\DeclareMathOperator{\operconv}{oper-conv}
\DeclareMathOperator{\proj}{proj}
\DeclareMathOperator{\Span}{span}
\DeclareMathOperator{\Lat}{Lat}
\DeclareMathOperator{\Int}{int}
\DeclareMathOperator{\ran}{Ran}
\DeclareMathOperator{\Ran}{Ran}
\newcommand{\Sym}{\mathbb{S}}
\newcommand{\RR}{\mathbb R}
\newcommand{\NN}{\mathbb N}
\newcommand{\CC}{\mathbb C}
\newcommand{\cG}{\mathscr G}
\newcommand{\cH}{\mathscr H}
\newcommand{\cK}{\mathscr K}
\newcommand{\cB}{\mathcal B}
\newcommand{\cL}{\mathcal L}
\newcommand{\cS}{\mathcal S}
\newcommand{\fK}{\mathcal K}
\newcommand{\cA}{\mathcal A}
\newcommand{\cX}{\mathcal X}
\newcommand{\benu}{\begin{enumerate}}
\newcommand{\eenu}{\end{enumerate}}
\newcommand{\bop}{\begin{opomba}}
\newcommand{\eop}{\end{opomba}}
\newcommand{\id}{\mathrm{Id}}
\newcommand{\diag}{\mathrm{diag}}
\newcommand{\beqn}{\begin{align*}}
\newcommand{\eeqn}{\end{align*}}
\newcommand{\bdefi}{\begin{definition}}
\newcommand{\edefi}{\end{definition}}
\newcommand{\bcor}{\begin{corollary}}
\newcommand{\ecor}{\end{corollary}}
\newcommand{\bthe}{\begin{theorem}}
\newcommand{\ethe}{\end{theorem}}
\newcommand{\bpro}{\begin{proposition}}
\newcommand{\epro}{\end{proposition}}
\newcommand{\blem}{\begin{lemma}}
\newcommand{\elem}{\end{lemma}}
\newcommand{\brem}{\begin{remark}}
\newcommand{\erem}{\end{remark}}
\newcommand{\bequ}{\begin{equation}}
\newcommand{\eequ}{\end{equation}}
\newcommand{\bprf}{\begin{proof}}
\newcommand{\eprf}{\end{proof}}
\newtheorem{theorem}{Theorem}[section]
\newtheorem{corollary}[theorem]{Corollary}
\newtheorem{lemma}[theorem]{Lemma}
\newtheorem{proposition}[theorem]{Proposition}
\theoremstyle{definition}
\newtheorem{definition}[theorem]{Definition}
\newtheorem{remark}[theorem]{Remark}
\newtheorem{example}[theorem]{Example}
\numberwithin{equation}{section}
\title[Operator Positivstellens\"atze on matrix convex sets]{Operator Positivstellens\"atze for noncommutative polynomials positive on matrix convex sets}
\author{Alja\v z Zalar}
\address{Alja\v z Zalar, Institute of Mathematics, Physics, and Mechanics, Jadranska 19, 1000
Ljubljana, Slovenia}
\email{aljaz.zalar@imfm.si}
\date{\today}
\keywords{
free convexity, linear matrix inequality (LMI), spectrahedron, spectrahedrop, completely positive, Positivstellensatz, polar dual,
Gleichstellensatz, quadratic module, free real algebraic geometry, noncommutative polynomial, free positivity}
\subjclass[2010]{ 14P10, 15A22, 46L07, 47A63 (Primary);
11E25, 13J30, 46L89, 46N10, 47L25 (Secondary)}
\begin{document}

\maketitle
\begin{abstract} 
	This article studies algebraic certificates of positivity for noncommutative (nc) operator-valued polynomials on matrix convex sets, such as the solution set $D_L$, called a free Hilbert spectrahedron, of the linear operator inequality (LOI)
$L(X)=A_0\otimes I+\sum_{j=1}^g A_{j}\otimes X_j\succeq 0,$ where $A_j$ are self-adjoint linear operators on a separable Hilbert space, $X_j$ matrices and $I$ is an identity matrix. If $A_j$ are matrices, then $L(X)\succeq 0$ is called a linear matrix inequality (LMI) and $D_L$ a free spectrahedron. For monic LMIs, i.e., $A_0=I$, and nc matrix-valued polynomials the certificates of positivity were established by Helton, Klep and McCullough in a series of articles
with the use of the theory of complete positivity from operator algebras and classical separation arguments from real algebraic geometry. Since the full strength of the theory of complete positivity is not restricted to finite dimensions, but works well also in the infinite-dimensional setting,
we use it to tackle our problems. 
First we extend the characterization of the inclusion $D_{L_1}\subseteq D_{L_2}$ from monic \emph{LMIs} to monic \emph{LOIs} $L_1$ and $L_2$.
As a corollary one immediately obtains the description of a polar dual of a free Hilbert spectrahedron $D_L$ and its projection, called a free Hilbert spectrahedrop. Further on, using this characterization in a separation argument, we obtain  a certificate for multivariate matrix-valued nc polynomials $F$ positive semidefinite on a free Hilbert spectrahedron defined by a monic LOI. Replacing the separation argument by an operator Fej\'er-Riesz theorem
enables us to extend this certificate, in the univariate case, to operator-valued polynomials $F$.
Finally, focusing on the algebraic description of the equality $D_{L_1}=D_{L_2}$, we remove the assumption of boundedness from the description in the LMIs case by an extended analysis. However, the description does not extend to LOIs case by counterexamples.
\end{abstract}

\section{Introduction}

In this section we state the main concepts and results of this paper. Subsection \ref{motivation} places the content of the paper in a general context. In Subsections \ref{free-sets}-\ref{nc-poly-and-oper-psatze} definitions intertwine with the main results. Subsection \ref{guide} is a guide to the organization of the rest of the paper.

Throughout the paper $\cH$, $\cH_1$, $\cH_2$, $\cK$, $\cG$ stand for separable real Hilbert spaces unless stated otherwise.

\subsection{Context}\label{motivation}

The name Positivstellensatz refers to an algebraic certificate for a given polynomial $p$ to have a positivity property on a given closed semialgebraic set. Finding a certificate for an operator-valued polynomial $p$ positive semidefinite on an arbitrary closed semialgebraic set is a hard problem. Even if $p$ is a matrix-valued polynomial, the optimal certificates are only known to exist for very special sets, namely matrix convex sets
defined as matrix solution sets of linear matrix inequalities (LMIs). The aim of this paper is to generalize characterizations of noncommutative (nc) matrix-valued polynomials which are positive semidefinite on  a LMI set to characterizations of nc 
operator-valued polynomials which are positive semidefinite on arbitrary matrix convex sets.
By \cite{EFF-WIN}, every closed matrix convex set is a matrix solution set of a linear operator inequality (LOI). 

Our problem belongs to the field of free real algebraic geometry (free RAG); see \cite{HKM5} and references therein. Free RAG has two branches - free positivity and free convexity. Both branches present exciting mathematical challenges, and lend themselves to many applications. 

Free positivity is an analog of classical real algebraic geometry \cite{BCR,LAS,LAU,MAR, PD, PUT, SCE}, a theory of polynomial inequalities embodied in Positivstellens\"atze. It makes contact
with noncommutative real algebraic geometry \cite{CIM, HEL, HKM1, HKM2, HM1, MC, SCH}.
Free Positivstellens\"atze have applications to quantum physics \cite{PNA10}, operator algebras \cite{KS1}, quantum statistical mechanics \cite{KS2, CKP}, the quantum moment problems and multiprover games \cite{DLTW}. 

Matrix convex sets and free convexity arise naturally in a number of contexts, including engineering systems theory, operator spaces, systems and algebras and is closely linked to unital completely positive maps \cite{ARV72, PAU, FAR, HMPV}. The simplest examples of matrix convex sets are matrix solution sets of LMIs. A large class of linear systems engineering problems transforms to LMIs \cite[\S 1.1]{HKM5}, which led to a major advance in those problems during the past two decades \cite{SIG}. Furthermore, LMIs underlie the theory of semidefinite programming, an important recent innovation in convex optimization \cite{NEM}.
As mentioned above every closed matrix convex set is a matrix solution sets of a LOI by \cite{EFF-WIN}.

\subsection{Free sets, matrix convex sets, linear pencils and LOI sets} \label{free-sets}
This work fits into the wider context of free analysis \cite{Voi04,Voi10,KVV+,MS11,Pop10,AM+,BB07,dOHMP09,HKM1,PNA10}, so we start by recalling some of the standard notions used throughout this article.

\subsubsection{Free sets - matrix level}
Fix a positive integer $g\in \NN$. We use $\Sym_{n}$ to denote real symmetric $n\times n$ matrices and
$\Sym^g$ for the sequence $(\Sym_n^g)_n$. A \textbf{subset} $\Gamma$ of $\Sym^g$ is a sequence 
$\Gamma=(\Gamma(n))_n$, where $\Gamma(n)\subseteq \Sym_n^g$ for each $n$. The subset $\Gamma$ is
\textbf{closed with respect to direct sums} if $A=(A_1,\ldots,A_g)\in\Gamma(n)$ and $B=(B_1,\ldots,B_g)\in \Gamma(m)$ implies
	$$A\oplus B=\left(\left[\begin{array}{cc} A_1 &  0 \\ 0 & B_2\end{array}\right],\ldots,
		\left[\begin{array}{cc} A_g &  0 \\ 0 & B_g\end{array}\right]
		\right)\in \Gamma(n+m).$$
It is closed with respect to \textbf{(simultaneous) unitary conjugation} if for each $n$, each $A\in \Gamma(n)$ and each
$n\times n$ unitary matrix $U$,
	$$U^\ast A U=(U^{\ast}A_1U,\ldots, U^{\ast}A_gU)\in \Gamma(n).$$
The set $\Gamma$ is a \textbf{free set} if it is closed with respect to direct sums and simultaneous unitary conjugation.
If in addition it is closed with respect to \textbf{(simultaneous) isometric conjugation}, i.e., if for each $m\leq n$, each
$A=(A_1,\ldots,A_g)\in\Gamma(n)$, and each isometry $V: \RR^m\to\RR^n$, 
	$$V^\ast A V=(V^{\ast}A_1V,\ldots, V^{\ast}A_gV)\in \Gamma(m),$$
then $\Gamma$ is \textbf{matrix convex} \cite{HKM4}.

\subsubsection{Free sets - operator level}
Fix a separable Hilbert space $\cK$.
Let $\Lat(\cK)$ denote the \textbf{lattice of closed subspaces} of $\cK$. For a $K\in \Lat(\cK)$, we use $\mathbb{S}_{K}$ to denote the set of all self-adjoint operators on $K$. 
Let $\mathbb{S}_{\cK}$ stand for the set $(\mathbb{S}_{K})_K$.
A collection $\Gamma=(\Gamma(K))_K$ where $\Gamma(K)\subseteq \Sym_K^g$ for each $K$ a closed subspace of $\cK$, is a 
\textbf{free operator set} \cite{HKM4} if it is closed under direct sums and with respect simultaneous conjugation by unitary operators. If in addition it is closed with respect to simultaneous conjugation by isometries $V:H\to K$, where $H,K\in \Lat(\cK)$, then $\Gamma$ is \textbf{operator convex}.

\subsubsection{Linear pencils and LOI sets} \label{linear-pencils}

Let $\cH$ be separable real Hilbert space and $I_{\cH}$ the identity operator on $\cH$. For self-adjoint operators $A_0,A_1,\ldots,A_g\in \Sym_{\cH}$, the expression
	$$L(x)=A_0+\sum_{j=1}^g A_j x_j$$
is a \textbf{linear (operator) pencil}. If $\cH$ is finite-dimensional, then $L(x)$ is a \textbf{linear matrix pencil}. 
If $A_0=I_{\cH}$, then $L$ is \textbf{monic}.
If $A_0=0$, then $L$ is \textbf{homogeneous}.
To every tuple $A=(A_1,\ldots,A_g)\in \Sym_\cH^g$ we associate a homogeneous linear pencil $\Lambda_A$ and a monic linear pencil $L_A$ by
	$$\Lambda_A(x):=\sum_{j=1}^g A_j x_j,\quad L_A(x):= I_{\cH}+\Lambda_A(x).$$
The \textbf{operator Hilbert convex hull} $\operconv_\cK\{A\}$ of $A$ is the set
	$$\operconv_\cK\{A\}:=\bigcup_{(\cG, \pi, V)\in \Pi} V^\ast \pi(A) V=
		\bigcup_{(\cG, \pi, V)\in \Pi} (V^\ast \pi(A_1) V,\ldots,V^\ast \pi(A_g)V),$$
where $\Pi$ is the set of all triples $(\cG, \pi,V)$ of a separable real Hilbert space $\cG$, a contraction $V:\cK\to \cG$ and
 a unital $\ast$-homomorphism $\pi:B(\cH)\to B(\cG)$.

Given a tuple of self-adjoint operators $X=(X_1,\ldots,X_g)\in \Sym_K^g$ on a closed subspace $K$ of a Hilbert space $\cK$, the \textbf{evaluation}
$L(X)$ is defined as
	$$L(X)=A_0\otimes I_K+\sum_{j=1}^g A_j\otimes X_j,$$
where $I_K$ stands for an identity operator on $K$.

We call the set 
	$$D_L(1)=\{ x\in \RR^g\colon L(x)\succeq 0\}$$ 
a \textbf{Hilbert spectrahedron} or a \textbf{LOI domain}, the set 
	$$D_L = (D_L(n))_n\quad \text{where}\quad D_L(n) = \{ X\in \mathbb{S}_n^g\colon L(X)\succeq 0\},$$
a \textbf{free Hilbert spectrahedron} or a \textbf{free LOI set}, the set
	$$\partial D_L = (\partial D_L(n))_n\quad \text{where}\quad \partial D_L(n) = \{ X\in \mathbb{S}_n^g\colon L(X)\succeq 0, L(X)\not\succ 0\}$$
the  \textbf{boundary of a free Hilbert spectrahedron} and the set
	$$D_L^{\cK}=(D_L(K))_{K\in \Lat(\cK)}\quad \text{where}\quad
		D_L(K) = \{ X\in \mathbb{S}_K^g\colon L(X)\succeq 0\}.$$ 
an \textbf{operator free Hilbert spectrahedron} or an \textbf{operator free LOI set}, where $\cK$ is a separable real Hilbert space.
Note that $D_L(1)\subseteq \RR^g$ is a closed convex set and by the classical Hahn-Banach theorem every convex closed subset of $\RR^g$ is of this form. If $L$ is a linear matrix pencil, then we omit the word Hilbert from the definitions. 

\subsection{Inclusion of free Hilbert spectrahedra}

Our first main result is an algebraic characterization of the inclusion $D_{L_1}\subseteq D_{L_2}$ where $L_1$ and $L_2$ are monic linear operator pencils.

\begin{theorem}[Operator linear Positivstellensatz] \label{OperLP-intro}
	Let $L_j$, $j=1,2$, be monic linear operator pencils with coefficients from $B(\cH_j)$, $j=1,2$. Then $D_{L_1}\subseteq D_{L_2}$ if and only if there exist a separable real Hilbert space $\cK$,
	a contraction $V:\cH_2\to \cK$, a positive semidefinite operator $S\in B(\cH_2)$ and 
	a $\ast$-homomorphism $\pi:B(\cH_1)\to B(\cK)$ such that
		$$L_2=S+V^\ast \pi(L_1)V.$$

	Moreover, if $D_{L_1}(1)$ is bounded, then $V$ can be chosen to be isometric and $\pi$ a unital $\ast$-homomorphism,
	i.e.,
				$$L_2=V^\ast \pi(L_1)V.$$
\end{theorem}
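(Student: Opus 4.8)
The direction "$\Leftarrow$" is the easy one: if $L_2 = S + V^\ast\pi(L_1)V$ with $S\succeq 0$, $V$ a contraction and $\pi$ a $\ast$-homomorphism, then for any $X\in D_{L_1}(n)$ we have $\pi(L_1)(X)=(\pi\otimes\mathrm{id})(L_1(X))\succeq 0$ because $\ast$-homomorphisms are completely positive; conjugating by $V\otimes I$ and adding $S\otimes I\succeq 0$ gives $L_2(X)\succeq 0$, so $X\in D_{L_2}(n)$. One must only check that monicity is compatible: evaluating the displayed identity at $x=0$ gives $I_{\cH_2}=S+V^\ast\pi(I_{\cH_1})V$; since $\pi(I)$ is a projection $\leq I$ and $V$ a contraction, this is consistent (in the bounded case $S=0$ forces $V^\ast\pi(I)V=I$, whence $V$ isometric and $\pi$ unital).

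The plan for "$\Rightarrow$" is the standard "LMI inclusion $\rightsquigarrow$ complete positivity $\rightsquigarrow$ Arveson dilation" argument of Helton--Klep--McCullough, executed in a way that does not use finite-dimensionality. First I would exploit the hypothesis at the operator level: by the Effros--Winkler / HKM machinery the containment $D_{L_1}\subseteq D_{L_2}$ of \emph{free} sets already upgrades to the operator level, $D_{L_1}^{\cK}\subseteq D_{L_2}^{\cK}$ for every separable $\cK$; concretely one shows that the tuple $A=(A_1,\dots,A_g)$ of coefficients of $\Lambda_{L_1}$ "lives in" $D_{L_1}$ in a universal sense, so plugging suitable compressions of $A$ into $L_2$ stays positive. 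Next, define on the operator system $\mathcal S_1=\Span\{I,A_1,\dots,A_g\}\subseteq B(\cH_1)$ the unital linear map $\tau:\mathcal S_1\to B(\cH_2)$ by $\tau(I)=I_{\cH_2}$, $\tau(A_j)=B_j$ (the coefficients of $L_2$). The heart of the matter is to prove that $\tau$ is \emph{completely positive}. This is exactly where the inclusion of free spectrahedra is used: positivity of $L_2$ on $D_{L_1}$ translates, via a duality/separation computation with matrix coefficients, into positivity of the ampliations $\tau_n$ on the relevant cones; the subtlety compared with the matrix case is that $\mathcal S_1$ need not be finite-dimensional, so one argues with the cone of positive elements of $M_n(\mathcal S_1)$ directly rather than with a spanning set of "test points."

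Once $\tau$ is completely positive and unital, I would invoke the Arveson--Stinespring dilation theorem (valid for ucp maps on operator systems inside $B(\cH_1)$, no dimension restriction): there is a separable Hilbert space $\cK$, a unital $\ast$-homomorphism $\pi:B(\cH_1)\to B(\cK)$ (extending $\tau$ after first extending $\tau$ to a ucp map on all of $B(\cH_1)$ by Arveson's extension theorem) and an isometry $V:\cH_2\to\cK$ with $\tau(T)=V^\ast\pi(T)V$ for all $T\in\mathcal S_1$. Applying this to $I$ and to each $A_j$ and assembling gives $L_2 = V^\ast\pi(L_1)V$ — this is the "bounded" conclusion. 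For the general (possibly unbounded $D_{L_1}(1)$) case the map $\tau$ need only be completely positive and \emph{contractive} rather than unital, so Stinespring yields a contraction $V$ and a (not necessarily unital) $\ast$-homomorphism; the defect $S:=I_{\cH_2}-V^\ast\pi(I_{\cH_1})V$ is then positive semidefinite and absorbs the discrepancy, giving $L_2=S+V^\ast\pi(L_1)V$.

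**Main obstacle.** The crux — and the only place the proof departs from a verbatim copy of the matrix argument — is establishing complete positivity of $\tau$ without finite-dimensional test sets: one must show that $X\in D_{L_1}(n)$ for \emph{all} $n$ (plus the unbounded directions of $D_{L_1}$, encoded by the homogeneous pencil $\Lambda_{L_1}$) suffices to force $\tau_n\geq 0$ on the full positive cone of $M_n(\mathcal S_1)$, which I expect to require a careful weak-$\ast$ / separation argument in $B(\cH_2)$ together with the fact that $D_{L_1}$ is "generated" by the coefficient tuple $A$ in the operator-convex-hull sense recorded in Subsection \ref{linear-pencils}.
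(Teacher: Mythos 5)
Your overall strategy (inclusion $\rightsquigarrow$ complete positivity $\rightsquigarrow$ Arveson extension $\rightsquigarrow$ Stinespring) is exactly the paper's, and your argument is fine in the bounded case and in the easy direction. But there is a genuine gap in the unbounded case, and it sits precisely at the step you flag as the crux. The unital map $\tau:\mathcal S_1\to\mathcal S_2$, $I_{\cH_1}\mapsto I_{\cH_2}$, $A_j\mapsto B_j$, is simply the wrong object when $D_{L_1}(1)$ is unbounded: first, it need not be well-defined (take $\ell_1=1+x$, $\ell_2=1$; then $D_{\ell_1}(1)=[-1,\infty)\subseteq\RR=D_{\ell_2}(1)$ but $\tau$ would have to send $1$ to both $1$ and $0$); second, even when it is well-defined, its complete positivity is equivalent to the inclusion of the \emph{homogenized} spectrahedra $D_{^{\text{h}}L_1}\subseteq D_{^{\text{h}}L_2}$ (Theorem \ref{n-pos-of-tau}), which is strictly stronger than $D_{L_1}\subseteq D_{L_2}$ for unbounded $D_{L_1}(1)$ — Example \ref{counterexample} in the paper gives diagonal $3\times 3$ pencils with $D_{L_1}\subseteq D_{L_2}$ but $D_{^{\text{h}}L_1}(1)\not\subseteq D_{^{\text{h}}L_2}(1)$, so $\tau$ is not even $1$-positive there. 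Consequently your closing move, "Stinespring applied to a merely contractive CP $\tau$ and let the defect $S=I-V^\ast\pi(I)V$ absorb the discrepancy," cannot be carried out: there is no CP map on $\mathcal S_1$ to dilate in the first place, and no weak-$\ast$/separation argument will manufacture one from $D_{L_1}\subseteq D_{L_2}$ alone.

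The missing idea is the paper's augmentation trick: replace $L_1$ by $L_1\oplus 1$ and work with the operator system $\widetilde{\mathcal S_1}=\Span\{I_{\cH_1}\oplus 1,\,A_1\oplus 0,\dots,A_g\oplus 0\}\subseteq B(\cH_1\oplus\RR)$ and the unital map $\tilde\tau(A_j\oplus 0)=B_j$. The extra scalar block encodes exactly the constraint $X_0\succeq 0$ in the homogenization, so $\tilde\tau$ is well-defined as soon as $D_{L_1}(1)\subseteq D_{L_2}(1)$ (Lemma \ref{well-defined-map}) and is completely positive if and only if $D_{L_1}\subseteq D_{L_2}$ (Theorem \ref{n-pos-of-tau}, via the canonical shuffle plus the $X_0\mapsto X_0+\epsilon I$, $X_0^{-1/2}$-conjugation argument — no separation argument is needed). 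Arveson and Stinespring then give $L_2=V^\ast\pi(L_1\oplus 1)V$ with $V:\cH_2\to\cK$ an isometry and $\pi:B(\cH_1\oplus\RR)\to B(\cK)$ unital; compressing to $\cK_0=\Ran\,\pi(I_{\cH_1}\oplus 0)$ produces the contraction $V_0$, the $\ast$-homomorphism $\pi_0$ of $B(\cH_1)$, and the positive summand $S=V^\ast\pi(0\oplus 1)V$, which is where the decomposition $L_2=S+V_0^\ast\pi_0(L_1)V_0$ actually comes from. Your bounded-case argument (direct use of $\tau$, which is then automatically well-defined and CP, yielding an isometry and a unital $\pi$) coincides with the paper's. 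A small additional remark: the complete-positivity step only involves matrix ampliations, so the preliminary upgrade to operator-level inclusion $D_{L_1}^{\cK}\subseteq D_{L_2}^{\cK}$ that you propose is not needed for this theorem.
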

	
For the proof see Corollary \ref{OperLP}. The main techniques used are complete positivity and the theory of operator algebras.
Namely, we define a unital $\ast$-linear map $\tau$ between the
linear spans of the coefficients of both pencils, connect
$D_{L_1}\subseteq D_{L_2}$ with $\tau$'s complete positivity,
invoke the Arveson extension theorem to extend it to a completely positive map on $B(\cH_1)$ and finally use the Stinespring representation theorem to obtain the result.

We demonstrate by Examples \ref{monicity-needed} and \ref{empty-operator-spectrahedron}, that the assumption of monicity of $L_j$, $j=1,2$, is in general needed in Theorem \ref{OperLP-intro}.

Inclusion of free spectrahedra for matrix pencils 
was considered in \cite{HKM1} and \cite{HKM2}. Our approach is the same
as the one from \cite{HKM1}, where the problem was
solved in the finite-dimensional case for a bounded set $D_{L_1}(1)$ 
(see \cite[Corollary 3.7]{HKM1}). We were able to modify it to work independently of the finite-dimensionality and the
boundedness of $D_{L_1}(1)$. Namely, Theorem \ref{OperLP-intro} extends \cite[Corollary 3.7]{HKM1} from matrix to operator pencils 
$L_1$, $L_2$ and removes the assumption of boundedness of the set $D_{L_1}(1)$.
\cite[Corollary 4.1]{HKM2} solves the problem 
in the finite-dimensional case also for an unbounded set
$D_{L_1}(1)$ but uses completely different techniques, including a Putinar-type separation argument, which does not seem to extend to the infinite-dimensional case.

\subsection{Equality of free spectrahedra} \label{equality-of-free-sp}
Our second main result is a characterization of different linear pencils which give the same free spectrahedron, see Theorem \ref{Unbound-GS-Intro} below. 
Before stating the  result we introduce some definitions. Let $A_0,A_1,\ldots,A_g\in \Sym_\cH$ be self-adjoint operators and
	$L(x)=A_0+\sum_{j=1}^g A_j x_j$
a linear pencil.
Let  $H\subseteq \cH$ be a closed subspace of $\cH$ which is \textbf{invariant} under each $A_j$, i.e., $A_jH\subseteq H$. Since each $A_j$ is self-adjoint, it also follows that $A_j H^\perp\subseteq H^\perp$, i.e.,  $\cH$ is automatically reducing for each $A_j$.
Hence, with respect to the decomposition $\cH=H\oplus H^\perp$,
$L$ can be written as the direct sum,
	$$L=\tilde L\oplus \tilde L^\perp=
			\left[\begin{array}{cc} \tilde L & 0 \\ 0 & \tilde L^\perp
					\end{array}\right], \quad \text{where}\quad
					\tilde L=I_H+\sum_{j=1}^g \tilde A_j x_j,$$
and $\tilde A_j$ is the restriction of $A_j$ to $H$.
We say that $\tilde L$ is a \textbf{subpencil} of $L$.
If $H$ is a proper closed subspace of $\cH$, then $\tilde L$ is a \textbf{proper subpencil} of $L$.
If $D_{L}=D_{\tilde L}$, then $\tilde L$ is a \textbf{whole subpencil} of $L$. If $L$ has no proper whole subpencil,
then $L$ is \textbf{$\sigma$-minimal}. 

\begin{theorem}[Linear Gleichstellensatz] \label{Unbound-GS-Intro}
	Let $L_k=I_{d_k}+\sum_{j=1}^g A_{k,j} x_j$, $k=1,2$, 
$d_j\in \NN$, $A_{k,j}\in \Sym_{d_k}$, be monic linear matrix pencils. 
Then $D_{L_1}=D_{L_2}$ if and only if every $\sigma$-minimal whole subpencil
$\tilde L_1$ of $L_1$ is unitarily equivalent to any $\sigma$-minimal whole subpencil $\tilde L_2$ of $L_2$, i.e., there is a unitary matrix $U$ such that 
	$$\tilde L_2=U^{\ast} \tilde L_1 U.$$
\end{theorem}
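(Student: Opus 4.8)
The plan is to dispose of the easy implication, reduce the converse to a statement about $\sigma$-minimal pencils, and then play the Operator linear Positivstellensatz (Theorem~\ref{OperLP-intro}) off against itself. For the easy direction, observe that since $d_1,d_2$ are finite each $L_k$ has a $\sigma$-minimal whole subpencil $\tilde L_k$: whenever $L_k$ is not $\sigma$-minimal, pass to a proper whole subpencil, which strictly decreases its size. Hence, if every $\sigma$-minimal whole subpencil of $L_1$ is unitarily equivalent to every one of $L_2$, then $D_{L_1}=D_{\tilde L_1}$, $D_{L_2}=D_{\tilde L_2}$, and $\tilde L_2=U^\ast\tilde L_1U$ forces $D_{\tilde L_1}=D_{\tilde L_2}$, so $D_{L_1}=D_{L_2}$. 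For the converse it suffices to prove the \emph{core statement}: two $\sigma$-minimal monic matrix pencils $L_1,L_2$ with $D_{L_1}=D_{L_2}$ are unitarily equivalent. Granting it, any two $\sigma$-minimal whole subpencils of a fixed $L_k$ are unitarily equivalent (they have the same free spectrahedron), a $\sigma$-minimal whole subpencil of $L_1$ is unitarily equivalent to one of $L_2$, and chaining these equivalences gives the theorem.

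To prove the core statement, apply Theorem~\ref{OperLP-intro} to $D_{L_1}\subseteq D_{L_2}$ and to $D_{L_2}\subseteq D_{L_1}$, obtaining $L_2=S+V^\ast\pi(L_1)V$ and $L_1=T+W^\ast\rho(L_2)W$ with $S,T\succeq0$, $V,W$ contractions and $\pi,\rho$ $\ast$-homomorphisms. In finite dimensions a $\ast$-homomorphism of a full matrix algebra is, up to unitary equivalence, an amplification followed by a compression, so after absorbing this structure into $V$ and $W$ the relations read $L_2(x)=S+\sum_\alpha V_\alpha^\ast L_1(x)V_\alpha$ and $L_1(x)=T+\sum_\beta W_\beta^\ast L_2(x)W_\beta$ with $\sum_\alpha V_\alpha^\ast V_\alpha=I-S$ and $\sum_\beta W_\beta^\ast W_\beta=I-T$ (from comparing constant terms). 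Substituting one into the other produces a fixed-pencil identity
\[
L_1(x)=R+\Psi\bigl(L_1(x)\bigr),\qquad \Psi(X)=\sum_\gamma Z_\gamma^\ast XZ_\gamma,\quad R=T+\sum_\beta W_\beta^\ast SW_\beta\succeq0,
\]
with $\Psi$ completely positive and $\Psi(I)=I-R\preceq I$; comparing the homogeneous parts yields $\Psi(A_{1,j})=A_{1,j}$ for all $j$, and symmetrically $L_2(x)=R'+\Psi'(L_2(x))$ with $R'\succeq0$ and $\Psi'(A_{2,j})=A_{2,j}$. If $D_{L_1}(1)$ is bounded, the bounded clause of Theorem~\ref{OperLP-intro} already makes $S=T=0$; the content of the present theorem is to reach this conclusion without boundedness.

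Removing boundedness is the hard part, and amounts to showing $R=0$. First one may assume the lineality space $\{v\in\RR^g:\sum_jA_{k,j}v_j=0\}$ of $D_{L_k}(1)$ --- which is the same for $k=1,2$ --- is trivial, by discarding the variables on which the pencils are constant. Iterating the fixed-pencil identity, $\Psi^n(L_1(x))$ converges to $L_\infty(x)=P_\infty+\sum_jA_{1,j}x_j$ with $P_\infty=\lim_n\Psi^n(I)\succeq0$ (a decreasing limit of positive semidefinite operators bounded above by $I$), $\Psi(L_\infty)=L_\infty$ and $D_{L_\infty}=D_{L_1}$. If $P_\infty$ were singular, then using that $0\in\operatorname{int}D_{L_1}(1)$ and $L_\infty(\pm\epsilon y)\succeq0$ for small $\epsilon$ one gets $\ker P_\infty\subseteq\bigcap_j\ker A_{1,j}$, which splits a trivial summand $I_{\ker P_\infty}$ off $L_1$ and exhibits a proper whole subpencil, contradicting $\sigma$-minimality; hence $P_\infty\succ0$. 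The remaining step, which I expect to be the genuine obstacle, is to see that $\sigma$-minimality of $L_1$ rules out a nonzero $R\succeq0$ satisfying $L_1(x)\succeq R$ on $D_{L_1}(1)$ together with $\Psi$ fixing all $A_{1,j}$; this is the statement that the Arveson boundary of a $\sigma$-minimal free spectrahedron is rich enough that no nonzero positive semidefinite $R$ annihilates every $\ker L_1(X)$, $X\in\partial D_{L_1}$. In the bounded case this lies behind the results of \cite{HKM1}; here it must be carried out by a recession-cone and separation analysis valid for unbounded $D_{L_1}(1)$. Once $R=0$, positivity together with $R=T+\sum_\beta W_\beta^\ast SW_\beta$ and $R'=S+\sum_\alpha V_\alpha^\ast TV_\alpha$ forces $S=T=0$, so that $L_2(x)=V^\ast(L_1(x)\otimes I)V$ and $L_1(x)=W^\ast(L_2(x)\otimes I)W$ with $V,W$ isometries.

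Finally, put $\gamma(X)=V^\ast(X\otimes I)V$ and $\beta(Y)=W^\ast(Y\otimes I)W$; these are unital completely positive maps $M_{d_1}\to M_{d_2}$ and $M_{d_2}\to M_{d_1}$, and $\beta\circ\gamma$, $\gamma\circ\beta$ fix the monic pencils $L_1,L_2$, hence restrict to the identity on the coefficient operator systems $\Span\{I,A_{k,1},\dots,A_{k,g}\}$. The same $\sigma$-minimality rigidity used above --- the unique extension property of the defining representation relative to the coefficient system, in the boundedness-free form --- promotes this to $\beta\circ\gamma=\mathrm{id}$ and $\gamma\circ\beta=\mathrm{id}$. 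Then $\gamma$ is a unital completely positive bijection with completely positive inverse, so applying the Kadison--Schwarz inequality to $\gamma$ and to $\gamma^{-1}=\beta$ gives $\gamma(X^\ast X)=\gamma(X)^\ast\gamma(X)$ for every $X$, whence $\gamma$ is a unital $\ast$-isomorphism $M_{d_1}\to M_{d_2}$ by Choi's multiplicative domain theorem. Consequently $d_1=d_2$ and $\gamma(\cdot)=U^\ast(\cdot)U$ for a unitary $U$, and $\gamma(A_{1,j})=A_{2,j}$ yields $L_2=U^\ast L_1U$, which proves the core statement. (For reducible $\sigma$-minimal pencils one runs these last two steps blockwise, using that $\sigma$-minimality leaves no repeated or redundant irreducible block; the whole difficulty of the theorem is concentrated in the boundedness-free Arveson-boundary richness of the third paragraph.)
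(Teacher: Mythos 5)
Your reduction to the core statement and the soft parts of the argument (composing the two certificates from Theorem~\ref{OperLP-intro}, the fixed-point identity $L_1=R+\Psi(L_1)$, the iteration giving $P_\infty$, and $\bigcap_j\ker A_{1,j}=\{0\}$ from $\sigma$-minimality) are fine, but the proof has a genuine gap exactly where the theorem's difficulty sits. You never prove that $\sigma$-minimality forces $R=0$ (equivalently $S=T=0$, i.e.\ that the isometric form of the certificate holds without boundedness); you explicitly defer it (``I expect to be the genuine obstacle'', ``must be carried out by a recession-cone and separation analysis''), and likewise the final step that a u.c.p.\ map fixing the coefficient operator system pointwise is the identity is invoked as ``the unique extension property \dots in the boundedness-free form'' without an argument, with the multiplicity/block issues relegated to a parenthesis. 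These two assertions are not routine: the boundary/\v{S}ilov-ideal rigidity behind them is tied to \emph{complete isometry} of the map $\tau$, which by Proposition~\ref{completely-pos-and-isom} is equivalent to $D_{^\text{h}L_1}=D_{^\text{h}L_2}$, and for unbounded spectrahedra $D_{L_1}=D_{L_2}$ does \emph{not} imply $D_{^\text{h}L_1}=D_{^\text{h}L_2}$ (Remark~\ref{complete-isometry-and-equality-of-sets}, Example~\ref{counterexample}). Your proposal never confronts this failure, which is precisely the obstruction that makes the unbounded Gleichstellensatz more than a corollary of the bounded one.

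The paper's route is different and is built to get around exactly that point: it does not compose the two linear Positivstellensatz certificates (the introduction explicitly notes it is unclear how to prove the Gleichstellensatz from the matrix version of Theorem~\ref{OperLP-intro} alone). Instead, in Theorem~\ref{UnbounLG} it splits into two cases. When $D_{^\text{h}L_1}=D_{^\text{h}L_2}$, Proposition~\ref{completely-pos-and-isom} makes $\tau$ completely isometric, Proposition~\ref{Central-proj-and-Silov} (Arveson boundary theory) gives trivial \v{S}ilov ideals for $\sigma$-minimal pencils, and the classification of finite-dimensional real $C^\ast$-algebras as in \cite[Theorem 3.12]{HKM1} yields the unitary. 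When $D_{^\text{h}L_1}\neq D_{^\text{h}L_2}$, the paper passes to the extended pencils $L_j\oplus 1$, proves their homogenizations are $\sigma$-minimal (Lemma~\ref{exten-reduc}), runs the Case~1 argument on modified pencils $\hat L_\ell$, and then extracts a contradiction via the block decomposition of the resulting unitary (Claims 4--5). To complete your proposal you would have to supply, in full, a boundedness-free proof of the rigidity statements you assume; as it stands the write-up assembles the easy scaffolding around the theorem but leaves its actual content unproved.
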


First we remark that even though $D_{L_1}=D_{L_2}$ if and only if $D_{L_1}\subseteq D_{L_2}$ and $D_{L_2}\subseteq D_{L_1}$, 
it is not clear how to prove Theorem \ref{Unbound-GS-Intro} only by using the matrix version of Theorem \ref{OperLP-intro}. 
The proof is more involved (see Theorem \ref{UnbounLG}). 
Our approach uses the idea from \cite{HKM1}, where it is shown how $D_L$ is governed by the multiplicative structure $C^{\ast}(\cS)$, i.e., the $C^{\ast}$-algebra generated by the set $\cS$ of the coefficients of $L$. Using this and the theory of  real $C^\ast$-algebras, 
Theorem \ref{Unbound-GS-Intro} is proved 
under the assumption $D_{L_1}(1)=D_{L_2}(1)$ is bounded in \cite[\S 3.3]{HKM1}. Analyzing the proof one can notice that it works  for $\sigma$-minimal pencils $\tilde L_1$, $\tilde L_2$ that satisfy
the implication 
	$$D_{\tilde L_1}=D_{\tilde L_2} \Rightarrow D_{^h \! \tilde L_1}=D_{^h\! \tilde L_2},$$
where 
	$$^\text{h} \! \tilde L_j(x_0,\ldots,x_g)=x_0\tilde L_j(x_0^{-1} x_1,\ldots,x_0^{-1} x_g), \quad j=1,2,$$ are the \textbf{homogenizations} of $\tilde L_j$, $j=1,2$. Note that the evaluation of a homogeneous linear pencil $L(x)=\sum_{j=0}^g A_jx_j$ on a tuple of symmetric matrices $X=(X_0,X_1,\ldots,X_g)\in \Sym_n^{g+1}$ is defined as
	$$L(X)=\sum_{j=0}^g A_j\otimes X_j$$
and 
$$D_{L} = (D_{L}(n))_n\quad 
		\text{where}\quad D_{L}(n) = \{ X\in \mathbb{S}_n^{g+1}\colon L(X)\succeq 0\}$$
is its free Hilbert spectrahedron.
By this observation and a lengthy case analysis we establish Theorem \ref{Unbound-GS-Intro} irrespective of the boundedness of the set $D_{L_1}(1)=D_{L_2}(1)$ in Section \ref{LOI-equality-problem-soln}.

	However, Theorem \ref{Unbound-GS-Intro} does not extend to linear operator pencils.
Example \ref{without-reduct-subpencil} shows, that not every operator pencil has a whole subpencil which is $\sigma$-minimal,
while Example \ref{not-unitarily-equivalent} gives two $\sigma$-minimal operator pencils which have the same free
Hilbert spectrahedron but are not unitarily equivalent.

\subsection{Free Hilbert spectrahedrops and polar duals}

\subsubsection{Free Hilbert spectrahedrops}\label{free-spectrahedrops}

Let $\cH$, $\cK$ be separable real Hilbert spaces. Let $D$, $\Omega_j$, $\Lambda_k\in \Sym_\cH$ be self-adjoint operators and 
	$$L(x,y)=D+\sum_{j=1}^g \Omega_j x_j + \sum_{k=1}^h \Gamma_k y_k\in \Sym_\cH\!\left\langle x,y\right\rangle$$
a linear pencil in the variables $(x_1,\ldots,x_g; y_1,\ldots,y_h)$.
We call the set
	$$\proj_x D_{L}(1):= \{ x\in \RR^g\colon \exists\; y\in \RR^h \text{ such that } L(x,y)\succeq 0\}$$
a \textbf{Hilbert spectrahedral shadow} \cite{BLEK-PAR-TOM}, the set
	$$\proj_x D_{L}=(\proj_x D_{L}(n))_n,$$
where
	$$\proj_x D_{L}(n):= \{ X\in \Sym_n^g\colon \exists\; Y\in \Sym_n^h \text{ such that } 
			L(X,Y)\succeq 0\},$$
a \textbf{free Hilbert spectrahedrop}, 
and the set 
	$$\proj_x D_{L}^{\cK}=(\proj_x D_{L}(K))_{K\in \Lat(\cK)},$$
where
	$$\proj_x D_{L}(K)= \{ X\in \Sym_K\colon \exists\; Y\in \Sym_K \text{ such that } L(X,Y)\succeq 0\},$$
an \textbf{operator free Hilbert spectrahedrop}.
If $L$ is a linear matrix pencil, then we omit the word Hilbert from the definitions.

\subsubsection{Polar duals}

Let $\cK$ be a real separable Hilbert space.
The \textbf{free polar dual} (resp.\ the \textbf{free Hilbert polar dual})  $\fK^{\circ}=(\fK^\circ(n))_n$ of a free set $\fK\subseteq \Sym^g$ (resp.\ a free operator set $\fK\subseteq \Sym^g_{\cK}$) is 
		$$\fK^{\circ}(n)=\{A\in \Sym_n^g\colon L_A(X)=I_n\otimes I+\sum_{j=1}^g A_j\otimes X_j\succeq 0
			\text{ for all } X\in \fK\}.$$

The \textbf{operator free polar dual} (resp.\ the \textbf{operator free Hilbert polar dual})  $\fK^{\cK,\circ}=(\fK^{\circ}(K))_{K\in \Lat(\cK)}$ of a free set $\fK\subseteq \Sym^g$ (resp.\ a free operator set $\fK\subseteq \Sym^g_{\cK}$) is 
		$$\fK^{\circ}(K)=\{A\in \Sym_K^g\colon L_A(X)=I_K\otimes I+\sum_{j=1}^g A_j\otimes X_j\succeq 0
			\text{ for all } X\in \fK\}.$$

\subsubsection{Polar duals of free Hilbert spectrahedra and free Hilbert spectrahedrops}\label{PolarDuals}

In this subsection we state our main results on polar duals of free spectrahedra and free spectrahedrops.
Let $\cH$, $\cK$ be separable real Hilbert spaces.

\begin{theorem} \label{polar-spectrahedron}
	Suppose $L:=I_{\cH}+\sum_{j=1}^g A_{j}x_j$ is 
	a monic linear operator pencil. The operator free Hilbert polar dual $(D_{L}^\cK)^{\cK,\circ}$ of the set
	$D_{L}^\cK$ is the set
				$$\operconv_\cK\{(A_1,\ldots,A_g)\}.$$
\end{theorem}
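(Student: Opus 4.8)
The plan is to prove the two inclusions separately, using Theorem~\ref{OperLP-intro} to handle the harder one. Write $A=(A_1,\ldots,A_g)$. For the inclusion $\operconv_\cK\{A\}\subseteq (D_L^\cK)^{\cK,\circ}$, take an arbitrary tuple $B=V^\ast\pi(A)V$ in $\operconv_\cK\{A\}$, where $\pi:B(\cH)\to B(\cG)$ is a unital $\ast$-homomorphism and $V:\cK\to\cG$ a contraction, with $B\in\Sym_K^g$ for some $K\in\Lat(\cK)$. I must show $L_B(X)\succeq 0$ for every $X\in D_L(K')$, i.e.\ every $X$ with $L(X)\succeq 0$. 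Since $\pi$ is a unital $\ast$-homomorphism, $(\pi\otimes\id)(L(X))=I_\cG\otimes I_{K'}+\sum_j\pi(A_j)\otimes X_j\succeq 0$ because $\ast$-homomorphisms are completely positive and unital. Conjugating by $V\otimes I_{K'}$ (a contraction, so $(V\otimes I)^\ast(I_\cG\otimes I)(V\otimes I)=VV^\ast\otimes I\preceq I_K\otimes I$) gives
\[
  I_K\otimes I_{K'}+\sum_{j=1}^g (V^\ast\pi(A_j)V)\otimes X_j
  \;\succeq\; (VV^\ast\otimes I_{K'}) + \sum_j (V^\ast\pi(A_j)V)\otimes X_j
  \;=\;(V\otimes I)^\ast (\pi\otimes\id)(L(X))(V\otimes I)\;\succeq\;0,
\]
which is exactly $L_B(X)\succeq 0$. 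Hence $B\in(D_L^\cK)^{\cK,\circ}$.

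For the reverse inclusion $(D_L^\cK)^{\cK,\circ}\subseteq\operconv_\cK\{A\}$, suppose $B=(B_1,\ldots,B_g)\in\Sym_K^g$ satisfies $L_B(X)\succeq 0$ for all $X\in D_L^\cK$. The key observation is that this says precisely $D_L^\cK\subseteq D_{L_B}^\cK$ at the operator level, and in particular $D_L\subseteq D_{L_B}$ at the matrix level (restricting to finite-dimensional $K$). Now apply Theorem~\ref{OperLP-intro} with $L_1=L$ and $L_2=L_B$: there exist a separable real Hilbert space $\cG$, a contraction $V:K\to\cG$, a positive semidefinite $S\in B(K)$, and a $\ast$-homomorphism $\pi:B(\cH)\to B(\cG)$ with $L_B=S+V^\ast\pi(L)V$. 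Comparing the constant terms gives $I_K=S+V^\ast\pi(I_\cH)V=S+V^\ast(\pi(I_\cH))V$; comparing the coefficient of $x_j$ gives $B_j=V^\ast\pi(A_j)V$. The only subtlety is that $\pi$ need not be unital, but $e:=\pi(I_\cH)$ is a projection in $B(\cG)$ commuting with the range of $\pi$; replacing $\cG$ by $e\cG$, $\pi$ by its corestriction (now unital), and $V$ by $eV$ (still a contraction, and $V^\ast\pi(A_j)V=V^\ast e\,\pi(A_j)\,eV$ since $\pi(A_j)=e\pi(A_j)e$), we get a unital $\ast$-homomorphism $\tilde\pi:B(\cH)\to B(e\cG)$ and a contraction $\tilde V:K\to e\cG$ with $B_j=\tilde V^\ast\tilde\pi(A_j)\tilde V$ for all $j$. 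This exhibits $B\in\operconv_\cK\{A\}$, noting that in the definition of $\operconv_\cK$ the triple is allowed to range over all separable real $\cG$.

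I expect the main obstacle to be purely bookkeeping: verifying that Theorem~\ref{OperLP-intro} may be applied with the second pencil having coefficients in $B(K)$ for an arbitrary closed subspace $K\subseteq\cK$ rather than a fixed ambient Hilbert space, and dealing cleanly with the non-unitality of $\pi$ (the ``corestrict to $\pi(I_\cH)\cG$'' step) so that the output lands in the set $\operconv_\cK\{A\}$ exactly as defined via the index set $\Pi$. One should also double-check that $D_L^\cK\subseteq D_{L_B}^\cK$ follows from the matrix-level inclusion together with the characterization — but since Theorem~\ref{OperLP-intro} already produces a representation $L_B=S+V^\ast\pi(L)V$ from the matrix inclusion $D_L\subseteq D_{L_B}$ alone, the operator-level statement is not even needed for the argument. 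A remark that this theorem is the promised immediate corollary of Theorem~\ref{OperLP-intro} advertised in the abstract and introduction would close the section.
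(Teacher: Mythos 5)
Your proposal is correct and takes essentially the same route as the paper's proof (Theorem \ref{OperLP-satz-unbounded}): the easy inclusion by conjugating $(\pi\otimes I)(L(X))\succeq 0$ with the contraction $V\otimes I$ (only note the harmless slip $VV^\ast$ where $V^\ast V$ is meant), and the hard inclusion by observing that the polar-dual condition is the spectrahedral inclusion $D_L\subseteq D_{L_B}$ and reading $B_j=V^\ast\pi(A_j)V$ off the representation $L_B=S+V^\ast\pi(L)V$ given by the linear Positivstellensatz. The only cosmetic difference is that the paper cites Corollary \ref{OperLP}(2), where the $\ast$-homomorphism is already unital, so your corestriction to $\pi(I_{\cH})\cG$ is not needed there.
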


\begin{theorem}\label{polar-spectrahedrop}
	Suppose $L:=I_{\cH}+\sum_{j=1}^g \Omega_{j}x_j+\sum_{k=1}^{h} \Gamma_k y_k$
	is a monic linear operator pencil, where $\Omega_j, \Gamma_k\in \Sym_\cH$.
		The operator free Hilbert polar dual of $\fK:=\proj_x D_L^{\cK}$ is the 
	set 
	\begin{eqnarray*}
		\fK^{\cK,\circ} &=& \{A\in \mathbb{S}_{\cK}^{g}\colon (A,0)\in (D_{L}^\cK)^{\cK,\circ}\}\\
		&=& \left\{A\in \mathbb{S}_{\cK}^{g}\colon 
			\exists\;\text{a separable Hilbert space } \cG, \text{ an isometry }V:\cK\to\cG,\right. \\
		&&	\text{ and } \ast\!\text{-homomorphism }\pi:B(\cH\oplus \RR)\to B(\cG) \text{ s.t. }\\
		&& \left.A=V^{\ast}\pi(\left[\begin{array}{cc} \Omega & 0\\ 0& 	
			0\end{array}\right])V,\quad
			0=V^{\ast}\pi(\left[\begin{array}{cc} \Gamma & 0\\ 0& 	
			0\end{array}\right])V\right\},
	\end{eqnarray*}
\end{theorem}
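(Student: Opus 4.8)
My plan is to derive the statement from two ingredients: the elementary behaviour of the operator free Hilbert polar dual under the coordinate projection $\proj_x$, and the polar-dual description for operator free Hilbert spectrahedra in Theorem~\ref{polar-spectrahedron}. For the first, I would establish the equality $\fK^{\cK,\circ}=\{A\in\Sym_\cK^g\colon(A,0)\in(D_L^\cK)^{\cK,\circ}\}$ straight from the definitions. Fix $K\in\Lat(\cK)$ and $A\in\Sym_K^g$. For any $H\in\Lat(\cK)$ and any $(X,Y)\in D_L^\cK(H)$ the evaluation $L_{(A,0)}(X,Y)=I_K\otimes I_H+\sum_{j=1}^g A_j\otimes X_j$ does not involve $Y$, so $L_{(A,0)}(X,Y)\succeq 0$ for all $(X,Y)\in D_L^\cK(H)$ if and only if $I_K\otimes I_H+\sum_j A_j\otimes X_j\succeq 0$ for all $X\in\proj_x D_L(H)=\fK(H)$. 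Letting $H$ vary, this is precisely the assertion $(A,0)\in(D_L^\cK)^{\cK,\circ}(K)\iff A\in\fK^{\cK,\circ}(K)$, which is the first displayed equality.

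Next I would compute $(D_L^\cK)^{\cK,\circ}$ by reducing to Theorem~\ref{polar-spectrahedron}. Set $\cH':=\cH\oplus\RR$ and consider the monic linear operator pencil $L':=L\oplus 1=I_{\cH'}+\sum_{j=1}^g\diag(\Omega_j,0)\,x_j+\sum_{k=1}^h\diag(\Gamma_k,0)\,y_k$ with coefficients in $\Sym_{\cH'}$. Since $L'(X,Y)=L(X,Y)\oplus I_H$ and the extra block is vacuously positive, $D_{L'}^\cK=D_L^\cK$, hence $(D_{L'}^\cK)^{\cK,\circ}=(D_L^\cK)^{\cK,\circ}$; applying Theorem~\ref{polar-spectrahedron} to $L'$ (in the $g+h$ variables $(x,y)$) then yields
\[(D_L^\cK)^{\cK,\circ}=\operconv_\cK\{(\diag(\Omega_1,0),\dots,\diag(\Omega_g,0),\diag(\Gamma_1,0),\dots,\diag(\Gamma_h,0))\}.\]
Unwinding the definition of $\operconv_\cK$ and restricting to the slice $y=0$ shows that $(A,0)$ belongs to this set precisely when there exist a separable real Hilbert space $\cG_0$, a contraction $V_0\colon\cK\to\cG_0$ and a unital $\ast$-homomorphism $\pi_0\colon B(\cH')\to B(\cG_0)$ with $A_j=V_0^\ast\pi_0(\diag(\Omega_j,0))V_0$ and $0=V_0^\ast\pi_0(\diag(\Gamma_k,0))V_0$ for all $j,k$.

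Finally I would upgrade ``contraction plus unital $\ast$-homomorphism'' to ``isometry plus $\ast$-homomorphism'', which is the form in the statement. Given $(\cG_0,V_0,\pi_0)$ as above, put $\cG:=\cG_0\oplus\cK$, let $V\xi:=V_0\xi\oplus(I_\cK-V_0^\ast V_0)^{1/2}\xi$ — an isometry, since $V^\ast V=V_0^\ast V_0+(I_\cK-V_0^\ast V_0)=I_\cK$ — and $\pi:=\pi_0\oplus 0\colon B(\cH')\to B(\cG)$, a $\ast$-homomorphism; a short computation gives $V^\ast\pi(T)V=V_0^\ast\pi_0(T)V_0$ for all $T\in B(\cH')$, which yields the representation of $(A,0)$ claimed in the statement. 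Conversely, given a separable $\cG$, an isometry $V\colon\cK\to\cG$ and a $\ast$-homomorphism $\pi\colon B(\cH')\to B(\cG)$ with $A_j=V^\ast\pi(\diag(\Omega_j,0))V$ and $0=V^\ast\pi(\diag(\Gamma_k,0))V$, set $q:=\pi(I_{\cH'})$, corestrict $\pi$ to the unital $\ast$-homomorphism $\pi_0\colon B(\cH')\to B(q\cG)$, and put $V_0:=qV\colon\cK\to q\cG$ (a contraction); using $\pi(T)=q\pi(T)q$ for all $T$ one recovers $A_j=V_0^\ast\pi_0(\diag(\Omega_j,0))V_0$ and $0=V_0^\ast\pi_0(\diag(\Gamma_k,0))V_0$, so $(A,0)\in\operconv_\cK\{\cdots\}=(D_L^\cK)^{\cK,\circ}$. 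Together with the first two steps this proves the theorem.

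The substantive input is entirely Theorem~\ref{polar-spectrahedron}; everything else is bookkeeping, so I do not expect a genuine obstacle. The points that need a little care are: the fact that the identity $\fK^{\cK,\circ}=\{A\colon(A,0)\in(D_L^\cK)^{\cK,\circ}\}$ has to be verified at every level $K$ simultaneously; and, in the last step, checking that the dilated data reproduce exactly the same compressions and that corestriction of $\pi$ to $\Ran\pi(I_{\cH'})$ is legitimate. Using the monic padding $L'=L\oplus 1$, whose coefficient algebra is already $B(\cH\oplus\RR)$, is what lets Theorem~\ref{polar-spectrahedron} deliver the conclusion in exactly the stated form, so that one never has to extend a $\ast$-homomorphism of $B(\cH)$ to $B(\cH\oplus\RR)$.
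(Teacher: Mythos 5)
Your argument is correct, and it reaches the statement by a slightly different route than the paper. The paper's proof (Theorem \ref{OperLP-trunc}) also gets the first equality directly from the definitions, but for the second it applies Corollary \ref{OperLP}(1) to the inclusion $D_L\subseteq D_{L_{(A,0)}}$ (via Proposition \ref{mat-to-oper-spec-relation}): that corollary already outputs an isometry $V:\cK\to\cG$ and a ($B(\cH\oplus\RR)$-valued) $\ast$-homomorphism with $L_{(A,0)}=V^\ast\pi(\mathrm{diag}(0,1))V+V^\ast\pi(\mathrm{diag}(L,0))V$, and comparing coefficients gives exactly the displayed representation, so no conversion between forms is needed. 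You instead pad to $L'=L\oplus 1$, quote Theorem \ref{polar-spectrahedron} (i.e.\ Theorem \ref{OperLP-satz-unbounded}, which rests on Corollary \ref{OperLP}(2)) to identify $(D_L^\cK)^{\cK,\circ}$ with $\operconv_\cK\{(\Omega\oplus 0,\Gamma\oplus 0)\}$, and then pass between the ``contraction plus unital'' and ``isometry plus possibly non-unital'' descriptions by the defect dilation $V=V_0\oplus(I-V_0^\ast V_0)^{1/2}$, $\pi=\pi_0\oplus 0$, and conversely by compressing to $\Ran\pi(I_{\cH\oplus\RR})$. Both proofs ultimately rest on the same linear Positivstellensatz; yours buys a cleaner logical dependence on the already-stated polar-dual theorem (and, as a bonus, makes explicit the easy converse containment that the paper leaves implicit, namely that any isometry/homomorphism representation of $(A,0)$ does land in the polar dual), at the cost of the extra dilation/compression bookkeeping, which you carry out correctly. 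Note only that the paper's Theorem \ref{OperLP-trunc} contains an additional second statement for bounded $\fK(1)$ (homomorphisms of $B(\cH)$ rather than $B(\cH\oplus\RR)$), which requires a separate homogenization argument; that part is not covered by your reduction, but it is also not part of the statement you were asked to prove.
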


	For the proof of Theorems \ref{polar-spectrahedron} and \ref{polar-spectrahedrop} see Theorems 
\ref{OperLP-satz-unbounded} and \ref{OperLP-trunc}, respectively. They follow from Theorem \ref{OperLP-intro}.
Theorem  \ref{polar-spectrahedron} states that an operator free convex set is generated by a finite set if and only if it is the polar dual of an operator free Hilbert spectrahedron, while Theorem \ref{polar-spectrahedrop} states a similar result for corresponding projections. The polar dual problems for finite-dimensional Hilbert spaces $\cH$ and $\cK$ were considered in \cite[\S 4.3, \S 4.4]{HKM3}.

\subsection{Nc polynomials and operator Positivstellens\"atze} \label{nc-poly-and-oper-psatze}

\subsubsection{Words and nc polynomials} \label{words-and-nc-polynomials}
We write $\left\langle x\right\rangle$ for the monoid freely generated by $x=(x_1,\ldots,x_g)$, i.e., 
$\left\langle x\right\rangle$ consists of \textbf{words} in the $g$ noncommuting letters $x_1,\ldots,x_g$. 
Let $\RR\!\left\langle x\right\rangle$ denote the associative $\RR$-algebra freely generated by $x$, i.e., the elements of 
$\RR\!\left\langle x\right\rangle$ are polynomials in the noncommuting variables $x$ with coefficients in $\RR$. The elements are
called $\textbf{noncommutative (nc) polynomials}$. Endow $\RR\!\left\langle x\right\rangle$ with the natural \textbf{involution $^\ast$} which fixes $\RR\cup \{x\}$ pointwise, reverses the order of words, and acts linearly on polynomials. Polynomials invariant under this involution are $\textbf{symmetric}$. The length of the longest word in a noncommutative polynomial 
$f\in \RR\!\left\langle x\right\rangle$ is denoted by $\deg(f)$. The set of all words of degree at most $k$ is 
$\left\langle x\right\rangle_k$ and $\RR\!\left\langle x\right\rangle_k$ is the vector space of all noncommutative polynomials of degree at most $k$. 

Fix separable Hilbert spaces $\cH_1$, $\cH_2$. \textbf{Operator-valued nc polynomials} are the elements of 
$B(\cH_1,\cH_2)\otimes \RR\!\left\langle x\right\rangle$. We write 
	$$P=\sum_{w\in \left\langle x\right\rangle}A_w \otimes w\in B(\cH_1,\cH_2)\otimes \RR\!\left\langle x\right\rangle$$
for an element $P\in B(\cH_1,\cH_2)\otimes \RR\!\left\langle x\right\rangle$, where the sum is finite. The involution $^\ast$ extends
to operator-valued polynomials by
	$$P^\ast=\sum_{w\in \left\langle x\right\rangle}A_w^\ast \otimes w^\ast\in
		 B(\cH_2,\cH_1)\otimes \RR\!\left\langle x\right\rangle.$$
If $\cH_1=\cH_2$ and $P=P^\ast$, then we say $P$ is \textbf{symmetric}.

\subsubsection{Polynomial evaluations}
If $P\in B(\cH_1,\cH_2)\otimes \RR\!\left\langle x\right\rangle$ is a nc operator-valued polynomial and $X\in B(\cK)^g$, where
$\cK$ is a separable Hilbert space, then 
	$$P(X)\in B(\cH_1,\cH_2)\otimes B(\cK)$$ 
is defined in the natural way by replacing $x_i$ by $X_i$ and sending the empty word to the identity operator on $\cK$. 
Note that if $P\in \RR^{\ell_1\times \ell_2}\!\left\langle x\right\rangle$ is a matrix-valued polynomial, where $\ell_1,\ell_2\in\NN$ are natural numbers, then $P(X):\cK^{\ell_2}\to \cK^{\ell_1}$ is an operator mapping form $\cK^{\ell_2}$  to $\cK^{\ell_1}$
and has a matrix representation $(p_{ij}(X))_{ij}$, where $P=(p_{ij}(x))_{ij}.$

\subsubsection{Free Hilbert semialgebraic sets}\label{free-semi-sets}

A symmetric operator-valued nc polynomial $P$ determines the \textbf{free Hilbert semialgebraic set} by
	$$D_P = (D_P(n))_n\quad \text{where}\quad D_P(n) = \{ X\in \mathbb{S}_n^g\colon P(X)\succeq 0\},$$
and
	the \textbf{operator free Hilbert semialgebraic set } by
	$$D^{\cK}_P = (D_P(K))_{K\in \Lat(\cK)}\quad \text{where}\quad
		D_P(K) = \{ X\in \mathbb{S}_K^g\colon P(X)\succeq 0\}.$$
Clearly, the sets $D_P$ and $D_P^\cK$ are a free set and a free operator set, respectively.
If $P$ is a symmetric matrix-valued nc polynomial, then we omit the word Hilbert in the definitions of $D_P$ and $D_P^\cK$.

\subsubsection{Operator Positivstellens\"atze }\label{NSatz}

Now we turn our attention to nc polynomials positive semidefinite on free Hilbert spectrahedra.

\begin{theorem}[Operator convex Positivstellensatz] \label{Oper-to-mat-intro}
		Let $L\in \Sym_{\cH}\!\left\langle x \right\rangle$ be a monic linear operator pencil. 
	Then for every symmetric matrix-valued noncommutative polynomial
	$F\in \RR^{\nu\times \nu}\!\left\langle x \right\rangle$ with $F|_{D_L}\succeq 0$,
	there is a separable real Hilbert space $\cK$, a $\ast$-homomorphism $\pi:B(\cH)\to B(\cK)$,
	finitely many matrix polynomials $R_j\in \RR^{\nu\times \nu}\!\left\langle x \right\rangle$ 
	and
	operator polynomials $Q_k\in B(\RR^{\nu},\cK)\otimes \RR\!\left\langle x \right\rangle$ 
	all of degree at most $\frac{\deg(F)+2}{2}$
	such that 
		$$F=\sum_{j} R_j^\ast R_j + \sum_k Q_k^\ast \pi(L) Q_k.$$
\end{theorem}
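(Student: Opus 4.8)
The plan is to follow the classical Helton--Klep--McCullough recipe: combine a Hahn--Banach/Effros--Winkler separation in the space of truncated nc polynomials with the operator linear Positivstellensatz (Theorem \ref{OperLP-intro}). First I would fix $d=\lceil(\deg F+2)/2\rceil$ and form the truncated quadratic module
\[
M_L := \Bigl\{\textstyle\sum_j R_j^\ast R_j + \sum_k Q_k^\ast \pi(L) Q_k\Bigr\},
\]
where $R_j\in\RR^{\nu\times\nu}\langle x\rangle_{d-1}$, $\pi$ ranges over $\ast$-homomorphisms $B(\cH)\to B(\cK)$ for separable real $\cK$, and $Q_k\in B(\RR^\nu,\cK)\otimes\RR\langle x\rangle_{d-1}$. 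The first step is to check this is a cone of symmetric elements in the finite-dimensional real vector space $V:=\Sym^\nu\langle x\rangle_{2d}$ (the degree-$2d$ symmetric $\nu\times\nu$ nc polynomials), closed under the required operations, and to verify it is in fact \emph{closed} in $V$ — this is where one typically works, using a normalization/compactness argument on the $Q_k$'s exactly as in \cite[\S 4]{HKM1}, together with the direct-sum trick to absorb the arbitrary-dimensional $\cK$ into a single representation.

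Assuming $F\notin M_L$, I would separate: there is a symmetric linear functional $\lambda$ on $V$ with $\lambda\ge 0$ on $M_L$ but $\lambda(F)<0$. Taking the GNS-type construction associated with $R\mapsto\lambda(R^\ast R)$ on $\RR^{\nu\times\nu}\langle x\rangle_{d-1}$ produces a Hilbert space $\cK_\lambda$ and a tuple $\widehat X$ of symmetric operators (the images of the variables); the point is that the truncation forces $\widehat X$ to act, modulo the usual degree bookkeeping, like a genuine tuple, and the condition $\lambda\bigl(Q^\ast\pi(L)Q\bigr)\ge 0$ for all $\pi$ and all $Q$ translates — via Theorem \ref{polar-spectrahedron} / the $\operconv$ description — into $\widehat X\in D_L$ (or rather into a compression that lies in $D_L$). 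Then $F|_{D_L}\succeq 0$ gives a vector state on which $F$ evaluates nonnegatively, contradicting $\lambda(F)<0$. Hence $F\in M_L$, which is exactly the claimed decomposition; the degree bound $\le d$ on the $R_j$ and $Q_k$ is built into the definition of $M_L$.

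The technical heart, and the step I expect to be the main obstacle, is the closedness of $M_L$ together with the legitimate passage to a single separable $\cK$ and a single $\ast$-homomorphism $\pi$. Because $\cK$ and $\pi$ are not fixed in advance, one must show that a convergent sequence of elements of $M_L$ — each built from possibly different representations — has its limit again representable with controlled ($\deg\le d-1$) multipliers over one representation; the standard device is to replace $\pi$ by the universal representation of $C^\ast(A_0,\dots,A_g)$ (or a direct sum thereof), bound the number and norm of the $Q_k$ using that $\dim V<\infty$, and extract a limit. A second, more routine point is matching the degree count: one must ensure that $Q_k^\ast\pi(L)Q_k$ with $\deg Q_k\le d-1$ lands in $V$ (degree $\le 2d$), and that the GNS tuple built from a functional on $V$ genuinely detects membership in $D_L$ rather than in some truncated relaxation — handled, as in \cite{HKM1,HKM5}, by noting that flatness is not needed here because evaluations of $F$ and of $L$ only involve words of degree $\le d-1$ paired on the left and right.

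Finally, for the converse direction (the ``only if'' that is implicit in the phrase ``there is''), I would simply observe that any $F$ of the displayed form satisfies $F(X)\succeq 0$ whenever $L(X)\succeq 0$, since $R_j(X)^\ast R_j(X)\succeq 0$ and $Q_k(X)^\ast(\pi\otimes\id)(L(X))Q_k(X)\succeq 0$ because $\pi$ is a $\ast$-homomorphism and hence $(\pi\otimes\id)(L(X))\succeq 0$ on $D_L$; this is immediate and needs no hypotheses beyond positivity of $L(X)$.
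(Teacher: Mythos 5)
Your overall strategy is the paper's: a truncated quadratic module generated by $L$, a closedness argument, Hahn--Banach separation, a GNS construction, and detection of membership of the GNS tuple in $D_L$; your appeal to Theorem \ref{polar-spectrahedron} at the GNS step is the same content as the paper's use of Theorem \ref{EFF-WIN-HB} together with Corollary \ref{OperLP}.(1) in Proposition \ref{GNS-cons}, and the degree bookkeeping you sketch is the right one.

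The step that would not survive as written is the closedness of the module, which you correctly identify as the crux but propose to handle by ``replacing $\pi$ by the universal representation \dots and extracting a limit.'' There is no compactness at the operator level to extract such a limit: the representations and isometries live in infinite-dimensional spaces, and a convergent sequence of compressions $V_n^\ast\pi_n(\cdot)V_n$ with $V_n$ isometries need not converge to a compression along an isometry in any fixed (even universal) representation, since isometries from $\RR^{\nu_2}$ can converge weakly to strict contractions or to $0$. The paper's Proposition \ref{tr-mat-closed} never takes limits of representations at all: after Caratheodory and the norm bounds on the $R_{n,i}$, $T_{n,i}$, the only datum retained from each summand is the compressed \emph{matrix} pencil $V_{n,i}^\ast\pi_{n,i}(L\oplus 1)V_{n,i}$, a monic pencil of fixed finite size whose norm is bounded (using $\cB_\epsilon\subseteq D_L$); along a subsequence these converge to a monic matrix pencil $\hat L_i$ with $D_L\subseteq D_{\hat L_i}$, and only then is Corollary \ref{OperLP}.(1) invoked to re-represent $\hat L_i$ as a compression of a representation of the extended pencil -- this is exactly where the new operator linear Positivstellensatz carries the proof. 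Two related points you should build in: since $D_L(1)$ may be unbounded, you cannot insist on unital $\pi:B(\cH)\to B(\cK)$ with isometric compressions (Remark \ref{rem-after-OperLP}); the module must be formed with $L\oplus 1$, equivalently with possibly non-unital $\pi$, or the re-representation of $\hat L_i$ is unavailable. And before the GNS construction the separating functional must be made strictly positive on $\Sigma^\nu_k\setminus\{0\}$, which the paper gets from \cite[Lemma 3.2]{HKM2}; without this the GNS space and the identity $\lambda(f)=\langle f(X)\gamma,\gamma\rangle$ on degree $\le 2k+1$ are not guaranteed.
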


For the proof see Theorem \ref{CPsatz-matrix}. The problem for finite-dimensional Hilbert spaces $\cH$ and $\cK$ was considered 
in \cite{HKM2,HKM1}. In \cite{HKM2}, Theorem \ref{Oper-to-mat-intro} was obtained for linear matrix pencils $L$ by modifying a Putinar-type argument. In our approach we essentially use Theorem \ref{OperLP-intro} and a version of the Hahn-Banach theorem \cite[Theorem 2.2]{HKM3} to apply
the separation argument from \cite{HKM2} and extend the result to operator pencils $L$. 

Theorem \ref{Oper-to-mat-intro} extends to matrix-valued nc polynomials positive semidefinite on a free Hilbert spectrahedrop. 

In the univariate case we are able to extend Theorem \ref{Oper-to-mat-intro} to operator-valued nc polynomials $F$
by reducing the problem to the inclusion of free Hilbert spectrahedra. For the reduction we use variants of the operator Fej\'er-Riesz theorem \cite{ROS}.

\begin{theorem} \label{univariate-intro}
		Suppose $L=I_{\cH}+A_1 y\in \Sym_{\cH}\!\left\langle y \right\rangle$ 
	is a univariate monic linear operator pencil. 
	Then for every symmetric operator-valued noncommutative polynomial
	$F\in B(\cK)\otimes \RR\!\left\langle y \right\rangle$ with $F|_{D_L}\succeq 0$,
	there exists a separable real Hilbert space $\cG$, a $\ast$-homomorphism $\pi:B(\cH)\to B(\cG)$
	and
	finitely many operator polynomials $R_j\in B(\cK)\otimes \RR\!\left\langle x \right\rangle$ 
	and $Q_k\in B(\cK,\cG)\otimes \RR\!\left\langle x \right\rangle$ 
	all of degree at most $\frac{\deg(F)+2}{2}$
	such that 
		$$ F=\sum_{j} R_j^\ast R_j + \sum_k Q_k^\ast \pi(L) Q_k.$$
\end{theorem}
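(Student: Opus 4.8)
The plan is to run a one–variable spectral reduction, apply the operator linear Positivstellensatz (Theorem \ref{OperLP-intro}) to an inclusion of free Hilbert spectrahedra, and then feed the result into an operator-valued interval version of the Fej\'er--Riesz theorem \cite{ROS}. First I would reduce the hypothesis to a scalar one. Since $A_1\in B(\cH)$ is bounded self-adjoint and $L$ is monic, $D_L(1)=\{t\in\RR\colon I_\cH+tA_1\succeq 0\}$ is a closed interval $J$ with $0$ in its interior (its endpoints being $-1/\sup\sigma(A_1)$ when $\sup\sigma(A_1)>0$, else $-\infty$, and $-1/\inf\sigma(A_1)$ when $\inf\sigma(A_1)<0$, else $+\infty$). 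Diagonalising a symmetric matrix $X$ and using $L(X)=\sum_t(I_\cH+tA_1)\otimes P_t$ for the spectral resolution $X=\sum_t tP_t$, one gets $D_L(n)=\{X\in\Sym_n\colon \sigma(X)\subseteq J\}$; since moreover $F(X)=\sum_t F(t)\otimes P_t$, the hypothesis $F|_{D_L}\succeq 0$ is equivalent to the scalar statement that $F(t)\succeq 0$ in $B(\cK)$ for every $t\in J$.

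Next I would squeeze $D_L$ between two free Hilbert spectrahedra. Let $K$ be the connected component containing $0$ of the closed set $\{t\in\RR\colon F(t)\succeq 0\}$; it is a closed interval with $0$ in its interior and $J\subseteq K$. Let $M$ be the monic linear \emph{matrix} pencil of size $1$ or $2$ with $D_M(1)=K$: $M=1$ if $K=\RR$; $M=1-y/\gamma$ if $K=[\gamma,\infty)$ and symmetrically for $(-\infty,\delta]$; and $M=\diag(1-y/\gamma,\,1-y/\delta)$ if $K=[\gamma,\delta]$ with $\gamma<0<\delta$. Then $D_L\subseteq D_M$, so by Theorem \ref{OperLP-intro} there are a separable real Hilbert space $\cG$, a contraction $V\colon\cH_M\to\cG$ (with $\cH_M=\RR$ or $\RR^2$), a positive semidefinite $S=W^\ast W\in B(\cH_M)$ and a $\ast$-homomorphism $\pi\colon B(\cH)\to B(\cG)$ with $M=S+V^\ast\pi(L)V$ as pencils in $y$. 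In particular $\tilde Q^\ast M\tilde Q=(W\tilde Q)^\ast(W\tilde Q)+(V\tilde Q)^\ast\pi(L)(V\tilde Q)$ for every $\tilde Q\in B(\cK,\cH_M)\otimes\RR\!\left\langle y\right\rangle$.

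Finally I would certify $F$ on $K$. As $F$ is symmetric and $F(t)\succeq 0$ on the interval $K$, the operator-valued Markov--Luk\'acs representation — a variant of the operator Fej\'er--Riesz theorem \cite{ROS} — gives operator polynomials $G_i\in B(\cK)\otimes\RR\!\left\langle y\right\rangle$ of degree at most $\tfrac{\deg F+2}{2}$ with $F=G_0^\ast G_0+\sum_i w_i(y)\,G_i^\ast G_i$, where there is no weighted term if $K=\RR$ (the plain operator Fej\'er--Riesz factorisation $F=G_0^\ast G_0$), one weight $w_1=1-y/\gamma$ if $K$ is a half-line, and weights among $w_1=1-y/\gamma$, $w_2=1-y/\delta$ and $w_1w_2$ if $K=[\gamma,\delta]$, with the customary degree drop on the weighted summands; these $w_1,w_2$ are precisely the diagonal entries of $M$. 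Each weighted summand is absorbed into $M$ via $w_1G^\ast G=\left[\begin{smallmatrix}G\\0\end{smallmatrix}\right]^{\ast}M\left[\begin{smallmatrix}G\\0\end{smallmatrix}\right]$, $w_2G^\ast G=\left[\begin{smallmatrix}0\\G\end{smallmatrix}\right]^{\ast}M\left[\begin{smallmatrix}0\\G\end{smallmatrix}\right]$, and $w_1w_2\,G^\ast G=\left[\begin{smallmatrix}pG\\qG\end{smallmatrix}\right]^{\ast}M\left[\begin{smallmatrix}pG\\qG\end{smallmatrix}\right]$ with $p=c_1w_2$, $q=c_2w_1$, $c_1^2=\tfrac{\delta}{|\gamma|+\delta}$, $c_2^2=\tfrac{|\gamma|}{|\gamma|+\delta}$ — a one-line computation yields $p^\ast w_1 p+q^\ast w_2 q=w_1w_2$, and since $\deg p,\deg q\le 1$ the degree bound is preserved. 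Thus $F=G_0^\ast G_0+\sum_k\tilde Q_k^\ast M\tilde Q_k$ with finitely many $\tilde Q_k\in B(\cK,\cH_M)\otimes\RR\!\left\langle y\right\rangle$, and substituting $M=S+V^\ast\pi(L)V$ gives $F=G_0^\ast G_0+\sum_k(W\tilde Q_k)^\ast(W\tilde Q_k)+\sum_k(V\tilde Q_k)^\ast\pi(L)(V\tilde Q_k)$. Taking the $R_j$ to be $G_0$ together with the $W\tilde Q_k$, and $Q_k:=V\tilde Q_k$, finishes the proof, every polynomial having degree at most $\tfrac{\deg F+2}{2}$; composing with a fixed isometric embedding $\cH_M\hookrightarrow\cK^{(\infty)}$ and stacking lets the $R_j$ be taken in $B(\cK)\otimes\RR\!\left\langle y\right\rangle$, while the $Q_k$ lie in $B(\cK,\cG)\otimes\RR\!\left\langle y\right\rangle$ as required.

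The load-bearing step is the operator Fej\'er--Riesz step: it requires the operator-valued Markov--Luk\'acs representations on a half-line and on a bounded interval, with degree control. These descend from Rosenblum's operator Fej\'er--Riesz theorem for operator polynomials nonnegative on the real line, together with a Cayley transform to reduce to the circle (or are available directly in the operator literature), and — crucially — need no strict positivity of $F$; this is exactly the point at which the method is confined to a single variable. A secondary, purely bookkeeping difficulty is tracking the auxiliary spaces $\cH_M$ and $\cG$ and handling the degenerate shapes of $K$ (a half-line or all of $\RR$; and $F$ of odd degree, where one uses the two-weight form $F=w_1G_0^\ast G_0+w_2G_1^\ast G_1$ on a bounded $K$).
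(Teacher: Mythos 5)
Your proof is correct and takes essentially the same route as the paper: reduce to the interval $D_L(1)$ (the paper's three-case analysis, your spectral-theorem reduction), apply a univariate operator-valued Fej\'er--Riesz/Markov--Luk\'acs representation with linear weights and degree bounds (the paper cites \cite[Proposition 3]{CZ} for exactly this), and then replace the linear weights by $S+V^\ast\pi(L)V$ via the operator linear Positivstellensatz (the paper uses Corollary \ref{OperLP-scal} on the scalar weights $1\pm y$, while you apply Theorem \ref{OperLP-intro} once to a $2\times 2$ pencil $M$ with $D_M(1)=K\supseteq D_L(1)$). The remaining differences --- working with the component $K$ of the positivity set instead of $D_L(1)$ itself, bundling both weights into one pencil, and absorbing the product weight $w_1w_2$ by an explicit identity --- are cosmetic repackagings of the same argument.
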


By Examples \ref{monicity-needed} and \ref{empty-operator-spectrahedron} below the assumption of monicity of $L$ is in general needed in Theorem \ref{univariate-intro}. 
It remains an open question if Theorem \ref{Oper-to-mat-intro} extends to operator-valued nc polynomials $F$.

\subsection{Reader's guide}\label{guide}

The paper is organized as follows. In Section \ref{LOI-domination-section} we study the inclusion of free Hilbert spectrahedra (see 
Subsection \ref{LOI-domination-problem-soln}) and polar duals of free Hilbert sprectrahedra and free Hilbert spectrahedrops (see Subsection \ref{polar-duals-soln}). The main results are proved with the use of completely positive maps and operator algebras.
In Section \ref{LOI-equality-problem-soln} we consider equality of free spectrahedra. In Subsection \ref{LMI-equality-soln}
we extend the characterization of matrix pencils with the same free spectrahedron from bounded spectrahedra to unbounded ones, while in Subsections \ref{non-existence-of-reduct} and \ref{counterexample for the LG} we show that the characterization does not generalize to operator pencils. Section \ref{positivstellensatz-multivariable}
studies the existence of a Positivstellensatz for multivariate nc operator-valued polynomials positive semidefinite on a free Hilbert  spectrahedron. The main result, Theorem \ref{CPsatz-matrix} is the solution for matrix-valued polynomials. This result is then extended to projections of free Hilbert spectrahedra in Theorem \ref{CPsatz-matrix-drops}. Finally, Section \ref{positivstellensatz-univariate} focuses on a Positivstellensatz for univariate operator-valued polynomials and presents the proof of Theorem \ref{univariate-intro}.

Recently Davidson, Dor-On, Moshe Shalit and Solel posted an arxiv preprint \cite{DDSS} which also considers inclusion of free Hilbert spectrahedra but in a complex setting.  Since they mostly focus on bounded spectrahedra, the overlap with our results (in Sections 
\ref{LOI-domination-section} and \ref{LOI-equality-problem-soln}) is minimal.

\section{Inclusion of free Hilbert spectrahedra and polar duals of free Hilbert spectrahedra and free Hilbert spectrahedrops} \label{LOI-domination-section}

In this section we characterize the inclusion  of free Hilbert spectrahedra (see Corollary \ref{OperLP}) and describe operator free Hilbert polar duals of a free Hilbert spectrahedron (see Theorem \ref{OperLP-satz-unbounded}) and a free Hilbert spectrahedrop (see Theorem \ref{OperLP-trunc}).
The main techniques used are complete positivity and the theory of operator algebras.
We define the unital $\ast$-linear map $\tau$ between the linear spans of the coefficients of the given linear pencils. There are two crucial observations. The first is the connection between the inclusion $D_{L_1}\subseteq D_{L_2}$ and the complete positivity of $\tau$  given by Theorem \ref{n-pos-of-tau}, while the second is an an algebraic trick of extending the pencil to the direct sum with the monic scalar pencil $1$, which makes the extended map $\tilde \tau$ completely positive if and only if $D_{L_1}\subseteq D_{L_2}$. The proof of Theorem \ref{OperLP-intro} then follows by invoking the real version of Arveson extension theorem and finally using the Stinespring representation theorem. Theorems \ref{polar-spectrahedron} 
and  \ref{polar-spectrahedrop} are consequences of Theorem \ref{OperLP-intro}. 

\subsection{Domination of free Hilbert spectrahedra}  \label{LOI-domination-problem-soln}
Let $\cH_1,\cH_2, \cK$ be separable real Hilbert spaces.
Given $L_1$ and $L_2$ monic linear operator pencils
	\begin{eqnarray*}
		L_1(x) &:=& I_{\cH_1}+\sum_{j=1}^g A_{j}x_j\in \mathbb{S}_{\cH_1}\!\left\langle x\right\rangle,\quad
		L_2(x) := I_{\cH_2}+\sum_{j=1}^g B_{j}x_j\in \Sym_{\cH_2}\!\left\langle x\right\rangle,
	\end{eqnarray*}
we are interested in the algebraic characterization of the inclusion of the free LOI sets
(resp.\ operator free LOI sets)
	$$D_{L_1}\subseteq D_{L_2} \quad (\text{resp.}\ D_{L_1}^\cK\subseteq D_{L_2}^{\cK}).$$

In this subsection we first prove the equivalence between both inclusions,
then introduce the unital $\ast$-linear maps $\tilde \tau$ and $\tau$ between the linear spans of the (extended) coefficients of both pencils, study the well-definedeness and complete positivity of both maps and finally prove the main result; see
Corollary \ref{OperLP}. We also show by an example that the monicity of pencils is necessary (Example \ref{monicity-needed}).

 \subsubsection{Equivalence of the inclusions $D_{L_1}\subseteq D_{L_2}$ and $D_{L_1}^\cK\subseteq D_{L_2}^{\cK}$}

\begin{proposition}	\label{mat-to-oper-spec-relation}
	We have the following equivalence:
		$$D_{L_1}^{\cK}\subseteq D_{L_2}^{\cK} \quad\Leftrightarrow\quad D_{L_1}\subseteq D_{L_2}.$$
\end{proposition}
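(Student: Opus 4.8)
The plan is to prove the two implications separately. The implication $D_{L_1}^{\cK}\subseteq D_{L_2}^{\cK}\Rightarrow D_{L_1}\subseteq D_{L_2}$ is the routine one: a tuple $X\in D_{L_1}(n)$ is, after identifying $\RR^n$ with an $n$-dimensional subspace $K\in\Lat(\cK)$, an element of $D_{L_1}(K)$, hence of $D_{L_2}(K)$ by hypothesis, hence $L_2(X)\succeq 0$, i.e.\ $X\in D_{L_2}(n)$. Since $D_{L_1}$ and $D_{L_2}$ are free sets, closed under unitary conjugation, the particular identification of $\RR^n$ with $K$ is immaterial, so this gives $D_{L_1}\subseteq D_{L_2}$.

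For the converse $D_{L_1}\subseteq D_{L_2}\Rightarrow D_{L_1}^{\cK}\subseteq D_{L_2}^{\cK}$, the extra ingredient is a compression argument to pass from finite-dimensional to infinite-dimensional evaluations. Fix $K\in\Lat(\cK)$ and $X=(X_1,\dots,X_g)\in D_{L_1}(K)$, so that $L_1(X)=I_{\cH_1}\otimes I_K+\sum_j A_j\otimes X_j\succeq 0$; the goal is $L_2(X)\succeq 0$. For a finite-dimensional subspace $M\subseteq K$ with orthogonal projection $P_M$, set $X^M:=(P_MX_1P_M|_M,\dots,P_MX_gP_M|_M)\in\Sym_M^g$. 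The key observation is the identity
$$(I_{\cH_i}\otimes P_M)\,L_i(X)\,(I_{\cH_i}\otimes P_M)=L_i(X^M)\oplus 0_{\cH_i\otimes M^{\perp}}$$
relative to the decomposition $\cH_i\otimes K=(\cH_i\otimes M)\oplus(\cH_i\otimes M^{\perp})$ (here $i=1,2$); this is just bilinearity of $\otimes$ together with $P_MI_KP_M=P_M$ and the fact that $P_MX_jP_M$ is the $(1,1)$-block of $X_j$. Compressing $L_1(X)\succeq 0$ therefore gives $L_1(X^M)\succeq 0$, i.e.\ $X^M\in D_{L_1}(\dim M)\subseteq D_{L_2}(\dim M)$, so $L_2(X^M)\succeq 0$, and consequently $(I_{\cH_2}\otimes P_M)\,L_2(X)\,(I_{\cH_2}\otimes P_M)\succeq 0$ for every finite-dimensional $M\subseteq K$.

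It remains to remove the compression. I would do this by a strong-operator limit: choosing an increasing sequence $M_1\subseteq M_2\subseteq\cdots$ of finite-dimensional subspaces of $K$ with dense union (possible since $K$ is separable), one has $P_{M_n}\to I_K$ in the strong operator topology, and then, using that the coefficients $B_j$ and the operators $X_j$ are bounded and that finite sums of elementary tensors are dense in $\cH_2\otimes K$, one checks that $(I_{\cH_2}\otimes P_{M_n})\,L_2(X)\,(I_{\cH_2}\otimes P_{M_n})\to L_2(X)$ in the strong operator topology; since a strong-operator limit of positive operators is positive, $L_2(X)\succeq 0$, i.e.\ $X\in D_{L_2}(K)$. (Alternatively, argue by contradiction: if $\langle L_2(X)\eta,\eta\rangle<0$ for some $\eta\in\cH_2\otimes K$, approximate $\eta$ by a finite sum $\xi=\sum_i h_i\otimes k_i$ of elementary tensors with $\langle L_2(X)\xi,\xi\rangle<0$, put $M=\Span\{k_i\}$, and observe $\langle L_2(X)\xi,\xi\rangle=\langle L_2(X^M)\xi,\xi\rangle$, contradicting $L_2(X^M)\succeq 0$.) The only genuine obstacle is this passage to the limit, that is, verifying the required strong-operator convergence; everything else is bookkeeping with tensor products and compressions.
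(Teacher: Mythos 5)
Your proof is correct and follows essentially the same route as the paper: the paper isolates the compression step as Lemma \ref{matrix-domain-sufficient-for-linear} (membership in $D_L^{\cK}$ is detected by all finite-dimensional isometric compressions $V^\ast X V$), and proves the hard half by exactly your parenthetical witness-vector argument — a vector with $\left\langle L(X)v,v\right\rangle<0$ that is a finite sum of elementary tensors spans a finite-dimensional subspace on which positivity already fails. Your strong-operator-limit phrasing of the decompression step is a harmless variant of that same idea.
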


To prove proposition we need  a lemma.

\begin{lemma} \label{matrix-domain-sufficient-for-linear}
		Let 
		$L(x)=A_0+\sum_j A_{j}x_j\in \Sym_\cH\!  \left\langle x\right\rangle$
	be a linear operator pencil and $X\in S_{\cK}^g$ be a tuple self-adjoint operators on a Hilbert space $\cK$. Then
	$X\in D_L^{\cK}$ if and only if $V^\ast X V\in D_{L}(m)$ for every $m\in\NN$ 
	%and every contraction $V\in B(\RR^{m},K)$.
	and every isometry $V\in B(\RR^{m},\cK)$.
\end{lemma}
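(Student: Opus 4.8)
The plan is to phrase both sides of the claimed equivalence as positive semidefiniteness of the bounded self-adjoint operator $L(X)=A_0\otimes I_{\cK}+\sum_j A_j\otimes X_j$ on the real Hilbert space $\cH\otimes\cK$, and to move between $L(X)$ and $L(V^\ast XV)$ by conjugating with an ampliated isometry. The one computation I need first is that for an isometry $V\in B(\RR^m,\cK)$ the ampliation $I_\cH\otimes V\colon \cH\otimes\RR^m\to\cH\otimes\cK$ is again an isometry and, because $V^\ast V=I_m$,
$$(I_\cH\otimes V)^\ast\, L(X)\,(I_\cH\otimes V)=A_0\otimes V^\ast V+\sum_j A_j\otimes V^\ast X_j V=L(V^\ast XV);$$
this is checked on elementary tensors and extended by linearity and continuity.

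For the forward implication, if $X\in D_L^{\cK}$, i.e.\ $L(X)\succeq 0$, then for any $m\in\NN$ and any isometry $V\in B(\RR^m,\cK)$ the operator $(I_\cH\otimes V)^\ast L(X)(I_\cH\otimes V)$ is positive semidefinite (conjugation by a bounded operator preserves positivity), and by the identity above this is exactly $L(V^\ast XV)$; hence $V^\ast XV\in D_L(m)$.

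For the converse I would verify $\langle L(X)\xi,\xi\rangle\geq 0$ for every $\xi\in\cH\otimes\cK$. Since $L(X)$ is bounded, it is enough, by density, to treat finitely supported $\xi=\sum_{i=1}^N h_i\otimes k_i$ with $h_i\in\cH$, $k_i\in\cK$. The vectors $k_1,\ldots,k_N$ span a finite-dimensional subspace $K_0\subseteq\cK$; choosing $m:=\dim K_0$ and an isometry $V\in B(\RR^m,\cK)$ with $\ran V=K_0$, we get $k_i=Ve_i'$ for unique $e_i'\in\RR^m$, so $\xi=(I_\cH\otimes V)\eta$ with $\eta:=\sum_i h_i\otimes e_i'$. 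The displayed identity then gives $\langle L(X)\xi,\xi\rangle=\langle L(V^\ast XV)\eta,\eta\rangle\geq 0$, using $V^\ast XV\in D_L(m)$ from the hypothesis. Running $\xi$ over a dense subset and using continuity of $\xi\mapsto\langle L(X)\xi,\xi\rangle$ shows $L(X)\succeq 0$, i.e.\ $X\in D_L^{\cK}$.

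There is no serious obstacle here; the only points needing a little care are that the ampliation identity is valid on the real Hilbert-space tensor product (so that $I_\cH\otimes V$ is genuinely an isometry and the conjugation formula holds on all of $\cH\otimes\RR^m$), and that every vector of $\cH\otimes\cK$ is a norm limit of finitely supported sums, which legitimizes the density argument in the converse.
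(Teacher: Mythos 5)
Your proof is correct and takes essentially the same route as the paper: both directions rest on the ampliation identity $(I_\cH\otimes V)^\ast L(X)(I_\cH\otimes V)=L(V^\ast XV)$, and the converse is reduced to vectors that are finite sums of elementary tensors by compressing with an isometry onto the finite-dimensional span of their $\cK$-legs. The paper words the converse as a contradiction with a witness vector and expands the inner products termwise instead of invoking your explicit density-and-continuity step, but the substance is identical.
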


\begin{proof}
	Let $X\in D_L^{\cK}$. We have 
	\begin{eqnarray*}
		L(V^\ast X V)
			&=& A_0\otimes I_{\RR^m} + \sum_j A_j \otimes V^\ast X_j V \\
			%&=& A_0\otimes (I-V^\ast V)+(I\otimes V)^\ast (A_0\otimes I+ \sum_j A_j \otimes X_j) (I\otimes V)\\
			&=& (I_\cH\otimes V)^\ast (A_0\otimes I_{\RR^m}+ \sum_j A_j \otimes X_j) (I_\cH\otimes V)\\
			%&\succeq& (I\otimes V)^\ast (A_0\otimes I+ \sum_j A_j \otimes X_j) (I\otimes V)\\
			&=& (I_\cH\otimes V)^\ast L(X) (I_\cH\otimes V)\succeq 0.
	\end{eqnarray*}
	Hence $V^\ast X V\in D_{L}(m)$.\\ 
	
	%Let us say $V^\ast X V\in D_{L}(m)$ for every contraction $V\in B(\RR^{m},K)$, $m\in \NN$. Let us say $X\notin D_L^{\infty}$. 
	Let us now assume $V^\ast X V\in D_{L}(m)$ for every isometry $V\in B(\RR^{m},\cK)$, $m\in \NN$. 		
	Suppose $X\notin D_L^{\cK}$. 
	Then there is a vector 
	$v:=\sum_{k=1}^m h_k\otimes u_k\in \cH\otimes \cK$ such that 
		$\left\langle L(X)v,v\right\rangle<0.$
	Without loss of generality we may assume $u_1,\ldots,u_m$ are orthonormal.
	Hence
	\begin{eqnarray*}
		\left\langle L(X)v,v\right\rangle
			&=& \left\langle (\sum_j A_j \otimes X_j ) (\sum_k h_k\otimes u_k),\sum_k h_k\otimes u_k\right\rangle_{\cH\otimes \cK}\\
			&=& \sum_j\sum_k\sum_l \left\langle (A_j\otimes X_j) (h_k\otimes u_k), h_l \otimes u_l\right\rangle_{\cH\otimes \cK}\\
			&=& \sum_j\sum_k\sum_l \left\langle A_jh_k, h_l \right\rangle_{\cH}\left\langle X_ju_k, u_l \right\rangle_{\cK}<0.
	\end{eqnarray*}
	Let $e_k$ be the standard basis vectors for $\RR^m$. 
	Let us define a linear map $V:\RR^m\to \cH$ by $e_k\mapsto u_k$. Since $\{e_1,\ldots,e_m\}$ and $\{u_1,\ldots,u_m\}$ are orthonormal,
	$V$ is an isometry. Therefore, $L(V^\ast X V)\succeq 0$. We have 
	\begin{eqnarray*}
		0         &\leq&\left\langle L(V^\ast X V)\tilde v,\tilde v\right\rangle_{\cH\otimes \RR^m}\\
			&=& \left\langle (\sum_j A_j \otimes V^\ast X_j V) (\sum_k  h_k\otimes e_k),\sum_k h_k\otimes  e_k\right\rangle_{\cH\otimes \RR^m}\\
			&=& \sum_j\sum_k\sum_l \left\langle ( A_j  \otimes V^\ast X_j V ) (h_k\otimes e_k), h_l \otimes e_l\right\rangle_{\cH\otimes \RR^m}\\
			&=& \sum_j\sum_k\sum_l \left\langle  A_j  h_k,  h_l \right\rangle_{\cH}\left\langle V^\ast X_j V  e_k,  e_l \right\rangle_{\RR^m}\\
			&=& \sum_j\sum_k\sum_l \left\langle  A_j  h_k, h_l \right\rangle_{\cH}\left\langle X_j V e_k, V  e_l \right\rangle_{\cK}\\
			&=& \sum_j\sum_k\sum_l \left\langle A_jh_k, h_l \right\rangle_{\cH}\left\langle X_ju_k, u_l \right\rangle_{\cK}
			=\left\langle L(X)v,v\right\rangle.
	\end{eqnarray*}
	This is a contradiction. Hence $X\in D_L^{\cK}$.
\end{proof}

\begin{proof}[Proof of Proposition \ref{mat-to-oper-spec-relation}]
		The non-trivial direction is $D_{L_1}\subseteq D_{L_2}$ implies $D_{L_1}^{\cK}\subseteq D_{L_2}^{\cK}$. Let us take $X\in D_{L_1}^{\cK}$. 
	By Lemma \ref{matrix-domain-sufficient-for-linear}, 
		$$X\in D_{L_2}^{\cK} \Leftrightarrow V^\ast X V\in D_{L_2}(m)\text{ for every 
	isometry }	%contraction } 
	V\in B(\RR^{m},\cK), \; m\in\NN.$$
	By Lemma \ref{matrix-domain-sufficient-for-linear}, $X\in D_{L_1}^{\cK}$ implies
		$$V^\ast X V\in D_{L_1}(m)\text{ for every
 	isometry } 	%contraction } 
	V\in B(\RR^{m},\cK), \; m\in\NN.$$
	But $D_{L_1}\subseteq D_{L_2}$ implies
		$$V^\ast X V\in D_{L_2}(m)\text{ for every 
	isometry }	%contraction } 
	V\in B(\RR^{m},\cK), \; m\in\NN.$$
	This concludes the proof.
\end{proof}

\subsubsection{Connection with complete positivity} \label{connection-with-com-pos} 
The following operator systems will play an important role in the sequel: 
	\begin{eqnarray*}
		\mathcal{S}_1&:=&\Span\{I_{\cH_1},A_1,\ldots, A_g\},\\
		\widetilde{ \mathcal{S}_1}&:=&\Span\{I_{\cH_1}\oplus 1,A_1\oplus 0,\ldots, A_g\oplus 0\}\subseteq 
			B(\cH_1\oplus \RR),\\
		\mathcal{S}_2&:=&\Span\{I_{\cH_2},B_1,\ldots, B_g\}\subseteq B(\cH_2).
	\end{eqnarray*}
Recall from \S \ref{equality-of-free-sp} that the homogenization $^\text{h}\! L_j$ of $L_j$ is
	$$^{\text{h}\!} L_j(x_0,\ldots,x_g)=x_0 L_j(x_0^{-1}x_1,\ldots,x_0^{-1}x_g).$$
By Lemma \ref{well-defined-map} below,  the inclusion $D_{L_1}(1)\subseteq D_{L_2}(1)$ implies that the unital linear map 
	$$\tilde \tau:\widetilde{ \mathcal{S}_1} \to \mathcal{S}_2,\quad A_j\oplus 1\mapsto B_j,$$
is well-defined, while the stronger inclusion $D_{^\text{h}\! L_1}(1)\subseteq D_{^\text{h}\! L_2}(1)$ implies the well-definedness of the unital linear map 
	$$\tau:\mathcal{S}_1 \to \mathcal{S}_2,\quad A_j \mapsto B_j.$$
In particular, $\tau$ is well-defined if  $D_{L_1}(1)$ is bounded.
	
\begin{lemma}\label{well-defined-map}
	Assume the notation as above. 
	\begin{enumerate}
		\item If $D_{L_1}(1)\subseteq D_{L_2}(1)$, then the map $\tilde\tau$ is well-defined. 
		\item If $D_{^\text{h}\! L_1}(1)\subseteq D_{^\text{h}\! L_2}(1)$ or $D_{L_1}(1)$ is a bounded set, then the map $\tau$ is 
		well-defined.
	\end{enumerate}
\end{lemma}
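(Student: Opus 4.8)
The plan is to use the elementary characterisation of well-definedness: a linear map prescribed on a spanning set is well-defined precisely when every linear relation among the chosen spanning vectors is carried to a valid relation among their images. So in each case I would (i) determine which relations $\sum_j \lambda_j \cdot(\text{coefficient}_j) = 0$ actually occur in the relevant operator system ($\widetilde{\mathcal{S}_1}$ for $\tilde\tau$, $\mathcal{S}_1$ for $\tau$), and then (ii) turn such a relation into the corresponding relation among the $B_j$'s by evaluating the appropriate pencil along the line $t\mapsto t(\lambda_1,\ldots,\lambda_g)$ (resp.\ $t\mapsto t(\lambda_0,\ldots,\lambda_g)$) and invoking the hypothesised inclusion. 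The only analytic ingredient is the trivial observation that a self-adjoint operator $C$ with $I + tC\succeq 0$ for every $t\in\RR$ (equivalently $tC\succeq 0$ for every $t\in\RR$) must be $0$, because its spectrum is then forced to be $\{0\}$.

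For part (1) the direct sum with the scalar pencil $1$ is exactly what pins down the constant coordinate. If $\lambda_0(I_{\cH_1}\oplus 1)+\sum_{j=1}^g\lambda_j(A_j\oplus 0)=0$ in $B(\cH_1\oplus\RR)$, then comparing the $\RR$-entries forces $\lambda_0=0$, and then comparing the $\cH_1$-entries forces $\sum_{j=1}^g\lambda_j A_j=0$. Writing $\lambda=(\lambda_1,\ldots,\lambda_g)$, I would note that $L_1(t\lambda)=I_{\cH_1}+t\sum_j\lambda_j A_j=I_{\cH_1}\succeq 0$ for every $t\in\RR$, so $t\lambda\in D_{L_1}(1)\subseteq D_{L_2}(1)$, which gives $I_{\cH_2}+t\sum_j\lambda_j B_j\succeq 0$ for all real $t$; hence $\sum_j\lambda_j B_j=0$ by the fact above. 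Combined with $\lambda_0=0$, this is exactly the relation $\lambda_0 I_{\cH_2}+\sum_j\lambda_j B_j=0$ among the images, so $\tilde\tau$ is well-defined; unitality is immediate since $\tilde\tau(I_{\cH_1}\oplus 1)=I_{\cH_2}$.

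For part (2), under the hypothesis $D_{{}^{\text{h}}L_1}(1)\subseteq D_{{}^{\text{h}}L_2}(1)$, the argument is the same but now $\lambda_0$ need not vanish and the homogenisation absorbs it: $\,{}^{\text{h}}L_1(x_0,\ldots,x_g)=x_0 I_{\cH_1}+\sum_{j=1}^g A_j x_j$, and likewise for $L_2$. Given $\lambda_0 I_{\cH_1}+\sum_{j=1}^g\lambda_j A_j=0$, I would evaluate along the line $t\mu$ with $\mu=(\lambda_0,\ldots,\lambda_g)\in\RR^{g+1}$: then ${}^{\text{h}}L_1(t\mu)=t\bigl(\lambda_0 I_{\cH_1}+\sum_j\lambda_j A_j\bigr)=0\succeq 0$, so $t\mu\in D_{{}^{\text{h}}L_1}(1)\subseteq D_{{}^{\text{h}}L_2}(1)$, whence $t\bigl(\lambda_0 I_{\cH_2}+\sum_j\lambda_j B_j\bigr)\succeq 0$ for all $t$; taking $t=\pm 1$ gives $\lambda_0 I_{\cH_2}+\sum_j\lambda_j B_j=0$, as required (and $\tau$ is unital). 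For the ``bounded'' alternative the argument is even shorter and does not involve $L_2$ at all: I would show that $\{I_{\cH_1},A_1,\ldots,A_g\}$ is linearly independent, so that $\tau$ is prescribed on a basis of $\mathcal{S}_1$ and is automatically well-defined. Indeed, from $\lambda_0 I_{\cH_1}+\sum_j\lambda_j A_j=0$ with $\lambda=(\lambda_1,\ldots,\lambda_g)$: if $\lambda_0=0$ and $\lambda\neq 0$, then $L_1(t\lambda)=I_{\cH_1}\succeq 0$ for all $t$, so the whole line $\RR\lambda\subseteq D_{L_1}(1)$, contradicting boundedness; and if $\lambda_0\neq 0$, scale to $\lambda_0=-1$ so that $I_{\cH_1}=\sum_j\lambda_j A_j$ with $\lambda\neq 0$ (since $I_{\cH_1}\neq 0$), giving $L_1(t\lambda)=(1+t)I_{\cH_1}\succeq 0$ for all $t\geq -1$ and hence the ray $\{t\lambda:t\geq -1\}\subseteq D_{L_1}(1)$, again contradicting boundedness. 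The degenerate case $\cH_1=\{0\}$ makes both operator systems trivial and can be dismissed at the start.

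I do not anticipate a genuine obstacle: the substance is purely the bookkeeping of which linear relations survive in $\widetilde{\mathcal{S}_1}$ versus $\mathcal{S}_1$, together with the one-line spectral fact about self-adjoint operators. The only point requiring a little care is in part (2): one should prove the ``bounded'' case on its own, via linear independence, rather than attempt to deduce $D_{{}^{\text{h}}L_1}(1)\subseteq D_{{}^{\text{h}}L_2}(1)$ from boundedness (which is false in general), and one should remember that the relations in $\widetilde{\mathcal{S}_1}$ automatically kill $\lambda_0$ whereas those in $\mathcal{S}_1$ do not.
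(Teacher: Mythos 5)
Your proposal is correct and follows essentially the same route as the paper: reduce well-definedness to checking that linear relations among the coefficients transfer, kill the relation among the $B_j$'s by evaluating along a line in $D_{L_1}(1)$ (resp.\ $D_{{}^{\text{h}}L_1}(1)$) and using that a self-adjoint $C$ with $I+tC\succeq 0$ for all $t\in\RR$ must vanish, and handle the bounded case via linear independence of $\{I_{\cH_1},A_1,\ldots,A_g\}$. The only difference is cosmetic: you prove the linear-independence step directly, whereas the paper delegates it to the matrix-case argument of \cite[Proposition 2.6]{HKM1}.
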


\begin{proof}
	First we prove (1).
	It suffices to prove that 
		$$\mu_0 (I_{\cH_1}\oplus 1 ) + \sum_{i=1}^g \mu_i (A_i\oplus 0) =0 \quad \text{implies}\quad
		\mu_0 I_{\cH_2}+\sum_{i=1}^g \mu_i B_i =0,$$
	where $\mu_0,\ldots,\mu_g\in \RR$. First we notice that $\mu_0=0$.
	From $\sum_{i=1}^g \mu_i (A_i\oplus 0) =0$  it follows that 
		$\sum_{i=1}^g t\mu_i (A_i\oplus 0) =0 \text{ for every } t\in\RR.$
	Hence,  $(t\mu_1,\ldots,t\mu_g)\in D_{L_1}(1)\subseteq D_{L_2}(1)$ for every $t\in\RR$.
	Suppose to the contrary that $\sum_{i=1}^g \mu_i B_i \neq 0$. Since 
	 $\sum_{i=1}^g \mu_i B_i$ is self-adjoint, it follows that there is $h\in \cH_2$ such that
	 $\left\langle  (\sum_{i=1}^g \mu_i B_i )h,h\right\rangle\neq 0.$
	 But then 
	 $t(\mu_1,\ldots, \mu_g) \notin D_{L_2}$ for $t\to \infty$ or $t\to -\infty$, which is a contradiction. 	Hence $\sum_{i=1}^g \mu_i B_i =0$ and 
	the map $\tilde\tau$ is well-defined.

	For the proof of (2) let us first consider the inclusion $D_{^\text{h}\! L_1}(1)\subseteq D_{^\text{h}\! L_2}(1)$.
	We have to prove that 
		$$\mu_0 I_n + \sum_j \mu_j A_j=0\quad \text{implies} \quad \mu_0 I_m + \sum_j \mu_j B_j=0.$$
	Suppose to the contrary that $\mu_0 I_m + \sum_j \mu_j B_j\neq 0$. Since
	$\mu_0 I_m + \sum_j \mu_j B_j$ is self-adjoint, it follows that there is $h\in \cH_2$ such 
	that $\left\langle(\mu_0 I_m + \sum_j \mu_j B_j)h,h\right\rangle\neq 0$. 
 	Therefore
	$t(\mu_0, \mu_1,\ldots, \mu_g) \notin D_{^\text{h}\! L_2}$ for $t\to \infty$ or $t\to -\infty$. 
	But this is a contradiction with 
	$t(\mu_0,\mu_1,\ldots,\mu_g)\in D_{^\text{h}\! L_1}\subseteq D_{^\text{h}\! L_2}$.
	Hence $\tau$ is well-defined.

	Now we consider the case of a bounded set $D_{L_1}(1)$. 
	In this case the set $$\{I_{\cH_1},A_1,\ldots,A_g\}$$ is linearly independent; the proof is the same
	as in the matrix case (see \cite[Proposition 2.6]{HKM1}). Thus $\tau$ is well-defined.
\end{proof}

The following example shows that for unbounded sets $D_{L_1}(1)$, the assumption $D_{L_1}(1)\subseteq D_{L_2}(1)$ does not
suffice for the well-definedness of the map $\tau$.

\begin{example}
	Let $\ell_1=1+x$ and $\ell_2=1$ be monic linear scalar polynomials. 
	Note that $D_{\ell_1}(1)=[-1,\infty)\subset \RR=D_{\ell_2}(1)$
	but by the definition of the map $\tau:\RR\to \RR$ we have
	that $\tau(1)=1$ and $\tau(1)=0$, 
	which is a contradiction.
\end{example}

Now we define the $n$-positivity, $n\in \NN$, and the complete positivity of a map 
	$$\phi:\cS_1\to \cS_2$$ 
mapping between operator systems $\cS_j\subseteq B(\cH_j)$, $j=1,2,$ invariant under the transpose. 
For $n\in \NN$, $\phi$ induces the map 
	$$\phi_n=I_n \otimes \phi:\RR^{n\times n} \otimes \cS_1 =\cS_1^{n\times n} \to \cS_2^{n\times n}, \quad
		M\otimes A\mapsto M\otimes \phi(A),$$
called an \textbf{ampliation} of $\phi$. Equivalently,
	$$\phi\left(\left[\begin{array}{ccc} T_{11} & \cdots & T_{1n}\\ \vdots & \ddots & \vdots \\ T_{n1} & \cdots & T_{nn}  \end{array}\right] \right)
	=\left[\begin{array}{ccc} \phi(T_{11}) & \cdots & \phi(T_{1n})\\ \vdots & \ddots & \vdots \\ \phi(T_{n1}) & \cdots & \phi(T_{nn})  \end{array}\right].$$
We say that $\phi$ is \textbf{$n$-positive} if $\phi_n$ is a positive map. If $\phi$ is $n$-positive for every $n\in\NN$, then $\phi$ is \textbf{completely positive}.
If $\phi_n$ is an isometry for every $n\in \NN$, then $\phi$ is \textbf{completely isometric}.

	In the following theorem we prove that the $n$-positivity of $\tau$ is equivalent to the inclusion
$D_{^\text{h}\! L_1}(n)\subseteq D_{^\text{h}\! L_2}(n)$. If $D_{L_1}(1)$ is bounded this is equivalent to the
inclusion $D_{L_1}(n)\subseteq D_{L_2}(n)$. Since
$D_{^\text{h}\! \tilde L_1}(n)\subseteq D_{^\text{h}\! L_2}(n)$ if and only if $D_{L_1}(n)\subseteq D_{L_2}(n)$, 
$\tilde \tau$ is $n$-positive if and only if $D_{L_1}(n)\subseteq D_{L_2}(n)$. 

\begin{theorem}\label{n-pos-of-tau}
	Let 
	\begin{eqnarray*}
		L_1&=& I_{\cH_1}+\sum_{j=1}^g A_{j}x_j\in \Sym_{\cH_1}\!\left\langle x\right\rangle, \quad
		L_2= I_{\cH_2}+\sum_{j=1}^g B_{j}x_j\in \Sym_{\cH_2}\!\left\langle x\right\rangle
	\end{eqnarray*}
	be monic linear operator pencils. 
	Suppose $D_{^\text{h}\! L_1}(1)\subseteq D_{^\text{h}\! L_2}(1)$. Let 
	$\tau:\mathcal{S}_1\to \mathcal{S}_2$ be the unital linear map $A_{j}\mapsto B_{j}$.
	Then:
	\begin{enumerate}
		\item[(1)] $\tau$ is $n$-positive if and only if $D_{^\text{h}\! L_1}(n)\subseteq D_{^\text{h}\! L_2}(n)$.
		\item[(2)] $\tau$ is completely positive if and only if $D_{^\text{h}\! L_1}\subseteq D_{^\text{h}\! L_2}$.
		\item[(3)] If $\dim(\cH_2)=n$, then $\tau$ is completely positive if and only if $\tau$ is $n$-positive.
	\end{enumerate}
	In particular, if $D_{L_1}(1)$ is bounded, then 
		$$D_{L_1}(n)\subseteq D_{L_2}(n)\quad \Leftrightarrow \quad D_{^\text{h}\! L_1}(n)\subseteq D_{^\text{h}\! L_2}(n)$$ 
	and hence
	\begin{enumerate}
		\item[(4)] $\tau$ is $n$-positive if and only if $D_{L_1}(n)\subseteq D_{L_2}(n)$.
		\item[(5)] $\tau$ is completely positive if and only if $D_{L_1}\subseteq D_{L_2}$.
	\end{enumerate}
	Let $\tilde L_1$ be the monic linear pencil
		$$\tilde L_1= I_{\cH_1\oplus \RR}+\sum_{j=1}^g \tilde A_{j}x_j\in \Sym_{\cH_1\oplus \RR}\!\left\langle 
			x\right\rangle,$$
	where $\tilde A_j:=\left[\begin{array}{cc} A_j & 0 \\ 0 & 0\end{array}\right] \in \Sym_{\cH_1}$.	
	Then
		$$D_{L_1}(n)\subseteq D_{L_2}(n)\quad \Leftrightarrow 
			\quad D_{^\text{h}\! \tilde L_1}(n)\subseteq D_{^\text{h}\! L_2}(n).$$ 
	If $D_{L_1}(1)\subseteq D_{L_2}(1)$,
	let $\tilde \tau:\widetilde{\mathcal{S}_1}\to \mathcal{S}_2$ be the unital linear map $A_{j}\oplus 0\mapsto B_{1}.$
	Then:
	\begin{enumerate}
		\item[(7)] $\tilde\tau$ is $n$-positive if and only if $D_{L_1}(n)\subseteq D_{L_2}(n)$.
		\item[(8)] $\tilde\tau$ is completely positive if and only if $D_{L_1}\subseteq D_{L_2}$.
	\end{enumerate}
\end{theorem}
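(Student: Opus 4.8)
The whole statement is organised around part (1), which I would prove via the standard correspondence between the matrix operator system $\mathcal S_1^{n\times n}$ and evaluations of the homogenized pencil ${}^{\text h}\! L_1$. Every self-adjoint $T\in\mathcal S_1^{n\times n}$ may be written $T=\sum_{j=0}^g A_j\otimes X_j$ with $A_0=I_{\cH_1}$ and $X_0,\dots,X_g\in\RR^{n\times n}$; replacing each $X_j$ by its symmetric part does not change $T$ because every $A_j$ is self-adjoint, so $T={}^{\text h}\! L_1(X)$ for some $X=(X_0,\dots,X_g)\in\Sym_n^{g+1}$. By Lemma~\ref{well-defined-map}(2) (applicable since $D_{^\text{h}\! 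L_1}(1)\subseteq D_{^\text{h}\! L_2}(1)$) the map $\tau$, and hence its ampliation $\tau_n$, is well defined, and $\tau_n(T)=\sum_j\tau(A_j)\otimes X_j={}^{\text h}\! L_2(X)$ since $\tau$ is unital. Because $\mathcal S_1$ and $\mathcal S_2$ are operator systems, $\tau_n$ is positive exactly when $T\succeq0$ forces $\tau_n(T)\succeq0$, i.e.\ when ${}^{\text h}\! L_1(X)\succeq0$ implies ${}^{\text h}\! L_2(X)\succeq0$ for all $X\in\Sym_n^{g+1}$, which is precisely $D_{^\text{h}\! L_1}(n)\subseteq D_{^\text{h}\! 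L_2}(n)$. This is (1), and (2) is (1) quantified over $n\in\NN$.

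For (3) I would argue directly, avoiding Smith's theorem. Assume $\dim\cH_2=n$ and that $\tau$ is $n$-positive; fix $m\in\NN$ and a positive $T=\sum_j A_j\otimes X_j\in\mathcal S_1^{m\times m}$ ($A_0=I_{\cH_1}$, $X_j\in\RR^{m\times m}$). For each $W\in B(\RR^n,\RR^m)$ the compression $(I_{\cH_1}\otimes W)^\ast T(I_{\cH_1}\otimes W)=\sum_j A_j\otimes W^\ast X_jW$ is a positive element of $\mathcal S_1^{n\times n}$, so $n$-positivity makes $\tau_n$ of it, namely $\sum_j B_j\otimes W^\ast X_jW=(I_{\cH_2}\otimes W)^\ast\tau_m(T)(I_{\cH_2}\otimes W)$, positive. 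Given $\xi\in\cH_2\otimes\RR^m$, write (using $\dim\cH_2=n$) $\xi=\sum_{a=1}^n\epsilon_a\otimes u_a$ for an orthonormal basis $\{\epsilon_a\}$ of $\cH_2$ and $u_a\in\RR^m$, and let $W$ send the $a$-th standard basis vector of $\RR^n$ to $u_a$; then $\xi=(I_{\cH_2}\otimes W)\omega$ with $\omega=\sum_a\epsilon_a\otimes e_a$, whence $\langle\tau_m(T)\xi,\xi\rangle=\langle(I_{\cH_2}\otimes W)^\ast\tau_m(T)(I_{\cH_2}\otimes W)\omega,\omega\rangle\ge0$. Thus $\tau_m(T)\succeq0$ for every $m$, i.e.\ $\tau$ is completely positive; the reverse implication in (3) is immediate.

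Next, the ``in particular'' clause and the $\tilde L_1$ block. The implication $D_{^\text{h}\! L_1}(n)\subseteq D_{^\text{h}\! L_2}(n)\Rightarrow D_{L_1}(n)\subseteq D_{L_2}(n)$ always holds because $(I_n,Y)\in D_{^\text{h}\! L_k}(n)\Leftrightarrow Y\in D_{L_k}(n)$. For the converse, suppose $D_{L_1}(1)$ is bounded and $D_{L_1}(n)\subseteq D_{L_2}(n)$, and take $X=(X_0,\dots,X_g)\in D_{^\text{h}\! L_1}(n)$. First, $X_0\succeq0$: if $\langle X_0v,v\rangle=c_0<0$ for a unit $v\in\RR^n$, then compressing ${}^{\text h}\! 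L_1(X)\succeq0$ against the vectors $h\otimes v$, $h\in\cH_1$, gives $c_0I_{\cH_1}+\sum_j c_jA_j\succeq0$ with $c_j=\langle X_jv,v\rangle$; hence $(c_1,\dots,c_g)\ne0$ and $I_{\cH_1}+\tfrac{t}{|c_0|}\sum_j c_jA_j\succeq(1+t)I_{\cH_1}\succ0$ for every $t\ge0$, an unbounded ray inside $D_{L_1}(1)$ --- a contradiction. Given $X_0\succeq0$: when $X_0\succ0$, congruence of ${}^{\text h}\! L_1(X)\succeq0$ by $I_{\cH_1}\otimes X_0^{-1/2}$ places $(X_0^{-1/2}X_jX_0^{-1/2})_{j=1}^g$ in $D_{L_1}(n)\subseteq D_{L_2}(n)$, and the opposite congruence by $I_{\cH_2}\otimes X_0^{1/2}$ gives ${}^{\text h}\! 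L_2(X)\succeq0$; the general case follows by running this with $(X_0+\varepsilon I_n,X_1,\dots,X_g)$ and letting $\varepsilon\downarrow0$. This is the displayed equivalence, and (4), (5) follow from (1), (2). For $\tilde L_1$: since $\tilde A_j=A_j\oplus0$ and $I_{\cH_1\oplus\RR}=I_{\cH_1}\oplus1$ we have ${}^{\text h}\!\tilde L_1(X)={}^{\text h}\! L_1(X)\oplus X_0$, so $D_{^\text{h}\! \tilde L_1}(n)=D_{^\text{h}\! L_1}(n)\cap\{X:X_0\succeq0\}$; the slice $X_0=I_n$ gives one direction of the displayed equivalence for $\tilde L_1$, and because every $X\in D_{^\text{h}\! \tilde L_1}(n)$ has $X_0\succeq0$ automatically, the congruence-and-limit argument above (which needed only $X_0\succeq0$) gives the other direction with no boundedness hypothesis. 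Finally $\widetilde{\mathcal S_1}$ is exactly the span of the coefficients of $\tilde L_1$ and $\tilde\tau$ is the $\tau$-map of the pair $(\tilde L_1,L_2)$, well defined by Lemma~\ref{well-defined-map}(1) once $D_{L_1}(1)\subseteq D_{L_2}(1)$ (equivalently $D_{^\text{h}\! \tilde L_1}(1)\subseteq D_{^\text{h}\! L_2}(1)$); applying (1) and (2) to $(\tilde L_1,L_2)$ and combining with these equivalences yields (7) and (8).

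The only steps beyond tensor-product bookkeeping lie in the ``in particular'' clause: proving $X_0\succeq0$ on $D_{^\text{h}\! L_1}(n)$ from boundedness of $D_{L_1}(1)$ via the recession ray, and transporting semidefiniteness through the congruences $I_\cH\otimes X_0^{\pm1/2}$ and the $\varepsilon\downarrow0$ limit. Part (3) might look as though it needs Smith's theorem on $n$-positive maps into $\RR^{n\times n}$, but the identity $\xi=(I_{\cH_2}\otimes W)\omega$ makes $\dim\cH_2=n$ do all the work in a single line, and that is the only point where finite-dimensionality of $\cH_2$ enters.
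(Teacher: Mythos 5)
Your argument is correct and follows essentially the same route as the paper's proof: you identify positive elements of $\mathcal S_1^{n\times n}$ with evaluations ${}^{\text{h}}\!L_1(X)$ at symmetric tuples (your symmetrization $T=(T+T^\ast)/2$ replaces the paper's explicit canonical-shuffle lemma, a purely cosmetic difference), and you establish the two displayed equivalences by the same recession-ray argument forcing $X_0\succeq 0$ under boundedness and the same $X_0^{\pm 1/2}$-congruence with the $\varepsilon\downarrow 0$ approximation, then deduce (4), (5), (7), (8) exactly as the paper does. The only material addition is your direct compression proof of (3), which the paper states without proof; that argument (the standard one for positivity into an $n$-dimensional codomain) is correct.
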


\begin{remark}
	Notice that $\tau$ and $\tilde \tau$ are well-defined; see Lemma \ref{well-defined-map}.
\end{remark}

To prove Theorem \ref{n-pos-of-tau} we need an additional technical lemma.

\begin{lemma}\label{shuffle}
	Let $L=I_\cH+\sum_{j=1}^g A_j x_j\in  \Sym_{\cH}\!\left\langle x\right\rangle$ be a monic linear pencil. Then
	for a tuple $X=(X_1,\ldots,X_g)\in \Sym_n$ we have
	$$L(X)=I_\cH\otimes I_n+\sum_{j=1}^g A_j \otimes X_j \succeq 0 \quad \Leftrightarrow \quad
		I_n\otimes I_\cH+\sum_{j=1}^g X_j \otimes A_j\succeq 0.
	$$
\end{lemma}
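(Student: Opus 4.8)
The plan is to observe that the two operators in the statement are unitarily equivalent via the canonical flip of the tensor factors, and then to invoke the elementary fact that unitary conjugation preserves positive semidefiniteness. Concretely, fix an orthonormal basis $\{h_\alpha\}$ of $\cH$ and let $\{e_1,\dots,e_n\}$ be the standard basis of $\RR^n$. Since $\RR^n$ is finite-dimensional, $\cH\otimes\RR^n$ is already complete, and the map
$$W:\cH\otimes\RR^n\to\RR^n\otimes\cH,\qquad W(h\otimes v)=v\otimes h,$$
defined on elementary tensors and extended linearly, sends the orthonormal basis $\{h_\alpha\otimes e_k\}$ onto the orthonormal basis $\{e_k\otimes h_\alpha\}$, hence is a well-defined unitary with $W^\ast(v\otimes h)=h\otimes v$.

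Next I would record the intertwining identity. For arbitrary $A\in B(\cH)$ and $X\in B(\RR^n)$, evaluating on an elementary tensor gives
$$W(A\otimes X)W^\ast(v\otimes h)=W(A\otimes X)(h\otimes v)=W(Ah\otimes Xv)=Xv\otimes Ah=(X\otimes A)(v\otimes h),$$
so $W(A\otimes X)W^\ast=X\otimes A$; in particular $W(I_\cH\otimes I_n)W^\ast=I_n\otimes I_\cH$. Because conjugation by $W$ is linear and the pencil has only finitely many terms, applying this to $L(X)$ yields
$$W\Bigl(I_\cH\otimes I_n+\sum_{j=1}^g A_j\otimes X_j\Bigr)W^\ast=I_n\otimes I_\cH+\sum_{j=1}^g X_j\otimes A_j.$$

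Finally, for any bounded self-adjoint $T$ and any unitary $W$ one has $\langle WTW^\ast\xi,\xi\rangle=\langle T(W^\ast\xi),W^\ast\xi\rangle$ for all $\xi$, so $T\succeq 0$ if and only if $WTW^\ast\succeq 0$; applying this with $T=L(X)$ gives exactly the asserted equivalence. There is no genuine obstacle here: the only point meriting a moment's care is that $W$ is a bona fide unitary even when $\dim\cH=\infty$, which holds precisely because the other tensor factor $\RR^n$ is finite-dimensional, so no subtleties about which tensor-product completion is used can arise. (Alternatively, one could write both operators as $n\times n$ operator matrices over $B(\cH)$ and observe that one is obtained from the other by the block transpose, which preserves positivity; this is the same computation in disguise.)
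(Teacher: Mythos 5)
Your proof is correct and is essentially the paper's argument: the paper simply cites the "canonical shuffle" permutation from Paulsen, which is exactly the flip unitary $W(h\otimes v)=v\otimes h$ you construct, together with the observation that unitary conjugation preserves positive semidefiniteness. You have merely written out in full the details the paper leaves to the reference.
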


\begin{proof}
	The lemma follows by observing that after applying a permutation called the 
	\textit{canonical shuffle} \cite{PAU} to $L(X)$ we obtain 
	$I_n\otimes I_\cH+\sum_{j=1}^g X_j \otimes A_j$.
\end{proof}

\begin{proof} [Proof of Theorem \ref{n-pos-of-tau}]
	First we prove (1) and (2). Since (2) follows from (1), it suffices to prove (1).
	The nontrivial direction is $(\Leftarrow)$. 
	Suppose $\tilde T\in \widetilde{\mathcal{S}_1}^{n\times n}$ is positive semidefinite. 
	Then $\tilde T$ is of the form
		$Y\otimes I_{\cH_1}+ \sum_j X_j \otimes A_j$
	for some $Y,X_1,\ldots,X_g\in \RR^{n\times n}.$
	We have to prove that
		$\tilde \tau(\tilde T)=Y\otimes I_{\cH_2}+ \sum_j X_j\otimes B_j\succeq 0.$
	From $\tilde T=\tilde T^{\ast}$, it follows that 
		$$Y\otimes I_{\cH_1}+ \sum_j X_j \otimes A_j=
			\frac{1}{2}((Y+Y^{\ast})\otimes I_{\cH_1} + \sum_j (X_j+X_j^{\ast}) \otimes A_j).$$
	Thus we may assume that $Y,X_1,\ldots,X_g\in \Sym_{n}$.
	But since 
		$$(Y,X_1,\ldots,X_g)\in D_{^\text{h}\! L_1}(n)\subseteq D_{^\text{h}\! L_2}(n),$$ 
	it follows by Lemma \ref{shuffle} that $\tau(T)$ is $n$-positive.

	Second we prove that if $D_{L_1}(1)$ is bounded, then 
		$$D_{L_1}(n)\subseteq D_{L_2}(n)\quad
			\Leftrightarrow \quad D_{^\text{h}\! L_1}(n)\subseteq D_{^\text{h}\! L_2}(n).$$
	The non-trivial direction is $(\Rightarrow)$. 
	Let us take $X:=(X_0,X_1,\ldots, X_n)\in D_{^\text{h}\! L_1}(n)$. 
	We have to prove that  $X\in D_{^\text{h}\! L_2}(n)$. 
	If $X_0\succeq 0$, then this follows (possibly after approximation argument $X_0\mapsto X_0+\epsilon I_n$)
	from $D_{L_1}(n)\subseteq D_{L_2}(n)$. Let us suppose that $X_0\not\succeq 0$.
	Then there exists $v\in \RR^n$ with $\left\langle X_0 v,v\right\rangle<0$. Define $V:\RR\to \RR^n$ by
	$r\mapsto rv$. The map $V^\ast:\RR^n\to \RR$ is given by $u\mapsto \left\langle u,v\right\rangle$. We have
		\begin{eqnarray*}
			\left(I_{\cH_1}\otimes V \right)^\ast {^\text{h}\! L_1(X)} (I_{\cH_1}\otimes V)
			&=&	I_{\cH_1}\otimes V^\ast X_0 V +\sum_j A_{j}\otimes V^\ast X_j V \\
			&=&	I_{\cH_1}\otimes \left\langle X_0 v,v\right\rangle +
				\sum_j A_{j}\otimes  \left\langle X_j v,v\right\rangle \succeq 0.
		\end{eqnarray*}
	Since $ I_{\cH_1}\otimes \left\langle X_0 v,v\right\rangle  \prec 0,$ 
	it follows that 
	$\sum_j  A_{j}\otimes \left\langle X_j v,v\right\rangle \succ 0.$
	Thus $$(t\left\langle X_1 v,v\right\rangle,\ldots,t\left\langle X_g v,v\right\rangle)\in D_{L_1}(1)$$ for every $t>0$, 
	which	contradicts the boundedness of $D_{L_1}(1)$. 

	Finally, we prove that
		$$D_{L_1}(n)\subseteq D_{L_2}(n)\quad \Leftrightarrow \quad D_{^\text{h}\! \tilde L_1}(n)\subseteq D_{^\text{h}\! L_2}(n).$$
	The nontrivial direction is $(\Rightarrow)$.
	Let us take $(X_0,X_1,\ldots, X_n)\in D_{^\text{h}\! \tilde L_1}(n)$. 
	We have to prove that  $(X_0,X_1,\ldots, X_n)\in D_{^\text{h}\! \tilde L_2}(n)$. 
	We know that
		$$(I_{\cH_1}\oplus 1)\otimes X_0 + \sum_j (A_j\oplus 0) \otimes X_j\succeq 0.$$
	Clearly, this is equivalent to 
		$$
			I_{\cH_1}\otimes X_0 +\sum_j A_j \otimes X_j\succeq 0 \text{ and } X_0\succeq 0.
		$$
	We have to prove that
		$$I_{\cH_2}\otimes X_0+ \sum_j B_j \otimes X_j\succeq 0.$$
	By the approximation argument we can replace $X_0\succeq 0$ with $X_0+\epsilon I_{n}$ for $\epsilon>0$.
	So without loss of generality we may assume that $X_0\succ 0$. 
	Since $X_0^{-\frac{1}{2}}$ is invertible, it follows that 
	$\ran(I_j\otimes Y_0^{-\frac{1}{2}})={\cH_j\otimes \RR^{n}}$ for $j=1,2$.
	Therefore
	\begin{eqnarray*}
		&&	0\preceq I_{\cH_1}\otimes X_0+\sum_j A_j \otimes X_j\\
		&\Leftrightarrow&
			0\preceq (I_{\cH_1}\otimes X_0^{-\frac{1}{2}})^\ast 
			(I_{\cH_1}\otimes X_0+\sum_j A_j \otimes X_j)
			(I_{\cH_1}\otimes X_0^{-\frac{1}{2}})\\
		&=& I_{\cH_1} \otimes I_n+\sum_j A_j \otimes X_0^{-\frac{1}{2}}X_j X_0^{-\frac{1}{2}}.
	\end{eqnarray*}
	(and $0\preceq I_{\cH_2}\otimes X_0+\sum_j B_j \otimes X_j 
				\Leftrightarrow
			0\preceq I_{\cH_2}\otimes I_n+\sum_j B_j \otimes X_0^{-\frac{1}{2}}X_j X_0^{-\frac{1}{2}}$.). 
	By the assumption $D_{L_1}(n)\subseteq D_{L_2}(n)$,
		$I_{\cH_1}\otimes I_n+\sum_j A_j \otimes X_0^{-\frac{1}{2}}X_j X_0^{-\frac{1}{2}}\succeq 0$
	implies 
		$I_{\cH_2}\otimes I_n+\sum_j B_j \otimes X_0^{-\frac{1}{2}}X_j X_0^{-\frac{1}{2}}\succeq 0$, which concludes the proof.
\end{proof}

		If $L_1$ and $L_2$ are monic linear matrix pencils such that
 	$D_{L_1}(1)$ is unbounded and $D_{L_1}\subseteq D_{L_2}$, it is not necessary  that
	$D_{^\text{h}\! L_1}\subseteq D_{^\text{h}\! L_2}$ by Example	\ref{counterexample} below. 

\begin{example}\label{counterexample}
	For the following monic linear matrix pencils
		\begin{eqnarray*}
		L_1(x_1,x_2)&=&\left[\begin{array}{ccc} 1+2x_1+2x_2 & 0 & 0\\ 
			0 & 1+2x_1 & 0 \\
			0 & 0 & 1+2x_2 \end{array}\right],\\
		L_2(x_1,x_2)&=&\left[\begin{array}{ccc} 1+x_1+x_2 & 0 & 0\\ 
			0 & 1+x_1 & 0 \\
			0 & 0 & 1+x_2 \end{array}\right],
		\end{eqnarray*}
	we have
		\begin{eqnarray*}
			D_{L_1}(n)&=&\{(X_1,X_2)\in \Sym_n \colon X_1+X_2\succeq -\frac{1}{2}I_n, X_1\succeq -\frac{1}{2}I_n, X_2 \succeq -\frac{1}{2}I_n\},\\
			D_{L_2}(n)&=&\{(X_1,X_2)\in \Sym_n \colon X_1+X_2\succeq -I_n, X_1\succeq -I_n, X_2 \succeq -I_n\}
		\end{eqnarray*}
	for every $n\in \NN$. 
	Hence, $D_{L_1}(n)\subseteq D_{L_2}(n)$ for every $n\in \NN$, i.e., $D_{L_1}\subseteq D_{L_2}$.
	%Since the set 
	%$$\left\{\left[\begin{array}{ccc} 1 & 0 & 0\\ 
	%		0 & 1& 0 \\
	%		0 & 0 & 1 \end{array}\right], 
	%\left[\begin{array}{ccc} 1 & 0 & 0\\ 
	%		0 & 1 & 0 \\
	%		0 & 0 & 0 \end{array}\right],
	%\left[\begin{array}{ccc} 1 & 0 & 0\\ 
	%		0 & 0 & 0 \\
	%		0 & 0 & 1 \end{array}\right]\right\}$$
	%is linearly independent, $\tau$ is well-defined. 
	%By Lemma \ref{n-pos-hom-D-L}, the map $\tau$ is 1-positive iff $D_{^\text{h}\!(L_1)}(1)\subseteq D_{^\text{h}\!(L_2)}(1)$.
	But 
		$$(-1,\frac{1}{2},\frac{1}{2})\in D_{^\text{h}\! L_1}(1)\setminus D_{^\text{h}\! L_2 }(1)$$ 
	and hence 
		$$D_{^\text{h}\! L_1}(1)\not\subseteq D_{^\text{h}\! L_2 }(1).$$
\end{example}

\subsubsection{Characterization of the inclusion $D_{L_1}\subseteq D_{L_2}$} We characterize the domination  $D_{L_1}\subseteq D_{L_2}$ by using the connection with complete positivity explained in \S \ref{connection-with-com-pos}.

\begin{corollary}[Operator linear Positivstellensatz] \label{OperLP}
		Let 
	$$
		L_1= I_{\cH_1}+\sum_{j=1}^g A_{j}x_j\in \Sym_{\cH_1}\!\left\langle x\right\rangle, \quad
		L_2= I_{\cH_2}+\sum_{j=1}^g B_{j}x_j\in \Sym_{\cH_2}\!\left\langle x\right\rangle
	$$
	be monic linear operator pencils. If $D_{L_1}\subseteq D_{L_2}$ then:
	\begin{enumerate}
		\item There exist a separable real Hilbert space $\cK$, an isometry $V:\cH_2\to \cK$ and a unital 
			$\ast$-homomorphism $\pi:B(\cH_1\oplus \RR)\to B(\cK)$ such that
				$$L_2(x)=V^\ast \pi(\left[\begin{array}{cc} 0 & 0 \\ 0 & 1\end{array}\right])V
					+V^\ast \pi(\left[\begin{array}{cc} L_1 & 0 \\ 0 & 0\end{array}\right])V.$$
		\item There  exist a separable real Hilbert space $\cK_0$, a contraction $V_0:\cH_2\to \cK_0$, 
			a unital $\ast$-homomorphism $\pi_0:B(\cH_1)\to B(\cK_0)$ 
			and a positive semidefinite operator 
			$S\in B(\cH_2)$ such that
				$$L_2(x)=S+V_0^\ast \pi_0(L_1(x))V_0.$$
		\item If $D_{L_1}(1)$ is bounded, then $V_0$ in (2) can be chosen to be isometric and $S=0$, i.e.,
				$$L_2(x)=V_0^\ast \pi_0(L_1(x))V_0.$$
	\end{enumerate}
\end{corollary}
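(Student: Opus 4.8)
The plan is to deduce all three items from Theorem~\ref{n-pos-of-tau} together with the real-scalar analogues of the Arveson extension theorem and the Stinespring representation theorem; the only work beyond Theorem~\ref{n-pos-of-tau} is a corner-compression argument that passes from the unital picture over $B(\cH_1\oplus\RR)$ to the (non-unital) picture over $B(\cH_1)$. For item~(1): since $D_{L_1}\subseteq D_{L_2}$, Lemma~\ref{well-defined-map}(1) shows that the unital $\ast$-linear map $\tilde\tau\colon\widetilde{\mathcal S_1}\to\mathcal S_2$, $I_{\cH_1}\oplus1\mapsto I_{\cH_2}$, $A_j\oplus0\mapsto B_j$, is well defined, and Theorem~\ref{n-pos-of-tau}(8) shows that it is completely positive. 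First I would invoke the real version of the Arveson extension theorem to extend $\tilde\tau$ to a unital completely positive map $\Phi\colon B(\cH_1\oplus\RR)\to B(\cH_2)$, and then the Stinespring representation theorem to write $\Phi(T)=V^\ast\pi(T)V$ for a separable real Hilbert space $\cK$, a unital $\ast$-homomorphism $\pi\colon B(\cH_1\oplus\RR)\to B(\cK)$ and an isometry $V\colon\cH_2\to\cK$ (separability being arranged by first restricting attention to the separable $C^\ast$-subalgebra generated by the finitely many coefficients of the pencils). Evaluating $\Phi=V^\ast\pi(\cdot)V$ at the monic pencil $\widetilde L_1=\diag(L_1,0)+\diag(0,1)$, whose coefficients all lie in $\widetilde{\mathcal S_1}$, yields $L_2=\tilde\tau(\widetilde L_1)=V^\ast\pi(\diag(0,1))V+V^\ast\pi(\diag(L_1,0))V$, which is precisely~(1).

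Item~(2) I would obtain from~(1) by compressing to a corner. Let $p:=\diag(I_{\cH_1},0)\in B(\cH_1\oplus\RR)$; then $P:=\pi(p)$ is a projection on $\cK$ with $\pi(\diag(0,1))=I_\cK-P$, and I put $\cK_0:=\ran P$ (separable, as a subspace of $\cK$), with $\iota\colon\cK_0\hookrightarrow\cK$ the inclusion isometry, so $P=\iota\iota^\ast$ and $\iota^\ast\iota=I_{\cK_0}$. Since $\diag(T,0)=p\,\diag(T,0)\,p$ for every $T\in B(\cH_1)$, the formula $\pi_0(T):=\iota^\ast\pi(\diag(T,0))\iota$ defines a unital $\ast$-homomorphism $\pi_0\colon B(\cH_1)\to B(\cK_0)$, and $\pi(\diag(T,0))=\iota\,\pi_0(T)\,\iota^\ast$. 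Setting $V_0:=\iota^\ast V\colon\cH_2\to\cK_0$ (a contraction) and $S:=V^\ast\pi(\diag(0,1))V=V^\ast V-V^\ast PV=I_{\cH_2}-V_0^\ast V_0\succeq0$, the identity from~(1) turns into $L_2=S+V_0^\ast\pi_0(L_1)V_0$, which is~(2).

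For item~(3), when $D_{L_1}(1)$ is bounded, Lemma~\ref{well-defined-map}(2) and Theorem~\ref{n-pos-of-tau}(5) give that the unital $\ast$-linear map $\tau\colon\mathcal S_1\to\mathcal S_2$, $A_j\mapsto B_j$, is well defined and completely positive, so applying the real Arveson extension theorem and then Stinespring directly to $\tau$ produces a separable real Hilbert space $\cK_0$, a unital $\ast$-homomorphism $\pi_0\colon B(\cH_1)\to B(\cK_0)$ and $V_0\colon\cH_2\to\cK_0$ with $\tau(T)=V_0^\ast\pi_0(T)V_0$; unitality forces $I_{\cH_2}=\tau(I_{\cH_1})=V_0^\ast V_0$, so $V_0$ is an isometry and $L_2=\tau(L_1)=V_0^\ast\pi_0(L_1)V_0$. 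I expect the main difficulty to be bookkeeping rather than conceptual: Theorem~\ref{n-pos-of-tau} already carries the geometric content, and what needs care is the availability of the Arveson and Stinespring theorems over the reals in infinite dimensions and the separability of the Stinespring Hilbert space, handled by passing to the separable $C^\ast$-algebra generated by the pencil coefficients.
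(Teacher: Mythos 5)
Your proposal is correct and follows essentially the same route as the paper: complete positivity of $\tilde\tau$ via Theorem~\ref{n-pos-of-tau}(8), the real Arveson extension theorem and Stinespring dilation for (1), compression to the corner $\Ran\bigl(\pi(\mathrm{diag}(I_{\cH_1},0))\bigr)$ with $V_0=\iota^\ast V$ and $S=V^\ast\pi(\mathrm{diag}(0,1))V$ for (2), and a direct application of Arveson--Stinespring to the unital map $\tau$ (using Lemma~\ref{well-defined-map}(2) and Theorem~\ref{n-pos-of-tau}(5)) for (3), which is exactly the paper's proof. The only divergence is your parenthetical on securing separability of $\cK$ by first passing to the separable $C^\ast$-subalgebra generated by the coefficients, which sits slightly awkwardly with the requirement that $\pi$ be defined on all of $B(\cH_1\oplus\RR)$, but the paper's own proof is no more explicit on this point.
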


\begin{proof}
	First we will prove (1). By Theorem \ref{n-pos-of-tau} (8) the map $\tilde \tau$ is completely positive. 
	By the real version of Arveson's extension theorem \cite[Proposition 4]{CZ}
	(take 
	$E=B(\cH_1)$, $E_0=\widetilde{\mathcal S_1}$, $K_n(E)=\{A\in M_n(B(\cH_1))\colon A\succeq 0\}$), there exists a 
	completely positive extension $\tilde\tau:B(\cH_1)\to B(\cH_2)$ for $\tilde\tau:\widetilde{\mathcal S_1}\to \mathcal{S}_2$. 
			By the Stinespring theorem,
			there exist a separable real Hilbert space $\cK$, a $\ast$-homomorphism $\pi$ and an isometry
		      $V:\cH_2\to\ell^2$ such that 
			$\tilde\tau(C)=V^\ast \pi(C) V$ for all $C\in B(\cH_1\oplus \RR)$.
			Hence, 
			\begin{eqnarray*}
					L_2(x)=\tilde\tau(\left[\begin{array}{cc} L_1(x) & 0 \\ 0 & 1\end{array}\right])
				&=& V^\ast \pi(\left[\begin{array}{cc} L_1(x) & 0 \\ 0 & 1\end{array}\right]) V\\
				&=& V^\ast \pi(\left[ \begin{array}{cc} 0 & 0\\ 0& 1\end{array} \right])V+
					V^\ast \pi(
					\left[ \begin{array}{cc}  L_1(x)  & 0\\ 0& 0\end{array} \right])V.\\
			\end{eqnarray*}

	Now we will prove (2). Observe that
		$\pi(\left[ \begin{array}{cc} I_{\cH_1} & 0\\ 0& 0\end{array} \right])$ is a hermitian 
	idempotent, hence a projection onto
		$\cK_0:=\Ran(\pi(\left[ \begin{array}{cc} I_{\cH_1} & 0\\ 0& 0\end{array} \right]))$,
	by \cite[3.3 Proposition]{CON}.
	We define a contraction
				$$V_0:=P^{\cK}_{\cK_0}\pi(
						\left[ \begin{array}{cc} I_{\cH_1} & 0\\ 0& 0\end{array} \right])V:\cH_1\to \cK_0,$$
			where $P^{\cK}_{\cK_0}$ is a projection from $\cK$ to $\cK_0$.
		We define a new representation 
				$$\pi_0:B(\cH_1)\to B(\cK_0), \quad 
					A\mapsto
					P^{\cK}_{\cK_0}
					\pi(\left[ \begin{array}{cc} A& 0\\ 0& 0\end{array} \right])|_{\cK_0}.$$
		Since 
		$\Ran(\pi(\left[ \begin{array}{cc} A& 0\\ 0& 0\end{array} \right])\subseteq \cK_0$, 
	$\pi_0$ is well-defined.
		Thus
			\begin{eqnarray*}
					L_2(x)
				&=& S+
					V_0^\ast \pi_0(L_1(x))V_0,
			\end{eqnarray*}
	where $S:= V^\ast \pi(\left[ \begin{array}{cc} 0 & 0\\ 0& 1\end{array} \right])V\succeq 0$.		

	Finally, the proof of (3) is the same as the first part of the proof of (1) working with $\tau$ instead of $\tilde\tau$.
\end{proof}

\begin{remark} \label{rem-after-OperLP}
	\begin{enumerate}
		\item
	If $D_{L_1}(1)$ is unbounded, then in (2) of Theorem \ref{OperLP}, $V_0$ cannot always be chosen to be isometric 	
	(and hence $S=0$). See Example \ref{counterexample} above: if $L_2=V_0^\ast \pi(L_1) V_0$ for an isometry
	$V_0$, then $D_{^\text{h}\! L_1}\subseteq D_{^\text{h}\! L_2}$ which is not true. If $L_1$ and $L_2$ are monic linear matrix
	pencils and we restrict ourselves to $\ast$-homomorphisms $\pi$ mapping into finite dimensional spaces, 
	then $V_0$ can be chosen to be isometric if and only if $\Span\{A_1,\ldots,A_g\}$ does not contain a positive definite matrix 
	by \cite[Remark 4.4]{HKM2}.
		\item
	If $\cH_1$ is finite-dimensional, then every unital $\ast$-homomorphism $\pi:B(\cH_1)\to B(\cK)$ is unitarily equivalent to the direct sum of the identity $\ast$-homomorphism. Hence if $\cH_2$ is infinite-dimensional, then we can replace 
	in Corollary \ref{OperLP} above	
		$$\pi(\left[\begin{array}{cc} 0& 0 \\ 0 & 1\end{array}\right]),\quad
		\pi(\left[\begin{array}{cc} L_1 & 0 \\ 0 & 0\end{array}\right])\quad \text{and} \quad
		\pi_0(L_1)$$ 
	by
		$$\oplus_{i=1}^{\infty} \left[\begin{array}{cc} 0 & 0 \\ 0 & 1\end{array}\right],\quad
		\oplus_{i=1}^{\infty} \left[\begin{array}{cc} L_1 & 0 \\ 0 & 0\end{array}\right]\quad\text{and}\quad
		\oplus_{i=1}^{\infty} L_1\quad \text{respectively}.$$
	If $\cH_2$ is finite-dimensional, then those sums are finite as in \cite[Corollary 3.7]{HKM1} and \cite[Corollary 4.1]{HKM2}.
	\item The assumption of monicity of pencils can be replaced by the assumption of nonempty $D_{L_1}$ and the existence of an 
		invertible positive definite element in the linear span of coefficients of $L_1$. In the statement of Corollary \ref{OperLP}, $V$
		then becomes a bounded operator, which is not necesarrily a contraction.
	\end{enumerate}
\end{remark}

If $\cH_2$ is finite-dimensional of dimension $n$, then the inclusion $D_{L_1}(n)\subseteq D_{L_2}(n)$ is sufficient for the conclusion of Corollary \ref{OperLP} to hold.

\begin{corollary}[Operator-to-matrix linear Postivstellensatz] \label{OperLP-scal}
		Let 
	\begin{eqnarray*}
		L_1(x)&=& I_{\cH}+\sum_{j=1}^g A_{j}x_j\in \Sym_\cH\!\left\langle x\right\rangle,\quad
		L_2(x) =I_n+\sum_{j=1}^g B_{j}x_j\in \RR^{n\times n}\!\left\langle x\right\rangle
	\end{eqnarray*}
	be a monic linear operator polynomial and a monic linear matrix polynomial, respectively.
	If $D_{L_1}(n)\subseteq D_{L_2}(n)$ then:
	\begin{enumerate}
		\item There exist a Hilbert space $\cK$, an isometry $V\in B(\RR^{n},\cK)$,
			and a unital $\ast$-homomorphism $\pi:B(\cH\oplus \RR)\to B(\cK)$ such that
				$$L_2(x)=V^\ast \pi(\left[\begin{array}{cc} 0 & 0 \\ 0 & 1\end{array}\right])V
							+
					V^\ast \pi(\left[\begin{array}{cc} L_1 & 0 \\ 0 & 0\end{array}\right])V.$$
		\item There exist a Hilbert space $\cK_0$,  a contraction $V_0\in B(\RR^n,\cK)$,
			a unital $\ast$-homomorphism $\pi_0:B(\cH)\to B(\cK)$ and a positive semidefinite matrix $S\in \Sym_n$
			such that
				$$L_2(x)=S+V_0^\ast \pi_0(L_1)V_0.$$
		\item If $D_{L_1}(1)$ is bounded, then $V_0$ in (2) can be chosen to be an isometry and $S=0$, that is 
				$$L_2(x)=V_0^\ast \pi_0(L)V_0.$$
	\end{enumerate}
\end{corollary}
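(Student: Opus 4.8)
The plan is to run the proof of Corollary \ref{OperLP} essentially verbatim, the only new ingredient being that when the target operator system sits inside $B(\RR^n)$ the hypothesis $D_{L_1}(n)\subseteq D_{L_2}(n)$ already forces complete positivity of the relevant unital maps, so there is no need to assume the inclusion at every matrix level. Concretely, since $L_2\in\RR^{n\times n}\!\left\langle x\right\rangle$ we have $\cS_2=\Span\{I_n,B_1,\ldots,B_g\}\subseteq B(\RR^n)$, and a unital positive map from an operator system into $B(\RR^n)$ is completely positive as soon as it is $n$-positive; this is Smith's theorem, and it is precisely the mechanism already recorded in Theorem \ref{n-pos-of-tau}(3), now applied with the extra observation that the same reasoning applies to any unital $n$-positive map with values in $B(\RR^n)$, in particular to $\tilde\tau$.

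For (1): by Theorem \ref{n-pos-of-tau}(7), the inclusion $D_{L_1}(n)\subseteq D_{L_2}(n)$ makes the unital map $\tilde\tau:\widetilde{\cS_1}\to\cS_2$, $A_j\oplus 0\mapsto B_j$, an $n$-positive map; since $\widetilde{\cS_1}$ and $\cS_2\subseteq B(\RR^n)$ are operator systems, Smith's theorem upgrades this to complete positivity of $\tilde\tau$. Then I would apply the real version of Arveson's extension theorem \cite{CZ} to extend $\tilde\tau$ to a completely positive map $B(\cH\oplus\RR)\to B(\RR^n)$, and the Stinespring representation theorem to get a separable real Hilbert space $\cK$ (take the cyclic subspace generated by the image of $\RR^n$), a unital $\ast$-homomorphism $\pi:B(\cH\oplus\RR)\to B(\cK)$ and an isometry $V\in B(\RR^n,\cK)$ with $\tilde\tau(C)=V^\ast\pi(C)V$ for all $C$. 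Evaluating at the element $L_1(x)\oplus 1$, written as $(L_1(x)\oplus 0)+(0\oplus 1)$, yields the asserted decomposition.

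For (2) and (3): these follow from (1) exactly as in the proof of Corollary \ref{OperLP}. For (2), $\pi(I_{\cH}\oplus 0)$ is a self-adjoint idempotent, hence the orthogonal projection onto $\cK_0:=\Ran(\pi(I_{\cH}\oplus 0))$; put $V_0:=P^\cK_{\cK_0}\pi(I_{\cH}\oplus 0)V:\RR^n\to\cK_0$, a contraction, define the unital $\ast$-homomorphism $\pi_0:B(\cH)\to B(\cK_0)$, $A\mapsto P^\cK_{\cK_0}\pi(A\oplus 0)|_{\cK_0}$, and set $S:=V^\ast\pi(0\oplus 1)V\succeq 0$; then $L_2=S+V_0^\ast\pi_0(L_1)V_0$. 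For (3), when $D_{L_1}(1)$ is bounded the map $\tau:\cS_1\to\cS_2$, $A_j\mapsto B_j$, is well-defined by Lemma \ref{well-defined-map}(2), is $n$-positive by Theorem \ref{n-pos-of-tau}(4) (which applies because $D_{L_1}(1)$ is bounded), hence completely positive by Theorem \ref{n-pos-of-tau}(3); applying Arveson and Stinespring to $\tau$ directly, as in the proof of Corollary \ref{OperLP}(3), gives an isometry $V_0$ and a unital $\ast$-homomorphism $\pi_0$ with $L_2=V_0^\ast\pi_0(L_1)V_0$.

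I do not expect a genuinely hard step here: the argument is a transcription of Corollary \ref{OperLP} with the global inclusion replaced by its level-$n$ instance. The one point that needs care is the passage from $n$-positivity to complete positivity for maps with values in $B(\RR^n)$ — but this is classical (Smith) and is the device already used for Theorem \ref{n-pos-of-tau}(3), so it is available without extra work. The remaining items are routine bookkeeping: separability of $\cK$ (ensured by compressing $\pi$ to the cyclic subspace generated by $V\RR^n$), and, in part (3), checking that the boundedness hypothesis legitimately licenses working with $\tau$ rather than $\tilde\tau$.
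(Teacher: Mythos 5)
Your proposal is correct and follows essentially the same route as the paper: the key device in both is that $n$-positivity of the unital map into $B(\RR^n)$ upgrades to complete positivity (Theorem \ref{n-pos-of-tau}(3)/Smith), after which Arveson extension and Stinespring give the representation. The paper merely packages this more briefly, using Theorem \ref{n-pos-of-tau} to deduce $D_{L_1}\subseteq D_{L_2}$ from the level-$n$ inclusion and then citing Corollary \ref{OperLP} wholesale, whereas you inline that corollary's proof; the content is the same.
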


\begin{proof}
	By Theorem \ref{n-pos-of-tau}, $D_{L_1}(n)\subseteq D_{L_2}(n)$ implies $D_{L_1}\subseteq D_{L_2}$.
	Now everything follows by Corollary $\ref{OperLP}$.
\end{proof}

\subsubsection{Counterexample for non-monic pencils} We present an example which shows that the assumption of monicity of pencils in Corollary \ref{OperLP} is necessary. The example is a generalization of \cite[Example 2]{Z}.

\begin{example} \label{monicity-needed}
		Let $L(y)=\left[\begin{array}{cc} 1 & y\\ y & 0 \end{array}\right]$ be a linear matrix polynomial
	with a spectrahedron $D_{L}=\{0\}$. The polynomial $\ell(y)=y$ is non-negative on $D_{L}(1)$, but there do not
	exist a Hilbert space $\cK$, a unital $\ast$-homomorphism 
	$\pi:B(\RR^2)\to B(\cK)$, polynomials 
	$r_j\in \RR\!\left\langle y\right\rangle$ and operator polynomials
	$q_k\in B(\RR,\cK)\!\left\langle y\right\rangle$
	such that 
		$$y= \sum_j r_j^2 + \sum_k q_k^\ast \pi(L) q_k.$$
\end{example}

\begin{proof}	
		For $K=\RR^2$, the identity $\ast$-homomorphism $\pi$, i.e., $\pi(x)=x$, and polynomials 
	$r_j\in \RR\!\left\langle y\right\rangle$, $q_k\in \RR^{2\times 1}\!\left\langle y\right\rangle$
	the proof is already done in \cite[Example 2]{Z}.

		Let us now prove a general case. If $\cK$, $\pi$, $r_j$, $q_k$ existed, we would have
	\begin{eqnarray*}
		y &=& \sum_j r_j^\ast r_j + \sum_k q_k^\ast \pi(L) q_k\\
		   &=&  \sum_j r_j^\ast r_j + \sum_k q_k^\ast \pi(E_{11}) q_k + \sum_k q_k^\ast \pi(E_{12}+E_{21}) q_k y.
	\end{eqnarray*}
	Let us write 
		\begin{eqnarray*}
		      	r_j(y) &=&\sum_{m=0}^{N_j} r_{j,m}y\in \RR\!\left\langle y \right\rangle,\quad
			q_k(y)=\sum_{m=0}^{M_k} q_{k,m}y\in B(\RR,\cK)\!\left\langle y \right\rangle,
		\end{eqnarray*}
	where $N_j\in \NN_0$ is such that $a_{j,N_j}\neq 0$ and 
	$M_k\in \NN_0$ is such that $b_{k,M_k}\neq 0$. 
	Comparing the coefficients at $1$ of both sides we get
	\begin{eqnarray*}
		0 &=& \sum_j r_{j,0}^2 +\sum_k q_{k,0}^{\ast}\pi(E_{11}) q_{k,0}.
	\end{eqnarray*}
	Since 
		$$\pi(E_{11})=\pi(\left[\begin{array}{cc} 1 & 0\\ 0 & 0 \end{array}\right])=
			\pi(\left[\begin{array}{cc} 1 & 0\\ 0 & 0 \end{array}\right]^2)=
			\pi(\left[\begin{array}{cc} 1 & 0\\ 0 & 0 \end{array}\right])^2$$
	and
		$$\pi(E_{11})=\pi(E_{11}^\ast)=\pi(E_{11})^\ast,$$ 
	$\pi(E_{11})$ is a hermitian idempotent, hence a projection.
	Therefore 
		$$\sum_k q_{k,0}^{\ast}\pi(E_{11}) q_{k,0}=
			\sum_k q_{k,0}^{\ast}(\pi(E_{11}))^\ast\pi(E_{11}) q_{k,0}\geq 0.$$
	Thus, 
		$$r_{j,0}^2=\sum_k q_{k,0}^{\ast}(\pi(E_{11}))^\ast \pi(E_{11}) q_{k,0}=0.$$
	It follows that 
		$$r_{j,0}=0\quad \text{and}\quad 0=\pi(E_{11}) q_{k,0}=q_{k,0}^{\ast}\pi(E_{11})^\ast\in B(\cK,\RR).$$
	Indeed,
		\begin{eqnarray*}
			0
			&=&\left\langle((\pi(E_{11}) q_{k,0})^\ast \pi(E_{11}) q_{k,0})1, 
				1\right\rangle	\\
			&=&\left\langle\pi(E_{11}) q_{k,0}1, \pi(E_{11}) q_{k,0}1\right\rangle\\
			&=&\left\|\pi(E_{11}) q_{k,0}1\right\|.
		\end{eqnarray*}
	It follows that 
		$$\ran(q_{k,0})\in \ker \pi(E_{11})(\cK).$$
	Hence,
		\begin{eqnarray*}
			& &  q_{k,0}^{\ast}\pi(E_{12}+E_{21})q_{k,0}
			 =    q_{k,0}^{\ast}\pi(E_{11}E_{12}+E_{21}E_{11}) q_{k,0}\\
			&=& q_{k,0}^{\ast}\pi(E_{11}E_{12})q_{k,0}
			+q_{k,0}^{\ast}\pi(E_{21}E_{11}) q_{k,0}\\
			&=&\underbrace{q_{k,0}^{\ast}\pi(E_{11})}_{=q_{k,0}^{\ast}\pi(E_{11})^\ast=0}\pi(E_{12})q_{k,0}
			+q_{k,0}^{\ast}\pi(E_{21})\underbrace{\pi(E_{11}) q_{k,0}}_{0}
			=0.
		\end{eqnarray*}
	The coefficient at $y$ on the RHS is
		\begin{eqnarray*}
			& & \sum_j r_{j,1}^\ast \underbrace{r_{j,0}}_{0}+\sum_j \underbrace{r_{j,0}^\ast}_{0} r_{j,1}+
			\sum_k q_{k,1}^\ast \underbrace{\pi(E_{11})q_{k,0}}_{0}+
			\sum_k \underbrace{q_{k,0}^\ast \pi(E_{11})}_{0}q_{k,1}+\\
			&+& \sum_k \underbrace{q_{k,0}^\ast \pi(E_{12}+E_{21})q_{k,0}}_0.
		\end{eqnarray*}
	This is a contradiction, which finishes the proof.
\end{proof}

\subsection{Polar duals of free Hilbert spectrahedra and free Hilbert spectrahedrops} \label{polar-duals-soln}
In this subsection we describe operator free polar duals of free Hilbert spectrahedra and free Hilbert spectrahedrops (see Theorems \ref{OperLP-satz-unbounded} and \ref{OperLP-trunc} below).

\begin{theorem} \label{OperLP-satz-unbounded}
		Let $A\in \Sym_\cH^g$  be a tuple of self-adjoint operators from $\Sym_\cH$. Then the operator free Hilbert polar dual 	
	$(D_{L_A}^{\cK})^{\cK,\circ}$ is given by
		$$\operconv_{\cK}(A)=(D_{L_A}^{\cK})^{\cK,\circ}.$$
\end{theorem}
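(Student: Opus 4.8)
The plan is to prove both inclusions separately, exploiting the characterization of the inclusion $D_{L_1}\subseteq D_{L_2}$ from Theorem \ref{OperLP-intro} (equivalently Corollary \ref{OperLP}) and the reduction from operator-level to matrix-level inclusions provided by Proposition \ref{mat-to-oper-spec-relation} and Lemma \ref{matrix-domain-sufficient-for-linear}.

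\bprf[Proof sketch]
For the inclusion $\operconv_{\cK}(A)\subseteq (D_{L_A}^{\cK})^{\cK,\circ}$, take $B=(B_1,\ldots,B_g)\in\operconv_{\cK}(A)$, so $B=V^\ast\pi(A)V$ for some separable real Hilbert space $\cG$, a contraction $V:\cK\to\cG$ and a unital $\ast$-homomorphism $\pi:B(\cH)\to B(\cG)$. We must show that for every $X\in D_{L_A}^{\cK}$ we have $L_B(X)=I_K\otimes I+\sum_j B_j\otimes X_j\succeq 0$. Since $L_A(X)\succeq 0$, apply $\pi\otimes\mathrm{Id}$ to get $I\otimes I+\sum_j\pi(A_j)\otimes X_j\succeq 0$, and then conjugate by $V\otimes I_K$; using that $V$ is a contraction so $I_K - V^\ast V\succeq 0$, the term $I_K\otimes I$ is dominated appropriately and one obtains $I_K\otimes I+\sum_j V^\ast\pi(A_j)V\otimes X_j\succeq 0$, i.e.\ $L_B(X)\succeq 0$. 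This shows $B\in (D_{L_A}^{\cK})^{\cK,\circ}$.

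For the reverse inclusion $(D_{L_A}^{\cK})^{\cK,\circ}\subseteq\operconv_{\cK}(A)$, take $B\in\mathbb{S}_K^g$ for some $K\in\Lat(\cK)$ with $L_B(X)\succeq 0$ for all $X\in D_{L_A}^{\cK}$. The key observation is that this says exactly $D_{L_A}^{\cK}\subseteq D_{L_B}^{\cK}$ (in the operator sense), which by Proposition \ref{mat-to-oper-spec-relation} is equivalent to $D_{L_A}\subseteq D_{L_B}$ at the matrix level. Now apply part (2) of Corollary \ref{OperLP}: there exist a separable real Hilbert space $\cK_0$, a contraction $V_0:K\to\cK_0$, a unital $\ast$-homomorphism $\pi_0:B(\cH)\to B(\cK_0)$ and a positive semidefinite $S\in B(K)$ with $L_B=S+V_0^\ast\pi_0(L_A)V_0$. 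Comparing the constant coefficients gives $I_K=S+V_0^\ast V_0$, so $S=I_K-V_0^\ast V_0\succeq 0$ automatically, and comparing the coefficients of $x_j$ gives $B_j=V_0^\ast\pi_0(A_j)V_0$. Thus $B=V_0^\ast\pi_0(A)V_0\in\operconv_{\cK}(A)$, where we regard $K$ as a closed subspace of $\cK$ and compose with an inclusion if needed (or absorb $K$ into the definition of the triple, which is permissible since $\Pi$ ranges over all such triples with $V$ a contraction defined on $K$).

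The main obstacle I anticipate is the bookkeeping at the operator level: making sure the ambient Hilbert space $\cK$ plays the role it should in the definition of $\operconv_{\cK}$ versus the role of the varying closed subspace $K\in\Lat(\cK)$ on which $B$ acts, and checking that the definition of the operator free Hilbert polar dual (which quantifies over all $X\in D_{L_A}^{\cK}$, i.e.\ over all closed subspaces of $\cK$) matches precisely the hypothesis $D_{L_A}^{\cK}\subseteq D_{L_B}^{\cK}$ needed to invoke Corollary \ref{OperLP}. A minor subtlety is handling the contraction $V$ correctly in the first inclusion: one should write the conjugation as $(V\otimes I_K)^\ast(I_\cG\otimes I_K+\sum_j\pi(A_j)\otimes X_j)(V\otimes I_K) = V^\ast V\otimes I_K + \sum_j V^\ast\pi(A_j)V\otimes X_j$ and then add $(I_K-V^\ast V)\otimes I_K\succeq 0$ to get the desired inequality; everything else is routine.
\eprf
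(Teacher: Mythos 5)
Your proof is correct and follows essentially the same route as the paper: the easy inclusion by conjugating $(\pi\otimes\Id)(L_A(X))\succeq 0$ with the contraction $V\otimes I$ (absorbing $I-V^\ast V\succeq 0$), and the reverse inclusion by reading the polar-dual condition as the spectrahedral inclusion $D_{L_A}\subseteq D_{L_B}$ and invoking Corollary \ref{OperLP}(2) to write $L_B=S+V_0^\ast\pi_0(L_A)V_0$, hence $B=V_0^\ast\pi_0(A)V_0\in\operconv_{\cK}(A)$. The only cosmetic differences are that you pass explicitly through Proposition \ref{mat-to-oper-spec-relation} to the matrix level and record $S=I-V_0^\ast V_0$, details the paper leaves implicit.
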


\begin{proof}
	 It is easy to see that $\operconv_\cK\{A\}\subseteq (D_{L_A}^{\cK})^{\cK,\circ}$. Indeed, let us take $X:=V^\ast \pi(A) V\in\operconv_\cK\{A\}$, where $\cG$ is a separable real Hilbert space, $V:\cK\to \cG$ a contraction and $\pi: B(\cH)\to B(\cG)$ a unital $\ast$-homomorphism. We have to prove that 
		$L_X|_{D^{\cK}_{L_{A}}}\succeq 0$. 
	For every	$Y\in  D_{L_{A}}^{\cK}$ we have
		\begin{eqnarray*}
		L_{X}(Y)
			&=& L_{V^\ast \pi(A) V}(Y)\\
			&=& I\otimes I + \sum_j V^\ast \pi(A_j)V\otimes Y_j\\
			&\succeq& (V\otimes I)^\ast (I\otimes I + \sum_j \pi(A_j)\otimes Y_j) (V\otimes I).
		\end{eqnarray*}
	Using that
		\begin{eqnarray*}
			I\otimes I + \sum_j \pi(A_j)\otimes Y_j
			&=& (\pi\otimes I) (I\otimes I + \sum_j A_j \otimes Y_j)\\
			&=& (\pi\otimes I) (L_{A}(Y))\succeq 0,
		\end{eqnarray*}
	where the last inequality follows by $\pi\otimes I$ being a $\ast$-homomorphism,
	it follows that $X\in (D_{L_A}^{\cK})^{\cK,\circ}$.

	Let us now prove the opposite direction, i.e., 
	$(D_{L_A}^{\cK})^{\cK,\circ}\subseteq \operconv_\cK\{A\}.$
	Suppose that $X\in \mathbb{S}_\cK^g$ belongs to $(D_{L_A}^{\cK})^{\cK,\circ}$. 
	We have to prove that $X \in\operconv_\cK(A).$ 
	By assumption ${L_X}|_{D_{L_A}^{\cK}}\succeq 0$. 
	Using Corollary \ref{OperLP} (2) there exist a separable real Hilbert space $\cG$, a
	contraction $V:\cK\to \cG$, a unital $\ast$-homomorphism 
	$\pi:B(\cH)\to B(\cG)$ and a positive semidefinite 
	operator $S\in B(\cK)$ such that
		$L_X=S+V^\ast \pi(L_A)V$. In particular, $X=V^{\ast} \pi(A) V\in \operconv_\cK(A).$
\end{proof}

The set $\operconv_{\cK}(A)$ is closed in the weak operator topology.

\begin{corollary}
	For a tuple $A\in \Sym_\cH^g$
	the set $\operconv_\cK(A)$ is closed in the weak 
		operator topology. In particular, it is closed in the norm topology.
\end{corollary}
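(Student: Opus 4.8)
The plan is to deduce this from the identification $\operconv_\cK(A)=(D_{L_A}^{\cK})^{\cK,\circ}$ furnished by Theorem \ref{OperLP-satz-unbounded}, and then to observe that a polar dual, being cut out by the operator inequalities $L_Y(X)\succeq 0$, is an intersection of weak‑operator closed half‑spaces. Concretely, I would fix a closed subspace $K\in\Lat(\cK)$ and show that the level
$$(D_{L_A}^{\cK})^{\cK,\circ}(K)=\{Y\in\Sym_K^g\colon L_Y(X)\succeq 0\ \text{for all}\ X\in D_{L_A}^{\cK}\}$$
is closed in $\Sym_K^g$ equipped with the product weak operator topology.

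Next, for a fixed $H\in\Lat(\cK)$, a fixed $X\in D_{L_A}^{\cK}(H)$, and a fixed finite sum of elementary tensors $v=\sum_{k=1}^N \xi_k\otimes\eta_k\in K\otimes H$, I would consider the functional
$$Y\longmapsto \langle L_Y(X)v,v\rangle=\|v\|^2+\sum_{j=1}^g\sum_{k,l=1}^N \langle Y_j\xi_k,\xi_l\rangle\,\langle X_j\eta_k,\eta_l\rangle .$$
Since each $Y_j\mapsto \langle Y_j\xi_k,\xi_l\rangle$ is weak‑operator continuous by the very definition of the WOT, and only a finite sum is involved, this functional is a weak‑operator continuous affine map, so $\{Y\in\Sym_K^g\colon \langle L_Y(X)v,v\rangle\ge 0\}$ is weak‑operator closed.

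Then I would note that, since finite sums of elementary tensors are dense in $K\otimes H$ and $L_Y(X)$ is a bounded operator, the condition $L_Y(X)\succeq 0$ is equivalent to $\langle L_Y(X)v,v\rangle\ge 0$ for all such $v$. Hence $(D_{L_A}^{\cK})^{\cK,\circ}(K)$ is the intersection, over all $H\in\Lat(\cK)$, all $X\in D_{L_A}^{\cK}(H)$ and all such $v$, of the weak‑operator closed sets above, and is therefore weak‑operator closed. The ``in particular'' assertion then follows because the norm topology on $B(K)$ is finer than the weak operator topology, so every WOT‑closed subset of $\Sym_K^g$ is also norm closed.

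The main obstacle here is a point of care rather than a genuine difficulty: the WOT‑continuity of $Y\mapsto\langle L_Y(X)v,v\rangle$ is used, and it holds only because $v$ is taken to be a \emph{finite} sum of elementary tensors (for a general $v\in K\otimes H$ the functional is ultraweakly but not necessarily weak‑operator continuous). Restricting to the dense set of finite sums and invoking boundedness of $L_Y(X)$ circumvents this, and nothing further is needed.
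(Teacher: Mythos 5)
Your proof is correct and follows essentially the same route as the paper: identify $\operconv_\cK(A)$ with the polar dual $(D_{L_A}^{\cK})^{\cK,\circ}$ via Theorem \ref{OperLP-satz-unbounded} and observe that a polar dual is WOT-closed. The only difference is that you spell out the final containment (WOT-closedness of the polar dual, via finite sums of elementary tensors), which the paper dismisses as ``easy.''
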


\begin{proof}
	Let $\overline{\text{oper-conv}}^{w}_{\cK}(A)$ denote the closure of $\operconv_\cK(A)$ in the weak operator topology. We have
	\begin{eqnarray*}
		(D_{A}^\cK)^{\cK,\circ} 
				&\underbrace{=}_{\text{Theorem }\ref{OperLP-satz-unbounded}}& \operconv_{\cK}(A)
				\subseteq\overline{\operconv}_\cK(A)\\
				&\subseteq& \overline{\text{oper-conv}}^{w}_{\cK}(A)
				\underbrace{\subseteq}_{\text{easy}} (D_{A}^\cK)^{\cK,\circ} .
	 \end{eqnarray*}
\end{proof}

\begin{theorem}	\label{OperLP-trunc}
		Let $(\Omega,\Gamma)\in \Sym_\cH^{g+h}$ be a tuple of operators from $\Sym_\cH^{g+h}$ 
	and $$\fK=\proj_x D_{L_{(\Omega,\Gamma)}}^{\cK}.$$ The operator free Hilbert polar dual $\fK^{\cK,\circ}$
	is the set 
	\begin{eqnarray*}
		\fK^{\cK,\circ} 
		&=& \{A\in \mathbb{S}_{\cK}^{g}\colon (A,0)\in D_{L}^{\cK,\circ}\}\\
		&=& \{A\in \mathbb{S}_{\cK}^{g}\colon 
			\exists\; \text{a separable real Hilbert space }\cG, \text{ an isometry }V:\cK\to\cG \\
		&&\text{ and }
		\ast\!\text{-homomorphism }\pi:B(\cH\oplus \RR)\to B(\cG) \text{ s.t. }\\
		&& 
			A=V^{\ast}\pi(\left[\begin{array}{cc} \Omega & 0\\ 0& 	
			0\end{array}\right])V,\quad
			0=V^{\ast}\pi(\left[\begin{array}{cc} \Gamma & 0\\ 0& 	
			0\end{array}\right])V\},
	\end{eqnarray*}
	where $\Omega'=\Omega\oplus 0$ and $\Gamma'=\Gamma\oplus 0$.

	If $\fK(1)$ is bounded, then $\fK^{\cK,\circ}$
	is the set
	\begin{eqnarray*}
		\fK^{\cK,\circ} 
		&=& \{A\in \mathbb{S}_{\cK}^{g}\colon (A,0)\in D_{L}^{\cK,\circ}\}\\
		&=& \{A\in \mathbb{S}_{\cK}^{g}\colon 
			\exists\text{ a separable real Hilbert space } \cG, \text{an isometry }V:\cK\to\cG \\
		&& \text{ and}
			 \ast\!\text{-homomorphism }\pi:B(\cH)\to B(\cG) \text{ s.t. }\\ 
		&&
			A=V^{\ast}\pi(\Omega)V,\quad 0=V^{\ast}\pi(\Gamma)V\}.
	\end{eqnarray*}
\end{theorem}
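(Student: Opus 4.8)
The plan is to derive the two displayed equalities in turn, and then the bounded refinement, all from Corollary~\ref{OperLP}. Write $L:=L_{(\Omega,\Gamma)}$ and let $M:=L_{(A,0)}$ be the monic linear pencil in the variables $(x,y)$ with coefficients in $B(\cK)$ whose $y$-coefficients all vanish, so that $M(X,Y)=L_A(X)$ for every pair of tuples $(X,Y)$ over a common space. By definition of the projection, $X\in\fK^\cK$ precisely when $(X,Y)\in D_L^\cK$ for some $Y$ on the same space; hence
\begin{align*}
	A\in\fK^{\cK,\circ} &\iff L_A(X)\succeq 0\ \text{ for all }X\in\fK^\cK\\
		&\iff M(X,Y)\succeq 0\ \text{ for all }(X,Y)\in D_L^\cK\\
		&\iff (A,0)\in (D_L^\cK)^{\cK,\circ},
\end{align*}
which is the first displayed equality.

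For the second equality it remains, for $A\in\Sym_\cK^g$, to describe the membership $(A,0)\in(D_L^\cK)^{\cK,\circ}$. This is exactly the inclusion $D_L^\cK\subseteq D_M^\cK$, which by Proposition~\ref{mat-to-oper-spec-relation} is equivalent to $D_L\subseteq D_M$ at the matrix level. Applying Corollary~\ref{OperLP}(1) with $L_1=L$ and $L_2=M$ produces a separable real Hilbert space $\cG$, an isometry $V:\cK\to\cG$ and a unital $\ast$-homomorphism $\pi:B(\cH\oplus\RR)\to B(\cG)$ with $M=V^\ast\pi(0\oplus 1)V+V^\ast\pi(L\oplus 0)V$; comparing the coefficients of $x_j$ and of $y_k$ gives $A_j=V^\ast\pi(\Omega_j\oplus 0)V$ and $0=V^\ast\pi(\Gamma_k\oplus 0)V$, which is the asserted form. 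Conversely, given such $V$ and $\pi$ and any $(X,Y)\in D_L^\cK$, the identity $L_A(X)=(V\otimes I)^\ast\big(I_\cG\otimes I+\sum_j\pi(\Omega_j\oplus 0)\otimes X_j\big)(V\otimes I)$, together with the decomposition $I_\cG=\big(I_\cG-\pi(I_{\cH\oplus\RR})\big)+\pi(I_\cH\oplus 0)+\pi(0\oplus 1)$, the positivity of $L(X,Y)$ transported by the ampliation of the $\ast$-homomorphism $T\mapsto\pi(T\oplus 0)$ of $B(\cH)$, the positivity of $\pi(0\oplus 1)$ and of $I_\cG-\pi(I_{\cH\oplus\RR})$ (projections, resp.\ complements of a projection), and the relation $V^\ast\pi(\Gamma_k\oplus 0)V=0$ (which kills the $Y$-terms after compression by $V\otimes I$), yields $L_A(X)\succeq 0$; hence $A\in\fK^{\cK,\circ}$.

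For the bounded case the only change is that $B(\cH\oplus\RR)$, $\Omega_j\oplus 0$, $\Gamma_k\oplus 0$ are replaced by $B(\cH)$, $\Omega_j$, $\Gamma_k$; equivalently (the isometric-plus-$B(\cH\oplus\RR)$ form of the previous paragraph being interchangeable with the contraction-plus-$B(\cH)$ form furnished by Theorem~\ref{OperLP-satz-unbounded} for $(D_L^\cK)^{\cK,\circ}=\operconv_\cK\{(\Omega,\Gamma)\}$), the contraction is to be upgraded to an isometry. This is precisely what Corollary~\ref{OperLP}(3) delivers, but that result requires the level-one set of the \emph{dominated} pencil to be bounded, whereas the hypothesis only provides boundedness of its projection $\fK(1)$. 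The plan is therefore to first replace $L$ by an equivalent monic pencil $L'$: since $\fK=\proj_xD_L$ is matrix convex with $\fK(1)$ bounded — so $\fK(n)$ is bounded for all $n$ and the $x$-component of the recession cone of $D_L(1)$ is trivial — one adjoins to $L$ finitely many redundant scalar linear inequalities bounding the $x$-variables and, on the nontrivial fibres, the $y$-variables, arranged so that $\proj_xD_{L'}=\fK$ while $D_{L'}(1)$ is bounded; one then reruns the argument above with $L'$ in place of $L$, invoking Corollary~\ref{OperLP}(3) instead of~(1), and checks that passing back from the coefficients of $L'$ to $\Omega,\Gamma$ leaves the resulting set of admissible $A$ unchanged (the auxiliary diagonal blocks of $L'$ impose no new constraint). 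I expect the main obstacle to be exactly this reduction — certifying that boundedness of the projection $\fK(1)$ alone allows $D_L$ to be cut down, in both the $x$- and $y$-directions, to a defining pencil with bounded level-one set — after which the argument is routine bookkeeping with compressions and $\ast$-homomorphisms as in the general case.
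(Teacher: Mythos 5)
Your first two steps are fine and are essentially the paper's: the identification $A\in\fK^{\cK,\circ}\Leftrightarrow(A,0)\in(D_L^{\cK})^{\cK,\circ}$, and the general (unbounded) description via Corollary \ref{OperLP}(1) together with the routine converse by compressing $I_\cG\otimes I+\sum_j\pi(\Omega_j\oplus 0)\otimes X_j$ and using $V^\ast\pi(\Gamma_k\oplus 0)V=0$ to kill the $y$-terms.

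The bounded case, however, has a genuine gap, and it sits exactly where you flagged it. Your plan is to adjoin redundant linear inequalities to $L$ so that the new pencil $L'$ has $D_{L'}(1)$ bounded and $\proj_x D_{L'}=\fK$, and then invoke Corollary \ref{OperLP}(3). This reduction is not available in general: boundedness of $\fK(1)$ does not let you bound the $y$-variables without changing the projection, because the witnesses $Y$ may be forced to have arbitrarily large norm. For instance, with $g=h=1$ take $L(x,y)=\left[\begin{smallmatrix} y & 1\\ 1 & 1-x\end{smallmatrix}\right]\oplus(1+x)$; then $\fK(1)=[-1,1)$ is bounded, but any witness satisfies $y\geq 1/(1-x)$, so adding any constraint $N'-y\geq 0$ shrinks the projection. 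Moreover, even where such an $L'$ exists, Corollary \ref{OperLP}(3) applied to $L'$ produces a unital $\ast$-homomorphism on $B(\cH\oplus\RR^{m})$ (the enlarged coefficient space) and certificates $A_j=V^\ast\pi(\Omega_j\oplus C_j)V$, $0=V^\ast\pi(\Gamma_k\oplus D_k)V$ involving the auxiliary blocks $C_j,D_k$; extracting from this the stated form, with $\pi$ defined on $B(\cH)$ and $A=V^\ast\pi(\Omega)V$, $0=V^\ast\pi(\Gamma)V$, is precisely what is not "routine bookkeeping" (compare Remark \ref{rem-after-OperLP}(1): the shape of such certificates is sensitive to which pencil one works with). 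The paper avoids the reduction altogether: assuming only $\fK(1)$ bounded, it proves directly that $D_{^{\text{h}}\!L_{(\Omega,\Gamma)}}\subseteq D_{^{\text{h}}\!L_{(A,0)}}$ by showing $X_0\succeq 0$ for every tuple $(X_0,X,Y)$ in the homogenized spectrahedron (if $X_0\not\succeq 0$ one scales and either $(tX_1,\ldots,tX_g)\in\fK(1)$ for all $t>0$, or $X=0$ and then all of $\RR^g$ lies in $\fK(1)$, both contradicting boundedness), then uses Theorem \ref{n-pos-of-tau}(2) to conclude that the unital map $\Omega_j\mapsto A_j$, $\Gamma_k\mapsto 0$ on $\Span\{I_\cH,\Omega_j,\Gamma_k\}$ is completely positive, and finally reruns the Arveson--Stinespring argument of Corollary \ref{OperLP}(3). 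That homogenization step is the idea missing from your proposal; without it (or a correct substitute) the bounded refinement is not established.
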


\begin{proof}
By definition, 
	$\fK^{\cK,\circ} = \{A\in \mathbb{S}_{\cK}^{g}\colon L_A|_{\fK}\succeq 0\}.$
It is easily seen that
	$$\{A\in \mathbb{S}_{\cK}^{g}\colon L_A|_{\fK}\succeq 0\}=
	\{A\in \mathbb{S}_{\cK}^{g}\colon L_{(A,0)}|_{D_{L}^{\cK}}\succeq 0\}.$$
To get the second equality in the first statement of the theorem use Corollary \ref{OperLP}.
If not only $\fK(1)$ but also $D_{L_{(\Omega,\Gamma)}}(1)$ is bounded, then the second equality in the second statement of the theorem also follows by Corollary \ref{OperLP}. From now on we assume that only $\fK(1)$ is bounded.
Let $(A,0)\in D_{L}^{\cK,\circ}$. \\

\noindent\textbf{Claim:} $D_{^{\text{h}}\! L_{(\Omega,\Gamma)}}\subseteq D_{^{\text{h}}\! L_{(A,0)}}$. \\

By assumption 
$D_{L_{(\Omega,\Gamma)}}\subseteq D_{ L_{(A,0)}}$. 
Let
 	$$(X_0,\ldots,X_g,Y_1,\ldots,Y_k)\in D_{^\text{h}\! L_{(\Omega,\Gamma)}}(n)\quad\text{for some }n\in \NN.$$

First we prove that $X_0\succeq 0$. It suffices to prove this fact for $n=1$ (by the same reduction as in the proof of
Theorem \ref{n-pos-of-tau}).
Let as assume on contrary that $X_0<0$.
From
	$$I_\cH \otimes X_0 + \sum_j \Omega_j \otimes X_j + \sum_k \Gamma_k \otimes Y_k \succeq 0,$$
it follows that
	$$I_\cH \otimes tX_0 + \sum_j \Omega_j \otimes tX_j + \sum_k \Gamma_k \otimes tY_k \succeq 0\quad
\text{for every }t>0.$$ Since $tX_0<0$ for $t>0$, it follows that
	$$I_\cH \otimes 1 + \sum_j \Omega_j \otimes tX_j + \sum_k \Gamma_k \otimes tY_k \succeq 0\quad
\text{for every }t>0.$$
	Therefore 
	$$(tX_1,\ldots,tX_g)\in \fK(1)\quad \text{for every }t>0.$$ 
If $(X_1,\ldots,X_g)\neq 0^g$, this contradicts the 
boundedness of $\fK(1)$. Else  $(X_1,\ldots,X_g)= 0^g$. But then 
	$$\sum_k \Gamma_k \otimes Y_k\succ -I_\cH \otimes X_0=I_\cH\otimes \left|X_0\right|,$$ 
and hence for every $(X_1,\ldots,X_g)\in \RR^g$ there exists $t>0$ such that 
	$$I_\cH \otimes 1 + \sum_j \Omega_j \otimes X_j + \sum_k \Gamma_k \otimes tY_k \succeq 0.$$
This again contradicts  the boundedness of $\fK(1)$. 

Now for $(X_0,\ldots,X_g,Y_1,\ldots,Y_k)\in D_{^\text{h}\! L_{(\Omega,\Gamma)}}(n)$  with $X_0\succeq 0$,
as in the proof of Theorem \ref{n-pos-of-tau},
it follows that
		$(X_0,\ldots,X_g,Y_1,\ldots,Y_k)\in D_{^{\text{h}}\! L_{(A,0)}}.$
This concludes the proof of Claim.\\

By Theorem \ref{n-pos-of-tau}.(2), the unital linear map 
	$$\tau:\Span\{ I_\cH,\Omega_1,\ldots,\Omega_g,\Gamma_1,\ldots,\Gamma_h\}\to \Span\{I_\cK,A_1,\ldots,A_g\},$$
	$$\Omega_j \mapsto A_j, \quad \Gamma_k \mapsto 0_\cK,$$
is completely positive. Now the same proof as for (3) of Corollary \ref{OperLP} applies to get the second equality in the second statement of the theorem.
\end{proof}

\section{Equality of free spectrahedra}\label{LOI-equality-problem-soln}

In  this section we consider the equality of free spectrahedra. Our main result, which extends 
\cite[Theorem 1.2]{HKM1} from bounded to unbounded spectrahedra, states that up to obvious redundancies, two linear matrix pencils define the same free spectrahedron if and only if they are unitarily equivalent. We refer the reader to \S  \ref{equality-of-free-sp} for basic definitions, context and the precise statement of the main result, i.e., Theorem \ref{Unbound-GS-Intro}. In Subsection \ref{LMI-equality-soln} we present the proof of Theorem \ref{Unbound-GS-Intro} (see Theorem \ref{UnbounLG} below), in Subsections \ref{non-existence-of-reduct} and 
\ref{counterexample for the LG} we show that Theorem \ref{Unbound-GS-Intro} does not extend to linear operator pencils. More precisely, in Subsection \ref{non-existence-of-reduct}
we present an operator pencil that does not have a whole subpencil which is $\sigma$-minimal, while in Subsection \ref{counterexample for the LG} we give two $\sigma$-minimal operator pencils with the same free spectrahedron but are not unitarily equivalent.

\subsection{Characterization of the equality $D_{L_1}=D_{L_2}$} \label{LMI-equality-soln}
The main result of this subsection is Theorem \ref{UnbounLG}. We use the approach from \cite[\S 3.3]{HKM1}, where the result is proved for a
bounded spectrahedron $D_{L_1}(1)=D_{L_2}(1)$. The crucial observation is that boundedness of $D_{L_1}(1)=D_{L_2}(1)$ is not essential for the approach to work, as it works for a
$\sigma$-minimal pencils $D_{L_1}=D_{L_2}$ that satisfy $D_{^h \!  L_1}=D_{^h\!  L_2}.$ 
We prove this holds also in the unbounded case.

\begin{theorem}[Linear Gleichstellensatz] \label{UnbounLG}
		Let 
	\begin{eqnarray*}
		L_1&=& I_{d}+\sum_{j=1}^g A_{j}x_j\in \Sym_{d}\!\left\langle x\right\rangle, \quad
		L_2= I_{e}+\sum_{j=1}^g B_{j}x_j\in \Sym_{e}\!\left\langle x\right\rangle
	\end{eqnarray*}
	be monic linear matrix pencils. If $D_{L_1}=D_{L_2}$ and $L_1$, $L_2$ are $\sigma$-minimal, then $d=e$ and 
	there is a unitary matrix $n\times n$ matrix $U$ such that 
		$$U^\ast L_1 U=L_2.$$
\end{theorem}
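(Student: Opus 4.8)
The plan is to follow the $C^\ast$-algebraic route of \cite[\S 3.3]{HKM1}, extending it from bounded to unbounded spectrahedra. That argument, applied to two $\sigma$-minimal monic pencils $L_1,L_2$ with $D_{L_1}=D_{L_2}$, produces the desired unitary \emph{provided} one also knows the homogenizations have the same free spectrahedron, $D_{^\text{h}\! L_1}=D_{^\text{h}\! L_2}$. Indeed, granting the latter, part (2) of Theorem \ref{n-pos-of-tau} applied to both inclusions $D_{^\text{h}\! L_1}\subseteq D_{^\text{h}\! L_2}$ and $D_{^\text{h}\! L_2}\subseteq D_{^\text{h}\! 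L_1}$ shows that the unital map $\tau:\mathcal{S}_1\to\mathcal{S}_2$, $A_j\mapsto B_j$, and its inverse are completely positive, i.e.\ $\tau$ is a unital complete order isomorphism of the operator systems $\mathcal{S}_1=\Span\{I_d,A_1,\ldots,A_g\}$ and $\mathcal{S}_2=\Span\{I_e,B_1,\ldots,B_g\}$; one then extends $\tau$ to a unital $\ast$-isomorphism of the generated real $C^\ast$-algebras $C^\ast(\mathcal{S}_1)\to C^\ast(\mathcal{S}_2)$ and uses $\sigma$-minimality — which forbids superfluous reducing summands — to conclude that this $\ast$-isomorphism is spatial, whence $d=e$ and $U^\ast L_1U=L_2$. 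So the one genuinely new ingredient beyond \cite{HKM1} is the implication
\[
D_{L_1}=D_{L_2}\ \Longrightarrow\ D_{^\text{h}\! L_1}=D_{^\text{h}\! L_2}\qquad(\text{for }L_1,L_2\ \sigma\text{-minimal}),
\]
which in the bounded case was already obtained en route to Theorem \ref{n-pos-of-tau} but requires real work when $D_{L_1}(1)$ is unbounded.

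To prove that implication I would take $(X_0,X_1,\ldots,X_g)\in D_{^\text{h}\! L_1}(n)$ and show $(X_0,X_1,\ldots,X_g)\in D_{^\text{h}\! L_2}(n)$. Diagonalizing $X_0$ and compressing $^\text{h}\! L_1(X)\succeq 0$ to the eigenspaces of $X_0$ by isometries (the same reduction as in the proof of Theorem \ref{n-pos-of-tau}), the problem splits according to the sign of the eigenvalues of $X_0$. On the nonnegative eigenspace, after the approximation $X_0\mapsto X_0+\epsilon I$ and conjugation by $X_0^{-1/2}$, the claim reduces to the hypothesis $D_{L_1}(n)=D_{L_2}(n)$, exactly as in Theorem \ref{n-pos-of-tau}. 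The substance lies in the negative eigenspace: compressing to a vector $v$ with $\langle X_0v,v\rangle<0$ forces one to control tuples $c\in\RR^g$ in the interior of the recession cone $\{c:\Lambda_A(c)\succeq 0\}$ (equivalently $\Lambda_A(c)\succ 0$), and for such $c$ to compare the least eigenvalues of $\Lambda_A(c)$ and $\Lambda_B(c)$.

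This comparison is organized as a case analysis driven by the common recession cone $R:=\{c:\Lambda_A(c)\succeq 0\}=\{c:\Lambda_B(c)\succeq 0\}$ of $D_{L_1}(1)=D_{L_2}(1)$; in particular its lineality space $\{c:\Lambda_A(c)=0\}=\{c:\Lambda_B(c)=0\}$ is the same for both pencils. After a linear change of the $x$-variables one may assume this lineality space is a coordinate subspace, along which the corresponding coefficients $A_j$ and $B_j$ vanish; splitting those variables off preserves $\sigma$-minimality and reduces matters to the case in which $R$ is pointed (line-free), though possibly still full-dimensional. One then analyzes the remaining configurations face by face, using $\sigma$-minimality of $L_1$ and $L_2$ — which forces $\bigcap_j\ker A_j=0=\bigcap_j\ker B_j$ and, more generally, forbids any direct-sum decomposition in which one block is redundant — to exclude exactly the degeneracies that would let $\lambda_{\min}(\Lambda_A(\cdot))$ and $\lambda_{\min}(\Lambda_B(\cdot))$ disagree on $\Int R$. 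With $D_{^\text{h}\! L_1}=D_{^\text{h}\! L_2}$ in hand, the $C^\ast$-algebraic argument of \cite[\S 3.3]{HKM1} then applies word for word.

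I expect the main obstacle to be precisely this case analysis in the unbounded setting. In the bounded case $X_0\not\succeq 0$ cannot occur at all, since $\Lambda_A(c)\succ 0$ would put an entire ray into $D_{L_1}(1)$; once $D_{L_1}(1)$ is unbounded such rays genuinely appear, and one must show that $\sigma$-minimality nevertheless rigidifies $\Lambda_A$ on the interior of the recession cone enough to match it with $\Lambda_B$. A subsidiary difficulty is that the concluding step is over $\RR$: one must verify that the classification of $\sigma$-minimal pencils, and the passage from a $\ast$-isomorphism of generated $C^\ast$-algebras to a unitary conjugation, survive in the real setting, which is where the real $C^\ast$-algebra theory mentioned in \S\ref{equality-of-free-sp} is needed.
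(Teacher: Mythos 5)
Your Case 1 skeleton (equal homogenized spectrahedra $\Rightarrow$ $\tau$ a unital complete order isomorphism $\Rightarrow$ $\ast$-isomorphism of $C^\ast(\cS_1)$ and $C^\ast(\cS_2)$ via triviality of the \v{S}ilov ideals, with $\sigma$-minimality and real $C^\ast$-algebra theory as in \cite[\S 3.3]{HKM1}) matches the paper. But the step you yourself identify as the new content — that for $\sigma$-minimal pencils $D_{L_1}=D_{L_2}$ forces $D_{^\text{h}\! L_1}=D_{^\text{h}\! L_2}$ — is not actually proved in your proposal, and the reduction you sketch for it does not work. Diagonalizing $X_0$ and "splitting according to the sign of the eigenvalues of $X_0$" does not decouple the condition $^\text{h}\! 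L_1(X)\succeq 0$: the $X_j$ need not commute with $X_0$, so positivity of the compressions of $^\text{h}\! L_1(X)$ to the spectral subspaces of $X_0$ neither captures the hypothesis nor, on the $L_2$ side, implies $^\text{h}\! L_2(X)\succeq 0$ (the cross terms are lost). The reduction used in Theorem \ref{n-pos-of-tau} is a compression to a single vector employed to derive a contradiction under \emph{boundedness}; in the unbounded case indefinite $X_0$ genuinely occurs and no such reduction to scalar data on the recession cone is available — indeed $D_{L_1}(1)=D_{L_2}(1)$ pins down $\lambda_{\min}(\Lambda_A(c))=\lambda_{\min}(\Lambda_B(c))$ only where this value is negative, and on $\Int R$ it says nothing, so everything hinges on the unproven assertion that $\sigma$-minimality "rigidifies" $\Lambda_A$ versus $\Lambda_B$ there (and one needs this at all matrix levels, not just level one). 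That assertion is exactly the hard point, and your proposal leaves it as a plan for a face-by-face case analysis rather than an argument.

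For comparison, the paper never proves this implication directly. It argues by contradiction: assuming $D_{^\text{h}\! L_1}\neq D_{^\text{h}\! L_2}$, it passes to the extended pencils $\tilde L_j=L_j\oplus 1$, whose homogenizations have equal free spectrahedra because $D_{L_1}\subseteq D_{L_2}\Leftrightarrow D_{^\text{h}\!\tilde L_1}\subseteq D_{^\text{h}\! L_2}$ (Theorem \ref{n-pos-of-tau}); it shows via Lemma \ref{exten-reduc}.(2) (itself a nontrivial argument) that in this situation the $^\text{h}\!\tilde L_\ell$ are $\sigma$-minimal; it then runs the Case 1 machinery on the shifted pencils $\hat L_\ell$ to obtain a $(d+1)\times(d+1)$ unitary $\tilde U$ with $\hat L_2=\tilde U^\ast\hat L_1\tilde U$; and finally a block-matrix/spectral argument (Claim 5, using $\bigcap_j\ker A_j=\{0\}$, which follows from $\sigma$-minimality) forces the off-diagonal blocks of $\tilde U$ to vanish, giving $L_2=U_{11}^\ast L_1U_{11}$ and hence $D_{^\text{h}\! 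L_1}=D_{^\text{h}\! L_2}$, a contradiction. If you want to salvage your route you would need a genuinely new argument replacing the eigenspace splitting and making the $\sigma$-minimality rigidity precise at all matrix levels; as written, the proposal has a gap exactly at the theorem's novel difficulty.
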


%For a monic linear pencil $L= I_{n}+\sum_{j=1}^g A_{j}x_j\in \Sym_{n}\left\langle x\right\rangle$
%we write $^\text{h}\! L$ for its homogenization, i.e.,
%	$$^\text{h}\! L=I_{n}x_0+\sum_{j=1}^g A_{j}x_j.$$

To prove the theorem we will need some preliminary results.
Even though Theorem \ref{UnbounLG} does not extend from matrix to operator pencils, most of the preliminary results do
in fact work for operator pencils.

Let $\cH$, $\cK$ be separable real Hilbert spaces and let us define the unital linear spaces
\begin{eqnarray*}
	\mathcal S_1&:=& \Span\{I_\cH,A_j\colon j=1,\ldots,g\}\subseteq \Sym_\cH,\\  
	\mathcal S_2&:=&\Span\{I_\cK,B_j\colon j=1,\ldots,g\}\subseteq \Sym_\cK.
\end{eqnarray*}

The following proposition translates the equality $D_{^\text{h}\! L_1}=D_{^\text{h}\! L_2}$ into properties of the unital map $\tau$ mapping from $\cS_1$ to $\cS_2$. Recall from Section 2 that a map $\tau$ is completely isometric if and only if every ampliation $\tau_n$, $n\in \NN$, is an isometry.

\begin{proposition} \label{completely-pos-and-isom}
		Let $L_1= I_{\cH}+\sum_{j=1}^g A_{j}x_j\in \Sym_{\cH}\!\left\langle x\right\rangle$
	and $L_2= I_{\cK}+\sum_{j=1}^g B_{j}x_j\in \Sym_{\cK}\!\left\langle x\right\rangle$
	be monic linear operator pencils. Then $D_{^\text{h}\! L_1}=D_{^\text{h}\! L_2}$ if and only if 
	the unital linear map $\tau:\mathcal S_1 \to \mathcal S_2$, $A_j\mapsto B_j,$
	is well-defined and completely isometric.
\end{proposition}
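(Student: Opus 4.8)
The plan is to route the statement through the dictionary between the inclusions $D_{^\text{h}\! L_i}\subseteq D_{^\text{h}\! L_j}$ and complete positivity of the associated unital maps provided by Theorem \ref{n-pos-of-tau}, and then to invoke the elementary fact from (real) operator system theory that, for a unital map between operator systems, being completely isometric is the same as being a complete order isomorphism onto its range. Note that $\mathcal S_1\subseteq\Sym_\cH$ and $\mathcal S_2\subseteq\Sym_\cK$ are concrete operator systems (unital self-adjoint subspaces), their matrix orders and norms being inherited from $B(\cH)$ and $B(\cK)$, and that $\tau$ is unital and $\ast$-linear whenever it is well-defined.

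\emph{($\Rightarrow$).} Assume $D_{^\text{h}\! L_1}=D_{^\text{h}\! L_2}$. Both inclusions $D_{^\text{h}\! L_1}(1)\subseteq D_{^\text{h}\! L_2}(1)$ and $D_{^\text{h}\! L_2}(1)\subseteq D_{^\text{h}\! L_1}(1)$ hold, so by Lemma \ref{well-defined-map}(2) the map $\tau\colon A_j\mapsto B_j$ and the reverse assignment $B_j\mapsto A_j$ are well-defined unital linear maps; being mutually inverse on the spanning sets (units included), they are mutually inverse bijections, so $\tau^{-1}\colon B_j\mapsto A_j$. Applying Theorem \ref{n-pos-of-tau}(2) to each of the two inclusions, $\tau$ and $\tau^{-1}$ are completely positive, i.e., $\tau$ is a unital complete order isomorphism. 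It remains to deduce complete isometry. I would use the order-theoretic description of the norm in a concrete operator system $\mathcal S$ with unit $e$: for every $n$, a self-adjoint $a\in M_n(\mathcal S)$ satisfies $\|a\|\le t$ iff $-t(I_n\otimes e)\preceq a\preceq t(I_n\otimes e)$, and a general $a\in M_n(\mathcal S)$ satisfies $\|a\|\le t$ iff the $2\times 2$ block matrix with diagonal entries $t(I_n\otimes e)$ and off-diagonal entries $a$ and $a^\ast$ is positive in $M_{2n}(\mathcal S)$. As $\tau$ is unital and $\ast$-linear and both $\tau_m$ and $\tau_m^{-1}$ preserve positivity, these two conditions are preserved by each $\tau_n$, whence $\|\tau_n(a)\|=\|a\|$ for all $n$ and all $a$; thus $\tau$ is completely isometric.

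\emph{($\Leftarrow$).} Assume $\tau$ is well-defined and completely isometric. Its range contains $I_\cK$ and every $B_j$, hence equals $\mathcal S_2$, and complete isometry forces injectivity; so $\tau$ is a unital bijection and $\tau^{-1}\colon\mathcal S_2\to\mathcal S_1$ is again unital and completely isometric. Next I would show that a unital complete isometry is automatically completely positive: if $a\in M_n(\mathcal S_1)$ is self-adjoint with $0\preceq a\preceq t(I_n\otimes e)$, then also $0\preceq t(I_n\otimes e)-a\preceq t(I_n\otimes e)$, hence $\|a\|\le t$ and $\|t(I_n\otimes e)-a\|\le t$; applying the unital isometry $\tau_n$ gives $\|\tau_n(a)\|\le t$ and $\|t(I_n\otimes e')-\tau_n(a)\|\le t$, and the second inequality forces $0\preceq\tau_n(a)\preceq 2t(I_n\otimes e')$, so $\tau_n(a)\succeq 0$. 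Since every positive $a$ lies below $\|a\|(I_n\otimes e)$, this proves $\tau$, and likewise $\tau^{-1}$, is completely positive. In particular $\tau$ is $1$-positive, which says exactly that positivity of $x_0I_\cH+\sum_j x_jA_j$ forces positivity of $x_0I_\cK+\sum_j x_jB_j$, i.e., $D_{^\text{h}\! L_1}(1)\subseteq D_{^\text{h}\! L_2}(1)$; symmetrically $D_{^\text{h}\! L_2}(1)\subseteq D_{^\text{h}\! L_1}(1)$. Theorem \ref{n-pos-of-tau}(2) then upgrades these to $D_{^\text{h}\! L_1}\subseteq D_{^\text{h}\! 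L_2}$ and $D_{^\text{h}\! L_2}\subseteq D_{^\text{h}\! L_1}$, i.e., $D_{^\text{h}\! L_1}=D_{^\text{h}\! L_2}$.

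The only non-formal ingredients are the two operator-system facts invoked above (complete order isomorphism $\Rightarrow$ complete isometry, and unital complete isometry $\Rightarrow$ complete positivity); both are classical over $\CC$ (cf.\ \cite{PAU}), and since the present setting is real I would either include the short self-contained arguments sketched above or cite the real-coefficient analogue, in the same spirit as the real Arveson extension theorem used in Section \ref{LOI-domination-section}. I do not anticipate a serious obstacle beyond recording these facts in the real category.
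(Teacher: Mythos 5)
Your proof is correct, and while the skeleton (route everything through Theorem \ref{n-pos-of-tau}) is the same as the paper's, the mechanism you use in the forward direction is genuinely different. The paper proves complete isometry of $\tau$ from complete positivity of $\tau$ alone together with the geometric fact $\partial D_{^\text{h}\! L_1}=\partial D_{^\text{h}\! L_2}$ (Lemma \ref{hom-equalities-and-borders}, which rests on Lemmas \ref{equality-of-convex-sets} and \ref{boundaries}): a norm-one $T$ gives $W=\left[\begin{smallmatrix} I & T\\ T^\ast & I\end{smallmatrix}\right]\succeq 0$, $W\not\succ 0$, and the boundary equality forces $\tau(W)\not\succ 0$, hence $\|\tau(T)\|=1$. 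You instead get complete positivity of both $\tau$ and $\tau^{-1}$ from the two inclusions via Theorem \ref{n-pos-of-tau}(2) and then invoke the standard operator-system fact that a unital complete order isomorphism is a complete isometry, via the $2\times 2$ block characterization of the norm; this bypasses the three boundary lemmas entirely and makes the two directions of the proposition symmetric instances of the equivalence ``unital complete order isomorphism $\Leftrightarrow$ unital complete isometry.'' In the backward direction you and the paper argue identically, except that where the paper cites \cite[Corollary 7.6]{PAU} (a complex-coefficient reference) you supply the short self-contained order-unit argument; that is a worthwhile addition, since the paper's appeal to the complex literature in a real setting is exactly the kind of point it is elsewhere careful about (e.g.\ the real Arveson theorem from \cite{CZ}), and your sketched arguments do go through verbatim over $\RR$. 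The trade-off: the paper's boundary argument needs only one-sided complete positivity plus the spectrahedral geometry, while your route is more standard operator-systems bookkeeping and shorter in total, at the cost of using both inclusions from the start.
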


 For the proof of the implication $(\Rightarrow)$ we will need two observations. The first is an observation on convex sets (see Lemma \ref{equality-of-convex-sets}) and the second connects the equality of free spectrahedra of homogenizations of monic pencils with the equality of their boundaries (see Lemma \ref{hom-equalities-and-borders}).

For a set $C\subseteq\RR^n$ we write $bC$ for its boundary (in the topology of $\RR^n$).
%For a set $C\subseteq\RR^n$ we write $bC$ for its boundary (in the topology of $\RR^n$).
%
\begin{lemma}\label{equality-of-convex-sets}
		Let $C_1, C_2\subseteq \RR^n$ be closed convex sets, $0\in \Int C_1 \cap \Int C_2$. If $b C_1\subseteq b C_2$ then $C_2\subseteq C_1$.
\end{lemma}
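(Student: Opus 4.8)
The plan is to argue by contradiction, exploiting the elementary fact that the segment from an interior point of a closed convex set to a point outside it meets the boundary at a single, well-controlled point. Suppose there is a point $x\in C_2\setminus C_1$. Since $0\in\Int C_1$ and $x\notin C_1$, set $\bar t=\sup\{t\in[0,1]\colon tx\in C_1\}$. Because $C_1$ contains a ball around $0$, small positive multiples of $x$ lie in $C_1$, so $\bar t>0$; because $x=1\cdot x\notin C_1$, we have $\bar t<1$; and because $C_1$ is closed, $p:=\bar t\,x\in C_1$. If $p$ were in $\Int C_1$, then $(\bar t+\eps)x\in C_1$ for small $\eps>0$, contradicting maximality of $\bar t$; hence $p\in bC_1$.

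By hypothesis $bC_1\subseteq bC_2$, so $p\in bC_2$. On the other hand $p=\bar t\,x+(1-\bar t)\cdot 0$ is a convex combination of the point $x\in C_2$ and the point $0\in\Int C_2$ in which the interior point $0$ carries the positive weight $1-\bar t\in(0,1)$. By the standard convexity fact that $\lambda a+(1-\lambda)b\in\Int K$ whenever $a\in\Int K$, $b\in K$, $\lambda\in(0,1]$ and $K$ is convex, we conclude $p\in\Int C_2$, contradicting $p\in bC_2$. Hence no such $x$ exists and $C_2\subseteq C_1$.

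The only non-routine input is the convexity lemma that pushing a point of $K$ slightly toward $\Int K$ lands in $\Int K$, which follows at once from openness of $\Int K$ together with convexity; and the one point requiring care is ruling out the degenerate possibility that $[0,x]$ never leaves $C_1$, which is precisely why one passes to the supremum $\bar t$ and uses closedness of $C_1$. I expect no genuine obstacle. As an alternative presentation I would record the Minkowski-gauge version: writing $\|x\|_{C_i}=\inf\{s>0\colon x/s\in C_i\}$, each gauge is finite since $0\in\Int C_i$, and $C_i=\{\|x\|_{C_i}\le 1\}$, $\Int C_i=\{\|x\|_{C_i}<1\}$, $bC_i=\{\|x\|_{C_i}=1\}$ because $C_i$ is closed; then $bC_1\subseteq bC_2$ forces $\|x\|_{C_1}=\|x\|_{C_2}$ whenever $\|x\|_{C_1}>0$ (apply the boundary inclusion to $x/\|x\|_{C_1}$ and use positive homogeneity), so for $x\in C_2$ we get $\|x\|_{C_1}=\|x\|_{C_2}\le 1$, i.e.\ $x\in C_1$, the case $\|x\|_{C_1}=0$ being trivial.
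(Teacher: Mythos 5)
Your argument is correct and follows essentially the same route as the paper's proof: take $x\in C_2\setminus C_1$, locate the point where the segment $[0,x]$ meets $bC_1$, and derive a contradiction by showing that this point lies in $\Int C_2$ (the paper does this via the convex hull of a small disk around $0$ and $\{x\}$, you via the standard interior-segment lemma, which is the same idea). Your supremum construction even makes explicit the step the paper only asserts, namely that the segment meets $bC_1$ at some $\mu x$ with $0<\mu<1$.
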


\begin{proof}
		By the way of contradiction, assume $C_2\not\subseteq C_1$ and let 
	$a\in C_2\setminus C_1$. The interval $[0,a]$ intersects $bC_1$
	in $\mu a$ for some $0<\mu<1$. Then $\mu a\in bC_1$. By assumption $\mu a\in bC_2$. Since $0\in \Int C_2$, $C_2$ contains a small disk $D(0,\epsilon)$. Then the convex hull $K$ of the set 
$D(0,\epsilon)\cup \{a\}$ is contained in $C_2$ and $\mu a\in \Int K\subseteq \Int C_2$ contradicting $\mu a \in bC_2$.
\end{proof}

\begin{lemma}\label{hom-equalities-and-borders}
		Let $L_1\in \Sym_{\cH}\!\left\langle x\right\rangle$ and $L_2 \in\Sym_{\cK}\!\left\langle x\right\rangle$
	be monic linear operator pencils. Then $D_{^\text{h}\! L_1}=D_{^\text{h}\! L_2}$ if and only if $\partial D_{^\text{h}\! L_1}=\partial D_{^\text{h}\! L_2}$.
\end{lemma}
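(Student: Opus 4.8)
The plan is to argue one level $n\in\NN$ at a time and to exploit monicity through the distinguished tuple $p_n:=(I_n,0,\dots,0)\in\Sym_n^{g+1}$. Writing $L_k=I_{d_k}+\sum_j A_{k,j}x_j$, so that $^\text{h}\! L_k=I_{d_k}x_0+\sum_j A_{k,j}x_j$, one has the elementary identity
$$ {}^{\text{h}}\!L_k(X+t\,p_n)={}^{\text{h}}\!L_k(X)+t\,(I_{d_k}\otimes I_n)\qquad(X\in\Sym_n^{g+1},\ t\in\RR).$$
From this I would extract two facts. First, ${}^{\text{h}}\!L_k(p_n)=I_{d_k}\otimes I_n\succ0$, and since strict positivity is a perturbation-stable condition, $p_n\in\Int D_{^\text{h}\! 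L_k}(n)$; in particular $D_{^\text{h}\! L_k}(n)$ is a closed convex set with nonempty interior. Second, for every $X$ one has ${}^{\text{h}}\!L_k(X)\succ0$ if and only if ${}^{\text{h}}\!L_k(X-\epsilon p_n)\succeq0$ for some $\epsilon>0$, i.e.\ iff $X-\epsilon p_n\in D_{^\text{h}\! L_k}(n)$ for some $\epsilon>0$. Hence the strict locus $\{X:{}^{\text{h}}\!L_k(X)\succ0\}$ is determined by the set $D_{^\text{h}\! L_k}(n)$ alone, and in fact coincides with $\Int D_{^\text{h}\! L_k}(n)$; consequently
$$ \partial D_{^\text{h}\! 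L_k}(n)=D_{^\text{h}\! L_k}(n)\setminus\Int D_{^\text{h}\! L_k}(n)=bD_{^\text{h}\! L_k}(n),$$
the topological boundary of $D_{^\text{h}\! L_k}(n)$.

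With this in hand the forward implication is immediate: if $D_{^\text{h}\! L_1}=D_{^\text{h}\! L_2}$ then $D_{^\text{h}\! L_1}(n)=D_{^\text{h}\! L_2}(n)$ for each $n$, so by the second fact the strict loci agree at every level, whence $\partial D_{^\text{h}\! L_1}(n)=\partial D_{^\text{h}\! L_2}(n)$ for every $n$.

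For the converse I would fix $n$ and set $C_k:=D_{^\text{h}\! L_k}(n)$, a closed convex subset of the Euclidean space $\Sym_n^{g+1}$. Translating by $-p_n$ gives closed convex sets $C_k-p_n$ with $0\in\Int(C_1-p_n)\cap\Int(C_2-p_n)$ by the first fact, and by the first paragraph $b(C_1-p_n)=\partial D_{^\text{h}\! L_1}(n)-p_n=\partial D_{^\text{h}\! L_2}(n)-p_n=b(C_2-p_n)$. Applying Lemma \ref{equality-of-convex-sets} to the pair $C_1-p_n,\ C_2-p_n$ in both orders then yields $C_1-p_n=C_2-p_n$, i.e.\ $D_{^\text{h}\! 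L_1}(n)=D_{^\text{h}\! L_2}(n)$; since $n$ was arbitrary, $D_{^\text{h}\! L_1}=D_{^\text{h}\! L_2}$.

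The hard part — and the only place monicity is genuinely used — is the identification $\partial D_{^\text{h}\! L_k}(n)=bD_{^\text{h}\! L_k}(n)$: it is precisely because $x_0$ carries the coefficient $I_{d_k}$ that the perturbation $p_n$ strictly increases ${}^{\text{h}}\!L_k$, so that a point with ${}^{\text{h}}\!L_k(X)\succeq0$ but ${}^{\text{h}}\!L_k(X)\not\succ0$ really is approached by tuples outside $D_{^\text{h}\! L_k}(n)$. Everything else is elementary convex geometry carried out separately on each level, so no boundedness, compactness, or operator-algebraic input is needed here.
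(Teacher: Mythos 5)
Your argument is correct and follows essentially the paper's own route: your identification of $\partial D_{^\text{h}\! L_k}(n)$ with the topological boundary $bD_{^\text{h}\! L_k}(n)$ via the perturbation $(I_n,0,\dots,0)$ is exactly Lemma \ref{boundaries}, the forward implication is immediate, and the converse via Lemma \ref{equality-of-convex-sets} applied levelwise in both orders is the same step the paper takes. The only difference is cosmetic: you make explicit the translation by $p_n$ that puts $0$ in the interiors before invoking Lemma \ref{equality-of-convex-sets}, which the paper leaves implicit.
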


\begin{remark}
	Note that for a homogeneous linear pencil  $L\in \Sym_{d}\!\left\langle x\right\rangle$ it is always true that
	$bD_L(n)\subseteq \partial D_L(n)$, but not necessarily $bD_L(n)=\partial D_L(n)$. 
	Taking 
		$$L(x_0)=\left[\begin{array}{cc} 1 & 0\\ 0 & 0 \end{array}\right] x_0$$ 
	we see that 
		$$bD_L(1)=\{0\}\subset \{x_0\in \RR\colon x_0\geq 0\}=\partial D_L(1).$$
	However, if $L$ is monic, then we have $bD_L(n)= \partial D_L(n)$ for every $n\in \NN$ by Lemma \ref{boundaries} below.
\end{remark}

\begin{lemma}\label{boundaries}
	Let $L=I_\cH x_0+\sum_{j=1}^{g}A_j x_j\in \Sym_{\cH}\!\left\langle x\right\rangle$
	be a monic homogeneous linear operator pencil. Then $bD_L(n)=\partial D_L(n)$ for every $n\in \NN$.
\end{lemma}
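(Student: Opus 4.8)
The plan is to derive the lemma by combining the inclusion $bD_L(n)\subseteq\partial D_L(n)$ — valid for every homogeneous linear pencil and recorded in the remark above — with the reverse inclusion $\partial D_L(n)\subseteq bD_L(n)$, which is the place where monicity of $L$ is used. Since $D_L(n)$ is closed, for the reverse inclusion it is enough to show that every $X\in\partial D_L(n)$ is a limit of tuples not belonging to $D_L(n)$; such an $X$ then automatically lies on the topological boundary $bD_L(n)$, and equality follows.

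First I would reformulate the hypotheses $L(X)\succeq 0$, $L(X)\not\succ 0$ for $X=(X_0,X_1,\dots,X_g)\in\Sym_n^{g+1}$ as a property of the quadratic form of the bounded self-adjoint operator $L(X)$ on $\cH\otimes\RR^n$, namely
$$\inf\{\langle L(X)v,v\rangle\;:\;v\in\cH\otimes\RR^n,\ \|v\|=1\}=0.$$
Next I would use that $L$ is monic homogeneous, i.e.\ $A_0=I_{\cH}$: for $\epsilon>0$ set $X^{\epsilon}:=(X_0-\epsilon I_n,\,X_1,\dots,X_g)\in\Sym_n^{g+1}$, so that
$$L(X^{\epsilon})=I_{\cH}\otimes(X_0-\epsilon I_n)+\sum_{j=1}^g A_j\otimes X_j=L(X)-\epsilon\,(I_{\cH}\otimes I_n).$$
Then $\inf\{\langle L(X^{\epsilon})v,v\rangle:\|v\|=1\}=-\epsilon<0$, hence $L(X^{\epsilon})\not\succeq 0$ and $X^{\epsilon}\notin D_L(n)$. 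As $\epsilon\to 0^{+}$ we have $X^{\epsilon}\to X$ in $\Sym_n^{g+1}$ while $X\in D_L(n)$, so $X\in bD_L(n)$; together with the first inclusion this gives $bD_L(n)=\partial D_L(n)$ for every $n$.

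The only step needing any care — rather than a genuine obstacle — is the passage from $L(X)\not\succ 0$ (together with $L(X)\succeq 0$) to the statement that the infimum of the quadratic form on the unit sphere is exactly $0$; this is automatic in finite dimensions and holds in the infinite-dimensional case regardless of whether "$\succ 0$" is read as "bounded below by a positive multiple of the identity" or as "strictly positive quadratic form". It is precisely this vanishing of the bottom of the spectrum, combined with monicity, that makes the uniform shift $X_0\mapsto X_0-\epsilon I_n$ move $X$ out of $D_L(n)$; without monicity this fails, as the example in the remark above shows, where a degenerate block of the coefficient of $x_0$ is left untouched by the shift and $\partial D_L$ becomes strictly larger than $bD_L$.
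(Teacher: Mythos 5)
Your proof is correct and follows essentially the same route as the paper: the reverse inclusion $\partial D_L(n)\subseteq bD_L(n)$ is obtained by the perturbation $X\mapsto (X_0-\epsilon I_n,X_1,\dots,X_g)$, which is exactly the paper's $X-\epsilon Y$ with $Y=(I_n,0,\dots,0)$, using monicity to get $L(X-\epsilon Y)=L(X)-\epsilon\,I_{\cH}\otimes I_n\not\succeq 0$. Your phrasing via limits of points outside $D_L(n)$ and the paper's phrasing via $X\notin\Int D_L(n)$ are equivalent, and your spectral remark justifying the key step is a fine (slightly more explicit) version of what the paper leaves implicit.
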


\begin{proof}
	The nontrivial inclusion is $\partial D_L(n)\subseteq bD_L(n)$. Since 
		$$D_L(n)=\Int D_L(n) \cup bD_L(n),$$ 
	it suffices to prove that 
	if $X\in \partial D_L(n)$ then $X\notin \Int D_L(n)$. Let 
		$$Y=(I_n,0,\ldots,0)\in \Sym_n^{g+1}.$$
	If $X\in \partial D_{L}(n)$, then $L(X)\not\succ 0$ and hence
		$$L(X-\epsilon Y)=L(X)-\epsilon I_d\otimes I_n\not\succeq 0$$ 
	for every $\epsilon>0$. Thus $X\notin \Int D_L(n)$, which concludes the proof.
\end{proof}

\begin{proof}[Proof of Lemma \ref{hom-equalities-and-borders}]
		Since $L_1$ and $L_2$ are monic, the equality  $\partial D_{^\text{h}\! L_1}=\partial D_{^\text{h}\! L_2}$ is equivalent
	to  the equality $b D_{^\text{h}\! L_1}(n)=b D_{^\text{h}\! L_2}(n)$ for every $n\in \NN$, by Lemma \ref{boundaries}. 
	Now the implication ($\Rightarrow$) is obvious, while the implication  ($\Leftarrow$) follows by Lemma \ref{equality-of-convex-sets}.
\end{proof}

\begin{proof}[Proof of Proposition \ref{completely-pos-and-isom}]
	First we will prove the direction $(\Rightarrow).$
	By Lemma \ref{well-defined-map}, the inclusion 
	$D_{^\text{h}\! L_1}(1)\subseteq D_{^\text{h}\! L_2}(1)$ implies that $\tau$ is well-defined.
	 By Theorem \ref{n-pos-of-tau}.(1),  the inclusion $D_{^\text{h}\! L_1}\subseteq D_{^\text{h}\! L_2}$
	implies that $\tau$ is completely positive. 
	
	Finally we prove that the map $\tau$ is completely isometric.	
	Suppose $T\in \mathcal{S}_1^{n\times n}$ has norm one. Then 
		$$W:=\left[\begin{array}{cc} I & T \\ T^\ast & I\end{array}\right]\succeq 0\quad\text{and}\quad W\not\succ 0.$$
	By complete positivity of $\tau$, we have that
		$$\tau(W):=\left[\begin{array}{cc} I & \tau(T) \\ \tau(T^\ast) & I\end{array}\right]\succeq 0.$$
	By Lemma \ref{hom-equalities-and-borders}, 
	$\partial D_{^\text{h}\! L_1}=\partial D_{^\text{h}\! L_2}$ and thus $\tau(W)\not\succ 0$.
	Hence, $\tau(T)$ has norm one. Thus  $\tau$ is completely isometric.

	Now we prove the implication $(\Leftarrow)$, i.e., $D_{^\text{h}\! L_1}= D_{^\text{h}\! L_2}$.
	%First notice, by Lemma \ref{ci-implies-cp}, that the map 
	%$\tau$ is completely positive. Hence
	%$D_{^\text{h}\! L_1}\subseteq D_{^\text{h}\! L_2}$ by Theorem \ref{n-pos-of-tau}.(2). 
%	By Lemma \ref{equality-of-convex-sets}, it suffices to prove that
%	$bD_{^\text{h}\! L_1}(n)\subseteq bD_{^\text{h}\! L_2}(n)$ for every $n\in \NN$.
%	Further on, since $^\text{h}\! L_j$, $j=1,2$, are monic and homogeneous, this is equivalent to 
	%Since $L_1$ and $L_2$ are monic this equality is equivalent to the equality 
	%$\partial D_{^\text{h}\! L_1}=\partial D_{^\text{h}\! L_2}$.
	Since the map $\tau$ is unital completely isometric, the map $\tau^{-1}$ is also unital completely isometric. 
	By \cite[Corollary 7.6]{PAU}, $\tau$ and $\tau^{-1}$ are completely positive. Thus, by Theorem \ref{n-pos-of-tau}.(2), the equality
	$D_{^\text{h}\! L_1}= D_{^\text{h}\! L_2}$ follows.	
\end{proof}

\begin{remark} \label{complete-isometry-and-equality-of-sets}
		The equality $D_{^\text{h}\! L_1}=D_{^\text{h}\! L_2}$ clearly implies
	$D_{L_1}=D_{L_2}$. If $D_{L_1}(1)$ is bounded, then $D_{L_1}=D_{L_2}$ implies	$D_{^\text{h}\! L_1}=D_{^\text{h}\! L_2}$. Hence, for a bounded set 
	$D_{L_1}(1)$ we can replace $D_{^\text{h}\! L_1}=D_{^\text{h}\! L_2}$ by
	$D_{L_1}=D_{L_2}$. Moreover, since closed bounded convex sets are the convex hulls of their boundaries \cite[(3.3) Theorem]{BARV}, we can replace $D_{L_1}=D_{L_2}$ by
	$\partial D_{L_1}\subseteq \partial D_{L_2}$ (This implies $D_{L_1}\subseteq D_{L_2}$ and Lemma \ref{equality-of-convex-sets} implies $D_{L_2}\subseteq D_{L_1}$).
	
			For unbounded sets this cannot be done in general. Take
		$\tilde L_2=L_1\oplus L_2$ and $\tilde L_1=L_1$ where $L_1$ and
		$L_2$ are as in Example \ref{counterexample}. Then we have
		$D_{\tilde L_2}=D_{L_1}=D_{\tilde L_1}$ but
		$(-1,\frac{1}{2},\frac{1}{2})\in 
			D_{^\text{h}\! \tilde L_1}(1)\setminus D_{^\text{h}\! \tilde L_2}(1)$.
\end{remark}

Proposition \ref{Central-proj-and-Silov} below states the properties of the 
$C^\ast$-algebra generated by a $\sigma$-minimal \emph{matrix} pencil. 
This is the direction $(\Rightarrow)$ of 
\cite[Proposition 3.17]{HKM1} without the assumption of 
boundedness of $D_{L_1}(1)$ which is not needed in the proof. (Note also that \textsl{truly linear pencil} in the assumptions of \cite[Proposition 3.17]{HKM1} should be replaced by \textsl{monic linear pencil}.) The result uses Arveson's noncommutative Choquet theory \cite{ARV69, ARV08, ARV10} and to a lesser extent \cite{DM05}.

The following definitions are needed in the statement of Proposition \ref{Central-proj-and-Silov}.
Let 
	$$\cS:= \Span\{I_d,A_j\colon j=1,\ldots,g\}\subseteq \Sym_d$$ 
be the unital linear subspace in $\Sym_d$ and let $C^{\ast}(\cS)$ be the real $C^\ast$-algebra generated by $\cS$ in $M_d(\RR)$.
Let $K$ be the biggest two sided ideal of $C^\ast(\cS)$ such that the natural map
	$$C^{\ast}(\cS)\to C^{\ast}(\cS)/K,\quad a\mapsto \tilde a:=a+K$$
is completely isometric on $\cS$. $K$ is called the \textbf{\v{S}ilov ideal} for $\cS$ in $C^{\ast}(\cS)$.
A \textbf{central projection} $P$ in $C^{\ast}(\cS)$ is a projection $P\in C^{\ast}(\cS)$ such that $PA=AP$ for all $A\in C^{\ast}(\cS)$. A projection $Q$ is a \textbf{reducing projection for} $C^{\ast}(\cS)$ if $QA=AQ$ for all $A\in C^{\ast}(\cS)$. 

\begin{proposition}\label{Central-proj-and-Silov}
	Let $L\in \Sym_d\! \left\langle x\right\rangle$ be a monic linear matrix pencil. If $L$ is $\sigma$-minimal, then
	\begin{enumerate}
		\item Every minimal reducing projection $Q$ is in $C^\ast(\mathcal S)$.
		\item The \v{S}ilov ideal of $C^\ast(\mathcal S)$ is $(0)$.
	\end{enumerate}
\end{proposition}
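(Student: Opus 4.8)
The plan is to adapt the proof of \cite[Proposition 3.17]{HKM1} to the (possibly unbounded) matrix case, relying on the homogenization trick already set up in this section. First I would reduce both statements to facts about the unital operator system $\cS = \Span\{I_d, A_1,\ldots,A_g\}$ and the completely isometric structure it carries. Observe that, by Proposition \ref{completely-pos-and-isom}, the hypothesis that $L$ is $\sigma$-minimal forces the following rigidity: if $Q$ is a reducing projection for $C^\ast(\cS)$, write $\tilde L = Q L Q$ (a subpencil of $L$ acting on $Q\RR^d$) and $\tilde L^\perp = (I-Q)L(I-Q)$. Since $Q$ reduces all the $A_j$, $\sigma$-minimality of $L$ says $D_{\tilde L} \neq D_L$ unless $Q = I$ (and symmetrically for $I-Q$), i.e., $L$ has no proper whole subpencil. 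The technical point here, which I would want to isolate as a lemma, is that $D_L = D_{\tilde L} \cap D_{\tilde L^\perp}$ always, so $\sigma$-minimality is equivalent to: for every nontrivial reducing projection $Q$, $D_{\tilde L}\subsetneq D_L$, equivalently $D_{\tilde L^\perp}\not\subseteq D_{\tilde L}$.

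Next I would bring in Arveson's boundary theory. Recall that the \v{S}ilov ideal $K$ of $\cS$ in $C^\ast(\cS)$ is the largest two-sided ideal on which the quotient $C^\ast(\cS)\to C^\ast(\cS)/K$ restricts to a complete isometry on $\cS$; equivalently $C^\ast(\cS)/K = C^\ast_e(\cS)$ is the (real) $C^\ast$-envelope. For (2) I would argue by contradiction: if $K\neq (0)$, then since $C^\ast(\cS)$ is a finite-dimensional real $C^\ast$-algebra, it decomposes as a finite direct sum of matrix algebras over $\RR$, $\CC$, or $\HH$, and $K$ is a sum of some of these simple summands, corresponding to a central projection $P\in C^\ast(\cS)$ with $K = P C^\ast(\cS)$. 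Then $(I-P)$ is a reducing (indeed central) projection, and the quotient map is just compression by $I-P$; complete isometry of $\cS \to (I-P)\cS(I-P)$ means, via Proposition \ref{completely-pos-and-isom} applied to $L$ and its subpencil $(I-P)L(I-P)$, that $D_{^\text{h}\!L} = D_{^\text{h}\!(I-P)L(I-P)}$, hence (dropping $x_0=1$) $D_L = D_{(I-P)L(I-P)}$, so $(I-P)L(I-P)$ is a proper whole subpencil of $L$, contradicting $\sigma$-minimality. Hence $K=(0)$, proving (2).

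For (1), let $Q$ be a minimal reducing projection for $C^\ast(\cS)$. The goal is $Q\in C^\ast(\cS)$. Here I would use that, since the \v{S}ilov ideal is $(0)$ by part (2), $C^\ast(\cS)$ is already its own $C^\ast$-envelope; in the finite-dimensional real setting this means the identity representation is a direct sum of (finitely many) boundary representations (irreducible representations with the unique extension property), coming from Arveson's theory \cite{ARV69, ARV08, ARV10} together with the real-algebra analysis as in \cite{DM05}. A minimal reducing projection $Q$ projects onto the space of such an irreducible summand; boundary representations have the unique extension property, and a standard argument (the summand cannot be "split off" as a proper whole subpencil precisely because of $\sigma$-minimality) shows the central cover of that summand is actually a projection lying in $C^\ast(\cS)$ itself, so $Q$, being the support projection of that summand of the identity representation, belongs to $C^\ast(\cS)$.

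The main obstacle I anticipate is part (1): tracking the real-$C^\ast$-algebra subtleties in Arveson's boundary-representation machinery (the correct notion of "irreducible" over $\RR$, $\CC$, $\HH$ summands) and checking carefully that $\sigma$-minimality, via the homogenization equivalence in Proposition \ref{completely-pos-and-isom} and the ideal analysis, really does force each minimal reducing projection into $C^\ast(\cS)$ rather than merely into its commutant. Part (2) should be comparatively routine once one knows $\sigma$-minimality $\Leftrightarrow$ "no proper whole subpencil" $\Leftrightarrow$ complete isometry fails for every proper compression, since a nonzero \v{S}ilov ideal would manufacture exactly such a compression. Throughout, the only change from \cite[Proposition 3.17]{HKM1} is that one works with $D_{^\text{h}\!L}$ in place of $D_L$ wherever boundedness was previously invoked, and this substitution is legitimated by the results of Section \ref{LOI-domination-section}.
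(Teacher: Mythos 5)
Your part (2) is essentially the paper's argument and is fine: a nonzero \v{S}ilov ideal of the finite-dimensional real $C^\ast$-algebra $C^\ast(\cS)$ has the form $PC^\ast(\cS)$ for a nonzero central projection $P$, the quotient is compression by $I-P$, and its complete isometry on $\cS$ together with Proposition \ref{completely-pos-and-isom} gives $D_{^\text{h}\! L}=D_{^\text{h}\!\tilde L}$ for the subpencil $\tilde L=(I-P)L(I-P)$; setting $x_0=I$ (this is exactly the dehomogenization step inside Lemma \ref{exten-reduc}.(1)) yields $D_L=D_{\tilde L}$, a proper whole subpencil, contradicting $\sigma$-minimality. (Minor slip: compressions enlarge spectrahedra, so $\sigma$-minimality says $D_L\subsetneq D_{\tilde L}$ for every proper reducing projection, equivalently $D_{\tilde L}\not\subseteq D_{\tilde L^{\perp}}$; you wrote both inclusions in the reverse direction, though this does not affect the argument.)

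Part (1), however, has a genuine gap. Triviality of the \v{S}ilov ideal is a statement about the abstract algebra $C^\ast(\cS)$ and its quotients; it puts no constraint on the multiplicities with which irreducible summands occur in the concrete identity representation on $\RR^d$, and statement (1) is precisely a multiplicity statement: a minimal reducing projection $Q$ (a minimal projection of the commutant) lies in $C^\ast(\cS)$ exactly when the irreducible subrepresentation on $Q\RR^d$ occurs with multiplicity one, in which case $Q$ coincides with the central support of its isotypic block (one also uses that the real commutants $\RR$, $\CC$, $\mathbb{H}$ have no nontrivial idempotents). So (1) does not follow from (2) plus general boundary-representation theory, and your ``standard argument'' skips the only nontrivial step: the central cover is \emph{always} in $C^\ast(\cS)$, and the issue is why $Q$ equals it. What is needed---and what the paper, following the second through fourth paragraphs of the proof of \cite[Proposition 3.17]{HKM1}, actually does---is to invoke $\sigma$-minimality once more: if the irreducible on $Q\RR^d$ were equivalent to another summand, the compression $C^\ast(\cS)\ni A\mapsto A(I-Q)$ would be a faithful $\ast$-homomorphism, hence completely isometric on $\cS$; Proposition \ref{completely-pos-and-isom} then gives $D_{^\text{h}\! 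L}=D_{^\text{h}\!(I-Q)L(I-Q)}$, and dehomogenizing as above produces a proper whole subpencil of $L$, the desired contradiction. The only change relative to \cite{HKM1} is this substitution of the homogenized spectrahedra for the boundedness argument, which is exactly the paper's point; your sketch for (1) never carries this out.
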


\begin{proof}
	The proof is the same to the proof of the implication $(\Rightarrow)$ of \cite[Proposition 3.17]{HKM1} which assumes $D_L(1)$ is bounded. (Note also that \textsl{truly linear pencil} in the assumptions 
	should be replaced by \textsl{monic linear pencil}.)
	We only emphasize that the complete isometry of the mapping $C^{\ast}(\cS)P \ni A \mapsto A(I-Q)$	
	at the end of the second paragraph and Proposition \ref{completely-pos-and-isom} implies that $^\text{h}\! L$ is not $\sigma$-minimal.
	Now by Lemma \ref{exten-reduc}.(1) below, $L$ is not $\sigma$-minimal.
	The same arguments are used also for the contradiction with $\sigma$-minimality of $L$ in the third and fourth paragraphs.
\end{proof}

\begin{remark}
	\begin{enumerate}
		\item[(1)]
		For the implication $(\Leftarrow)$ of \cite[Proposition 3.17]{HKM1}
	one needs the fact that the equality $D_{L}=D_{\tilde L}$ implies 
	the complete isometry of the map $S\mapsto SQ$ where the coefficients
	of $L$ and $\tilde L$ are $I,A_1,\ldots, A_g$ and $Q, QA_1Q,\ldots,
	Q A_g Q$ respectively. By Remark \ref{complete-isometry-and-equality-of-sets} this is not always true for unbounded sets $D_L(1)$. Therefore the proof of the implication $(\Leftarrow)$ of \cite[Proposition 3.17]{HKM1} does not extend to unbounded sets $D_L(1)$. However, this implication is not needed for our purposes. 
		\item[(2)] It is not known if Proposition \ref{Central-proj-and-Silov} extends to operator pencils. 
		However, extending Proposition \ref{Central-proj-and-Silov} to operator
		pencils does not extend Theorem \ref{completely-pos-and-isom} to operator pencils 
	(see \S \ref{non-existence-of-reduct}  and \S \ref{counterexample for the LG}). 
	The proof of Theorem \ref{completely-pos-and-isom} also uses the classification of finite dimensional real $C^{\ast}$-algebras. 
	\end{enumerate}
\end{remark}

The following lemma states the equivalence between the $\sigma$-minimality of a monic linear operator pencil $L$ and its homogenization
$^\text{h}\! L$ and the connection with the $\sigma$-minimality of the extended linear pencil $L\oplus 1$.

\begin{lemma}\label{exten-reduc}
	Let 
		$L(x)= I_{\cH}+\sum_{j=1}^g A_{j}x_j\in \Sym_{\cH}\!\left\langle x\right\rangle$
	be a monic linear operator pencil and $\tilde L= L\oplus 1 \in  \Sym_{\cH\oplus\RR}\!\left\langle x\right\rangle$
	its extended linear pencil. Then:
	\begin{enumerate}
		\item If $L$ is $\sigma$-minimal, then $^\text{h}\! L$ is $\sigma$-minimal.
		\item If $L$ is $\sigma$-minimal and $D_{^\text{h}\! L}\neq D_{^\text{h}\! \tilde L}$, then $^\text{h}\! \tilde L$ is $\sigma$-minimal.
	\end{enumerate}
\end{lemma}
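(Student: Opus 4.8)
For the proof I plan to reduce everything to an analysis of invariant subspaces, using a few elementary facts recorded up front. Write $L=I_\cH+\sum_j A_jx_j$ and $\tilde L=L\oplus 1=I_{\cH\oplus\RR}+\sum_j(A_j\oplus 0)x_j$. (i) The coefficients of ${}^{\text{h}}\!L$ are $I_\cH,A_1,\dots,A_g$, so a subpencil of ${}^{\text{h}}\!L$ is exactly a subpencil of $L$ (both are restrictions to a closed subspace invariant under $A_1,\dots,A_g$, automatically reducing since the $A_j$ are self-adjoint), and ${}^{\text{h}}\!(L|_H)=({}^{\text{h}}\!L)|_H$ for every such $H$. (ii) Since ${}^{\text{h}}\!L(I_n,X)=L(X)$, one has $D_L(n)=\{X:(I_n,X)\in D_{{}^{\text{h}}\!L}(n)\}$. (iii) By the computation in the proof of Theorem~\ref{n-pos-of-tau}, $D_{{}^{\text{h}}\!\tilde L}(n)=\{(X_0,X)\in D_{{}^{\text{h}}\!L}(n):X_0\succeq 0\}$; in particular $D_{{}^{\text{h}}\!\tilde L}\subseteq D_{{}^{\text{h}}\!L}$, and the hypothesis of part (2) says precisely that this inclusion is strict. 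Finally, compressing a pencil to an invariant subspace enlarges its free spectrahedron.

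Part (1) I expect to be immediate: if ${}^{\text{h}}\!L$ is not $\sigma$-minimal, choose a proper invariant $H\subsetneq\cH$ with $D_{({}^{\text{h}}\!L)|_H}=D_{{}^{\text{h}}\!L}$, that is $D_{{}^{\text{h}}\!(L|_H)}=D_{{}^{\text{h}}\!L}$ by (i); evaluating at $X_0=I_n$ and using (ii) gives $D_{L|_H}(n)=D_L(n)$ for all $n$, so $L|_H$ is a proper whole subpencil of $L$, contradicting $\sigma$-minimality of $L$.

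For part (2) I would first record that a $\sigma$-minimal $L$ satisfies $\bigcap_j\ker A_j=\{0\}$: if $W:=\bigcap_j\ker A_j\neq\{0\}$, then $L=I_W\oplus L|_{W^\perp}$ and, since $I_W\otimes I_n\succ 0$, $D_L=D_{L|_{W^\perp}}$, making $L|_{W^\perp}$ a proper whole subpencil. Now assume, for contradiction, that ${}^{\text{h}}\!\tilde L$ has a proper whole subpencil on a closed $(A_j\oplus 0)$-invariant (hence reducing) subspace $H\subsetneq\cH\oplus\RR$, and split into cases. If $H\subseteq\cH\oplus 0$, then $({}^{\text{h}}\!\tilde L)|_H=({}^{\text{h}}\!L)|_H$, so $D_{{}^{\text{h}}\!L}\subseteq D_{({}^{\text{h}}\!L)|_H}=D_{{}^{\text{h}}\!\tilde L}\subseteq D_{{}^{\text{h}}\!L}$, giving $D_{{}^{\text{h}}\!L}=D_{{}^{\text{h}}\!\tilde L}$, a contradiction. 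If $H\not\subseteq\cH\oplus 0$, set $e:=0\oplus 1$ and $H':=H+\RR e$ (closed, invariant, containing $e$); then $H'=N'\oplus\RR e$ with $N':=H'\cap(\cH\oplus 0)$ an $A_j$-invariant subspace of $\cH$, and $({}^{\text{h}}\!\tilde L)|_{H'}={}^{\text{h}}\!(L|_{N'})\oplus x_0$, where $x_0$ denotes the $1\times 1$ pencil with coefficient $1$ at $x_0$. Sandwiching $D_{{}^{\text{h}}\!\tilde L}\subseteq D_{({}^{\text{h}}\!\tilde L)|_{H'}}\subseteq D_{({}^{\text{h}}\!\tilde L)|_H}=D_{{}^{\text{h}}\!\tilde L}$ shows $H'$ is whole, and then comparing the two pencils blockwise and setting $X_0=I_n$ yields $D_{L|_{N'}}(n)=D_L(n)$. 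If $N'\subsetneq\cH$ this contradicts $\sigma$-minimality of $L$; hence $N'=\cH$, so $H'=\cH\oplus\RR$, and since $H$ is proper necessarily $e\notin H$, so $\dim H^\perp=1$. The line $H^\perp$ is then reducing for each $A_j\oplus 0$, hence $H^\perp\subseteq\bigcap_j\ker(A_j\oplus 0)=\bigl(\bigcap_j\ker A_j\bigr)\oplus\RR=\{0\}\oplus\RR$, i.e.\ $H=(\RR e)^\perp=\cH\oplus 0$, contradicting $H\not\subseteq\cH\oplus 0$. This exhausts all cases, so ${}^{\text{h}}\!\tilde L$ is $\sigma$-minimal.

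The step I expect to be the main obstacle is this last case: codimension-one invariant subspaces of $\cH\oplus\RR$ that are \emph{skew} to the direct-sum splitting and so are not of the form (a subpencil of $L$)$\,\oplus\, x_0$. Such subspaces play no role in the matrix analysis of \cite{HKM1}, but they occur here; the device that disposes of them is the preliminary observation $\bigcap_j\ker A_j=\{0\}$ for $\sigma$-minimal $L$, which forces every reducing line for the $A_j\oplus 0$ to lie in $\{0\}\oplus\RR$ and thereby collapses the skew subspace onto $\cH\oplus 0$. (Alternatively, one can check directly that $({}^{\text{h}}\!\tilde L)|_H$ is unitarily equivalent to ${}^{\text{h}}\!L$ via a rescaling supported on $\bigcap_j\ker A_j$, which again forces $D_{{}^{\text{h}}\!L}=D_{{}^{\text{h}}\!\tilde L}$.) One should also handle the notions of properness and codimension with a little care in infinite dimensions, reading the codimension of $H$ as $\dim H^\perp$.
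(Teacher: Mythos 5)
Your proposal is sound and, for part (2), takes a genuinely different route from the paper; part (1) is essentially the paper's argument (restrict the equality of homogeneous spectrahedra to the slice $X_0=I_n$). For (2), the paper also reduces to a proper closed subspace $H_2\subseteq\cH\oplus\RR$ reducing for the $A_j\oplus 0$, but it first shows $P_\cH(H_2)=\cH$ using $\sigma$-minimality of $L$, and then uses the hypothesis $D_{{}^{\text{h}}\!L}\neq D_{{}^{\text{h}}\!\tilde L}$ to produce a point $(-1,\mu_1,\ldots,\mu_g)\in D_{{}^{\text{h}}\!L}(1)\setminus D_{{}^{\text{h}}\!\tilde L}(1)$, hence an invertible operator $\sum_j\mu_jA_j\succeq I_\cH$ whose action maps $H_2$ onto $\cH\oplus 0$, forcing $H_2=\cH\oplus 0$ and so $D_{{}^{\text{h}}\!L}=D_{{}^{\text{h}}\!\tilde L}$, a contradiction. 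You instead split on whether $H\subseteq\cH\oplus 0$, use the hypothesis only in that case, and in the skew case work with $H+\RR e$ and the fact $\bigcap_j\ker A_j=\{0\}$ (a consequence of $\sigma$-minimality that the paper itself invokes later, in Claim 5 of the proof of Theorem \ref{UnbounLG}). Your split makes visible exactly where $D_{{}^{\text{h}}\!L}\neq D_{{}^{\text{h}}\!\tilde L}$ enters and replaces the positive-definite element of $\Span\{A_1,\ldots,A_g\}$ by elementary linear algebra on a codimension-one reducing subspace; the paper's device avoids that codimension analysis altogether.

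One inference in your last step is misstated, though it is repairable with material you already have. A reducing line for the self-adjoint operators $A_j\oplus 0$ is only a joint eigenline, so ``$H^\perp$ reducing $\Rightarrow H^\perp\subseteq\bigcap_j\ker(A_j\oplus 0)$'' is false in general: a common eigenvector of the $A_j$ with nonzero eigenvalue, placed in $\cH\oplus 0$, spans a reducing line outside the joint kernel. To close the gap, write $H^\perp=\RR\,(v_0\oplus\beta)$ and use $e\notin H$, which you have already derived: since $H$ is closed, $e\notin H$ means $\beta=\left\langle v_0\oplus\beta,e\right\rangle\neq 0$, and then $(A_j\oplus 0)(v_0\oplus\beta)=A_jv_0\oplus 0=\lambda_j(v_0\oplus\beta)$ forces $\lambda_j=0$ and $A_jv_0=0$ for every $j$, hence $v_0\in\bigcap_j\ker A_j=\{0\}$, $H^\perp=\RR e$, and $H=\cH\oplus 0$, contradicting the case assumption. (Equivalently: were $\beta=0$, then $e\perp H^\perp$ would give $e\in H$, again a contradiction.) The parenthetical alternative about a unitary equivalence ``supported on $\bigcap_j\ker A_j$'' is vacuous once that intersection is $\{0\}$ and should be dropped.
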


\begin{proof}
	First we prove (1). 
%	Let us assume that $^\text{h}\! L$ is $\sigma$-minimal and prove that $L$ is also $\sigma$-minimal. 
%	By the way of contradiction, assume $\hat L$ is a proper whole subpencil of $L$. Since $^\text{h}\! L$ is $\sigma$-minimal, 
%	we have that $D_{^\text{h}\! \hat L}=D_{^\text{h}\! L}$. In partivular
%	But then $^\text{h}\! \tilde L$ is a proper whole subpencil of $^\text{h}\! L$. A contradiction.
	Assume that $L$ is $\sigma$-minimal and prove that $^\text{h}\! L$ is $\sigma$-minimal. By the way of contradiction, let 
	$^\text{h}\! L$ have a proper whole subpencil
		$$V_1^\ast {^\text{h}\! L}V_1 =I_{H_1}x_0+\hat A_j x_j\quad \text{with} \quad  
			D_{V_1^\ast {^\text{h}\! L} V_1}=D_{^\text{h}\! L},$$
	where $V_1:H_1\to \cH$ is the inclusion of a proper closed subspace $H_1$ into $\cH$. 
	The equality  $D_{V_1^\ast {^\text{h}\! L} V_1}=D_{^\text{h}\! L}$ implies that 
 	$D_{V_1^\ast L V_1}=D_{L}$.
	Hence	$V_1^\ast LV_1$ is a proper whole subpencil of $L$, which contradicts the $\sigma$-minimality of $L$.

	It remains to prove (2).
	If $^\text{h}\! \tilde L$ is not $\sigma$-minimal, then it has 
	a proper whole subpencil 
		$$V_2^\ast {^\text{h}\! \tilde L} V_2=I_{H_2}x_0+\hat A_j x_j\quad\text{with}\quad 
			D_{V_2^\ast {^\text{h}\! \tilde L} V_2 }=D_{^\text{h}\! \tilde L}$$ 
	where $V_2:H_2 \to \cH\oplus \RR$ is the inclusion of a closed subspace $H_2$ into $\cH\oplus \RR$.
	The equality $D_{V_2^\ast {^\text{h}\! \tilde L} V_2 }=D_{^\text{h}\! \tilde L}$ implies that
		\begin{equation}\label{equal-of-sets} D_{V_2^\ast \tilde LV_2}=D_{\tilde L}=D_{L}. \end{equation}

	Let $P_{\cH}:\cH\oplus \RR\to \cH$ be the projection onto $\cH$, i.e.,
		$P_{\cH}(v,\alpha)=v.$\\
	
	\textbf{Claim:} $P_{\cH}(H_2)=\cH$.\\

	For every tuple $X\in \Sym_n^g$ we have
		\begin{eqnarray}
		 (V_2^{\ast}\tilde L V_2) (X)&=& (V_2\otimes I_n)^\ast \tilde L(X)(V_2\otimes I_n) \nonumber\\
			&=&  (V_2\otimes I_n)^\ast (L(X)\oplus I_n) (V_2\otimes I_n) \nonumber\\
			&=& (V_2\otimes I_n)^\ast (L(X)\oplus 0) (V_2\otimes I_n)+(V_2\otimes I_n)^\ast (0\oplus I_n) (V_2\otimes I_n)	
			\nonumber\\
			&\succeq&  (V_2\otimes I_n)^\ast (L(X)\oplus 0) (V_2\otimes I_n). \label{dominance}
		\end{eqnarray}
	Let 
		$V_3:P_{\cH}(H_2)\to \cH$
	be the inclusion of a closed subspace $P_{\cH}(H_2)$ into $\cH$.
	Notice that  
		$$
		V_2(v\oplus \alpha)=V_3(v)\oplus \alpha\quad \text{where }v\oplus \alpha\in H_2 \text{ and }
			v\in P_{\cH}(H_2), \alpha\in \RR.
		$$
	Every vector $f\in H_2\otimes \RR^n$ is of the form $$f=\sum_j(v_j\oplus \alpha_j)\otimes u_j$$ where
	$v_j\in P_{\cH}(H_2), \alpha_j\in \RR$ and $u_j\in \RR^n$ for each $j$. We have
		\begin{eqnarray*}
	&&	\left\langle(V_2\otimes I_n)^\ast (L(X)\oplus 0) (V_2\otimes I_n)f,f\right\rangle_{H_2\otimes \RR^n}=\\
	&=&	\left\langle (L(X)\oplus 0) \sum_{j=1}^k (v_j\oplus \alpha_j)\otimes u_j,
			\sum_{j=1}^k (v_j\oplus \alpha_j)\otimes u_j\right\rangle_{H_2\otimes \RR^n}\\
	&=&	\left\langle L(X) \sum_{j=1}^k v_j\otimes u_j,
			\sum_{j=1}^k v_j\otimes u_j\right\rangle_{P_{\cH}(H_2)\otimes \RR^n}\\
	&=&	\left\langle (V_3\otimes I_n)^\ast L(X) (V_3\otimes I_n) \sum_{j=1}^k v_j\otimes u_j,
			\sum_{j=1}^k v_j\otimes u_j\right\rangle_{P_{\cH}(H_2)\otimes \RR^n}.
		\end{eqnarray*}
	By this calculation and (\ref{dominance}) we conclude that
		\begin{equation} \label{eq-4-for-minimality} D_{V_3^{\ast}L V_3}\subseteq D_{V_2^{\ast}\tilde L V_2}.\end{equation}
	If $P_{\cH}(H_2)\subsetneq \cH$, then by $\sigma$-minimality of $L$ we have
		\begin{equation} \label{eq-3-for-minimality} D_{L}\subsetneq D_{V_3^\ast L V_3}. \end{equation}
	Now (\ref{equal-of-sets}),  (\ref{eq-4-for-minimality}) and (\ref{eq-3-for-minimality}) give us a contradiction
		$$D_L\subsetneq D_{V_3^\ast L V_3}\subseteq D_{V_2^\ast \tilde L V_2} =D_L.$$
	Thus $P_{\cH}(H_2)=\cH$ which proves the claim.\\

	Since by assumption $D_{^\text{h}\! L}\neq D_{^\text{h}\! \tilde L}$, there is $n\in \NN$
	and a tuple $(X_0,\ldots,X_g)\in \Sym_n^{g+1}$ such that
		$$^\text{h}\! L(X)\succeq 0 \quad \text{and}\quad ^\text{h}\!\tilde L(X)=
			^\text{h}\!\!\! L(X)\oplus X_0\not\succeq 0.$$
	Therefore $X_0\not\succeq 0$. So there exists a vector $u\in \RR^n$ such that $\left\langle X_0u,u\right\rangle=-1$.
	Let $V:\cH\oplus \RR u \to \cH\oplus \RR^n$ be the inclusion of $\cH\oplus \RR u$ into $\cH\oplus \RR^n$.
	Then 
		\begin{eqnarray*}
			V^\ast {^\text{h}\! L}(X) V	=
				{^\text{h}\! L}(-1,\left\langle X_1 u,u\right\rangle,\ldots,\left\langle X_g u,u\right\rangle)&\succeq& 0,\\
		{^\text{h}\! \tilde L}(-1,\left\langle X_1 u,u\right\rangle,\ldots,\left\langle X_g u,u\right\rangle)=
			V^\ast {^\text{h}\! L}(X) V\oplus -1&\not\succeq& 0.
		\end{eqnarray*}
	Thus 
 		$$(-1,\mu_1,\ldots,\mu_g):=(-1,\left\langle X_1 u,u\right\rangle,\ldots,\left\langle X_g u,u\right\rangle)
			\in D_{^\text{h}\! L}(1)\setminus D_{^\text{h}\! \tilde L}(1).$$ 
	Hence $\sum_j A_j \mu_j \succ I_{\cH}$ and $\sum_j A_j \mu_j$ is invertible. Therefore
		\begin{equation} \label{invertibility} 
			(\sum_j A_j \mu_j)\cH=\cH.
		\end{equation} 
Using this and the fact that $H_2$ is reducing for every $\tilde A_j$
	we conclude that 
		$$\cH\oplus 0\underbrace{=}_{(\ref{invertibility})} 
			(\sum_j A_j \mu_j)\cH\oplus 0\underbrace{=}_{\text{Claim}}
		(\sum_j  A_j \mu_j)P_\cH(H_2)\oplus 0= (\sum_j  \tilde A_j \mu_j)H_2 \subseteq H_2.$$
	Since $H_2$ is a proper closed subspace of $\cH\oplus\RR$, it follows that $H_2=\cH\oplus 0$. Therefore 
		$$V_2^{\ast}{^\text{h}\! \tilde L} V_2={^\text{h}\! L}$$ 
	is proper whole subpencil of $^\text{h}\! \tilde L$. In particular $D_{^\text{h}\! L}= D_{^\text{h}\! \tilde L}$, which 
	contradicts the assumption in the statement of the lemma.
\end{proof}

\begin{proof}[Proof of Theorem \ref{UnbounLG}]
%	
%		Let us denote by $^\text{h}\! L_1$ and $^\text{h}\! L_2$ the homogenizations of linear matrix pencils $L_1$ and $L_2$, i.e.,
%	$$^\text{h}\! L_1= I_{n}x_0+\sum_{j=1}^g A_{j}x_j, \quad ^\text{h}\! L_2= I_{m}x_0+\sum_{j=1}^g B_{j}x_j.$$
	We will separate two cases. In Case 1 we will establish the theorem under the assumption 
	$D_{^\text{h}\! L_1}=D_{^\text{h}\! L_2}$. In Case 2 we will show that assumming
	$D_{^\text{h}\! L_1}\neq D_{^\text{h}\! L_2}$ leads to a contradiction.\\
	
	\noindent	\textbf{Case 1:} $D_{^\text{h}\! L_1}=D_{^\text{h}\! L_2}$.\\

	By Lemma \ref{exten-reduc}.(1) the linear pencils $^\text{h}\! L_1$ and $^\text{h}\! L_2$ are $\sigma$-minimal.
	By Lemma \ref{completely-pos-and-isom} the unital linear map
		$\tau:\mathcal S_1 \to \mathcal S_2,$ $A_j\mapsto B_j$
	is well-defined, completely isometric $\ast$-isomorphism.
	By Proposition \ref{Central-proj-and-Silov}, the \v{S}ilov ideals for $\cS_1$ in $C^\ast(\cS_1)$ and $\mathcal S_2$ in
	$C^\ast(\cS_2)$ are trivial. 
%	By \cite[Theorem 2.2.5]{ARV69}, $\tau$ is induced by a $\ast$-isomorphism
%		$$\rho:C^\ast(\mathcal S_1)\to C^\ast(\mathcal S_2).$$
	Now the remaining part of the proof is the same as for the proof of \cite[Theorem 3.12]{HKM1}.
%	Since $\rho$ is an isomorphism of $C^\ast$-algebras and $C^\ast(\mathcal S_1)P$ for a minimal central projection 
%	$P$, is a minimal ideal,
%		$$\rho(C^\ast(\mathcal S_1)P)=C^\ast(\mathcal S_2)Q$$
%	for some $Q\in \mathcal Q$. The converse is also true, i.e., for each minimal central projection $Q$ of 
%	$C^\ast(\mathcal S_2)$ there is a unique minimal central projection $P$ such that the last equality holds. Hence, $n=m$.
%	By \cite[Proposition 3.15]{HKM1} and \cite[Remark 3.16]{HKM1} we conclude that the $C^\ast$-isomorphism
%	$\rho(C^\ast(\mathcal S_1)P)=C^\ast(\mathcal S_2)Q$ is implemented by a unitary map $\ran P \to \ran Q$.
	\\

	\noindent\textbf{Case 2:} $D_{^\text{h}\! L_1}\neq D_{^\text{h}\! L_2}$.\\

	Let $\tilde L_j, j=1,2,$ be the extended linear pencils of $L_j$, $j=1,2$, defined by
		$$\tilde L_1:= L_1\oplus 1, \quad \tilde L_2:= L_2\oplus 1.$$
	We have the equalities
		$$D_{\tilde L_1}=D_{L_1}=D_{L_2}=D_{\tilde L_2}.$$

	\noindent \textbf{Claim 1:} $D_{^\text{h}\! \tilde L_1}=D_{^\text{h}\! \tilde L_2}$.\\

	By Theorem \ref{n-pos-of-tau}, the inclusion $D_{L_1}\subseteq D_{L_2}=D_{\tilde L_2}$ 
	implies $D_{^\text{h}\! \tilde L_1}\subseteq D_{^\text{h}\! \tilde L_2}$ and by analogy  
	the inclusion $D_{L_2}\subseteq D_{L_1}=D_{\tilde L_1}$ implies  $D_{^\text{h}\! \tilde L_2}\subseteq D_{^\text{h}\! \tilde L_1}$.
	Hence $D_{^\text{h}\! \tilde L_1}=D_{^\text{h}\! \tilde L_2}$.\\
	
	\noindent \textbf{Claim 2.} $D_{^\text{h}\! \tilde L_\ell} \neq D_{^\text{h}\! L_\ell}$ for $\ell=1,2$.\\

	Since we are in Case 2, $D_{^\text{h}\! L_1} \neq D_{^\text{h}\! L_2}$. It follows that 
		$$D_{^\text{h}\! \tilde L_1} \neq D_{^\text{h}\! L_1}\quad \text{or}\quad D_{^\text{h}\! \tilde L_2} \neq D_{^\text{h}\! L_2}.$$
	By symmetry let us suppose $D_{^\text{h}\! \tilde L_2} \neq D_{^\text{h}\! L_2}$ and prove that
	$D_{^\text{h}\! \tilde L_1} \neq D_{^\text{h}\! L_1}$.
	Assume that $D_{^\text{h}\! \tilde L_1} =D_{^\text{h}\! L_1}$.
	Since  $D_{^\text{h}\! \tilde L_2} \subsetneq D_{^\text{h}\! L_2}$, there exists a tuple 
		$$(X_0,X_1,\ldots,X_g)\in D_{^\text{h}\! L_2}\setminus D_{^\text{h}\! \tilde L_2}$$ 
	where $X_0, X_1,\ldots, X_g\in \Sym_n.$
	Note that $X_0\not\succeq 0$. Let $v\in \RR^n$ be a vector such that $\left\langle X_0 v,v\right\rangle=-1$.
	Hence 
		$$(-1,\left\langle X_1v,v\right\rangle,\ldots,\left\langle X_gv,v\right\rangle) 
			\in D_{^\text{h}\! L_2}(1)\setminus D_{^\text{h}\! \tilde L_2}(1).$$
	Thus $\sum_j B_j \left\langle X_jv,v\right\rangle\succ 0$. 
	Since $D_{L_1}=D_{L_2}$ it follows that 
		$$\sum_j A_j \left\langle X_jv,v\right\rangle\succeq 0.$$
	(Otherwise $\lim_{t\to \infty} I_m+ t\sum_j A_j \left\langle X_jv,v\right\rangle\not\succeq 0$ but 
	 $\lim_{t\to \infty} I_m+ t\sum_j B_j \left\langle X_jv,v\right\rangle\succeq 0$, which is a contradiction.)
	But since 
	$D_{^\text{h}\! L_1}=D_{^\text{h}\! \tilde L_1}$, it follows that 
		$$\sum_j A_j \left\langle X_jv,v\right\rangle\not \succ 0.$$
	Let 
		$$H=\Ran(\sum_j A_j \left\langle X_jv,v\right\rangle).$$ 
	We will prove that 
		$$V^\ast L_1 V,$$
	where $V:H\to \RR^d$ is the inclusion of $H$ into $\RR^d$, is 
	a proper whole subpencil of $L_1$,
	which contradicts to the fact that $L_1$ is $\sigma$-minimal.
	We have to prove that $H$ is reducing for every $A_j$ and that $D_{V^\ast L_1 V}=D_{L_1}$.
	It suffices to prove that $A_j|_{H^{\perp}}=0$.
	Since 
		$$I\otimes 0+ \sum_j B_j \left\langle X_jv,v\right\rangle\pm \epsilon B_j \succeq 0$$ 
	for every $j$ and every $\epsilon>0$ small 
	enough, it follows from $D_{L_1}=D_{L_2}$ and an approximation argument that
	%Let us say $\left\langle A_jv,v \right\rangle =0$ for every $j=1,\ldots,g$.
		\begin{equation}\label{psd} 
			\sum_j A_j \left\langle X_jv,v\right\rangle\pm \epsilon A_j \succeq 0
		\end{equation}
	for every $j$ and every $\epsilon>0$ small 
	enough. Let us take $u\in H^\perp$. By (\ref{psd}) we first conclude that 
		$$\left\langle A_ju,u \right\rangle =0.$$
	and second that
		$$(\sum_j A_j \left\langle X_jv,v\right\rangle\pm \epsilon A_j) u=0.$$ 
	Finally, since $u\in H^\perp$ it follows that
		$$A_ju=0.$$
	Therefore $A_j|_{\cH^\perp}=0$ for every $j=1,\ldots,g$.
	This proves the claim.\\
	
	\noindent \textbf{Claim 3.} Linear pencils $^\text{h}\! \tilde L_\ell$ are minimal for $\ell=1,2$.\\
	
	This follows by Claim 2 and Lemma \ref{exten-reduc}.(2).\\

	Now we define new pencils $\hat L_\ell(x_0,x_1,\ldots,x_g)$ for $\ell=1,2$ by
		$$\hat L_\ell(x_0,x_1,\ldots,x_g):=I_n\oplus 1 + (I_n\oplus 1) x_0 +\sum_j \tilde A_j x_j.$$

	\noindent \textbf{Claim 4.} The following observations are true:
		\begin{enumerate}
			\item\label{obs-1} For $X:=(X_0,X_1,\ldots,X_g)\in (\RR^{n\times n})^{g+1}$ we have 
				$$\hat L_\ell(X_0,X_1,\ldots,X_g)= ^\text{h}\!\tilde{L}_\ell(X_0+I_d,X_1,\ldots,X_g).$$
			\item\label{obs-2} $D_{\hat L_1}=D_{\hat L_2}.$
			\item\label{obs-3}  For $X:=(X_{-1},X_0,X_1,\ldots,X_g)\in (\RR^{n\times n})^{g+2}$ we have 
				$$^\text{h}\! \hat L_\ell(X_{-1},X_0,X_1,\ldots,X_g)=
				^\text{h}\! \tilde L_\ell(X_{-1}+X_0+I_d,X_1,\ldots,X_g).$$
			\item\label{obs-4} $D_{^\text{h}\! \hat L_1}=D_{^\text{h}\! \hat L_2}.$
			\item\label{obs-5} Let 
				\begin{eqnarray*}
					\widetilde{\mathcal S_1}:=\Span\{I_n\oplus 1, \tilde A_j\colon j=1,\ldots,g\}\\
					\widetilde{\mathcal S_2}:=\Span\{I_n\oplus 1, \tilde B_j\colon j=1,\ldots,g\}\\
				\end{eqnarray*}
				and let $\tilde \tau:\widetilde{\mathcal S_1}\to \widetilde{\mathcal S_2}$ be the unital linear map
				defined by 
					$\tilde A_j\mapsto \tilde B_j$.
				Then $\tau$ is well-defined and completely isometric.
			\item\label{obs-6} $\hat L_\ell$ is $\sigma$-minimal for $\ell=1,2$.			\\
		\end{enumerate}

	The statements (\ref{obs-1}) and  (\ref{obs-3}) are clear. By Claim 1,  (\ref{obs-1}) implies  (\ref{obs-2}) and 
	 (\ref{obs-3}) implies  (\ref{obs-4}). 
	 (\ref{obs-5}) follows by Lemma \ref{completely-pos-and-isom}. Let us prove  (\ref{obs-6}).
	Let us say $\hat L_{\ell}$ is not $\sigma$-minimal. Hence,
				$D_{\hat L_\ell}=D_{V^{\ast} \hat L_\ell V}$ where $V:H\to\RR^{d+1}$ is
				the inclusion of a proper reducing subspace $H$ of every $\tilde A_j$
				into $\RR^{d+1}$. But then since
					$$V^{\ast} \hat L_\ell(x_0-1,x_1,\ldots,x_g) V=
						V^\ast {^\text{h}\! \tilde L}_\ell(x_0,x_1,\ldots,x_g) V,$$
				we get 
					$$D_{^\text{h}\! \tilde L_\ell}=D_{V^{\ast} {^\text{h}\! \tilde L_\ell} V},$$ 
				which is a contradiction with $^\text{h}\! \tilde L_\ell$ being $\sigma$-minimal.\\

	Now, by the same proof as in the Case 1, we conclude that
	$d+1=e+1$ and there is a unitary operator 
	$\tilde U:\RR^{d+1}\to \RR^{d+1}$ such that
			$$
				\hat L_2 = \tilde U^\ast \hat L_1\tilde U,\quad\text{and whence}\quad
				\hat L_1 = \tilde U \hat L_2\tilde U^\ast.
			$$
	By  (\ref{obs-1}) of Claim 4, this implies
			\begin{equation}\label{unitary-equivalence}
			\tilde L_2 = \tilde U^\ast \tilde L_1\tilde U\quad\text{and whence}\quad
				\tilde L_1 = \tilde U \tilde L_2\tilde U^\ast.
			\end{equation}
	Write $U:\RR^{d+1}\to \RR^{d+1}$ in the form
		$$\left[\begin{array}{cc}  U_{11} & u_{12}\\ u_{21} & u_{22}\end{array}\right],$$
	where $U_{11}:\RR^d\to \RR^d$, $u_{12}, u_{21}$ are vectors in $\RR^d$ and $u_{22}$ is a real number.
	Plugging this in the equations above we get
		\begin{eqnarray*}
			L_2 &=&  U_{11}^\ast  L_1 U_{11}+u_{21}^{t}u_{21}\\
			L_1 &=& U_{11} L_2 U_{11}^\ast+u_{12}u_{12}^t.
		\end{eqnarray*} 
	Hence 
		$$L_1=U_{11}U_{11}^\ast L_1 U_{11}U_{11}^\ast + 
			U_{11}  u_{21}^t u_{21} U_{11}^\ast+u_{12}u_{12}^t.$$
	Thus for 
		$$W=U_{11}U_{11}^\ast,\quad \text{and}\quad S=U_{11}  u_{21}^t u_{21}
			 U_{11}^\ast+u_{12}u_{12}^t\succeq 0$$
	we get
		\begin{equation}\label{eq-for-L-1}
   			L_1=W^\ast L_1 W+S.
		\end{equation}

	\noindent \textbf{Claim 5.} The matrix $W$ is an isometry, i.e., $W^\ast W=I_d$. In particular,
		$$I_d=U_{11}U_{11}^\ast=U_{11}^\ast U_{11},\quad 0=u_{21}=u_{12}.$$

	From (\ref{eq-for-L-1}) it follows that 
		\begin{eqnarray*}
			A_j&=& W^\ast A_j W\quad \text{for every } j=1,\ldots,g,\\
			I_d&=& W^\ast W + S.
		\end{eqnarray*}
	 If $W^\ast W\neq I_d$, then if
		$$1\geq \lambda_1\geq \ldots\geq \lambda_d\geq 0$$
	are the eigenvalues of $W^\ast W$, we have $1>\lambda_d$.
	Since $1\geq \|W^\ast W\|=\|W\|^2$, it follows that $1\geq \|W\|$. 
	Let $\mu_{1},\ldots,\mu_d$ be the eigenvalues of $W$. From $1\geq \|W\|$ it follows that
	$|\mu_j|\leq 1$ for every $j=1,\ldots,d$.
	From
		$$1>\lambda_1\cdots \lambda_d=\det(W^\ast W)=\det(W)^2=|\mu_1|\cdots |\mu_d|$$
	it follows that there is an eigenvalue $\mu$ of $W$ such that $|\mu|<1$. 
	But then there is an eigenvector $v\in \CC^d$ such that $Wv=\mu v$.
	Since	$A_j=W^\ast A_j W$, we have 
		$$A_j=(W^k)^\ast A_j W^k\quad \text{for every  } k\in \NN.$$
	Therefore $A_jv=(W^k)^\ast A_j W^kv$ for every $k\in \NN$.
	But 
		$$\lim_{k\to \infty} \|(W^k)^\ast A_j W^kv\|\leq 
			\lim_{k\to \infty} \|W^\ast\|^k \|A_j\| \|W^kv\|\leq
			\|A_j\| \lim_{k\to \infty} |\mu|^k \|v\|=0.$$
	Hence 
		$$A_j v=0 \quad \text{for every }j.$$
	Since $L_1$ is $\sigma$-minimal, we conclude that $\cap_{j=1}^g \ker A_{j}=\{0\}$. But this is in contradiction with 
	$v\in \cap_{j=1}^g \ker A_{j}$. Hence $W^\ast W= I_d$
	and the claim is proved. \\

	By Claim 5, $L_2=U_{11}^\ast L_1 U_{11}$ for a unitary $U_{11}$. Hence $D_{^\text{h}\! L_1}=D_{^\text{h}\! L_2}$, which 
	contradicts the assumption of Case 2.
\end{proof}

\subsection{Non-existence of a $\sigma$-minimal whole subpencil} \label{non-existence-of-reduct}
Example \ref{without-reduct-subpencil} below shows that in contrast with a matrix pencil, an operator pencil does not necessarily have
a whole subpencil which is $\sigma$-minimal.

\begin{example}\label{without-reduct-subpencil}
	Let 
		$$L(x)=I_{\ell^2}+ \diag(\frac{n}{n+1})_{n\in\NN}  \; x \in B(\ell^2)$$ 
be a diagonal linear operator pencil on $\ell^2$. We claim that 
		$$D_{L}(m)=\{X\in \Sym_m \colon X\succeq -I_{\ell^2}\}$$ 
and there does not exist a $\sigma$-minimal whole subpencil of $L$.
	
	Let $\{e_{i}\}_{i\in \NN}$ be the standard basis of $\ell^2$, $e_i$ has 1 in the $i$-th coordinate and 0 elsewhere.
	
	First we will prove the claim about $D_L(m)$. Let $X\in \Sym_m$. 
	Since
		\begin{eqnarray*}
				\left\langle L(X)(e_i\otimes v),e_j\otimes u\right\rangle 
			&=&  \left\langle e_i\otimes v+ \frac{i}{i+1} e_i \otimes Xv ,e_j\otimes u\right\rangle\\
			&=&
		  		\left\langle e_i ,e_j\right\rangle_{\ell^2} \left\langle v ,u\right\rangle_{\RR^m}+
  			  	\left\langle \frac{i}{i+1}e_i ,e_j\right\rangle_{\ell^2} \left\langle Xv, u\right\rangle_{\RR^m}\\
			&=&	 0
		\end{eqnarray*}
	for every $i\neq j$ and every $u,v \in \RR^m$, $X\in D_L(m)$ if and only if 
		\begin{eqnarray*}
			0
			&\leq&\left\langle L(X)(e_i\otimes u),e_i\otimes u\right\rangle =
			 	\left\langle u ,u\right\rangle_{\RR^m}+\frac{i}{i+1} \left\langle Xu ,u\right\rangle_{\RR^m}\\
			&=& 	\left\langle (I_m+\frac{i}{i+1}X)u,u\right\rangle_{\RR^m}.
		\end{eqnarray*}
	for every $i\in \NN$ and every $u\in \RR^m$. This is equivalent to $I_m+\frac{i}{i+1}X\succeq 0$ for every $i\in \NN$
	and further on to $X\succeq -\frac{i+1}{i}I_m$ for every $i\in \NN$. Hence, $X\in D_{L}(m)$ if and only if $X\succeq -I_m$.
	
	To  prove that $L$ does not have a $\sigma$-minimal whole subpencil let us first argue that the only reducing closed subspaces of
	the operator $\Omega_1:=\diag(\frac{n}{n+1})$ are the subspaces $H$ with the orthonormal basis of the form
		\begin{equation} \label{form-of-H}
			\{e_{i_j}\colon i_j\in \NN, j\in \NN\}.
		\end{equation}
	For $i\in \NN$, let $e_i=h_1+h_2$ where $h_1\in H$ and $h_2\in H^\perp$.
	From
		$$\frac{i}{i+1}e_i=\underbrace{\frac{i}{i+1}h_1}_{H}+
						\underbrace{\frac{i}{i+1}h_2}_{H^\perp}=
			\Omega_1(e_i)=\Omega_1(h_1+h_2)=
					\underbrace{\Omega_1(h_1)}_{H}+
					\underbrace{\Omega_1(h_2)}_{H^\perp},$$
	it follows that 
		$$\Omega_1(h_i)=\frac{i}{i+1}h_i\quad \text{for }i=1,2.$$
	Since the eigenspace corresponding to the eigenvalue $\frac{i}{i+1}$ of
	$\Omega_1$ is $\Span\{e_i\}$ (one dimensional), it follows that $e_i=h_1$ or $e_i=h_2$. Hence,
	$e_i\in H$ or $e_i\in H^\perp$. Thus the orthonormal basis of $H$ is of the form 
	(\ref{form-of-H}).

	Therefore:
	\begin{enumerate}
		\item Subpencils of $L$ are of the form $V^\ast L V$, where 
			$V$ is the inclusion of some subspace $H$ with the orthonormal basis
			$\{e_{i_j}\colon i_j\in \NN, j\in \NN\}$ into $\ell^2$.
		\item A subpencil $V^\ast L V$ is whole if and only if the sequence $(i_j)_j$ diverges.
		\item The whole subpencil $V^\ast L V$ is not $\sigma$-minimal, since it has a whole subpencil 
				$\tilde V^\ast V^\ast L V \tilde V,$
			where $\tilde V$ is the inclusion 
				$\tilde V:H_1 \hookrightarrow H$
			of the subspace $H_1$ with the orthonormal basis
			$\{e_{i_j}\colon i_j\in \NN, j\in \NN\}\setminus \{e_{i_1}\}$ into $H$.
	\end{enumerate}
\end{example}

\subsection{Counterexample to the operator linear Gleichstellensatz} \label{counterexample for the LG}
By Example \ref{not-unitarily-equivalent} below $\sigma$-minimal \emph{operator} pencils with the same free Hilbert spectrahedron are not
necessarily unitarily equivalent. Hence Theorem  \ref{UnbounLG} does not extend from matrix to operator pencils.

Example  \ref{not-unitarily-equivalent} is constructed by the use of an outer $\ast$-automorphism \cite{ARC} of the Cuntz $C^{\ast}$-algebra $C^\ast(S_1,S_2)$  \cite{CUN} generated by the isometries $S_1,S_2\in B(\cH)$ on a Hilbert space $\cH$ such that
	$S_1S_1^\ast+S_2S_2^\ast=\id_\cH.$
Recall that a $\ast$-automorphism $\theta$ is \textit{outer} if there does not exist a unitary 
$U\in C^\ast(S_1,S_2)$
such that $\theta(A)=U^\ast AU$ for all $A\in C^\ast(S_1,S_2)$.

\begin{example} \label{not-unitarily-equivalent}
	Let $\NN=\{1,2,\ldots\}$ and let $e_i$ be a standard unit vector on a complex Hilbert space $\ell^2:=\ell^2(\NN)$, i.e., the only nonzero coordinate is the $i$-th one which is 1. 
	Let $S_1$ and $S_2$ be bounded operators on $\ell^2$ defined by $e_i\mapsto e_{2i-1}$ for $i\in \NN$ and
	$e_i\mapsto e_{2i}$ for $i\in\NN$ respectively. The $C^{\ast}$-algebra $C^{\ast}(S_1,S_2)$ was studied by
	Cuntz \cite{CUN}. He showed that there is a unique $\ast$-isomorphism 
		$$\theta:C^{\ast}(S_1,S_2)\to C^{\ast}(S_1,S_2)$$
	such that 
		$$\theta(S_1)=S_2,\quad \theta(S_2)=S_1.$$
	We claim that linear operator pencils 
		\begin{eqnarray*}
			L_1(x)
			&=& I_{\ell^2} + A_1 x_1 + A_2 x_2 + A_3 x_3 + A_4 x_4,\\
			L_2(x)
			&=& I_{\ell^2} + A_2 x_1 + A_1 x_2 + A_4 x_3 + A_3 x_4,
		\end{eqnarray*}
	where
		\begin{eqnarray*}
			A_1 &:=& S_1+S_1^\ast\in B(\ell^2), \quad
			A_2 :=S_2+S_2^\ast \in B(\ell^2),\\
			A_3 &:=& i(S_1-S_1^\ast)\in B(\ell^2), \quad
			A_4 := i(S_2-S_2^\ast)\in B(\ell^2),\\
		\end{eqnarray*}
%		\begin{eqnarray*}
%			A_1 &:=& \left[\begin{array}{cc} S_1+S_1^\ast & 0 \\ 0 & S_1 +S_1^\ast \end{array}\right]\in 
%				B(\ell^2\oplus \ell^2), \\
%			A_2 &:=& \left[\begin{array}{cc} S_2+S_2^\ast & 0 \\ 0 & S_2 +S_2^\ast \end{array}\right]\in 
%				B(\ell^2\oplus \ell^2), \\
%			A_3 &:=& \left[\begin{array}{cc} 0 & S_1-S_1^\ast \\ -S_1+S_1^\ast & 0 \end{array}\right]\in 
%				B(\ell^2\oplus \ell^2), \\
%			A_4 &:=& \left[\begin{array}{cc} 0 & S_2-S_2^\ast \\ -S_2+S_2^\ast & 0 \end{array}\right]\in 
%				B(\ell^2\oplus \ell^2),\\
%		\end{eqnarray*}
	are $\sigma$-minimal pencils with $D_{L_1}=D_{L_2}$, but there is no unitary operator $U:\ell^2\to \ell^2$ such that
		\begin{equation}\label{unitary-eq-L-2-to-L-1}
			 L_2=U^\ast L_1 U\quad \text{or} \quad L_2=U^\ast \overline{L_1} U.
		\end{equation}
	
	\noindent \textbf{Claim 1.} $D_{L_1}=D_{L_2}$. \\

	Clearly, the $C^\ast$-algebra 
		$$\cA:= C^{\ast}(A_1,A_2,A_3,A_4)$$ 
	generated by $A_j$, $j=1,2,3,4$, equals to
	 $C^{\ast}(S_1,S_2)$.
	Hence $\theta$ maps $L_1$ to $L_2$ and $L_2$ to $L_1$.
	From $\theta(L_1)=L_2$, it follows that $D_{L_1}\subseteq D_{L_2}$ and similarly $\theta(L_2)=L_1$ implies 
	$D_{L_2}\subseteq D_{L_1}$. Thus $D_{L_1}=D_{L_2}$.\\

	\noindent \textbf{Claim 2.} $L_1$ and $L_2$ are $\sigma$-minimal. \\
	
	It is sufficient to prove that there is no common
	reducing subspace for the operators $A_1, A_2, A_3, A_4$. 
	Let us say that $H$ is their common reducing subspace. Then it is also reducing for the
	operators
		$$\frac{A_1-iA_3}{2}=S_1,\quad \text{and}\quad \frac{A_2-iA_4}{2}=S_2.$$
	By the proof of \cite[Theorem 1]{ARC}, $S_1$ and $S_2$ have no common proper reducing subspaces.
	Hence $L_1$ and $L_2$ are $\sigma$-minimal.\\

	\noindent\textbf{Claim 3.} There does not exist a unitary operator $U:\ell^2\to \ell^2$ satisfying 
	(\ref{unitary-eq-L-2-to-L-1}).\\

	If there would exist a unitary operator $U:\ell^2\to \ell^2$ satisfying (\ref{unitary-eq-L-2-to-L-1}), then in particular
		\begin{equation}\label{A-3-to-A-4}
			A_4=U^\ast A_3 U \quad \text{or}\quad A_4=U^\ast \overline{A_3} U.
		\end{equation}
	We will prove that $\ker{A_3}=\ker{\overline{A}_3}\neq \{0\}$ and $\ker{A_4}=\{0\}$ which contradicts to
	(\ref{A-3-to-A-4}).
	Note that $S_1^\ast$ and $S_2^\ast$ are bounded operators on $\ell^2$ defined by 
		$$e_{2i-1}\mapsto e_i, \; e_{2i}\mapsto 0 \quad \text{for } i\in \NN\quad\text{and}\quad
		e_{2i-1}\mapsto 0,\; e_{2i}\mapsto e_i  \quad \text{for } i\in \NN,$$
	respectively.  
	Hence, 	
		$$A_3 e_1=i(S_1-S_1^\ast)e_1=0=-i(S_1-S_1^\ast)e_1=\overline{A_3}e_1\quad \Rightarrow\quad e_1\in \ker{A_3}=\ker(\overline{A_3}).$$
	It remains to prove that $\ker{A_4}=\{0\}.$ Let us say
		$$f:=\sum_{j=1}^{\infty}\alpha_{j} e_{j}\in \ker{A_4}\quad \text{where }\alpha_{j}\in \CC\; \text{for all }j\in \NN.$$
	We define $e_{\frac{2k-1}{2}}=0$ for every $k\in \NN$. We have
		\begin{equation} \label{A4-eq}
			A_{4}f=i\sum_{j=1}^{\infty}\alpha_{j} e_{2j}-i\sum_{j=1}^{\infty}\alpha_{j} e_{\frac{j}{2}}=0.
		\end{equation}
	If $\alpha_{j_0}\neq 0$ for some $j_0\in \NN$, then it follows from (\ref{A4-eq}) inductively that 
		$$\alpha_{j_0}=\alpha_{4j_0}=\alpha_{16j_0}=\ldots=\alpha_{4^nj_0}=\ldots .$$
	But then $\|f\|=\infty$ and hence $f\notin \ell^2.$ Therefore $f=0$ and $\ker{A_4}=\{0\}.$
\end{example}

\section{Operator Positivstellensatz for multivariate matrix polynomials } \label{positivstellensatz-multivariable}

In this section we characterize multivariate matrix polynomials that are positive semidefinite on a free Hilbert spectrahedron (see Theorem \ref{Oper-to-mat-intro} above and its restatement Theorem \ref{CPsatz-matrix} below) and a free Hilbert spectrahedrop (see Theorem \ref{CPsatz-matrix-drops}). Under the assumption of finite-dimensional Hilbert spaces Theorem \ref{Oper-to-mat-intro} was proved in \cite{HKM2} by modifying the classical Putinar-type separation argument. By essentially using Corollary \ref{OperLP} we are able to apply the separation argument also for infinite dimensional Hilbert spaces $\cH$. Precisely, we use Corollary \ref{OperLP} to prove that a certain set of nc matrix polynomials, i.e., 
the truncated quadratic module generated by an operator pencil, is closed. If $\cH$ is finite-dimensional, its closedeness follows by Caratheodory's theorem and a compactness argument in $\RR^\nu$, while for infinite-dimensional $\cH$ the compactness argument only works after translating the question to finite dimensions by Corollary \ref{OperLP}.

\subsection{Restatement of Theorem \ref{Oper-to-mat-intro}}

To prove Theorem \ref{Oper-to-mat-intro} we have to refine its statement. For this sake we introduce some definitions.

For 
$P\in \RR^{\ell\times \nu}\!\left\langle x\right\rangle$, an element of the form 
$P^\ast P\in \RR^{\ell\times \nu}\!\left\langle x\right\rangle$ is caled a \textbf{hermitian square}. Let $\Sigma^\nu$
denote the cone of sums of squares of $\nu\times\nu$ matrix-valued polynomials, and, given a nonnegative integer $N$, let
$\Sigma_N^\nu\subseteq \Sigma^\nu$ denote sums of squares of polynomials of degree at most $N$. Thus elements of $\Sigma^\nu_N$ have degree at most $2N$, i.e., $\Sigma^\nu_N\subseteq \RR^{\nu\times\nu}\!\left\langle x\right\rangle_{2N}$. Since the highest order terms in a sum of squares cannot cancel, we have
	$\RR^{\nu\times\nu}\!\left\langle x\right\rangle_{2N}\cap \Sigma^\nu=\Sigma^\nu_N.$

Fix $\nu\in\NN$. Let $\tilde \Pi^{\cH}_{\nu}$ be the set of all triples $(\cK,\pi,V)$, where $\cK$ is a separable real Hilbert space, $V:\RR^{\nu}\to \cK$ an isometry and
$\pi:B(\cH)\to B(\cK)$ a $\ast$-homomorphism.

%Let the \textbf{$\nu$-th quadratic module} generated by $L$ with degree bounds $\alpha, \beta$ be the  
%set
%	\begin{eqnarray*}
%		M_{L,\alpha,\beta}^{\nu}
%					&:=&  \{\sum_j A_j^\ast A_j + \sum_k B_k^\ast 
%		\pi^{(\cK)}(\left[\begin{array}{cc} L & 0 \\ 0 & 1\end{array}\right])B_k \colon \\
%					&& A_j\in \RR^{\nu\times\nu}
%							\left\langle x\right\rangle,
%					 	B_k \in B(\RR^{\nu},\cK)\left\langle x\right\rangle, 
%						\pi^{(\cK)}: B(\cH\oplus \RR)\to B(\cK)\\ 
%					&& \ast\text{-homomorphism}, 
%					\deg(A_j)\leq \alpha,\deg(B_k)\leq \beta\}.
%	\end{eqnarray*}
Let $L\in \Sym_\cH\! \left\langle x\right\rangle$ be a monic linear operator pencil. Given $\nu_1,\nu_2,\alpha,\beta\in \NN$, we define the \textbf{$(\nu_1,\nu_2; \alpha,\beta)$ truncated quadratic module generated by $L$}, 
	$$
		M_{\alpha,\beta}^{\nu_1,\nu_2}(L)
					:=  \Sigma^{\nu_1}_\alpha+ \left\{\sum_{k,(\cK_k,\pi_k,V_k)\in \tilde \Pi^{\cH\oplus \RR}_{\nu_2}}^{ \text{finite}}  B_k^\ast V_k^\ast \pi_k(\left[\begin{array}{cc} L & 0 \\ 0 & 1\end{array}\right]) V_k B_k\colon
		B_k \in \RR^{\nu_1\times \nu_2}\!\left\langle x\right\rangle_\beta
		\right\}.
	$$
In the case $D_L(1)$ is a bounded set, we can replace $\ast$-homomorphisms of the extended pencil $L\oplus 1$ by
$\ast$-homomorphisms of $L$ in the definition of the truncated quadratic module.

\begin{proposition}	 \label{bounded-trunc-module}
	If $D_L(1)$ is a bounded set, then:
	$$
		M_{\alpha,\beta}^{\nu_1,\nu_2}(L)
					=\Sigma^{\nu_1}_\alpha+ \left\{\sum_{k,(\cK_k,\pi_k,V_k)\in \tilde \Pi^\cH_{\nu_2}}^{ \text{finite}}  B_k^\ast V_k^\ast \pi_k(L) V_k B_k\colon
		B_k \in \RR^{\nu_1\times \nu_2}\!\left\langle x\right\rangle_\beta
		\right\}.
	$$
\end{proposition}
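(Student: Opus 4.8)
The plan is to prove the two inclusions of the asserted equality separately. Write $M:=M^{\nu_1,\nu_2}_{\alpha,\beta}(L)$ for the module built from the extended pencil $L\oplus 1$ and $\ast$-homomorphisms of $B(\cH\oplus\RR)$, and $M'$ for the right-hand side, built from $L$ and $\ast$-homomorphisms of $B(\cH)$. Since $\Sigma^{\nu_1}_{\alpha}$ is a summand of both, it is enough to match the weighted parts, and I would do so by rewriting each weighted generator of one module as a weighted generator of the other (possibly plus an element of $\Sigma^{\nu_1}_{\alpha}$). In both directions the mechanism will be the same: produce a completely positive map carrying one of the two pencils to the other, precompose the relevant representation with it, and then apply Stinespring's theorem in the real form used for Corollary \ref{OperLP}, together with the routine reductions already employed there (separability of the dilation space, passage to a unital $\ast$-homomorphism, and replacing a contraction by an isometry via a direct-sum-with-zero dilation).

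\emph{The inclusion $M'\subseteq M$ (this direction should need neither boundedness nor anything special about $L\oplus 1$).} I would let $J\colon\cH\hookrightarrow\cH\oplus\RR$ be the inclusion isometry, so that $\Psi(T):=J^\ast T J$ is a unital completely positive map $\Psi\colon B(\cH\oplus\RR)\to B(\cH)$ with $\Psi(L\oplus 1)=L$. Given $(\cK,\rho,V)\in\tilde\Pi^{\cH}_{\nu_2}$, the composition $\rho\circ\Psi$ is completely positive with $(\rho\circ\Psi)(L\oplus 1)=\rho(L)$, so Stinespring (and the reductions above) yields a separable real Hilbert space $\cK_2$, an isometry $W\colon\cK\to\cK_2$ and a unital $\ast$-homomorphism $\pi\colon B(\cH\oplus\RR)\to B(\cK_2)$ with $\rho(L)=W^\ast\pi(L\oplus 1)W$. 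Then for every $B\in\RR^{\nu_1\times\nu_2}\!\left\langle x\right\rangle_{\beta}$,
\[
B^\ast V^\ast\rho(L)VB=B^\ast(WV)^\ast\pi(L\oplus 1)(WV)B\in M,
\]
because $WV\colon\RR^{\nu_2}\to\cK_2$ is an isometry and $(\cK_2,\pi,WV)\in\tilde\Pi^{\cH\oplus\RR}_{\nu_2}$.

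\emph{The inclusion $M\subseteq M'$ (here boundedness will be used).} Since $D_{L\oplus 1}=D_L$, the inclusion $D_L\subseteq D_{L\oplus 1}$ is trivial, and $D_L(1)$ is bounded; so Corollary \ref{OperLP}(3), applied to the pair $(L,L\oplus 1)$, provides a separable real Hilbert space $\cK_0$, an isometry $V_0\colon\cH\oplus\RR\to\cK_0$ and a unital $\ast$-homomorphism $\sigma\colon B(\cH)\to B(\cK_0)$ with $L\oplus 1=V_0^\ast\sigma(L)V_0$. Hence $\Phi(A):=V_0^\ast\sigma(A)V_0$ is a unital completely positive map $\Phi\colon B(\cH)\to B(\cH\oplus\RR)$ with $\Phi(L)=L\oplus 1$. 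For a weighted generator $B^\ast V^\ast\pi(L\oplus 1)VB$ of $M$, with $(\cK,\pi,V)\in\tilde\Pi^{\cH\oplus\RR}_{\nu_2}$, I would write $\pi(L\oplus 1)=(\pi\circ\Phi)(L)$ and apply Stinespring to the completely positive map $\pi\circ\Phi\colon B(\cH)\to B(\cK)$, obtaining a separable real Hilbert space $\cK_1$, an isometry $W_1\colon\cK\to\cK_1$ and a unital $\ast$-homomorphism $\tau\colon B(\cH)\to B(\cK_1)$ with $\pi(L\oplus 1)=W_1^\ast\tau(L)W_1$. Then
\[
B^\ast V^\ast\pi(L\oplus 1)VB=B^\ast(W_1V)^\ast\tau(L)(W_1V)B
\]
lies in the weighted part of $M'$, since $W_1V$ is an isometry and $\deg(B)\le\beta$ is preserved; combining the two inclusions gives $M=M'$.

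The hard point is the second inclusion, and specifically locating where boundedness is genuinely indispensable: it is needed only to invoke Corollary \ref{OperLP}(3), that is, to realize $L\oplus 1$ itself as a compression $V_0^\ast\sigma(L)V_0$ of a $\ast$-homomorphic image of $L$ with $V_0$ an \emph{isometry} and with no positive semidefinite remainder. Without boundedness only Corollary \ref{OperLP}(2) is at hand, and the rewriting of a weighted generator of $M$ then leaves over a hermitian square stemming from the remainder $S$ whose degree is governed by $\beta$ and which therefore need not lie in $\Sigma^{\nu_1}_{\alpha}$. All the remaining ingredients are standard and mirror the arguments of Section \ref{LOI-domination-section}.
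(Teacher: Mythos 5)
Your proposal is correct, and you locate the role of boundedness exactly where the paper does: it is needed only to invoke Corollary \ref{OperLP}(3). The route differs, though. The paper's proof is a one-step affair: it observes that each weighted generator $V^\ast \pi(\left[\begin{smallmatrix} L & 0\\ 0 & 1\end{smallmatrix}\right])V$ is itself a monic $\nu_2\times\nu_2$ linear pencil whose free spectrahedron contains $D_L$, and applies Corollary \ref{OperLP}(3) to the pair $\bigl(L,\,V^\ast \pi(\left[\begin{smallmatrix} L & 0\\ 0 & 1\end{smallmatrix}\right])V\bigr)$, which immediately produces the required form $\tilde V^\ast\tilde\pi(L)\tilde V$ with $\tilde\pi:B(\cH)\to B(\tilde\cK)$ and $\tilde V$ an isometry; no auxiliary completely positive map and no second Stinespring dilation are needed. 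You instead apply Corollary \ref{OperLP}(3) once to the fixed pair $(L,L\oplus 1)$ to manufacture a ucp map $\Phi$ with $\Phi(L)=L\oplus 1$, and then re-dilate $\pi\circ\Phi$ per generator; this is valid but costs an extra Stinespring step, together with the pad-with-zero trick to restore isometries when the homomorphisms are not unital (a point you rightly flag, since $\tilde\Pi$ does not require unitality -- the same wrinkle is implicit in the paper's phrase ``monic linear pencil''). Conversely, you spell out the inclusion $M'\subseteq M$, which the paper leaves implicit; your compression-plus-Stinespring argument works, though in the paper's style one can get it more directly by noting that each generator $\tilde V^\ast\tilde\pi(L)\tilde V$ is a monic pencil positive semidefinite on $D_L$ and quoting Corollary \ref{OperLP}(1), with no boundedness needed, in agreement with your remark. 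So the two proofs share the same key lemma; the paper's application of it is more economical, while yours makes the easy inclusion and the precise point of entry of boundedness explicit.
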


\begin{proof}
	It is sufficient to prove that for every isometry $V\in B(\RR^{\nu_2},\cK)$
	and every $\ast$-homomorphism $\pi: B(\cH\oplus \RR)\to B(\cK)$ there
	exist an isometry $\tilde V\in B(\RR^{\nu_2},\tilde \cK)$ and a $\ast$-homomorphism 
	$\tilde \pi: B(\cH)\to B(\tilde \cK)$ such that
		$$V^\ast\pi(\left[\begin{array}{cc} L & 0 \\ 0 & 1\end{array}\right])V=
			\tilde V^\ast \tilde \pi(L)\tilde V.$$
	Since $V^\ast\pi(\left[\begin{array}{cc} L & 0 \\ 0 & 1\end{array}\right])V$ is a monic linear 
	pencil positive semidefinite on $D_L$ and $D_L(1)$ is bounded, this is true by 
	Corollary \ref{OperLP}.(3).
\end{proof}

The following is the restatement of Theorem \ref{Oper-to-mat-intro}.

\begin{theorem}[Operator convex Positivstellensatz] \label{CPsatz-matrix}
		Let 
		$L\in\Sym_\cH\!\left\langle x\right\rangle$
	be a monic linear operator pencil and $F\in \RR^{\nu\times \nu}\left\langle x\right\rangle$ a matrix polynomial of degree at most $2d+1$. 
	If $F|_{D_{L}}\succeq 0$, then 
		$$F\in M_{d+1,d}^{\nu,\ell}(L),$$
	where $\ell:=\nu\cdot \sigma_{\#}(d)$ and 
	$\sigma_{\#}(d):=\dim(\RR\!\left\langle x\right\rangle_d)$.
\end{theorem}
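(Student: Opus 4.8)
\textbf{Proof plan for Theorem \ref{CPsatz-matrix}.}
The plan is to mimic the Putinar-type separation argument from \cite{HKM2}, using Corollary \ref{OperLP} as the key technical ingredient that makes the finite-dimensional compactness argument available even though $\cH$ may be infinite-dimensional. First I would fix the degree bound: every candidate certificate lives in the truncated quadratic module $M_{d+1,d}^{\nu,\ell}(L)$, and the claim is that this cone already contains every $F\in\RR^{\nu\times\nu}\!\left\langle x\right\rangle_{2d+1}$ with $F|_{D_L}\succeq 0$. The strategy is: (i) show $M:=M_{d+1,d}^{\nu,\ell}(L)\cap\RR^{\nu\times\nu}\!\left\langle x\right\rangle_{2d+1}$ is a \emph{closed} convex cone in the finite-dimensional vector space $\RR^{\nu\times\nu}\!\left\langle x\right\rangle_{2d+1}$; (ii) if $F\notin M$, separate it from $M$ by a linear functional $\lambda$ with $\lambda|_M\geq 0$ and $\lambda(F)<0$; (iii) apply the GNS/Hahn-Banach machinery (the version \cite[Theorem 2.2]{HKM3}) to turn $\lambda$ into a tuple $X$ of symmetric matrices with $L(X)\succeq 0$, i.e.\ $X\in D_L$, on which $F(X)\not\succeq 0$, contradicting $F|_{D_L}\succeq 0$.

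The heart of step (ii)--(iii) is standard once (i) is in place: positivity of $\lambda$ on $\Sigma^\nu_{d+1}$ forces $\lambda$ to come from a positive semidefinite bilinear form on $\RR^\nu\otimes\RR\!\left\langle x\right\rangle_d$, hence (after quotienting by the kernel and taking a finite-dimensional GNS space $\cK_0$) gives symmetric operators $X_i$ of multiplication type; positivity of $\lambda$ on the $L$-part of the quadratic module, together with the way $\ell=\nu\cdot\sigma_\#(d)$ is chosen so that the universal GNS representation embeds isometrically, forces $L(X)\succeq 0$. The choice $\ell=\nu\cdot\dim\RR\!\left\langle x\right\rangle_d$ is exactly what is needed so that the test vectors $V_k B_k$ in the definition of the quadratic module range over a spanning set of $\RR^\nu\otimes\RR\!\left\langle x\right\rangle_d$, making the membership $L(X)\succeq 0$ detectable. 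I would lift these details essentially verbatim from \cite{HKM2}, since $\cK_0$ is automatically finite-dimensional regardless of $\dim\cH$.

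The genuinely new point---and the main obstacle---is step (i), the closedness of $M$. In the finite-dimensional case of \cite{HKM2} one writes an element of the $L$-part as a sum $\sum_k B_k^\ast V_k^\ast\pi_k(L)V_k B_k$, absorbs the $\pi_k$'s using that $\ast$-homomorphisms of $B(\cH)$ with $\dim\cH<\infty$ are multiples of the identity, and then runs a Caratheodory argument: any element of the cone is a sum of boundedly many extreme rays, and the relevant set of extreme generators is compact (the $B_k$ have bounded degree, and one can normalize), so limits stay in the cone. When $\dim\cH=\infty$ the representations $\pi_k$ are not inner and the "compactness of generators" is not immediate. Here is where Corollary \ref{OperLP} enters: given a convergent sequence of quadratic-module elements $p_n\to p$, one reorganizes each $p_n$ using Corollary \ref{OperLP}.(1) (or Proposition \ref{bounded-trunc-module} in the bounded case) to replace the single operator pencil term by a term of the same shape but now testable against the \emph{finitely many} coordinates coming from the matrix pencil $V^\ast\pi(L\oplus 1)V$ restricted to the finite-dimensional ampliation level dictated by $\nu$ and $d$; this reduces the problem to a closedness statement for a truncated quadratic module over a \emph{matrix} pencil, where the Caratheodory-plus-compactness argument of \cite{HKM2} applies directly. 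Concretely, one shows that membership of a degree-$(2d+1)$ polynomial $p$ in the $L$-part of $M$ is equivalent to $p(X)\succeq 0$ for all $X$ in a suitable matrix-level piece of $D_L$ together with a degree constraint, and that equivalence---via Corollary \ref{OperLP}---is insensitive to the dimension of $\cH$. Once $M$ is closed, the separation argument closes the proof. I expect the bookkeeping in this reduction (tracking how the isometries $V_k$ and the degree bound $\beta=d$ interact with the ampliation coming from Theorem \ref{n-pos-of-tau}) to be the most delicate part, but no genuinely new idea beyond Corollary \ref{OperLP} should be required.
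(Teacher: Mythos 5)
Your overall architecture---closedness of the truncated quadratic module, finite-dimensional Hahn--Banach separation, then a GNS construction producing $X\in D_L$ with $F(X)\not\succeq 0$---is exactly the paper's (Proposition \ref{tr-mat-closed}, Proposition \ref{GNS-cons}, and Subsection \ref{proof-CP-satz}), and your plan for the closedness step matches the paper's proof in substance: Caratheodory plus norm bounds on the sums-of-squares and pencil terms, convergence of the compressed pencils in the finite-dimensional space of $\nu_2\times\nu_2$ pencils, and Corollary \ref{OperLP}.(1) to stay inside the cone. Two small corrections there: in the paper Corollary \ref{OperLP}.(1) is applied to the \emph{limit} pencil $\hat L_i$ (after verifying $D_L\subseteq D_{\hat L_i}$), not to ``reorganize each $p_n$''; and your parenthetical claim that membership of $p$ in the $L$-part of the module is equivalent to a positivity condition on a matrix-level piece of $D_L$ is false as stated (that would essentially be the theorem itself) and is not needed for the argument.

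The genuine gap is in your step (iii). You assert that once closedness is in place the GNS part can be lifted ``essentially verbatim'' from \cite{HKM2}, with the choice $\ell=\nu\sigma_{\#}(d)$ making $L(X)\succeq 0$ detectable via spanning test vectors. That is precisely the point where the operator-valued case needs a new argument (the paper says so explicitly before Proposition \ref{GNS-cons}): in \cite{HKM2} the module contains terms $Q^\ast L Q$ with the pencil $L$ itself, so nonnegativity of $\lambda$ plus spanning gives $L(X)\succeq 0$ directly, whereas here the module contains only the finite compressions $B^\ast V^\ast\pi(L\oplus 1)V B$, and the same computation yields only that every monic $\ell\times\ell$ pencil of the form $V^\ast\pi(L\oplus 1)V$ is positive semidefinite at $X$. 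To conclude $X\in D_L(\ell)$ one needs the converse passage: if $X\notin D_L(\ell)$, the Effros--Winkler separation (Theorem \ref{EFF-WIN-HB}) produces a monic pencil $\mathcal L$ of size $\ell$ with $\mathcal L|_{D_L}\succeq 0$ but $\mathcal L(X)\not\succeq 0$, and a second application of Corollary \ref{OperLP}.(1) identifies $\mathcal L$ as a compression $V^\ast\pi(L\oplus 1)V$, contradicting the computed positivity. Your citation of the matricial Hahn--Banach theorem of \cite{HKM3} shows you have the right tool in hand, but as written your mechanism (HKM2 verbatim, spanning of $V_kB_k$) would not close this step; the role of $\ell$ is that it bounds the dimension of the GNS space, so the separating pencil can be taken of size $\ell$, where the module's compressions suffice.
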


The proof of Theorem \ref{CPsatz-matrix} is given in Subsection \ref{proof-CP-satz}. In the next two subsections 
we prove the connection between positive linear functionals and operators and 
show that the truncated quadratic module is closed. Both results are important ingredients for the separation argument in the proof of Theorem \ref{CPsatz-matrix}.

\subsection{Positive linear functionals and the GNS construction}

Proposition \ref{GNS-cons} below (see \cite[Proposition 2.5]{HKM2}), embodies the well known connection, through the Gelfand-Naimark-Segal (GNS) construction, between operators and positive linear functionals. The only difference between the statements of Proposition \ref{GNS-cons} and \cite[Proposition 2.5]{HKM2} is that the pencil $L$ is operator-valued here but was matrix-valued in \cite[Proposition 2.5]{HKM2}. Therefore, the proof of Proposition \ref{GNS-cons} needs an additional argument. Namely, in the notation of Proposition \ref{GNS-cons} the fact that a tuple of operators $X$ belongs to $D_L$ if $\lambda$
is nonnegative on $M_{k+1,k}^{\nu, \nu\sigma_{\#}(k)}(L)$ follows immediately by construction if $L$ is matrix-valued but needs a proof if $L$ is operator-valued.

\begin{proposition}\label{GNS-cons}
	If $\lambda:\RR^{\nu\times \nu}\!\left\langle x\right\rangle_{2k+2}\to \RR$ is a linear functional which is nonnegative on $\Sigma_{k+1}^\nu$  and positive on $\Sigma_{k}^\nu\setminus\{0\}$, then there exists a tuple $X=(X_1,\ldots,X_g)$ 
of symmetric operators on a Hilbert space $\mathcal X$ of dimension at most
$\nu \sigma_{\#}(k)=\nu \dim \RR\!\left\langle x\right\rangle_{k}$ and a vector $\gamma\in \mathcal X^{\oplus \nu}$, such
that
	$$\lambda(f)=\left\langle f(X)\gamma,\gamma\right\rangle$$
for all $f\in \RR^{\nu\times \nu}\!\left\langle x\right\rangle_{2k+1}$, where $\left\langle \cdot,\cdot\right\rangle$ is the inner
product on $\mathcal X$. Further, if $L\in \Sym_{\cH}\!\left\langle x\right\rangle$ is a monic linear operator pencil and $\lambda$ is nonnegative on $M_{k+1,k}^{\nu, \nu\sigma_{\#}(k)}(L)$, then $X\in D_L$.

		Conversely, if $X=(X_1,\ldots,X_g)$ is a tuple of symmetric operators on a Hilbert space $\cX$ of dimension $N$,
	the vector $\gamma\in \cX^{\oplus \nu}$, and $k$ a positive integer, then the linear functional
$\lambda:\RR^{\nu\times \nu}\!\left\langle x\right\rangle_{2k+2}\to \RR$, defined by
	$$\lambda(f)=\left\langle f(X)\gamma,\gamma\right\rangle$$
is nonnegative on $\Sigma_{k+1}^\nu$. Further, if $L\in \Sym_{\cH}\!\left\langle x\right\rangle$ is a monic linear operator pencil and $X\in D_L$, then $\lambda$ is nonnegative also on $M_{k+1,k}^{\nu, \ell}(L)$ for every $\ell\in \NN$.
\end{proposition}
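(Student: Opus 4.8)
The plan is to follow the standard GNS construction exactly as in \cite[Proposition 2.5]{HKM2} for the parts that only concern the sums-of-squares cone, and then supply the one genuinely new argument needed because $L$ is operator-valued rather than matrix-valued. For the first statement, I would define on $\RR^{\nu}\!\left\langle x\right\rangle_{k}$ the bilinear form $\langle p,q\rangle := \lambda(q^\ast p)$; positivity of $\lambda$ on $\Sigma^\nu_k\setminus\{0\}$ makes this an inner product (no nonzero null vectors in degree $\le k$), and positivity on $\Sigma^\nu_{k+1}$ controls the degree-$(k+1)$ terms that appear after multiplying by an $x_i$. Then $\mathcal X$ is the quotient/completion of $\RR\!\left\langle x\right\rangle_k$ (so $\dim\mathcal X \le \sigma_\#(k)$, and the ambient space for $\gamma$ is $\mathcal X^{\oplus\nu}$, of dimension $\le \nu\sigma_\#(k)$), the operators $X_i$ are the (symmetric, by the $\ast$-structure) multiplication-by-$x_i$ operators cut down to $\mathcal X$, and $\gamma$ represents the $\nu$-tuple of ``constant'' vectors so that $\lambda(f)=\langle f(X)\gamma,\gamma\rangle$ for $f$ of degree $\le 2k+1$. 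All of this is verbatim from \cite{HKM2}.

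The new point is: assuming in addition that $\lambda$ is nonnegative on $M^{\nu,\nu\sigma_\#(k)}_{k+1,k}(L)$, one must show $X\in D_L$, i.e.\ $L(X)\succeq 0$ as an operator on $\cH\otimes\mathcal X$. When $L$ is matrix-valued this is immediate because $L(X)$ acts on the finite-dimensional space $\cH\otimes\mathcal X$ and the relevant quadratic forms $\langle L(X)(h\otimes \xi),(h\otimes\xi)\rangle$ are literally values of $\lambda$ on elements of the truncated module; for operator-valued $L$ the obstacle is that $\cH$ is infinite-dimensional, so one cannot directly read off positivity of $L(X)$ from finitely many evaluations. The fix is to use Lemma \ref{matrix-domain-sufficient-for-linear} (equivalently Proposition \ref{mat-to-oper-spec-relation}): it suffices to prove $W^\ast X W\in D_L(m)$ for every $m$ and every isometry $W:\RR^m\to\mathcal X$, which is a statement in the finite-dimensional space $\cH\otimes\RR^m$. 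For such $W$, given any finite set of vectors $h_1,\dots,h_m\in\cH$, choose $B\in\RR^{\nu\times\nu\sigma_\#(k)}\!\left\langle x\right\rangle_k$ (or the appropriate block sizes) encoding the vectors $W e_r$ together with the $h_r$ so that $B^\ast \pi\!\left(\begin{bmatrix}L&0\\0&1\end{bmatrix}\right)B$, evaluated under $\lambda$, reproduces $\langle L(W^\ast X W)(\sum_r h_r\otimes e_r),\sum_r h_r\otimes e_r\rangle$; nonnegativity of $\lambda$ on $M^{\nu,\nu\sigma_\#(k)}_{k+1,k}(L)$ then forces this to be $\ge 0$. Ranging over all such finite tuples of $h_r$ shows $L(W^\ast X W)\succeq 0$, hence $W^\ast X W\in D_L(m)$, hence $X\in D_L$. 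The main obstacle is precisely bookkeeping this encoding: matching the ``constant-vector'' structure of $\gamma$ and the block matrix polynomial $B$ so that the quadratic form of the truncated-module generator evaluated at $\lambda$ equals the desired inner product of $L(W^\ast XW)$; everything else is routine.

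For the converse, the argument is entirely routine and identical in spirit to \cite[Proposition 2.5]{HKM2}: if $X=(X_1,\dots,X_g)$ acts on $\cX$ and $\gamma\in\cX^{\oplus\nu}$, then $\lambda(f):=\langle f(X)\gamma,\gamma\rangle$ is nonnegative on $\Sigma^\nu_{k+1}$ because $\lambda(P^\ast P)=\|P(X)\gamma\|^2\ge 0$; and if moreover $X\in D_L$, then for each generator $B_k^\ast V_k^\ast\pi_k\!\left(\begin{bmatrix}L&0\\0&1\end{bmatrix}\right)V_k B_k$ of the truncated module one has, writing $\eta_k:=(\pi_k\otimes I)\big((V_k B_k)(X)\gamma\big)$, that $\lambda$ applied to this generator equals $\langle (\pi_k\otimes I)\!\left(\begin{bmatrix}L(X)&0\\0&I\end{bmatrix}\right)\eta_k,\eta_k\rangle\ge 0$ since $\pi_k\otimes I$ is a $\ast$-homomorphism and $L(X)\succeq 0$; adding a sum of squares from $\Sigma^\nu_{k+1}$ keeps it nonnegative, so $\lambda\ge 0$ on $M^{\nu,\ell}_{k+1,k}(L)$ for every $\ell$. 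This completes the proof.
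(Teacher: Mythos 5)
Your GNS construction and the converse direction are fine and match the paper (apart from a type slip: $(\pi_k\otimes I)$ acts on operators, not on the vector $(V_kB_k)(X)\gamma$; the intended inequality $\lambda\bigl(B_k^\ast V_k^\ast\pi_k(L\oplus 1)V_kB_k\bigr)=\bigl\langle (\pi_k\otimes I)\bigl((L\oplus 1)(X)\bigr)(V_k\otimes I)B_k(X)\gamma,(V_k\otimes I)B_k(X)\gamma\bigr\rangle\ge 0$ is correct). The genuine issue is the forward implication ``$\lambda\ge 0$ on $M^{\nu,\nu\sigma_{\#}(k)}_{k+1,k}(L)$ $\Rightarrow$ $X\in D_L$''. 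The paper proves this by contradiction: if $X\notin D_L$, the Effros--Winkler separation (Theorem \ref{EFF-WIN-HB}) yields a monic matrix pencil $\mathcal L$ with $\mathcal L|_{D_L}\succeq 0$ and $\mathcal L(X)\not\succeq 0$; Corollary \ref{OperLP}.(1) then writes $\mathcal L=V^\ast\pi(L\oplus 1)V$, and the computation $\langle\mathcal L(X)p,p\rangle=\lambda(p^\ast\mathcal L p)\ge 0$ for all $p\in\cX^{\oplus\ell}$ gives the contradiction. Your replacement for this has two concrete defects. First, invoking Lemma \ref{matrix-domain-sufficient-for-linear} is a no-op here: it compresses the tuple $X$, but $X$ already acts on the finite-dimensional space $\cX$, and the claim that $W^\ast XW\in D_L(m)$ is ``a statement in the finite-dimensional space $\cH\otimes\RR^m$'' is false -- the infinite-dimensionality sits in the coefficient space $\cH$ of $L$, which this lemma does not touch. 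Second, and more seriously, the vectors $h_1,\dots,h_m\in\cH$ cannot be ``encoded'' in $B$: the $B$'s in the truncated module are matrix polynomials with real scalar entries of fixed size, so they cannot carry vectors of an infinite-dimensional $\cH$. As written, the identity between $\lambda$ of a module element and $\langle L(W^\ast XW)(\sum_r h_r\otimes e_r),\sum_r h_r\otimes e_r\rangle$ is exactly the step you defer to ``bookkeeping'', and it is not established; this is the missing idea, and it is precisely where the paper brings in Theorem \ref{EFF-WIN-HB} and Corollary \ref{OperLP}.

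That said, your underlying strategy can be repaired, and the repaired argument is genuinely different from (and more elementary than) the paper's: the $h_r$ must enter through the triple $(\cK,\pi,V)$, not through $B$. Take $\pi$ to be the identity representation of $B(\cH\oplus\RR)$ (or an ampliation $T\mapsto T\otimes I_s$ if $\dim(\cH\oplus\RR)<\nu\sigma_{\#}(k)$) and $V$ an isometry of $\RR^{\nu\sigma_{\#}(k)}$ onto a subspace containing the vectors $h_r\oplus 0$; then $V^\ast\pi(L\oplus 1)V$ is an honest finite compression of $L\oplus 1$ and is an admissible generator. For $v=\sum_r h_r\otimes\xi_r\in\cH\otimes\cX$ one gets $\langle L(X)v,v\rangle=\langle(V^\ast\pi(L\oplus 1)V)(X)p,p\rangle=\lambda(p^\ast V^\ast\pi(L\oplus 1)Vp)\ge 0$, where $p$ is the tuple of the $\xi_r$ (represented by degree-$\le k$ row polynomials) padded by zeros, the middle equality being the same displayed computation the paper performs. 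Since every vector of $\cH\otimes\cX$ has this finite-rank form, $L(X)\succeq 0$ follows directly, avoiding both the separation theorem and Corollary \ref{OperLP} in this step. Until that compression device is spelled out, however, your proof of the key implication has a gap.
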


In the proof we will need the following special case (see \cite[Theorem 3.1]{HKM4} and \cite[\S 6]{HM}) of a theorem due to Effros and Winkler \cite{EFF-WIN}.

\begin{theorem}\label{EFF-WIN-HB}
	If $\Gamma=(\Gamma(n))_{n\in\NN}\subseteq \Sym^g$ is a closed matrix convex set containing 0 and $X\in \Sym^g_m$ is not in $\Gamma(m)$, then there is a monic linear pencil $\cL$ if size $m$ such that $\cL(Y)\succeq 0$ for all $Y\in \Gamma$, but $\cL(X)\not\succeq 0 $.
\end{theorem}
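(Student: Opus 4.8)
The plan is to reduce the statement to the classical Hahn--Banach separation theorem applied to a single closed convex cone constructed from $\Gamma$, the point being to package all three closure properties of a matrix convex set (direct sums, unitary and isometric conjugation) plus closedness into one finite-dimensional separation. For $n\in\NN$, $Y=(Y_1,\dots,Y_g)\in\Gamma(n)$ and a rectangular matrix $W\in\RR^{m\times n}$ set
$$\Phi(Y,W):=\bigl(WW^{\ast},\,WY_1W^{\ast},\,\dots,\,WY_gW^{\ast}\bigr)\in\mathbb{S}_m^{g+1},$$
and let $\mathcal{K}\subseteq\mathbb{S}_m^{g+1}$ be the closed convex cone generated by all the $\Phi(Y,W)$. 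The key bookkeeping device is the matricization identity: if $\eta\in\RR^m\otimes\RR^n$ corresponds to $W\in\RR^{m\times n}$, then for $B\in\mathbb{S}_m$, $Y\in\mathbb{S}_n$ one has $\langle(B\otimes Y)\eta,\eta\rangle=\tr(W^{\ast}BWY)=\langle B,WYW^{\ast}\rangle$ and $\|\eta\|^2=\langle I_m,WW^{\ast}\rangle$. Decomposing an arbitrary positive semidefinite matrix into rank-one pieces, this identity shows that for $(C,B_1,\dots,B_g)\in\mathbb{S}_m^{g+1}$ one has $C\otimes I_n+\sum_jB_j\otimes Y_j\succeq0$ for every $n$ and every $Y\in\Gamma(n)$ if and only if $(C,B_1,\dots,B_g)$ lies in the dual cone $\mathcal{K}^{\ast}$.

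The heart of the proof is the lemma: $(I_m,X_1,\dots,X_g)\in\mathcal{K}$ if and only if $X\in\Gamma(m)$. One direction is immediate (take $W=I_m$, $Y=X$, $n=m$). For the converse, a finite nonnegative combination equal to $(I_m,X)$ is the same as a family $W^{(p)}\in\RR^{m\times n_p}$, $Y^{(p)}\in\Gamma(n_p)$ with $\sum_pW^{(p)}(W^{(p)})^{\ast}=I_m$ and $\sum_pW^{(p)}Y^{(p)}_j(W^{(p)})^{\ast}=X_j$. Juxtaposing the $W^{(p)}$ into one matrix $\mathbf{W}$ and the $Y^{(p)}$ into a block-diagonal $\mathbf{Y}\in\Gamma(\sum_pn_p)$ (closure under direct sums), the first relation says $\mathbf{W}^{\ast}$ is an isometry and the second gives $X_j=\mathbf{W}\,\mathbf{Y}_j\,\mathbf{W}^{\ast}$, so $X\in\Gamma(m)$ by closure under isometric conjugation. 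For the closed-cone case, if $(I_m,X)=\lim_q(\mathbf{W}_q\mathbf{W}_q^{\ast},\,\mathbf{W}_q\mathbf{Y}^{(q)}_1\mathbf{W}_q^{\ast},\dots)$ then $R_q:=\mathbf{W}_q\mathbf{W}_q^{\ast}\to I_m$, so eventually $R_q\succ0$, $R_q^{-1/2}\mathbf{W}_q$ is a coisometry, and $R_q^{-1/2}(\mathbf{W}_q\mathbf{Y}^{(q)}_j\mathbf{W}_q^{\ast})R_q^{-1/2}\to X_j$; each of these tuples lies in $\Gamma(m)$, which is closed, so $X\in\Gamma(m)$. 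I expect this lemma --- and in particular the gluing step and the limiting argument --- to be the main obstacle, since it is exactly here that the full force of matrix convexity and closedness enters.

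Finally I would run the separation. Since $X\notin\Gamma(m)$ the lemma gives $(I_m,X)\notin\mathcal{K}$, so by Hahn--Banach there is $(C,B_1,\dots,B_g)\in\mathcal{K}^{\ast}$ with $\tr C+\sum_j\tr(B_jX_j)=\langle(C,B),(I_m,X)\rangle<0$. Testing $\mathcal{K}^{\ast}$ against the generators $\Phi(0,W)=(WW^{\ast},0,\dots,0)$ --- valid because $0\in\Gamma(n)$ --- yields $\tr(CR)\ge0$ for all $R\succeq0$, i.e.\ $C\succeq0$; and since $(I_m,0,\dots,0)\in\mathcal{K}^{\ast}$, replacing $(C,B)$ by $(C+\epsilon I_m,B)$ for small $\epsilon>0$ keeps it in $\mathcal{K}^{\ast}$ and preserves the strict inequality, so we may assume $C\succ0$. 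Put $\cL(x):=I_m+\sum_j\bigl(C^{-1/2}B_jC^{-1/2}\bigr)x_j$, a monic linear pencil of size $m$. Conjugating by $C^{-1/2}\otimes I$ and using the matricization identity together with $(C,B)\in\mathcal{K}^{\ast}$ gives $\cL(Y)\succeq0$ for all $Y\in\Gamma$; testing $C\otimes I_m+\sum_jB_j\otimes X_j$ against the maximally entangled vector $\sum_ie_i\otimes e_i$ (whose matricization is $I_m$) gives the value $\tr C+\sum_j\tr(B_jX_j)<0$, whence $\cL(X)\not\succeq0$. Apart from the lemma, the only delicate point is this reduction to $C\succ0$, needed to make the pencil monic, which the $\epsilon$-perturbation handles.
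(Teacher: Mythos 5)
Your argument is correct, but note that the paper itself does not prove this statement at all: it is imported as a known special case of the Effros--Winkler matricial Hahn--Banach theorem, with references to \cite[Theorem 3.1]{HKM4} and \cite[\S 6]{HM}, and is used as a black box in the proof of Proposition \ref{GNS-cons}. What you have written is essentially a self-contained reconstruction of the Effros--Winkler separation: your cone $\mathcal{K}$ generated by the tuples $(WW^{\ast},WY_1W^{\ast},\dots,WY_gW^{\ast})$ is the standard device, your key lemma (that $(I_m,X)\in\mathcal{K}$ forces $X\in\Gamma(m)$, via gluing the $W^{(p)}$ into a coisometry, taking direct sums of the $Y^{(p)}$, compressing by the resulting isometry, and handling closure by the $R_q^{-1/2}$ renormalization) is exactly where matrix convexity and levelwise closedness enter, and the passage from the separating functional $(C,B)$ to a \emph{monic} pencil via $C\succeq 0$ (from $0\in\Gamma$) plus the $\epsilon$-perturbation to $C\succ 0$ and conjugation by $C^{-1/2}$ is the standard normalization; the matricization identity correctly converts $(C,B)\in\mathcal{K}^{\ast}$ into positivity of $C\otimes I_n+\sum_j B_j\otimes Y_j$ on $\Gamma$ and the maximally entangled vector correctly witnesses $\cL(X)\not\succeq 0$. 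So relative to the paper your route is ``different'' only in that it supplies the proof the paper delegates to the literature; what it buys is self-containedness (and explicit degree-zero bookkeeping of where $0\in\Gamma$, direct sums, isometric compressions, and closedness are each used), at the cost of redoing an argument the paper deliberately cites.
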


\begin{proof}[Proof of Proposition \ref{GNS-cons}]
	The nontrivial direction is $(\Rightarrow)$. 
	The proof is the same as that of \cite[Proposition 2.5]{HKM2}, just that we need to show that
	in the case that $\lambda$ is nonnegative on $M_{k+1,k}^{\nu, \nu\sigma_{\#}(k)}(L)$, we have  $X\in D_{L}$.
	If $L$ is matrix-valued, then this follows by an elementary calculation. If $L$ is operator-valued, we will additionally need
	Corollary \ref{OperLP} and Theorem \ref{EFF-WIN-HB}.

	Let us assume the notation from the proof of \cite[Proposition 2.5]{HKM2}. Namely,
	the positive semidefinite symmetric bilinear form defined on the vector space 
	$K=\RR^{1\times \nu}\left\langle x\right\rangle_{k+1}$ by
		\begin{equation}\label{sym-bil-form} 
			\left\langle f,h\right\rangle=\lambda(h^\ast f). 
		\end{equation}	
	induces a positive definite bilinear form on the quotient $\tilde \cX:=K/\mathcal N$, where
	$\mathcal N:=\{f\in K\colon \left\langle f,f\right\rangle=0\},$ making it a Hilbert space.
 	By positive definiteness of the form (\ref{sym-bil-form}) on the subspace $\cX=\RR^{1\times \nu}\left\langle x\right\rangle_{k}$, $\cX$ can be considered as a subspace of $\tilde \cX$ with dimension $\nu \sigma_{\#}(k)$.	
	The symmetric operators $X_j: \cX\to \cX$ are defined by
		$$X_jf=Px_jf, \quad f\in \cX, \; 1\leq j\leq g,$$
	where $P$ is the orthogonal projection from $\tilde \cX$ onto $\cX$.%
%	Each $x_j$ determines a multiplication operator on $\cX$. For $f=\left[f_1\cdots f_\nu\right]\in \cX,$ let
%		$$x_jf=\left[x_jf_1\cdots x_jf_\nu\right] \in \tilde \cX$$
%	and define $X_j: \cX\to \cX$ by
%		$$X_jf=Px_jf, \quad f\in \cX, \; 1\leq j\leq g,$$
%	where $P$ is the orthogonal projection from $\tilde \cX$ onto $\cX$. From the positive definiteness of the bilinear form 
%	$(\ref{sym-bil-form})$ on $\cX$, one easily sees that each $X_j$ is well defined and 
%		$$\left\langle X_jp,r\right\rangle=\left\langle x_jp,r\right\rangle=\left\langle p,x_jr\right\rangle=
%			\left\langle p,X_jr\right\rangle$$
%	for all $p,r\in \cX$. In particular, each $X_j$ is symmetric.

%	By the same proof as in \cite[Proposition 2.5]{HKM2}, for every 
%	$f\in \RR^{\nu\times \nu}\!\left\langle x\right\rangle_{2k+1}$ we have 
%		$$\left\langle f(X)\gamma,\gamma\right\rangle=\lambda(f),$$
%	where $\gamma=\oplus_{j=1}^{\nu} \gamma_j\in \cX^{\otimes \nu}$ with $\gamma_j$ being a standard vector with the empty word
%	in the $j$-th entry and zeros elsewhere.
	Suppose $\lambda$ is nonnegative on $M_{k+1,k}^{\nu, \nu\sigma_{\#}(k)}(L)$.
	Denote $L=I_{\cH}+\sum_{j=1}^{m} A_j x_j$ and $A:=(A_1,\ldots, A_g)\in B(\cH)^g$.
	Let $\ell:=\nu \sigma_{\#}(k)$. 
	Take an arbitrary isometry $V\in B(\RR^{\ell},\cK)$. Given 
		$$p=\left[\begin{array}{c} p_1 \\ \vdots \\ p_\ell \end{array}\right]\in \cX^{\oplus \ell},$$
	note that
	\begin{eqnarray*}
		&&\left\langle (V^\ast \pi(\left[\begin{array}{cc} L & 0 \\ 0 & 1\end{array}\right]
				)V)(X) p, p\right\rangle 
		=
			\left\langle (I_{\ell}-\Lambda_{V^{\ast}\pi(A\oplus 0)V}(X)p, p\right\rangle\\
		&=&
			\left\langle p-\sum \Lambda_{V^{\ast}\pi(A_j\oplus 0)V} Px_jp, p\right\rangle
		= \left\langle p-\sum \Lambda_{V^{\ast}\pi(A_j\oplus 0)V}x_jp, p\right\rangle\\
		&=& \left\langle (I_\ell-\sum \Lambda_{V^{\ast}\pi(A_j\oplus 0)V}x_j)p, p\right\rangle
		= \lambda(p^\ast(I_\ell- \Lambda_{V^{\ast}\pi(A\oplus 0)V}(x))p )\\
		&=&\lambda(p^{\ast}V^\ast \pi(\left[\begin{array}{cc} L & 0 \\ 0 & 1\end{array}\right]
				)Vp)\geq 0
	\end{eqnarray*}

\noindent	\textbf{Claim.} $X\in D_L(\ell)$.\\

	If $X\notin D_L(\ell)$, then by Theorem \ref{EFF-WIN-HB}, there is a monic linear 
	pencil $\mathcal{L}$ of size $\ell$ such that $\mathcal L(Y)\succeq 0$ for all $Y\in D_{L}$ and
	$\mathcal L(X)\not\succeq 0$. 
	But by Corollary \ref{OperLP}.(1), 
		$$\mathcal L=V^\ast \pi(\left[\begin{array}{cc} L & 0 \\ 0 & 1\end{array}\right])V$$
	for some $\ast$-homomorphism $\pi:B(\cH\oplus\RR)\to B(\cK)$ where $\cK$ is a separable real Hilbert space,
	and some isometry $V\in B(\RR^{\ell},\cK)$.
	By the calculation above,
		$$\mathcal L(X)\succeq 0,$$ 
	which is a contradiction. Hence, $X\in D_L(\ell)$.
\end{proof}

\subsection{The truncated quadratic module is closed}

	Fix $\alpha,\beta, \nu_1,\nu_2\in\NN$ and let $\kappa=\max\{2\alpha,2\beta+1\}.$ Let $L\in \Sym_\cH\!\left\langle x\right\rangle$ be a monic linear operator pencil. The truncated quadratic module $M_{\alpha,\beta}^{\nu_1,\nu_2}(L)$ generated by a monic linear operator pencil $L$ is a convex cone in 
$\RR^{\nu\times\nu}\!\left\langle x\right\rangle_k$.

	Given $\epsilon>0$, let 
		$$\mathcal B_{\epsilon}(n):=\{X\in \mathbb{S}^g_n\colon \left\|X\right\|\leq \epsilon\},
			\quad\text{and}\quad \mathcal B_\epsilon=\bigcup_{n\in \NN}\mathcal B_{\epsilon}(n).$$
	There is an $\epsilon>0$ such that for all $n\in \NN$, if $X\in \Sym_n^g$ and $\left\|X\right\|\leq \epsilon$, then
	$L(X)\succeq \frac{1}{2}I_\cH$. In particular, $\mathcal B_\epsilon\subseteq D_L$. 	
	Using this $\epsilon$ we norm $\RR^{\nu_1\times \nu_2}\left\langle x\right\rangle_k$ by
		\begin{equation} \label{norm} 
			\left\|p\right\|:=\max\{\left\|p(X)\right\|\colon X\in \mathcal B_\epsilon\}.
		\end{equation}
	(On the right-hand side of (\ref{norm}) the maximum is attained. This follows from the fact that the bounded nc semialgebraic set $\mathcal B_\epsilon$ is convex. See \cite[Section 2.3]{HM1} for details.)

By the Proposition \ref{tr-mat-closed} below,  $M_{\alpha,\beta}^{\nu_1,\nu_2}(L)$ is closed. This is the same result as \cite[Proposition 3.1]{HKM2} but the proof is much longer and uses Corollary \ref{OperLP} essentially. The latter is used to prove that a limit of a
certain convergent sequence of linear matrix pencils of the form $V_k \pi_k(\left[\begin{array}{cc} L & 0 \\ 0 & 1\end{array}\right])V_k$, where $\pi_k:B(\cH\oplus \RR)\to B(\cK_k)$ is a $\ast$-homomorphism, $\cK_k$ is a separable real Hilbert space and $V_k\in B(\RR^{\nu_2}, \cK_k)$ is an isometry, is again of the form 
	$V \pi(\left[\begin{array}{cc} L & 0 \\ 0 & 1\end{array}\right])V$ with $\pi$ a $\ast$-homomorphism and 
 $V\in B(\RR^{\nu_2}, \cK)$ an isometry.

\begin{proposition} \label{tr-mat-closed}
	The truncated quadratic module $M_{\alpha,\beta}^{\nu_1,\nu_2}(L)\subseteq \RR^{\nu_1\times \nu_1}\!\left\langle x\right\rangle_\kappa$ is closed.
\end{proposition}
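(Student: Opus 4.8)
The plan is to mimic the standard Carathéodory-plus-compactness argument (as in \cite[Proposition 3.1]{HKM2}), but with the boundedness/compactness input supplied by Corollary \ref{OperLP}. First I would note that $M_{\alpha,\beta}^{\nu_1,\nu_2}(L)$ is a convex cone in the finite-dimensional vector space $\RR^{\nu_1\times\nu_1}\!\left\langle x\right\rangle_\kappa$, normed by (\ref{norm}) via evaluation on the bounded nc set $\mathcal B_\epsilon\subseteq D_L$. By Carathéodory's theorem, every element of the cone is a sum of boundedly many (say $\binom{\dim}{}+1$) extreme-type generators: a bounded number of hermitian squares $R_j^\ast R_j$ with $R_j\in\RR^{\nu_1\times\nu_1}\!\left\langle x\right\rangle_\alpha$, plus a bounded number of terms $B_k^\ast V_k^\ast\pi_k(L\oplus 1)V_kB_k$ with $B_k\in\RR^{\nu_1\times\nu_2}\!\left\langle x\right\rangle_\beta$ and $(\cK_k,\pi_k,V_k)\in\tilde\Pi^{\cH\oplus\RR}_{\nu_2}$. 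So it suffices to take a sequence $p_n\to p$ in the cone, write each $p_n$ with this bounded number of generators, and extract convergent subsequences of all the data.

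Next I would handle the two kinds of generators separately. For the hermitian-square part, a norm bound on $p_n$ forces (since highest-order terms in a sum of squares cannot cancel, and using positivity of $\lambda\mapsto\text{eval at }\mathcal B_\epsilon$) a uniform bound on each $\|R_j^{(n)}\|$; since these live in a finite-dimensional space one passes to a convergent subsequence $R_j^{(n)}\to R_j$, and $R_j^{(n)\ast}R_j^{(n)}\to R_j^\ast R_j$. The genuinely new part is the pencil term: I would first argue that each summand $B_k^{(n)\ast}V_k^{(n)\ast}\pi_k^{(n)}(L\oplus 1)V_k^{(n)}B_k^{(n)}$ is itself bounded in norm (difference of $p_n$ and the sum-of-squares part, both bounded), hence the coefficient matrices $B_k^{(n)}$ are bounded in the finite-dimensional space $\RR^{\nu_1\times\nu_2}\!\left\langle x\right\rangle_\beta$ and, after passing to a subsequence, $B_k^{(n)}\to B_k$. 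What remains is to understand the limit of the "middle" monic linear matrix pencils
$$\mathcal M_k^{(n)}(y):=V_k^{(n)\ast}\pi_k^{(n)}\!\left(\left[\begin{array}{cc}L(y)&0\\0&1\end{array}\right]\right)V_k^{(n)}\in\Sym_{\nu_2}\!\left\langle y\right\rangle,$$
which are monic linear matrix pencils of fixed size $\nu_2$ with a bounded number of coefficients. Their coefficient tuples are bounded (each $\mathcal M_k^{(n)}$ is $\preceq$ the identity plus the bounded pencil term, and on the other hand $\mathcal M_k^{(n)}(X)\succeq 0$ for all $X\in D_L$, in particular on $\mathcal B_\epsilon$, which pins down the coefficients), so after a further subsequence $\mathcal M_k^{(n)}\to\mathcal M_k$ coefficientwise, a monic linear matrix pencil of size $\nu_2$.

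The main obstacle, and the place where Corollary \ref{OperLP} is essential, is showing that the limit pencil $\mathcal M_k$ is again of the required form $V_k^\ast\pi_k(L\oplus 1)V_k$ for some $(\cK_k,\pi_k,V_k)\in\tilde\Pi^{\cH\oplus\RR}_{\nu_2}$ — i.e., that the set of such "compressions of $\pi(L\oplus 1)$" is closed among monic linear matrix pencils of size $\nu_2$. To see this I would observe that for each $n$ we have $\mathcal M_k^{(n)}(X)\succeq 0$ for all $X\in D_L$ (since $\pi_k^{(n)}\otimes I$ is a $\ast$-homomorphism and $L\oplus 1\succeq 0$ on $D_L$), hence $D_L\subseteq D_{\mathcal M_k^{(n)}}$, and this inclusion passes to the limit: $D_L\subseteq D_{\mathcal M_k}$. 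Now $\mathcal M_k$ is a monic linear matrix pencil dominating $L$ on $D_L$, so Corollary \ref{OperLP}.(1) applies (with $L_1=L$ on $\cH$ and $L_2=\mathcal M_k$ on $\RR^{\nu_2}$, cf.\ Corollary \ref{OperLP-scal}): there exist a separable real Hilbert space $\cK_k$, an isometry $V_k:\RR^{\nu_2}\to\cK_k$ and a unital $\ast$-homomorphism $\pi_k:B(\cH\oplus\RR)\to B(\cK_k)$ with $\mathcal M_k=V_k^\ast\pi_k(L\oplus 1)V_k$, which is exactly a generator of the form occurring in $M_{\alpha,\beta}^{\nu_1,\nu_2}(L)$. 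Assembling: $p=\sum_j R_j^\ast R_j+\sum_k B_k^\ast\mathcal M_k B_k=\sum_j R_j^\ast R_j+\sum_k B_k^\ast V_k^\ast\pi_k(L\oplus 1)V_k B_k\in M_{\alpha,\beta}^{\nu_1,\nu_2}(L)$, proving the cone is closed. (In the bounded-$D_L(1)$ case one may instead invoke Proposition \ref{bounded-trunc-module} and Corollary \ref{OperLP}.(3) to get $\mathcal M_k=V_k^\ast\pi_k(L)V_k$ with $\pi_k$ on $B(\cH)$.) The degree bookkeeping — that everything stays inside $\RR^{\nu_1\times\nu_1}\!\left\langle x\right\rangle_\kappa$ with $\kappa=\max\{2\alpha,2\beta+1\}$ — is routine and inherited verbatim from \cite{HKM2}.
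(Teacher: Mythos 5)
Your proposal is correct and follows essentially the same route as the paper's proof: Carath\'eodory's theorem plus norm bounds on the generators, extraction of convergent subsequences in the relevant finite-dimensional spaces, the observation that the limiting monic $\nu_2\times\nu_2$ pencils still dominate $L$ on $D_L$, and then Corollary \ref{OperLP}.(1) to recognize each limit pencil as a compression $V^\ast\pi\!\left(\left[\begin{smallmatrix} L & 0\\ 0 & 1\end{smallmatrix}\right]\right)V$. The one step you pass over with a ``hence'' is deducing boundedness of the $B_k^{(n)}$ from boundedness of the summands $B_k^{(n)\ast}\bigl(V_k^{(n)\ast}\pi_k^{(n)}(L\oplus 1)V_k^{(n)}\bigr)B_k^{(n)}$, which genuinely uses monicity of the compressed pencils (the paper's Claim 1 does it by writing $2(I_{\cH}\oplus 1)$ as the pencil evaluated at $x$ plus at $-x$); this is a routine repair, not a flaw in the approach.
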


\begin{proof}
Suppose $(P_n)$ is a sequence from $M_{\alpha,\beta}^{\nu_1,\nu_2}(L)$ 
	which converges to some
	$P\in \RR^{\nu_1\times\nu_1}\!\left\langle x\right\rangle$ of degree at most $\kappa$.
	By Caratheodory's theorem on convex hulls \cite[Theorem I.2.3]{BARV}, there is an $M$ (at most the dimension of
	$\RR^{\nu_1\times\nu_1}\!\left\langle x\right\rangle_k$ plus one) 
	such that for each $n$ there exist matrix-valued
	polynomials $R_{n,i}\in \RR^{\nu_1\times \nu_1}\!\left\langle x\right\rangle_{\alpha}$, 
	$T_{n,i}\in \RR^{\nu_1\times \nu_2}\!\left\langle x\right\rangle_{\alpha}$,
	$\ast$-homomorphisms $\pi_{n,i}:B(\cH\oplus\RR)\to B(\cK_{n,i})$ where $\cK_{n,i}$ is
	a separable real Hilbert space, and isometries	$V_{n,i}\in B(\RR^{\nu_2}, \cK_{n,i})$ 
	 such that
		$$P_n=\sum_{i=1}^{M}R_{n,i}^{\ast}R_{n,i}+ 
				\sum_{i=1}^{M}T_{n,i}^{\ast} V^{\ast}_{n,i} \pi_{n,i}(
\left[\begin{array}{cc} L & 0 \\ 0 & 1\end{array}\right])V_{n,i}T_{n,i}.$$

	\noindent \textbf{Claim 1.} The sequences $(R_{n,i})_n$ and $(T_{n,i})_n$ are bounded in norm for each $i$.\\

	The sequence $(P_n)_n$ is bounded in norm, i.e., $\left\|P_n\right\|\leq N^2$ for every $n\in \NN$ and some
	$N\in \NN$. Fix $i\in \NN$.
	For every $X\in\mathcal B_{\epsilon}$ and every $n$ we have
		$$R_{n,i}^{\ast}R_{n,i}(X)\succeq 0 \quad \text{and}\quad
			T_{n,i}^{\ast} V^{\ast}_{n,i} \pi_{n,i}(\left[\begin{array}{cc} L & 0 \\ 0 & 1\end{array}\right])V_{n,i}T_{n,i}(X)\succeq 0.
			 $$
	Thus for every $X\in\mathcal B_{\epsilon}$ and every $n$ it follows that
		$$P_n(X)\succeq R_{n,i}^{\ast}R_{n,i}(X)\quad\text{and}\quad
			P_n(X)\succeq T_{n,i}^{\ast} V^{\ast}_{n,i} \pi_{n,i}(\left[\begin{array}{cc} L & 0 \\ 0 & 1\end{array}\right])V_{n,i}T_{n,i}(X).$$ 
	Hence, for every $n$ we have
		\begin{eqnarray*}
			N^2&\geq& \left\|P_n\right\|\geq \left\|R_{n,i}^{\ast}R_{n,i}\right\|= \left\|R_{n,i}\right\|^2,\\  
			N^2&\geq& \left\|P_n\right\|\geq 
				\left\|T_{n,i}^{\ast} V^{\ast}_{n,i} \pi_{n,i}(\left[\begin{array}{cc} L & 0 \\ 0 & 1\end{array}\right])V_{n,i}T_{n,i}\right\|.
		\end{eqnarray*}
	So $(R_{n,i})_n$ is bounded. Let us denote $L(x)=I_\cH + \sum_j A_jx_j$.
	For the boudedness of $(T_{n,i})_n$ observe that
		\begin{eqnarray*}
		\left\|T_{n,i}^{\ast}T_{n,i}\right\|
			&=& \frac{1}{2} \left\|T_{n,i}^{\ast} V^{\ast}_{n,i} \pi_{n,i}\left(2\left[\begin{array}{cc} I_{\cH} & 0 \\ 0 & 1\end{array}\right]\right)V_{n,i}T_{n,i}\right\|\\
			&=& \frac{1}{2} \left\|T_{n,i}^{\ast} V^{\ast}_{n,i} \pi_{n,i}\left(2\left[\begin{array}{cc} I_{\cH} & 0 \\ 0 & 1\end{array}\right]+
				\left[\begin{array}{cc} \sum_j A_j x_j & 0 \\ 0 & 0\end{array}\right]\right.\right.\\
			&-&\left.\left.
				\left[\begin{array}{cc} \sum_j A_j x_j & 0 \\ 0 & 0\end{array}\right]\right)V_{n,i}T_{n,i}\right\|\\
			&\leq& \frac{1}{2}  \left\|T_{n,i}^{\ast} V^{\ast}_{n,i} \pi_{n,i}\left(\left[\begin{array}{cc} I_\cH+\sum_j A_j x_j & 0 \\ 0 & 1\end{array}\right]\right)V_{n,i}T_{n,i}\right\|\\
			&+& \frac{1}{2}  \left\|T_{n,i}^{\ast} V^{\ast}_{n,i} \pi_{n,i}\left(\left[\begin{array}{cc} I_\cH+\sum_j A_j (-x_j) & 0 \\ 0 & 1\end{array}\right]\right)V_{n,i}T_{n,i}\right\|\\
			&=& \left\|T_{n,i}^{\ast} V^{\ast}_{n,i} \pi_{n,i}\left(\left[\begin{array}{cc} L & 0 \\ 0 & 1\end{array}\right]\right)V_{n,i}T_{n,i}\right\|\\
			&\leq& N^2.
		\end{eqnarray*}
	By Claim 1 and since we are in finite dimensional vector spaces, $(R_{n,i})_n$, $(T_{n,i})_n$ have convergent subsequences with limits 
	$R_i\in \RR^{\nu_1\times\nu_1}\!\left\langle x\right\rangle$, 
	$T_i\in \RR^{\nu_2\times\nu_1}\!\left\langle x\right\rangle$. \\

\noindent \textbf{Claim 2.} The sequences $(V^{\ast}_{n,i} \pi_{n,i}\left(\left[\begin{array}{cc} L & 0 \\ 0 & 1\end{array}\right]\right)V_{n,i})_n\subseteq 
		\RR^{\nu_2\times \nu_2}\left\langle x\right\rangle$ are bounded in norm for each $i$.\\

The following estimate holds:
		\begin{eqnarray*}
		&&\left\|V^{\ast}_{n,i} \pi_{n,i}\left(\left[\begin{array}{cc} L & 0 \\ 0 & 1\end{array}\right]\right)V_{n,i}\right\| 
		= 	\max_{X\in \cB_\epsilon}
			\left\|\left(V^{\ast}_{n,i} \pi_{n,i}\left(\left[\begin{array}{cc} L & 0 \\ 0 & 1\end{array}\right]\right)V_{n,i}\right)(X)\right\|\\
		&=& \max_{X\in \cB_\epsilon}
			\left\|I_{\nu_2}\otimes I + 
		\sum_{j=1}^{g} V^{\ast}_{n,i} \pi_{n,i}(\left[\begin{array}{cc} A_j & 0 \\ 0 & 0\end{array}\right]) V_{n,i}\otimes X_j \right\|\\
		&\leq& 1 + \sum_{j=1}^{g}\max_{X\in \cB_\epsilon}
			\left\|V^{\ast}_{n,i} \pi_{n,i}(\left[\begin{array}{cc} A_j & 0 \\ 0 & 0\end{array}\right]) V_{n,i}\otimes X_j \right\|\\
		&\leq& 1 + \sum_{j=1}^{g}\max_{X\in \cB_\epsilon}
			\left\|V^{\ast}_{n,i} \pi_{n,i}(\left[\begin{array}{cc} A_j & 0 \\ 0 & 0\end{array}\right]) V_{n,i}\right\|\left\|X_j \right\|\\
		&\leq& 1 + \epsilon 
			\sum_{j=1}^{g}\left\|V^{\ast}_{n,i} \pi_{n,i}(\left[\begin{array}{cc} A_j & 0 \\ 0 & 0\end{array}\right]) V_{n,i}\right\|	\\
		&\leq& 1 + \epsilon 
			\sum_{j=1}^{g}\left\|V^{\ast}_{n,i}\right\|	 
			\left\|\pi_{n,i}\right\|\left\|\left[\begin{array}{cc} A_j & 0 \\ 0 & 0\end{array}\right]\right\|\left\|V_{n,i}\right\|\\
		&\leq& 1 + \epsilon \sum_{j=1}^{g}\left\|A_j\right\|.
		\end{eqnarray*}

	By Claim 2 and since we are in a finite dimensional vector space, the sequences from Claim 2 converge for each $i$ 
	to a monic linear matrix pencil  
		$$\hat L_i=I_{\nu_2}+\sum_{j} \hat A_{j,i} x_j\in \RR^{\nu_2\times \nu_2}\!\left\langle x\right\rangle.$$

	\noindent \textbf{Claim 3.} $D_L\subseteq D_{\hat L_i}$ for each $i$.\\

	Fix $i\in \NN$. Suppose there is 
	$X\in D_L(m)\setminus D_{\hat L_i}(m)$. Then there is
	a vector $v\in \RR^{\nu_2}\otimes \RR^m$ of norm 1 such that 
		\begin{equation} \label{negative} \left\langle \hat L_i(X)v,v\right\rangle<0 \end{equation} 
	Since 
	$(V^{\ast}_{n,i} \pi_{n,i}(\left[\begin{array}{cc} L & 0 \\ 0 & 1\end{array}\right])V_{n,i})_n$
	converges to $\hat L_i$ in the norm, there is $k_0\in \NN$ such that 
		\begin{equation} \label{estimate-1} 
			\left\|\hat L_i-V^{\ast}_{k_0,i} \pi_{k_0,i}(
			\left[\begin{array}{cc} L & 0 \\ 0 & 1\end{array}\right])V_{k_0,i}\right\|\leq 
			\frac{\epsilon}{2\left\|X\right\|}\frac{\left|\left\langle \hat L_i(X)v,v\right\rangle\right|}{2}.\end{equation} 
	Since $\hat L_i$ and $V^{\ast}_{k_0,i} \pi_{k_0,i}(\left[\begin{array}{cc} L & 0 \\ 0 & 1\end{array}\right])V_{k_0,i}$ are monic, the following estimate holds
	\begin{eqnarray*} 
	&&\left\|\left(\hat L_i-V^{\ast}_{k_0,i} \pi_{k_0,i}(\left[\begin{array}{cc} L & 0 \\ 0 & 1\end{array}\right])V_{k_0,i}\right)(X)\right\|\\
	&=&\frac{2\left\|X\right\|}{\epsilon}\left\|(\hat L_i-V^{\ast}_{k_0,i} \pi_{k_0,i}(\left[\begin{array}{cc} L & 0 \\ 0 & 1\end{array}\right])V_{k_0,i})
	(\underbrace{\frac{\epsilon}{2\left\|X\right\|} X}_{\in \cB_\epsilon})\right\|\\
	&\leq&	\frac{2\left\|X\right\|}{\epsilon}
	\left\|\hat L_i-V^{\ast}_{k_0,i} \pi_{k_0,i}(\left[\begin{array}{cc} L & 0 \\ 0 & 1\end{array}\right])V_{k_0,i}\right\|
	\underbrace{\leq}_{(\ref{estimate-1})}
	\frac{\left|\left\langle \hat L_i(X)v,v\right\rangle\right|}{2}.
	\end{eqnarray*} 
	But then, since $v$ is of norm one, we have
		$$
			\left|\left\langle\left(\hat L_i-V^{\ast}_{k_0,i} \pi_{k_0,i}(\left[\begin{array}{cc} L & 0 \\ 0 & 1\end{array}\right]\right)V_{k_0,i})(X)v,v\right\rangle\right|
			\leq\frac{ | \left\langle \hat L_i(X)v,v\right\rangle|}{2},
		$$
	and hence 
		\begin{equation} \label{negative-2}
			\left\langle \left(V^{\ast}_{k_0,i} \pi_{k_0,i}(
				\left[\begin{array}{cc} L & 0 \\ 0 & 1\end{array}\right])V_{k_0,i}\right)(X)v,v\right\rangle\leq
			\frac{\left\langle \hat L_i(X)v,v\right\rangle}{2}\underbrace{<}_{(\ref{negative})}0.
		\end{equation}
	But since 
		\begin{eqnarray*}
			&&(V^{\ast}_{k_0,i} \pi_{k_0,i}(\left[\begin{array}{cc} L & 0 \\ 0 & 1\end{array}\right])V_{k_0,i})(X)
			=\\
			&=&	(V^{\ast}_{k_0,i}\otimes I_m)  \left(\pi_{k_0,i}(\left[\begin{array}{cc} L & 0 \\ 0 & 1\end{array}\right])(X)\right)
				(V_{k_0,i}\otimes I)\\
			&=&	(V^{\ast}_{k_0,i}\otimes I_m)  \left((\pi_{k_0,i}\otimes I_m)(\left[\begin{array}{cc} L & 0 \\ 0 & 1\end{array}\right](X))\right)
				(V_{k_0,i}\otimes I_m)\\
			&=&	(V^{\ast}_{k_0,i}\otimes I_m)  \left((\pi_{k_0,i}\otimes I_m)(\left[\begin{array}{cc} L(X) & 0 \\ 0 & 1\end{array}\right])\right)
				(V_{k_0,i}\otimes I_m)\\
			&\underbrace{\succeq}_{X\in D_L(m)}& 0,
		\end{eqnarray*}
	where $\pi_{k_0,i}\otimes I_m$ is a $\ast$-homomorphism $A\otimes B\mapsto \pi_{k_0,i}(A)\otimes B$, 
	this is a contradiction with (\ref{negative-2}). Hence, $D_L\subseteq D_{\hat L_i}$.

	To conclude the proof we use Corollary \ref{OperLP}.(1). There is a triple $(\cK_{i},\pi_i,V_i)$ of a separable real Hilbert space $\cK_i$,
	a $\ast$-homomorphism $\pi_i:\cH\oplus\RR\to\cK_i$ and an
	isometry $V_i$ such that
		$$\hat L_i=V_i^{\ast}\pi_i(\left[\begin{array}{cc} L & 0 \\ 0 & 1\end{array}\right])V_i.$$
	Therefore $(P_n)_n$ converges to
		$$\sum_{i=1}^{M}R_{i}^{\ast}R_{i}+ 
				\sum_{i=1}^{M}T_{i}^{\ast} V^{\ast}_{i} \pi_{i}(\left[\begin{array}{cc} L & 0 \\ 0 & 1\end{array}\right])V_{i}T_{i}\in 
				M_{\alpha,\beta}^{\nu_1,\nu_2}(L).$$
\end{proof}

\subsection{Proof of Theorem  \ref{CPsatz-matrix}} \label{proof-CP-satz} In this subsection we prove Theorem \ref{CPsatz-matrix}.
	The argument is a classical one going back to at least Putinar \cite{PUT} and its noncommutative version in \cite{HM1}, but with a consequential difference. Namely, the difference is in the separating functional $\lambda$, which produces perfection, i.e., the Positivstellensatz holds not only for positive definite polynomials but for semidefinite ones as well and we also 
get degree bounds (like \cite{HKM2}).

\begin{proof}[Proof of Theorem \ref{CPsatz-matrix}]
	Suppose $F\notin M_{d+1,d}^{\nu,\ell}(L)$. 
	By Proposition \ref{tr-mat-closed} and the Hahn-Banach separation theorem  
	there exists a linear functional 
	$\lambda:\RR^{\nu\times\nu}\!\left\langle x\right\rangle_{2d+1}\to \RR$ that is nonnegative on 
	$M_{d+1,d}^{\nu,\ell}(L)$ and negative on $F$.
	By \cite[Lemma 3.2]{HKM2}, we can assume $\lambda$ is positive. 
	(Note that the functional $\hat \lambda$ from the proof of \cite[Lemma 3.2]{HKM2} 
	is non-negative also on $M_{d+1,d}^{\nu,\ell}(L)$.) By Proposition \ref{GNS-cons}
	with $k=d$, there is a tuple of symmetric matrices $X\in D_L$ acting on the finite-dimensional Hilbert space $\cX$
 	and a vector $\gamma$ such that
		$$\lambda(P)=\left\langle P(X)\gamma,\gamma\right\rangle$$
	for all $P\in \RR^{\nu\times\nu}\!\left\langle x\right\rangle.$ In particular,
		$$\left\langle F(X)\gamma,\gamma\right\rangle=\lambda(F)<0,$$
	so that $F(X)$ is not positive semidefinite, contradicting $D_L\subseteq D_F$ and the proof is complete.
\end{proof}

\subsection{Convex Positivstellensatz for free Hilbert spectrahedrops}

This subsection focuses on polynomials positive on a free Hilbert spectrahedrop. The main result, Theorem \ref{CPsatz-matrix-drops}, extends Theorem \ref{CPsatz-matrix} from free Hilbert spectrahedra to free Hilbert spectrahedrops.

Let $L$ be a monic linear operator pencil of the form
	$$L(x,y)=I_\cH+\sum_{j=1}^g \Omega_j x_j + \sum_{k=1}^{h}\Gamma_k y_k\in 
		\Sym_\cH\!\left\langle x\right\rangle$$
and let $\fK=\proj_x D_L$.
%For positive integers $\nu, \nu_1,\nu_2$ and $d$ we define:
%\begin{enumerate} 
%	\item The \textbf{$\nu$-th truncated quadratic module} in $\RR^{\nu\times\nu}\left\langle \beta
%			\right\rangle_{2d+1}$ with degree bound $d$ associated to $L$ and 
%			$\mathcal{P}:=\proj_x\; D_{L}^{\cK}$ by
%	\begin{eqnarray*}
%	M_{L,x,d}^{\nu}
%					&:=& \{\sum_j A_j^\ast A_j + \sum_k B_k^\ast 
%		\pi^{(\cK)}(\left[\begin{array}{cc} L & 0\\ 0 & 1\end{array}\right])B_k \colon 
%		A_j\in \RR^{\nu\times\nu}\left\langle x\right\rangle,\\ 
%		&&B_k \in B(\RR^{\nu},\cK)\left\langle x\right\rangle,
%				 \pi^{(\cK)}: B(\cH\oplus \RR)\to B(\cK) \text{ a } \ast\text{-homomorphism},\\
%				&&	\deg(A_j)\leq d,\deg(B_k)\leq d,\\
%				&&
%				\sum_k B_k^\ast V_k^\ast\pi^{(\cK)}(\left[\begin{array}{cc} \Gamma_\ell & 0\\ 0 & 0\end{array}\right])V _k B_k=0
%					\text{ for all }\ell\}.
%	\end{eqnarray*}
	Fix positive integers $\nu_1,\nu_2, d\in  \NN$.
	We define the \textbf{$(\nu_1,\nu_2;d)$ truncated quadratic module} in 
	$\RR^{\nu_1\times\nu_1}\!\left\langle x	\right\rangle_{2d+1}$ associated to $L$ and 
	$\fK=\proj_x D_{L}$ by
	\begin{eqnarray*}
	M_{x}^{\nu_1,\nu_2}(L)_d
				&:=& 
		\{\sigma + \sum_{k,(\cK_k,\pi_k,V_k)\in \tilde \Pi_{\nu_2}^{\cH\oplus \RR}}^{\text{finite}}
		R_k^\ast V_k^\ast\pi_k (\left[\begin{array}{cc} L & 0\\ 0 & 1\end{array}\right])V _k R_k \colon 
		\sigma\in \Sigma_d^{\nu_1}\!\left\langle x\right\rangle,\\ 
		&&R_k \in \RR^{\nu_2\times \nu_1}\!\left\langle x\right\rangle_d,				
				\sum_k R_k^\ast V_k^\ast\pi_k(\left[\begin{array}{cc} \Gamma_\ell & 0\\ 0 & 0\end{array}\right])V _k R_k=0
					\text{ for all }\ell\}.
	\end{eqnarray*}

In the case $D_L(1)$ is a bounded set, we can replace $\ast$-homomorphisms of the extended pencil $L\oplus 1$ by
$\ast$-homomorphisms of $L$ in the definition of the truncated quadratic module.

\begin{proposition}	 \label{bounded-trunc-module-2}
	If $D_L(1)$ is a bounded set, then:
	\begin{eqnarray*}
	M_{x}^{\nu_1,\nu_2}(L)_d
				&:=& 
		\{\sigma + \sum_{k,(\cK_k,\pi_k,V_k)\in \tilde \Pi_{\nu_2}^{\cH}}^{\text{finite}}
		R_k^\ast V_k^\ast\pi_k (L)V _k R_k \colon 
		\sigma\in \Sigma_d^{\nu_1}\!\left\langle x\right\rangle,\\ 
		&&R_k \in \RR^{\nu_2\times \nu_1}\!\left\langle x\right\rangle_d,				
				\sum_k R_k^\ast V_k^\ast\pi_k (\Gamma_\ell )V _k R_k=0
					\text{ for all }\ell\}.
	\end{eqnarray*}
\end{proposition}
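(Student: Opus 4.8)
The plan is to imitate the proof of Proposition \ref{bounded-trunc-module}, the only new point being that the side condition on the $\Gamma_\ell$ must be transported together with the rest of a generator. As there, it suffices to prove the following exchange of data, and its converse: for every separable real Hilbert space $\cK$, every isometry $V\in B(\RR^{\nu_2},\cK)$ and every (unital) $\ast$-homomorphism $\pi:B(\cH\oplus\RR)\to B(\cK)$ there exist a separable real Hilbert space $\tilde\cK$, an isometry $\tilde V\in B(\RR^{\nu_2},\tilde\cK)$ and a $\ast$-homomorphism $\tilde\pi:B(\cH)\to B(\tilde\cK)$ with
$$V^\ast\pi(L\oplus 1)V=\tilde V^\ast\tilde\pi(L)\tilde V\qquad\text{and}\qquad V^\ast\pi(\Gamma_\ell\oplus 0)V=\tilde V^\ast\tilde\pi(\Gamma_\ell)\tilde V\quad(1\leq\ell\leq h).$$
Given this, I would apply it termwise to an element $\sigma+\sum_k R_k^\ast V_k^\ast\pi_k(L\oplus 1)V_kR_k$ of $M_{x}^{\nu_1,\nu_2}(L)_d$: the sum of squares $\sigma$ and the coefficients $R_k\in\RR^{\nu_2\times\nu_1}\!\left\langle x\right\rangle_d$ are left as they are, the first identity rewrites each pencil summand in the form required by the other description, and the second identity shows that the defining constraint $\sum_k R_k^\ast V_k^\ast\pi_k(\Gamma_\ell\oplus 0)V_kR_k=0$ is \emph{the very same equation} in both descriptions; hence it survives the rewriting, and we obtain both inclusions, so equality.

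For the exchange above I would argue exactly as in Proposition \ref{bounded-trunc-module}. Put $\cL:=V^\ast\pi(L\oplus 1)V$; it is a monic linear pencil in the variables $(x,y)$ (the $\ast$-homomorphisms that occur may be taken unital, as is implicitly done throughout this section), and because $\pi$ is a $\ast$-homomorphism and compression by the isometry $V$ preserves positivity we have $D_L=D_{L\oplus 1}\subseteq D_{\cL}$. Since $D_L(1)$ is bounded, Corollary \ref{OperLP}.(3) applied to the pair $(L,\cL)$ provides $\tilde\cK$, an isometry $\tilde V:\RR^{\nu_2}\to\tilde\cK$ and a unital $\ast$-homomorphism $\tilde\pi:B(\cH)\to B(\tilde\cK)$ with $\cL=\tilde V^\ast\tilde\pi(L)\tilde V$. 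Writing out the right-hand side as $I_{\nu_2}+\sum_j \tilde V^\ast\tilde\pi(\Omega_j)\tilde V\,x_j+\sum_\ell\tilde V^\ast\tilde\pi(\Gamma_\ell)\tilde V\,y_\ell$ and comparing, in this identity of pencils in $(x,y)$, the coefficients of the $y_\ell$ (and of the $x_j$) yields the two equalities, in particular $V^\ast\pi(\Gamma_\ell\oplus 0)V=\tilde V^\ast\tilde\pi(\Gamma_\ell)\tilde V$ for each $\ell$.

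For the converse exchange I would use the structure of $\ast$-representations of $B(\cH)$: writing $\tilde\pi$, up to unitary equivalence, as $A\mapsto(A\otimes I_{\cM})\oplus 0$ on $\tilde\cK\cong(\cH\otimes\cM)\oplus\cM'$, set $\cK:=((\cH\oplus\RR)\otimes\cM)\oplus\cM'$, take $\pi:B(\cH\oplus\RR)\to B(\cK)$ to be $T\mapsto(T\otimes I_{\cM})\oplus 0$, and let $j:\tilde\cK\hookrightarrow\cK$ be the isometry induced by $\cH\hookrightarrow\cH\oplus\RR$; then $V:=j\tilde V$ is an isometry, and since $I_\cH\oplus 1$ and $\Gamma_\ell\oplus 0$ leave the first summand invariant and restrict there to $I_\cH$ and $\Gamma_\ell$, a direct computation gives $j^\ast\pi(I_\cH\oplus 1)j=\tilde\pi(I_\cH)$ and $j^\ast\pi(\Gamma_\ell\oplus 0)j=\tilde\pi(\Gamma_\ell)$, hence the two required identities. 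The genuinely nontrivial input is Corollary \ref{OperLP}.(3), which is already available; I expect the only real obstacle to be bookkeeping, namely checking that passing between pencils over $B(\cH\oplus\RR)$ and over $B(\cH)$ and back truly leaves the constraint $\sum_k R_k^\ast(\cdot)R_k=0$ intact — but this is immediate once the second identity of the exchange is in hand.
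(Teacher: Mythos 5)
Your argument for the main inclusion is essentially the paper's: the paper's one-line proof (whose pointer to Proposition \ref{tr-mat-closed} is evidently a slip for Proposition \ref{bounded-trunc-module}) intends exactly the termwise exchange of triples you perform, with boundedness of $D_L(1)$ entering only through Corollary \ref{OperLP}.(3). Your implementation is in fact the correct reading of the paper's terse instruction: Theorem \ref{OperLP-trunc} literally concerns pencils whose individual $y$-coefficients vanish, which a single summand $V_k^\ast\pi_k(L\oplus 1)V_k$ need not satisfy, whereas applying Corollary \ref{OperLP}.(3) to the pair $\bigl(L,\,V^\ast\pi(L\oplus 1)V\bigr)$ in the joint variables $(x,y)$ and comparing coefficients transports each $V^\ast\pi(\Gamma_\ell\oplus 0)V$ to $\tilde V^\ast\tilde\pi(\Gamma_\ell)\tilde V$, so the constraint $\sum_k R_k^\ast(\cdot)R_k=0$ survives verbatim; your unitality caveat matches the convention the paper uses implicitly in Proposition \ref{bounded-trunc-module}. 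Where you genuinely diverge is the reverse inclusion: you realize a $B(\cH)$-generator as a $B(\cH\oplus\RR)$-generator via the structure of separable $\ast$-representations of $B(\cH)$ (a multiple of the identity plus a zero summand). This is unproblematic for finite-dimensional $\cH$ (cf.\ Remark \ref{rem-after-OperLP}.(2)), but for infinite-dimensional $\cH$ it rests on the nontrivial fact that the real Calkin algebra admits no nonzero representation on a separable Hilbert space, which the paper neither states nor needs. A lighter route entirely inside the paper's toolkit: under the same unitality convention, $\tilde V^\ast\tilde\pi(L)\tilde V$ is a monic pencil positive semidefinite on $D_L$, so Corollary \ref{OperLP}.(1) (no boundedness required) writes it as $V^\ast\pi(L\oplus 1)V$ with $\pi:B(\cH\oplus\RR)\to B(\cK)$ unital, $\cK$ separable and $V$ isometric, and coefficientwise equality again carries the $\Gamma_\ell$-data and hence the constraint. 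With that substitution your proof coincides with the intended one; as written it is still correct, but the converse step imports a heavier representation-theoretic fact than necessary.
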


\begin{proof}
	The proof is the same as the proof of Proposition \ref{tr-mat-closed} using
Theorem \ref{OperLP-trunc} instead of Corollary \ref{OperLP}.
\end{proof}

The main result of this subsection is the following Positivstellensatz:

\begin{theorem}	\label{CPsatz-matrix-drops}
		A polynomial $F\in \RR^{\nu_1\times \nu_1}\!\left\langle x\right\rangle_{2d+1}$ is positive semidefinite on $\fK$
	if and only if $F\in M_{x}^{\nu_1,\nu_2}(L)_d$.
%	Let 
%		$$L(x,y)=I_\cH + \sum_j \Omega_j x_j + \sum_\ell \Gamma_\ell y_\ell 
%			\in \Sym_\cH \left\langle x,y\right\rangle$$
%	be a monic linear operator pencil and 
%	$F\in\RR^{\nu\times\nu}\left\langle x\right\rangle$ a matrix polynomial of degree at most $2d+1$. If $F|_{D_{L}}\succeq 0$, then 
%		$$F\in M_{L,x,d}^{\nu_1,\nu_2}\subseteq M_{L,x,d}^{\nu_1},$$
%	where $\nu_2:=\nu_1\cdot \sigma_{\#}(d)$ and
%	$\sigma_{\#}(d)=\dim(\RR\left\langle x\right\rangle_d)$.
\end{theorem}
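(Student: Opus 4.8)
The strategy is to reduce Theorem \ref{CPsatz-matrix-drops} to the already-established Theorem \ref{CPsatz-matrix} by treating $F$ as a polynomial in the variables $x$ only and viewing the spectrahedrop $\fK = \proj_x D_L$ through the lens of the full spectrahedron $D_L$ in the variables $(x,y)$. First I would check the easy direction: if $F \in M_x^{\nu_1,\nu_2}(L)_d$, then for any $X \in \fK(n)$ there is $Y$ with $L(X,Y) \succeq 0$, so each summand $R_k^\ast V_k^\ast \pi_k(\mathrm{diag}(L,1))V_k R_k$ evaluated at $(X,Y)$ is positive semidefinite (using that $\pi_k \otimes I$ is a $\ast$-homomorphism and $V_k$ an isometry, as in the proof of Theorem \ref{OperLP-satz-unbounded}); together with $\sigma(X) \succeq 0$ this gives $F(X) \succeq 0$. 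The constraint equations $\sum_k R_k^\ast V_k^\ast \pi_k(\mathrm{diag}(\Gamma_\ell,0))V_k R_k = 0$ guarantee that the $y$-dependent part drops out, so that $F$ as a polynomial in $x$ alone is represented.

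For the hard direction, suppose $F|_{\fK} \succeq 0$. The first key observation is that $F$, regarded as an element of $\RR^{\nu_1 \times \nu_1}\langle x, y\rangle$ not depending on $y$, is positive semidefinite on all of $D_L$ (the $(x,y)$-spectrahedron): indeed $(X,Y) \in D_L$ forces $X \in \fK$, hence $F(X) = F(X,Y) \succeq 0$. Now apply Theorem \ref{CPsatz-matrix} to the monic linear \emph{operator} pencil $L(x,y) \in \Sym_\cH\langle x, y\rangle$ and the matrix polynomial $F$ of degree $\leq 2d+1$ in the combined variables. This produces a decomposition $F = \sigma' + \sum_k Q_k^\ast V_k^\ast \pi_k(\mathrm{diag}(L,1)) V_k Q_k$ with $\sigma' \in \Sigma_{d+1}^{\nu_1}$ and $Q_k \in \RR^{\nu_2 \times \nu_1}\langle x,y\rangle_d$, where the $V_k, \pi_k$ range over $\tilde\Pi^{\cH\oplus\RR}_{\nu_2}$. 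The main obstacle is then a \textbf{degree/variable bookkeeping argument}: I must show the $y$-variables can be eliminated from this representation, i.e., that it can be rewritten with all $Q_k \in \RR^{\nu_2\times\nu_1}\langle x\rangle_d$ and with the $\Gamma_\ell$-constraints holding. This is where one uses that $F$ does not involve $y$: by the same trick as in the univariate-to-operator arguments and in \cite{HKM2}, one collects the coefficients of each monomial in $y$ on both sides; since the left side has no $y$, the $y$-homogeneous components of positive degree on the right must vanish, and a Fejér–Riesz / truncation-style manipulation (or the closedness of the relevant truncated module together with a separation argument exactly parallel to the proof of Theorem \ref{CPsatz-matrix}) forces the claimed form. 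Concretely, I expect the cleanest route is to mimic the proof of Theorem \ref{CPsatz-matrix} verbatim: assume $F \notin M_x^{\nu_1,\nu_2}(L)_d$, invoke closedness of $M_x^{\nu_1,\nu_2}(L)_d$ (Proposition \ref{tr-mat-closed}, whose proof adapts using Theorem \ref{OperLP-trunc} in place of Corollary \ref{OperLP}, exactly as in Proposition \ref{bounded-trunc-module-2}) and Hahn–Banach to separate $F$ by a linear functional $\lambda$ nonnegative on the module and negative on $F$; then make $\lambda$ strictly positive on squares via \cite[Lemma 3.2]{HKM2}, run the GNS construction of Proposition \ref{GNS-cons} to obtain a tuple $X$ acting on a finite-dimensional space with $\lambda(P) = \langle P(X)\gamma,\gamma\rangle$, and show $X \in \fK$.

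The crux of this last step is verifying $X \in \fK$, i.e., producing $Y$ with $L(X,Y) \succeq 0$. Here one argues as in the proof of Proposition \ref{GNS-cons}: nonnegativity of $\lambda$ on $M_x^{\nu_1,\nu_2}(L)_d$ implies, via the same computation with isometries $V \in B(\RR^\ell, \cK)$ and the Effros–Winkler theorem (Theorem \ref{EFF-WIN-HB}), that $X$ lies in every matrix convex set cut out by monic linear pencils that are nonnegative on $\fK$; combined with Theorem \ref{OperLP-trunc}'s characterization of the polar dual of a spectrahedrop, this pins $X$ to $\fK = \proj_x D_L$. Then $\langle F(X)\gamma,\gamma\rangle = \lambda(F) < 0$ contradicts $F|_{\fK} \succeq 0$. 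I expect the only genuinely delicate point is threading the $\Gamma_\ell$-constraint equations correctly through the GNS/separation machinery — i.e., making sure the cone one separates against is exactly $M_x^{\nu_1,\nu_2}(L)_d$ with its constraints, rather than a larger cone — but this is handled by carrying the $y$-variables along formally and discarding them at the end, precisely because $F$ is $y$-free.
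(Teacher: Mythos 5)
Your ``cleanest route'' is essentially the paper's own proof: the paper establishes Theorem \ref{CPsatz-matrix-drops} by running the separation argument of \cite[Theorem 5.1]{HKM3}, using the closedness of $M_{x}^{\nu_1,\nu_2}(L)_d$ (Proposition \ref{trunc-proj-quad-closed}) and the GNS construction combined with Theorem \ref{OperLP-trunc} (Proposition \ref{GNS-cons-2}), exactly as you outline, so your abandoned first route (eliminating the $y$-variables from an application of Theorem \ref{CPsatz-matrix} in the combined variables) is not needed. One small imprecision: the GNS/Effros--Winkler step only places the tuple $X$ in the closure $\overline{\proj_x D_L}$ (the spectrahedrop need not be closed), but since $F$ is a polynomial its positivity on $\fK$ passes to the closure by continuity, so the contradiction goes through unchanged.
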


\begin{remark}
Several remarks are in order.
\begin{enumerate}
\item In case there are no $y$-variables in $L$, Theorem \ref{CPsatz-matrix-drops} reduces to Theorem \ref{CPsatz-matrix}.
\item If $d= 0$, i.e., $F$ is linear, then Theorem \ref{CPsatz-matrix-drops} reduces to Theorem \ref{OperLP-trunc}.
\item If $L$ is matrix-valued, then Theorem \ref{CPsatz-matrix-drops} reduces to \cite[Theorem 5.1]{HKM3}. 
\item If $L$ is matrix-valued and variables commute, a Positivstellensatz for commutative polynomials strictly positive on spectrahedrops was established by Gouveia and Netzer in \cite{GN11}. A major distinction is that the degrees of
the $R_k$ and $\sigma$ in the commutative theorem behave very badly.
%(4) Observe that K is in general not closed. Thus Theorem 5.1 yields a “perfect” Positivstel-
%lensatz for certain non-closed sets.
\end{enumerate}
\end{remark}

\subsection{Proof of Theorem \ref{CPsatz-matrix-drops}}
The proof uses the same idea as the proof of Theorem \ref{CPsatz-matrix}, i.e., construction of a positive separating functional and then the connection with operators via the GNS construction. What has to be proved additionally is that the truncated quadratic module $M_{x}^{\nu_1,\nu_2}(L)_d$ is closed (see Proposition \ref{trunc-proj-quad-closed}) and that the tuple of operators $X$ from the GNS construction belongs to the closure of the free Hilbert spectrahedrop (see Proposition \ref{GNS-cons-2}).

\begin{proposition} \label{trunc-proj-quad-closed}
	The truncated module $M_{x}^{\nu_1,\nu_2}(L)_d$  is closed.
\end{proposition}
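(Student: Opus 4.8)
The plan is to reproduce the proof of Proposition~\ref{tr-mat-closed} almost verbatim, the only real novelty being the bookkeeping needed so that the affine side-constraints survive both the Carath\'eodory reduction and the passage to a limit. Write $L^{+}:=\left[\begin{array}{cc}L & 0\\ 0 & 1\end{array}\right]$, $\Gamma_\ell^{+}:=\left[\begin{array}{cc}\Gamma_\ell & 0\\ 0 & 0\end{array}\right]$, $\kappa:=2d+1$, and introduce the finite-dimensional space $W:=\RR^{\nu_1\times\nu_1}\!\left\langle x\right\rangle_{\kappa}\oplus\left(\RR^{\nu_1\times\nu_1}\!\left\langle x\right\rangle_{2d}\right)^{h}$. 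Let $\mathcal E\subseteq W$ be the union of the cone $\{(\sigma,0,\dots,0)\colon\sigma\in\Sigma_{d}^{\nu_1}\}$ with the set of all tuples $\left(T^\ast V^\ast\pi(L^{+})VT,\,T^\ast V^\ast\pi(\Gamma_1^{+})VT,\dots,T^\ast V^\ast\pi(\Gamma_h^{+})VT\right)$ where $(\cK,\pi,V)\in\tilde\Pi_{\nu_2}^{\cH\oplus\RR}$ and $T\in\RR^{\nu_2\times\nu_1}\!\left\langle x\right\rangle_d$. Unravelling the definition of $M_{x}^{\nu_1,\nu_2}(L)_d$ (a $y$-free conic combination of generators of $\mathcal E$ forces the $\Gamma_\ell$-coefficients to cancel, which is precisely the constraint), one has $M_{x}^{\nu_1,\nu_2}(L)_d=\{P\colon(P,0,\dots,0)\in\mathrm{cone}(\mathcal E)\}$.

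Now let $(P_n)\subseteq M_{x}^{\nu_1,\nu_2}(L)_d$ converge to $P\in\RR^{\nu_1\times\nu_1}\!\left\langle x\right\rangle_{\kappa}$. By Carath\'eodory's theorem applied to $\mathrm{cone}(\mathcal E)$ inside $W$, there is a fixed $M$ such that for each $n$ one may write $P_n=\sigma_n+\sum_{i=1}^{M}T_{n,i}^\ast V_{n,i}^\ast\pi_{n,i}(L^{+})V_{n,i}T_{n,i}$ with $\sigma_n\in\Sigma_{d}^{\nu_1}$, $(\cK_{n,i},\pi_{n,i},V_{n,i})\in\tilde\Pi_{\nu_2}^{\cH\oplus\RR}$ and $T_{n,i}\in\RR^{\nu_2\times\nu_1}\!\left\langle x\right\rangle_d$, and, crucially, $\sum_{i=1}^{M}T_{n,i}^\ast V_{n,i}^\ast\pi_{n,i}(\Gamma_\ell^{+})V_{n,i}T_{n,i}=0$ for every $\ell$. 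Fix $\epsilon>0$ with $\epsilon\sum_j\|\Omega_j\|\le\tfrac12$, so that $L^{+}(X,0)\succeq\tfrac12 I$ for $\|X\|\le\epsilon$ and $\cB_\epsilon\subseteq\proj_x D_L$, and norm the relevant polynomial spaces by $\|p\|:=\max_{X\in\cB_\epsilon}\|p(X)\|$ as in Proposition~\ref{tr-mat-closed}. Evaluating $P_n=\sigma_n+\sum_i T_{n,i}^\ast V_{n,i}^\ast\pi_{n,i}(L^{+})V_{n,i}T_{n,i}$ at $(X,0)$ for $X\in\cB_\epsilon$ and using that $V_{n,i}$ is an isometry and $\left(V_{n,i}^\ast\pi_{n,i}(L^{+})V_{n,i}\right)(X,0)\succeq\tfrac12 I$, every summand on the right is positive semidefinite, so $P_n(X)\succeq\tfrac12\,T_{n,i}^\ast T_{n,i}(X)$; since $(P_n)$ is norm-bounded, each $(T_{n,i})_n$ is bounded, and, exactly as in the estimate proving Claim~2 of Proposition~\ref{tr-mat-closed} (now for pencils in $(x,y)$), so is each $\left(V_{n,i}^\ast\pi_{n,i}(L^{+})V_{n,i}\right)_n$.

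Passing to a subsequence, $T_{n,i}\to T_i$ and $V_{n,i}^\ast\pi_{n,i}(L^{+})V_{n,i}\to\hat L_i$, a monic linear pencil of size $\nu_2$ in $(x,y)$; the argument for Claim~3 of Proposition~\ref{tr-mat-closed} applies verbatim and gives $D_L\subseteq D_{\hat L_i}$, so by Corollary~\ref{OperLP}(1) there are a separable real Hilbert space $\cK_i$, an isometry $V_i\colon\RR^{\nu_2}\to\cK_i$ and a $\ast$-homomorphism $\pi_i\colon B(\cH\oplus\RR)\to B(\cK_i)$ with $\hat L_i=V_i^\ast\pi_i(L^{+})V_i$. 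Then $\sigma_n=P_n-\sum_i T_{n,i}^\ast V_{n,i}^\ast\pi_{n,i}(L^{+})V_{n,i}T_{n,i}\to P-\sum_i T_i^\ast V_i^\ast\pi_i(L^{+})V_i T_i=:\sigma$, and as $\Sigma_d^{\nu_1}$ is closed, $\sigma\in\Sigma_d^{\nu_1}$; thus $P=\sigma+\sum_{i=1}^{M}T_i^\ast V_i^\ast\pi_i(L^{+})V_i T_i$. Finally, the coefficient of $y_\ell$ in $V_{n,i}^\ast\pi_{n,i}(L^{+})V_{n,i}$ equals $V_{n,i}^\ast\pi_{n,i}(\Gamma_\ell^{+})V_{n,i}$, which converges to the coefficient of $y_\ell$ in $\hat L_i$, namely $V_i^\ast\pi_i(\Gamma_\ell^{+})V_i$; hence $\sum_{i=1}^{M}T_i^\ast V_i^\ast\pi_i(\Gamma_\ell^{+})V_i T_i=\lim_n\sum_{i=1}^{M}T_{n,i}^\ast V_{n,i}^\ast\pi_{n,i}(\Gamma_\ell^{+})V_{n,i}T_{n,i}=0$ for every $\ell$, so $P\in M_{x}^{\nu_1,\nu_2}(L)_d$. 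The main obstacle is exactly this last point --- controlling the affine $\Gamma_\ell$-constraints --- which forces one to work in $W$ rather than in $\RR^{\nu_1\times\nu_1}\!\left\langle x\right\rangle_{\kappa}$ and to use the convergence of the $\Gamma_\ell$-coefficients of the approximating pencils; the rest is the proof of Proposition~\ref{tr-mat-closed}.
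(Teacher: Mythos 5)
Your proof is correct, and it follows the template the paper prescribes, namely rerunning the Carath\'eodory/norm-bound/limit argument of Proposition~\ref{tr-mat-closed} (the paper's printed citation of Proposition~\ref{bounded-trunc-module} in its one-line proof is evidently a swapped reference). Where you genuinely diverge is in how the side-constraint is handled, which is exactly the point the paper's one-liner leaves implicit. The paper says to finish ``using Theorem~\ref{OperLP-trunc} instead of Corollary~\ref{OperLP}'', i.e.\ to invoke the spectrahedrop polar-dual theorem to produce the representation together with the vanishing $\Gamma$-part; you instead keep Corollary~\ref{OperLP}(1), apply it to the limiting monic pencil $\hat L_i$ in the variables $(x,y)$, and recover $\sum_i T_i^\ast V_i^\ast\pi_i(\Gamma_\ell\oplus 0)V_iT_i=0$ from convergence of the $y_\ell$-coefficients of the approximating pencils together with convergence of the $T_{n,i}$. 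In addition, your augmented cone in $W$, recording the $\Gamma_\ell$-components alongside the element, guarantees that the Carath\'eodory reduction cannot destroy the joint cancellation constraint; the paper never addresses this, and your device (or, equivalently, proving closedness of the unconstrained cone in $\RR^{\nu_1\times\nu_1}\!\left\langle x,y\right\rangle_{2d+1}$ and intersecting with the $y$-free subspace) is a clean way to do it. What the paper's route buys is that Theorem~\ref{OperLP-trunc} delivers the constrained representation in one stroke; what yours buys is that only the more basic Corollary~\ref{OperLP} plus continuity of coefficients is needed, at the cost of the extra bookkeeping space.

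Three small imprecisions, none fatal: the first component of $W$ should be taken in $\RR^{\nu_1\times\nu_1}\!\left\langle x,y\right\rangle_{2d+1}$, since the first entries of your pencil-type generators are not $y$-free; in the adaptation of Claims~2 and~3 of Proposition~\ref{tr-mat-closed} the evaluation norm on the size-$\nu_2$ pencils must be taken over a ball in $\Sym^{g+h}$ contained in $D_L$, not over $\cB_\epsilon\subseteq\Sym^g$ (harmless, since all norms on that finite-dimensional space are equivalent and convergence is coefficient-wise); and you quote closedness of $\Sigma_d^{\nu_1}$ rather than absorbing the squares into the Carath\'eodory/limit argument as the paper does in Proposition~\ref{tr-mat-closed} --- standard, but worth a word or a reference.
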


\begin{proof}
		The proof is the same as the proof of Proposition 
	\ref{bounded-trunc-module} using	Theorem \ref{OperLP-trunc} instead of
	Corollary \ref{OperLP}.
\end{proof}

\begin{proposition}\label{GNS-cons-2}
	If $\lambda:\RR^{\nu\times \nu}\!\left\langle x\right\rangle_{2k+2}\to \RR$ is a linear functional which is nonnegative on $\Sigma_{k+1}^\nu$  and positive on $\Sigma_{k}^\nu\setminus\{0\}$, then there exists a tuple $X=(X_1,\ldots,X_g)$ 
of symmetric operators on a Hilbert space $\mathcal X$ of dimension at most
$\nu \sigma_{\#}(k)=\nu \dim \RR\!\left\langle x\right\rangle_{k}$ and a vector $\gamma\in \mathcal X^{\oplus \nu}$, such
that
	$$\lambda(f)=\left\langle f(X)\gamma,\gamma\right\rangle$$
for all $f\in \RR^{\nu\times \nu}\!\left\langle x\right\rangle_{2k+1}$, where $\left\langle \cdot,\cdot\right\rangle$ is the inner
product on $\mathcal X$. Further, if $L(x,y)\in \Sym_{\cH}\!\left\langle x\right\rangle$ is a monic linear operator pencil and $\lambda$ is nonnegative on $M_{x}^{\nu, \nu\sigma_{\#}(k)}(L)_{k}$, then $X$
is in the closure $\overline{\proj_x D_L}$ of the free spectrahedrop $\proj_x D_L$ coming from $L$.

		Conversely, if $X=(X_1,\ldots,X_g)$ is a tuple of symmetric operators on a Hilbert space $\cX$ of dimension $N$,
	the vector $\gamma\in \cX^{\oplus \nu}$, and $k$ a positive integer, then the linear functional
$\lambda:\RR^{\nu\times \nu}\!\left\langle x\right\rangle_{2k+2}\to \RR$, defined by
	$$\lambda(f)=\left\langle f(X)\gamma,\gamma\right\rangle$$
is nonnegative on $\Sigma_{k+1}^\nu$. Further, if $L(x,y)\in \Sym_{\cH}\!\left\langle x\right\rangle$ is a monic linear operator pencil and $X\in \overline{\proj_x D_L}$, then $\lambda$ is nonnegative also on $M_{x}^{\nu, \ell\sigma_{\#}(k)}(L)_{k}$ for every $\ell\in\NN$.
\end{proposition}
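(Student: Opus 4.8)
The plan is to copy the Gelfand--Naimark--Segal construction from the proof of Proposition~\ref{GNS-cons} verbatim and then supply two arguments particular to the spectrahedrop: deducing $X\in\overline{\proj_x D_L}$ from nonnegativity of $\lambda$ on the truncated module, and the converse. For the construction itself, the positive semidefinite form $\langle f,h\rangle:=\lambda(h^\ast f)$ on $K=\RR^{1\times\nu}\!\left\langle x\right\rangle_{k+1}$ descends to a finite-dimensional Hilbert space $\widetilde{\cX}=K/\mathcal N$ inside which $\cX:=\RR^{1\times\nu}\!\left\langle x\right\rangle_k$ sits as a subspace of dimension $m:=\nu\sigma_\#(k)$; with $P$ the orthogonal projection of $\widetilde{\cX}$ onto $\cX$, the operators $X_jf:=Px_jf$ and a suitable vector $\gamma\in\cX^{\oplus\nu}$ satisfy $\lambda(f)=\langle f(X)\gamma,\gamma\rangle$ for all $f\in\RR^{\nu\times\nu}\!\left\langle x\right\rangle_{2k+1}$. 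This does not involve $L$ at all, and the nonnegativity of $\lambda$ on $\Sigma_{k+1}^\nu$ in the converse is immediate, since $\sigma(X)=\sum_iR_i(X)^\ast R_i(X)\succeq 0$ for $\sigma=\sum_iR_i^\ast R_i\in\Sigma_{k+1}^\nu$.

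For the first assertion, assume $\lambda\geq 0$ on $M_x^{\nu,m}(L)_k$ and, for a contradiction, $X\notin\overline{\proj_x D_L}(m)$. The free set $\proj_x D_L$ is closed under direct sums, unitary conjugation and isometric conjugation (if $L(Z,W)\succeq 0$ then $L(V^\ast ZV,V^\ast WV)=(I_\cH\otimes V)^\ast L(Z,W)(I_\cH\otimes V)\succeq 0$ for any isometry $V$), and it contains $0$ since $L$ is monic; hence $\overline{\proj_x D_L}$ is a closed matrix convex set containing $0$. By Theorem~\ref{EFF-WIN-HB} there is a monic linear matrix pencil $\mathcal L$ of size $m$ in the variables $x$ with $\mathcal L\succeq 0$ on $\overline{\proj_x D_L}$, hence on $\proj_x D_L$, but $\mathcal L(X)\not\succeq 0$. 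The coefficient tuple of $\mathcal L$ therefore lies in $(\proj_x D_L)^\circ(m)$, so Theorem~\ref{OperLP-trunc} furnishes a separable real Hilbert space $\cG$, an isometry $V\colon\RR^m\to\cG$ and a $\ast$-homomorphism $\pi\colon B(\cH\oplus\RR)\to B(\cG)$ with $\mathcal L=V^\ast\pi(L\oplus 1)V$ and $V^\ast\pi(\Gamma_\ell\oplus 0)V=0$ for all $\ell$. Now the computation from the proof of Proposition~\ref{GNS-cons} carries over unchanged: for every $p\in\cX^{\oplus m}$, regarded as an element of $\RR^{m\times\nu}\!\left\langle x\right\rangle_k$, one has $\langle\mathcal L(X)p,p\rangle=\lambda\bigl(p^\ast V^\ast\pi(L\oplus 1)Vp\bigr)$, using $X_j=Px_j$ and that $P$ may be dropped against vectors of $\cX$. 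Multiplying $V^\ast\pi(\Gamma_\ell\oplus 0)V=0$ on the left by $p^\ast$ and on the right by $p$ shows that the single-summand element $p^\ast V^\ast\pi(L\oplus 1)Vp$ (with $\sigma=0$) satisfies the $\Gamma$-constraint in the definition of $M_x^{\nu,m}(L)_k$ and so belongs to it; hence $\langle\mathcal L(X)p,p\rangle\geq 0$ for all $p$, i.e.\ $\mathcal L(X)\succeq 0$, a contradiction. Thus $X\in\overline{\proj_x D_L}$.

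For the converse, let $X\in\overline{\proj_x D_L}$ and let $P=\sigma+\sum_kR_k^\ast V_k^\ast\pi_k(L\oplus 1)V_kR_k\in M_x^{\nu,\ell\sigma_\#(k)}(L)_k$ for some $\ell\in\NN$, so that $\sum_kR_k^\ast V_k^\ast\pi_k(\Gamma_\ell\oplus 0)V_kR_k=0$ for each index of $\Gamma$. Expanding $\pi_k(L\oplus 1)$ into its constant, $x$-linear and $y$-linear parts, the coefficient of each $y_\ell$ in $P$ equals $\sum_kR_k^\ast V_k^\ast\pi_k(\Gamma_\ell\oplus 0)V_kR_k=0$, so $P\in\RR^{\nu\times\nu}\!\left\langle x\right\rangle_{2k+1}$ involves no $y$-variable. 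For any $(Z,W)\in D_L$ one has $\pi_k(L\oplus 1)(Z,W)=(\pi_k\otimes I)\bigl(L(Z,W)\oplus I\bigr)\succeq 0$, whence $P(Z)=P(Z,W)\succeq 0$; thus $P\succeq 0$ on $\proj_x D_L$, and since $\{Z\colon P(Z)\succeq 0\}$ is closed at every level it contains $\overline{\proj_x D_L}$. Consequently $P(X)\succeq 0$, so $\lambda(P)=\langle P(X)\gamma,\gamma\rangle\geq 0$, as claimed.

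The step I expect to be the main obstacle is the bookkeeping in the forward direction: one must be sure the separating pencil $\mathcal L$ can be taken of size $\dim\cX=m$, that the representation delivered by Theorem~\ref{OperLP-trunc} is genuinely a generator of $M_x^{\nu,m}(L)_k$ — the $\Gamma$-constraint must transfer to $p^\ast V^\ast\pi(\Gamma_\ell\oplus 0)Vp$ and the factor $R_k=p$ must have degree $\le k$ — and that the compression identity $\langle\mathcal L(X)p,p\rangle=\lambda(\dots)$ is valid for all test vectors $p\in\cX^{\oplus m}$, not merely for $p=\gamma$. This last point, where the finite-dimensionality of $\cX$ and the degree truncation are used, is exactly the subtlety already present in the proof of Proposition~\ref{GNS-cons}; everything else is a transcription of the spectrahedron case with $D_L$ replaced by $\proj_x D_L$ and Corollary~\ref{OperLP} replaced by Theorem~\ref{OperLP-trunc}.
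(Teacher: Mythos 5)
Your proposal is correct and follows essentially the same route as the paper's proof, which runs the GNS construction of Proposition \ref{GNS-cons}, separates $X$ from $\overline{\proj_x D_L}$ by an Effros--Winkler pencil, represents that pencil via Theorem \ref{OperLP-trunc}, and derives a contradiction with the nonnegativity of $\lambda$ on $M_x^{\nu,\nu\sigma_{\#}(k)}(L)_k$ (the paper delegates the bookkeeping to \cite[Proposition 5.4]{HKM3}, which you have simply written out). The details you flag as potential obstacles are handled exactly as you suggest, so no changes are needed.
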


\begin{proof}
		The nontrivial direction is $(\Rightarrow)$. The proof goes the same as the proof of 
	\cite[Proposition 5.4]{HKM3}, just that we need to add the explanation, why in the case that 
	$\lambda$ is nonnegative on $M_{x}^{\nu, \nu\sigma_{\#}(k)}(L)_{k}$, we have 
	$X\in\overline{\proj_x D_L}$. If $L$ is matrix-valued, then this is \cite[Proposition 5.4]{HKM3}, while	for operator-valued case there are minor changes.  To establish \cite[Equality 5.5]{HKM3}
	use Theorem \ref{OperLP-trunc} with $\Gamma$ and $\Omega$ replaced by $\pi(\Gamma)$ and $\pi(\Omega)$ for some $\ast$-homomorphism $\pi:B(\cH)\to B(\cK)$, where $\cK$ is a separable real Hilbert space.  In the next statement replace 
		$$\sum_{j=1}^{\eta}W_j^\ast \Gamma W_j=0,\quad \sum_{j=1}^{\eta}W_j^\ast \Omega 	
				W_j=\Lambda$$
by 
		$$W^{\ast} \pi(\Gamma\oplus 0)W=0,\quad 
			W^\ast \pi(\Omega\oplus 0) W=\Lambda, W\in B(\RR^{\sigma},\cK)\;
			\text{an isometry}.$$
Now the \cite[Equality 5.7]{HKM3} becomes
	\begin{eqnarray*}
	0&>& u^\ast \cL_\Lambda (X) u= (\sum_i e_i \otimes v_i)^\ast \cL_\Lambda (X)  (\sum_i e_i \otimes v_i)\\
	& &		\sum_{i,j}(e_i\otimes v_i)^{\ast}(W \otimes I)^\ast \pi(L(X,Y)\oplus 1)  (W\otimes I)
			(e_i\otimes v_i)\\
	&=&	\sum_{i,j}(e_i\otimes p_i(X)\gamma)^{\ast} (W \otimes I)^\ast \pi(L(X,Y)\oplus 1)  (W\otimes I)
			(e_i\otimes p_j(X)\gamma)
	\end{eqnarray*}
and \cite[Equality 5.8]{HKM3} becomes
	$$0> (\vec p(X) \gamma)^\ast  (W \otimes I)^\ast \pi(L(X,Y)\oplus 1)  (W\otimes I) (\vec p(X) \gamma)=
		\lambda(q),$$
where $\vec p(x)=\sum_j e_j\otimes p_j(x)\in \RR^{ \nu\sigma_{\#}(k)\times \nu}\!\left\langle x\right\rangle_k$ and 
	$$q=\vec p(X)^\ast  (W \otimes I)^\ast \pi(L(X,Y)\oplus 1)  (W\otimes I) \vec p(X)\in M_{x}^{\nu, \nu\sigma_{\#}(k)}(L)_{k}.$$
This is in contradiction with the nonnegativity of $\lambda$ on $M_{x}^{\nu, \nu\sigma_{\#}(k)}(L)_{k}$.
\end{proof}

\begin{proof}[Proof of Theorem \ref{CPsatz-matrix-drops}]
		The proof is the same as the proof of \cite[Theorem 5.1]{HKM3}, just that 
	we use Proposition \ref{trunc-proj-quad-closed} instead of \cite[Proposition 5.3]{HKM3}
	and Proposition \ref{GNS-cons-2} instead of \cite[Proposition 5.4]{HKM3}.
\end{proof}

\section{Operator Positivstellensatz for univariate operator polynomials} \label{positivstellensatz-univariate}

In this section we extend Theorem \ref{CPsatz-matrix}  in the univariate case from matrix-valued polynomials to operator-valued ones. Namely, in the univariate case, $F$ in Theorem \ref{CPsatz-matrix} can be operator-valued but the conclusion still holds. For the precise statement see Theorem \ref{univariate-intro} above. The main step is the reduction to the inclusion of free Hilbert spectrahedra
by the use of variants of the operator Fej\'er-Riesz theorem \cite{ROS}. In Subsection \ref{non-monic-case} we also study the case of a non-monic $L$. By Examples \ref{monicity-needed} and \ref{empty-operator-spectrahedron}, Theorem \ref{univariate-intro} does not extend to the non-monic case.

\subsection{Proof of Theorem \ref{univariate-intro}}

	Since $L$ is monic, the set $D_{L}(1)$ is an interval with non-empty interior. We separate three cases.\\
	
\noindent		\textbf{Case 1:} $D_{L}(1)=[a,b]$, $a<b$, $a,b\in \RR$.\\ 

By the linear change of variables we may assume that $D_{L}(1)=[-1,1]$. 
		By \cite[Proposition 3]{CZ}, 
			$$F(y)=\sum_j R_j^\ast R_j + \sum_k \tilde Q_k^\ast
			\left[\begin{array}{cc} (1+y)I_\cK & 0\\ 0 & (1-y) I_\cK\end{array}\right] 
			\tilde Q_k,$$ where
	$R_j\in B(\cK)\!\left\langle y\right\rangle$, $\tilde Q_k\in B(\cK,\cK^2)\!\left\langle y\right\rangle$ and
	$$\deg(R_j)\leq \frac{\deg(F)}{2},\,\quad 
		\deg(\tilde B_k)\leq \frac{\deg(F)}{2}.$$ 
(For the degree bounds see \cite[Theorem 2.5]{DS} and
	use the identity $x(1-x)=x^2(1-x)+(1-x)^2x$.)
	
	It remains to prove the statement of the theorem for
	the pairs $(L(y),(1+y)I_\cK)$, $(L(y),(1-y)I_\cK)$. Further on, it suffices to prove it for
	the pairs  $(L(y),1+y)$, $(L(y),1-y)$. 
	We use Corollary \ref{OperLP-scal} and conclude the proof.\\
%	By Corollary \ref{OperLP-scal}, 
%	there exist a Hilbert space $\cK^{(i)}$, 
%	a bounded operator $V^{(i)}:\RR\to \cK^{(i)}$ and a unital $\ast$-homomorphism $\pi^{(i)}:B(\cH_1)\to B(\cK^{(i)})$ for $i=1,2$
%	such that 
%		$$1+y=(V^{(1)})^\ast \pi^{(1)}(L(y)) V^{(1)},\quad 
%			1-y=(V^{(2)})^\ast \pi^{(2)}(L(y)) V^{(2)}.$$ 
%Therefore
%		$$\left[\begin{array}{cc} 1+y & 0\\ 0 & 1-y\end{array}\right]= (V^{(1)}\oplus V^{(2)})^\ast (\pi^{(1)}\oplus \pi^{(2)})(L(y)) (V^{(1)}
%			\oplus V^{(2)}),$$
%which concludes the proof.\\

\noindent	
\textbf{Case 2:} $D_{L}(1)=[a,\infty)$ or $(-\infty,a]$, $a\in \RR$. \\

By the linear change of variables we may assume that 
	$D_{L}(1)=[-1,\infty)$. By \cite[Proposition 3]{CZ}, 
		$$F=\sum_j  R_j^\ast R_j + \sum_k \tilde Q_k^\ast (1+y)I_\cK \tilde Q_k ,$$ 
	where	$R_j\in B(\cK)\!\left\langle y\right\rangle$, $\tilde Q_k\in B(\cK)\!\left\langle y\right\rangle$ and
	$$\deg(A_j)\leq \frac{\deg(F)}{2},\quad 
	\deg(\tilde B_k)\leq \frac{\deg(F)}{2}.$$ 
(The degree bounds are easy to see by comparing the leading coefficients.)
	%If $A_0$ and $A_1$ are linearly independent, then $A_0\succeq 0$ and $A_0\neq 0$.  Hence the map 
	
	It remains to prove the statement of the theorem for the pair $(L(y),(1+y)I_\cK)$. As in Case 1 it suffices to prove it
	for the pair $(L(y),1+y)$. By Corollary \ref{OperLP-scal} the statement follows.\\
	
%	Since 
%	$\displaystyle\lim_{y\to \infty}L(y)$ is positive semidefinite, $A_1$ must be positive semidefinite.
%	We separate two subcases:\\
%	
%	\textsl{Case 2.1: $A_0, A_1$ are linearly independent:}\\
%
% We define the map 
%	$\tau:\Span\{A_0,A_1\}\to \RR$, $A_0\mapsto 0$, $A_1\mapsto 1$. 
%	If $A_0\lambda_0 + A_1 \lambda_1\succeq 0$ and $\lambda_0\leq 0$, then $A_0\lambda_0\preceq 0$ and
%	hence $\lambda_1 A_1\succeq 0$. But then $\lambda_1\geq 0$. Therefore the map $\tau$ is positive.
%	By \cite[Proposition 3.8]{PAU}, $\tau$ is completely positive. We finish the proof as in the last 
%	paragraph of the proof of Corollary \ref{OperLP}.\\
%	
%	\textsl{Case 2.2: $A_0, A_1$ are linearly dependent:}\\
%
% We have 
%	$A_0+A_1 y = A_0(1+\lambda y)$ or $A_0+A_1 y = A_1(\lambda+y)$ for some $\lambda\in \RR$. In the first case we do not get $D_L(1)=[0,\infty)$, while in the other $\lambda$ must be $0$. Let $v\in \cH_1$ be such that $\left\langle A_1v,v\right\rangle>0$. 
%	We define the operator $V:\RR\to \cH_1$ by $1\mapsto \frac{v}{\left\langle A_1v,v\right\rangle}$. 
%	Note that $V^\ast:\cH_1\to\RR$, $u\mapsto \frac{\left\langle u,v \right\rangle}{\sqrt{\left\langle A_1v,v\right\rangle}}$. 
%	Hence $V^\ast A_1 V:\RR\to\RR$, $1\mapsto 1$. Thus	
%	$y= V^\ast A_1 V y$ and we are done.\\
	
\noindent\textbf{Case 3:} $D_{L}(1)=\RR$. \\

	By \cite[Proposition 3]{CZ},
		$F=\sum_j  R_j^\ast R_j$
	where	$R_j\in B(\cH_2)\!\left\langle y\right\rangle$ and
		$\deg(R_j)\leq \frac{\deg(F)}{2}.$

\subsection{Non-monic case}\label{non-monic-case}

If $L$ is not monic in Theorem \ref{univariate-intro}, then the conclusion is not true in general (see Example \ref{monicity-needed} in
Section \ref{LOI-domination-section} above). However, by \cite[Corollary 4.3.1]{KS}, it extends to the matrix-valued pencil
$L$ with $D_L=\emptyset$. (The case $F=-1$ is the content of  \cite[Corollary 4.3.1]{KS}, while for an arbitrary $F$
one uses the identity $\frac{(F+1)^\ast(F+1)-(F-1)^\ast(F-1)}{4}$.)  But the following counterexample shows that 
\cite[Corollary 4.3.1]{KS} does not extend to the operator-valued pencil $L$ with $D_L=\emptyset$.

\begin{example} \label{empty-operator-spectrahedron}
		Let $L(y)=A_0 + A_1 y \in B(\ell^2)$ be a linear operator pencil, where
		$$A_0=\oplus_{n\in \NN}(-\frac{1}{n}),\quad A_1=\oplus_{n\in \NN}(\frac{1}{n^2}).$$
	Then the spectrahedron $D_{L}(1)$ is $\emptyset$ and $\ell(y)=-1$ is non-negative on $D_L(1)$, but there do
	not exist a Hilbert space $\cK$, a unital $\ast$-homomorphism 
	$\pi:B(\ell^2)\to B(\cK)$, polynomials $r_j\in \RR\!\left\langle y\right\rangle$ and operator polynomials
	$b_k\in B(\RR,\cK)\!\left\langle y\right\rangle$
	such that 
		\begin{equation} \label{zapis-za--1}
			-1= \sum_j r_j^2 + \sum_k q_k^\ast \pi(L) q_k.
		\end{equation}
\end{example}

\begin{proof}
	Let us say that $\cK$, $\pi$, $r_j$, $q_k$ satisfying (\ref{zapis-za--1}) exist. 
	Observe that $A_1=A_0^\ast A_0$.
	Therefore 
		$$\sum_k q_k^\ast \pi(L(y)) q_k
			=\sum_k q_k^\ast \pi(A_0)^\ast \pi(A_0) q_k\cdot y 
			+\sum_k q_k^\ast \pi(A_0) q_k.$$
	If $\sum_k q_k^\ast \pi(A_0) q_k=0$, then 
		\begin{equation} \label{zapis-za--1-2}
			-1=\sum_j r_j^2 + \sum_k q_k^\ast \pi(A_0)^\ast \pi(A_0) q_k y.
		\end{equation}
	This is a contradiction since the right-hand side of (\ref{zapis-za--1-2})  is nonnegative for $y\geq 0$, while the left-hand side is always -1.
	Therefore $\sum_k q_k^\ast \pi(A_0) q_k\neq 0.$
	Let us write 
		\begin{eqnarray*}
		      	r_j(y) &=&\sum_{m=0}^{N_j} r_{j,m}y\in \RR\!\left\langle y \right\rangle,\quad
			q_k(y)=\sum_{m=0}^{M_k} q_{k,m}y\in B(\RR,\cK)\!\left\langle y \right\rangle,
		\end{eqnarray*}
	where $N_j\in \NN_0$ is such that $r_{N_j}\neq 0$ and 
	$M_k\in \NN_0$ is such that $\pi(A_0) q_{k,M_k}\neq 0$.
	We can indeed choose such $M_k$, since otherwise 
	$\pi(A_0) q_{k,M_k}=q_{k,M_k}^\ast \pi(A_0)=0$ and hence
		\begin{eqnarray*}
			& &q_k^\ast \pi(A_0) q_k
			= \sum_{\ell,j=0}^{M_k} q_{\ell}^\ast \pi(A_0) q_j y^{\ell+j}\\
			&=&\sum_{j=0}^{M_k}  \underbrace{q_{M_k}^\ast \pi(A_0)}_{0} q_j y^{M_k+j}
				+\sum_{\ell=0}^{M_k-1} q_{\ell}^\ast \underbrace{\pi(A_0) q_{M_k}}_{0} y^{M_k+j}
				+\sum_{\ell,j=0}^{M_k-1} q_{\ell}^\ast \pi(A_0) q_{j} y^{\ell+j}\\
			&=& \sum_{\ell,j=0}^{M_k-1} q_{\ell}^\ast \pi(A_0) q_j y^{\ell+j}
			= (\sum_{m=0}^{M_k-1} q_{k,m}y)^{\ast}  \pi(A_0) (\sum_{m=0}^{M_k-1} q_{k,m}y)
		\end{eqnarray*}
	and
		\begin{eqnarray*}
			& &\sum_k q_k^\ast \pi(A_1) q_k
			= \sum_{\ell,j=0}^{M_k} q_{\ell}^\ast \pi(A_0)\pi(A_0) q_j y^{\ell+j}\\
			&=& \sum_{j=0}^{M_k}  \underbrace{q_{M_k}^\ast \pi(A_0)}_{0} 
					\pi(A_0)q_j y^{M_k+j}
				+\sum_{\ell=0}^{M_k-1} q_{\ell}^\ast \pi(A_0)
					\underbrace{\pi(A_0) q_{M_k}}_{0} y^{\ell+M_k}+\\
				&+&\sum_{\ell,j=0}^{M_k-1} q_{\ell}^\ast \pi(A_1)q_{j} y^{\ell+j}\\
			&=& \sum_{\ell,j=0}^{M_k-1} q_{\ell}^\ast \pi(A_1) q_j y^{\ell+j}
			= (\sum_{m=0}^{M_k-1} q_{k,m}y)^{\ast}  \pi(A_1) (\sum_{m=0}^{M_k-1} q_{k,m}y)
		\end{eqnarray*}
	We are repeating this calculation until $\pi(A_0)q_{k,M_k-r_k}\neq 0$ and take $q_k:=\sum_{m=0}^{M_k-r_k} q_{k,m}y$.
	
	The highest monomial according to the ordering of $\RR\!\left\langle y\right\rangle$
		$$dy^m\succeq c y^{n} \Leftrightarrow m>n \text{ or } m=n, d\geq c$$
	 in:
	\begin{enumerate}
		\item $r_j^2$ is $\underbrace{r_{j,N_j}^2}_{\neq 0} y^{2N_j},$
		\item $q_k^\ast \pi(A_1) q_k y$ is 
			$\underbrace{q_{k,M_k}^\ast \pi(A_0)^\ast \pi(A_0)q_{k,M_k}}_{\neq 0} 
				y^{2M_k+1}$,
		\item $q_k^\ast \pi(A_0) q_k$ is `at most'
			$q_{k,M_k}^\ast  \pi(A_0)q_{k,M_k} y^{2M_k}$ (or smaller).
	\end{enumerate}
	Let $M:=\text{max}\{N_j,M_k\colon j,k\}$. Therefore, the highest monomial on the right-hand side of (\ref{zapis-za--1-2})
	is
		$$\left\{
			\begin{array}{cc}	
				\displaystyle\sum_{j\colon N_j=M} \underbrace{r_{j,N_j}^2}_{> 0}
					y^{2M},& \text{if } M\neq M_k 
				\text{ for every }k\\
				\displaystyle
				\sum_{k\colon M_k=M} \underbrace{q_{k,M_k}^\ast \pi(A_0)^\ast
					 \pi(A_0)q_{k,M_k}}_{> 0}
						y^{2M+1},& \text{if } M= M_k \text{ for some }k
			\end{array}.
		\right.$$
	Since the highest monomial on left-hand side of (\ref{zapis-za--1-2})  is $-1$, we conclude that $M=0$ and $q_k=0$ for every $k$.
	Thus $-1=\sum_j r_j^2$ which is a contradiction.
\end{proof}

%\begin{theorem}[Operator convex nonmonic univariate Positivstellensatz] \label{Op-op-conv-one-var}
%		Let $\cH$, $\cK$ be separable real Hilbert spaces. Suppose
%		$L=A_0+A_1y\in B(\cH)\!\left\langle y\right\rangle$ is a nonmonic linear operator pencil and
%		$F\in B(\cK)\!\left\langle y\right\rangle$
%	an operator polynomial.
%	If $F|_{D_L(1)}\succeq 0$, then
%	in either of the following cases:
%	\begin{enumerate}
%		\item $D_{L}(1)=\{a\}$, $a\in \RR$, and
%			$A_0$ and $A_1$ are linearly dependent.
%		\item $D_{L}(1)=\emptyset$ and $D_{P_kLP_k}(1)$ is compact for some finite dimensional projection $P_k$.
%		\item $D_{L}(1)=\emptyset$ and $D_{P_kLP_k}(1)$ is unbounded for every finite dimensional projection $P_k$ 	and 	
%			$\Span\{A_0,A_1\}$ contains an invertible positive definite
%			element.
%	\end{enumerate}
%	there exist a separable real Hilbert space $\cG$, a $\ast$-homomorphism $\pi:B(\cH)\to B(\cG)$
%	and
%	finitely many operator polynomials $R_j\in B(\cK)\otimes \RR\!\left\langle x \right\rangle$ 
%	and $Q_k\in B(\cK,\cG)\otimes \RR\!\left\langle x \right\rangle$ 
%	both of degree at most $\frac{\deg(F)}{2}$
%	such that 
%		$$F=\sum_{j} R_j^\ast R_j + \sum_k Q_k^\ast \pi(L) Q_k.$$
%%	$$F\in M_{L,d+1,d}^{\cH_2}.$$
%
%%			\end{enumerate}		
%\end{theorem}

\begin{remark}
	Theorem \ref{univariate-intro} extends to non-monic $L(y)=A_0+A_1y\in \Sym_\cH\!\left\langle y\right\rangle$
	in the following cases:
	\begin{enumerate}
		\item $D_L(1)\neq \emptyset$ and $\Span\{A_0,A_1\}$ contains an invertible positive definite element.
		\item $D_L(1)=\{a\}$  and $A_0, A_1$ are linearly dependent.
		\item $D_L(1)=\emptyset$ and $D_{PLP}(1)$ is compact for some finite-dimensional projection $P\in B(\cH)$.
	\end{enumerate}
\begin{proof}
	The proof of (1) is the same as the proof of Theorem \ref{univariate-intro} just that we use a non-monic version of Corollary \ref{OperLP-scal} 
	(see Remark \ref{rem-after-OperLP}.(3)).
	
	Now we prove (2). By a linear change of variables we may assume that $D_L(1)=\{0\}.$
If $A_0\neq 0$, then we have 
	$A_0+A_1 y = A_0(1+\lambda y)$ for some $\lambda\in\RR$.
	Hence $A_0\succeq 0$. Thus	$\lim_{y\to\infty}L(y)\succeq 0$ or
	$\lim_{y\to-\infty}L(y)\succeq 0$. This is a contradiction.
Hence $A_0=0$ and $L(y)=A_1 y$. Since $D_L(1)=\{0\}$, there are $v_1,v_2\in\cH$ such that $\left\langle A_1v_1,v_1\right\rangle>0$ and
$\left\langle A_1v_2,v_2\right\rangle<0$. 
	So
		$$y=\frac{\left\langle L v_1,v_1\right\rangle}{\left\langle A_1v_1,v_1\right\rangle},\quad
		 -y=\frac{\left\langle L v_2,v_2\right\rangle}{|\left\langle A_1v_2,v_2\right\rangle|}.$$
By the identity $-y^2=\frac{y(y-1)^2-y(y+1)^2}{4}$ we conclude that
	$-y^2$ is of the form 
		$$-y^2=\sum_{j} r_j^2 + \sum_k Q_k^\ast P_NLP_N Q_k,$$
	where $r_j\in \RR\!\left\langle x \right\rangle$ are scalar polynomials and $Q_k\in M(\RR,\RR^N)\!\left\langle x \right\rangle$ are matrix polynomials.
	 Thus also $-y^{2\ell},-y^{2\ell+1}$ are of the above form for every $\ell\in \NN$ and so every $F\in B(\cK)\!\left\langle y\right\rangle$ satisfying $F(0)\succeq 0$ is of the from
		$$\sum_{j} R_j^\ast R_j + \sum_k Q_k^\ast P_NLP_N Q_k,$$
	where $R_j\in B(\cK)\!\left\langle x \right\rangle$  and $Q_k\in B(\cK,\RR^N)\!\left\langle x \right\rangle$ are operator 		
	polynomials.

	Finally we prove (3). Let $(P_n)_n$ be an increasing sequence of projections from $\cH$ to a $n$ dimensional subspace of 
	$\cH$ such that $P=P_\ell$ for $\ell=\dim \Ran(P).$
	We have the following decreasing sequence of compact sets:
		$$D_{P_\ell LP_\ell}(1)\supseteq D_{P_{\ell+1}LP_{\ell+1}}(1)
			\supseteq \cdots \supseteq \cap_{k=\ell}^\infty D_{P_k LP_k}(1)= 
			D_{L}(1)=\emptyset.$$
	Note that the equality $\cap_{k=\ell}^\infty D_{P_k LP_k}(1)=D_{L}(1)$ follows by the convergence of the sequence
	$P_kL P_k$ to $L$ in the weak operator topology.
	Since $D_{P_\ell L P_\ell}(1)$ is compact and 
		$D_{P_\ell LP_\ell }(1)\subset \bigcup_{k=\ell}^\infty D_{P_kLP_k}(1)^{c}$ is a 
	an open covering, it follows that 
		$$D_{P_\ell LP_\ell }(1)\subset \bigcup_{k}^{N} D_{P_kLP_k}(1)^{c}= D_{P_NLP_N}(1)^{c}$$
	for some $N\in\NN$. Hence $D_{P_NLP_N}(1))=\emptyset.$
	By \cite[Corollary 4.3.1]{KS}, $-1$ is of the form
		$$-1=\sum_{j} r_j^2 + \sum_k Q_k^\ast P_NLP_N Q_k,$$
	where $r_j\in M_N(\RR)\!\left\langle x \right\rangle$ are scalar polynomials and $Q_k\in M(\RR,\RR^N)\!\left\langle x \right\rangle$ are matrix polynomials.
	By the equality $F=\frac{(F+1)^\ast(F+1)-(F-1)^\ast(F-1)}{4}$, arbitrary $F$ is of the form
		$$F=\sum_{j} R_j^\ast R_j + \sum_k Q_k^\ast P_NLP_N Q_k,$$
	where $R_j\in B(\cK)\!\left\langle x \right\rangle$  and $Q_k\in B(\cK,\RR^N)\!\left\langle x \right\rangle$ are operator 		
	polynomials. (For the degree bounds see \cite[Theorem 4.3.3]{KS}.)
\end{proof}
\end{remark}

\noindent\textbf{Acknowledgement.} The paper was written while I was visiting Igor Klep at the University of Auckland. I would like to thank him for the hospitality, many helpful discussions and improvements of the article.
I am also thankful to Scott McCullough and Jurij Vol\v ci\v c who read parts of the manuscript and provided numerous helpful suggestions. 

\section*{References}


\begin{thebibliography}{00}

\bibitem{AM+}
	J.\ Agler, J.E.\ McCarthy, Global holomorphic functions in several non-commuting variables, Canad.\ J.\ Math.\ 67 (2015) 241--285.

\bibitem{ARC}
	R.J.\ Archbold, On the 'flip-flop' automorphism of $C^{\ast}(S_1,S_2)$, Quart.\ J.\ Math.\  30 (1979) 129--132.

\bibitem{ARV69}
	W.\ Arveson, Subalgebras of $C^\ast$-algebras, Acta Math.\ 123 (1969) 141-224.

\bibitem{ARV72}
	W.\ Arveson, Subalgebras of $C^\ast$-algebras II, Acta Math.\ 128 (1972) 271--308.



\bibitem{ARV08}
	W.\ Arveson, The noncommutative Choquet boundary, J.\ Amer.\ Math.\ Soc.\ 21 (2008) 1065--1084.

\bibitem{ARV10}
	W.\ Arveson, The noncommutative Choquet boundary III, Math.\ Scand.\ 106 (2010) 196--210.



\bibitem{BARV}
	A.\ Barvinok, A course in convexity, Graduate Studies in Mathematics 54, Amer.\ Math.\ Soc., 2002.

\bibitem{BB07}
	J.A.\ Ball, V.\ Bolotnikov, Interpolation in the noncommutative Schur-Agler class, J.\ Operator Theory 58
(2007) 83--126.

\bibitem{BCR}
	J.\ Bochnack, M.\ Coste, M.-F.\ Roy, Real algebraic geometry, Ergebnisse der Mathematik und ihrer Grenzgebiete
3, Springer, 1998. 

\bibitem{BLEK-PAR-TOM}
	G.\ Blekherman, P.A.\ Parrilo, R.R.\ Thomas (editors), Semidefinite optimization and convex algebraic geometry, MOS-SIAM Series on Optimization 13, SIAM, 2013.

\bibitem{CKP}
	K.\ Cafuta, I.\ Klep, J.\ Povh, A note on the nonexistence of sum of squares certificates
for the Bessis-Moussa-Villani conjecture, J.\ math.\ phys.\ 51 (2010) 083521.

 \bibitem{CIM}
	J.\ Cimpri\v c, Real algebraic geometry for matrices over commutative rings, J.\ algebra 359 (2012) 89--103.

\bibitem{CZ}
	 J.\ Cimpri\v c, A.\ Zalar, Moment problems for operator polynomials, J.\ Math.\ Anal.\ Appl.\ 401
(2013) 307--316.

\bibitem{CON}
	J.B.\ Conway, A course in functional analysis, Graduate Texts in Mathematics 96, Springer-Verlag, New York, 1990.

\bibitem{CUN}
	J.\ Cuntz, Simple $C^\ast$-algebras generated by isometries, Commun.\ Math.\ Phys. 57 (1977) 173--185.

\bibitem{DDSS}
	K.R.\ Davidson, A.\ Dor-On, O.\ Moshe Shalit, B.\ Solel, Dilations, inclusions of matrix convex sets, and completely positive maps, preprint \url{arxiv.org/abs/1601.07993}.

\bibitem{DM05}
	M.A.\ Dritschel, S.\ McCullough, Boundary representations for families of representations of operator algebras and spaces, J.\ Operator Theory 53 (2005) 159--167.

\bibitem{dOHMP09}
	E.\ de Klerk, T.\ Terlaky, K.\ Roos, Self-dual embeddings. In: Handbook of Semidefinite Programming,
111--138, Kluwer, 2000.

\bibitem{DS} 
	H.\ Dette, W.\ J.\ Studden, Matrix measures, moment spaces and Favard's theorem for the
interval $[0, 1]$ and $[0,\infty)$, Linear Algebra Appl.\ 345 (2002), 169--193.

\bibitem{DLTW}
	A.C.\ Doherty, Y.-C.\ Liang, B.\ Toner, S.\ Wehner, The quantum moment problem and
bounds on entangled multi-prover games, Twenty-Third Annual IEEE Conference
on Computational Complexity (2008) 199--210.

\bibitem{EFF-WIN}
	E.G.\ Effros, S.\ Winkler, Matrix convexity: operator analogues of the bipolar and Hahn-Banach theorems, J.\ Funct. Anal.\ 144 (1997) 210--243.

\bibitem{FAR}
	D.R.\ Farenick, Extremal matrix states on operator systems, J.\ London Math.\ Soc.\ 61 (2000)
885--892.



\bibitem{FAR-PAU}
	D.\ Farenick, V.I.\ Paulsen, Operator system quotients of matrix algebras and their tensor products, Math.\ Scand.\ 111 (2012) 210--243.

\bibitem{GN11}
	J.\ Gouveia, T.\ Netzer, Positive polynomials and projections of spectrahedra, SIAM J.\ Optimization 21 (2011) 960--976.

\bibitem{HEL}
	J.W.\ Helton, Positive noncommutative polynomials are sums of squares, Ann.\ of
Math. 156 (2002) 675--694.

\bibitem{HKM2} 
	J.W.\ Helton, I.\ Klep, S.\ McCullough, The convex Positivstellensatz in a free algebra, 
		Adv.\ Math.\ 231 (2012) 516--534.

\bibitem{HKM5}
	J.W.\ Helton, I.\ Klep, S.\ McCullough, Free convex algebraic geometry, In: "Semidefinite Optimization and Convex Algebraic Geometry" edited by G.\ Blekherman, P.\ Parrilo, R.\ Thomas, 341--405, SIAM, 2013.

\bibitem{HKM1} 
	J.W.\ Helton, I.\ Klep, S.\ McCullough, The matricial relaxation of a linear matrix inequality,
		Math.\ Program.\ 138 (2013) 401--445.
	
\bibitem{HKM3} 
	J.W.\ Helton, I.\ Klep, S.\ McCullough, The tracial Hahn-Banach theorem, polar dual, matrix convex sets, and
		projections of free spectrahedra, to appear in J.\ Eur.\ Math.\ Soc., \url{http://arxiv.org/abs/1407.8198}.

\bibitem{HKM4} 
	J.W.\ Helton, I.\ Klep, S.\ McCullough, Matrix Convex Hulls of Free Semialgebraic Sets, to appear in
 	Trans.\ Amer.\ Math.\ Soc.\ 368 (2016) 3105--3139.

\bibitem{HM1}
	J.W.\ Helton, S.\ McCullough, A Positivstellensatz for noncommutative polynomials, Trans.\ Amer.\ Math.\ Soc.\ 365 (2004) 3721--3737.

\bibitem{HM}
	J.W.\ Helton, S.\ McCullough, Every free basic convex semi-algebraic set has an LMI representation, Ann.\ of Math.\ (2) 176 (2012) 979--1013.

\bibitem{HMPV}
	J.W.\ Helton, S.\ McCullough, M.\ Putinar, V.\ Vinnikov, Convex matrix inequalities versus linear
matrix inequalities,  IEEE Trans.\ Automat.\ Control 54 (2009) 952--964.

\bibitem{KVV+} 
	D.\ Kalyuzhnyi-Verbovetskyi, V. Vinnikov,  Foundations of noncommutative function theory,
	Mathematical Surveys and Monographs 199, Amer.\ Math.\ Soc., Providence, 2014.

\bibitem{KS1}
	I.\ Klep, M.\ Schweighofer, Connes' embedding conjecture and sums of Hermitian
squares,  Adv.\ Math.\ 217 (2008) 1816--1837.

\bibitem{KS2}
	I.\ Klep, M.\ Schweighofer, Sums of Hermitian squares and the BMV conjecture,  J.\ Stat.\
Phys 133 (2008) 739--760.


\bibitem{KS} 
	I.\ Klep, M.\ Schweighofer, An exact duality theory for semidefinite programming based on sums
of squares,  Math.\ Oper.\ Res.\ 38 (2013) 569--590.


\bibitem{LAS}
	J.B.\ Lasserre,  Moments, positive polynomials and their applications, Imperial College Press Optimization
Series 1, 2010.

\bibitem{LAU}
	M.\ Laurent, Sums of squares, moment matrices and optimization over polynomials, In:  Emerging applications
of algebraic geometry 157--270, IMA Vol.\ Math.\ Appl.\ 149, Springer, 2009. Updated version available at \url{http:
//homepages.cwi.nl/~monique/files/moment-ima-update-new.pdf}.

\bibitem{MAR}
	M.\ Marshall,  Positive polynomials and sums of squares, Mathematical Surveys and Monographs 146, Amer.\ Math.\ Soc., 2008.

\bibitem{MC}
	S.\ McCullough, Factorization of operator-valued polynomials in several noncommuting
	variables,  Linear Algebra Appl. 326 (2001) 193--203.

\bibitem{MS11}
	P.S.\ Muhly, B.\ Solel, Progress in noncommutative function theory,  Sci.\ China Ser.\ A 54 (2011) 2275--2294.

\bibitem{NEM}
	A.\ Nemirovskii, Advances in convex optimization: conic programming, plenary lecture, Inter-
national Congress of Mathematicians (ICM), Madrid, Spain, 2006.

\bibitem{PAU}
	V.\ Paulsen,  Completely bounded maps and operator algebras, Cambridge University Press, 2002.

\bibitem{PD}
	A.\ Prestel, C.N.\ Delzell, Positive polynomials. From Hilbert's 17th problem to real algebra, Springer Monographs
in Mathematics, 2001.

\bibitem{PNA10}
	S. Pironio, M. Navascu\'es, A. Ac\' in, Convergent relaxations of polynomial optimization problems with
noncommuting variables,  SIAM J. Optim. 20 (2010) 2157--2180.

\bibitem{Pop10}
	S.\ Popovych, Positivstellensatz and 
at functionals on path $\ast$-algebras,  J.\ Algebra 324 (2010) 2418--2431.

\bibitem{PUT}
	M.\ Putinar, Positive polynomials on compact semi-algebraic sets,  Indiana Univ.\ Math.\ J.\ 43 (1993), 969--984.

\bibitem{ROS}
	M.\ Rosenblum, Vectorial Toeplitz operators and the Fej\'er-Riesz theorem, J.\ Math.\ Anal.\ Appl.\ 23 (1968), 139--147.

\bibitem{SCE}
	C.\ Scheiderer, Positivity and sums of squares: a guide to recent results. In: Emerging applications of algebraic
geometry 271--324, IMA Vol.\ Math.\ Appl.\ 149, Springer, 2009.

\bibitem{SCH}
	K.\ Schm\"udgen, Noncommutative real algebraic geometry - some basic concepts and
firrst ideas, In: Emerging applications of algebraic geometry, 325--350, IMA Vol.\ Math.\
Appl.\ 149, Springer-Verlag, 2009.

\bibitem{SIG}
	R.E.\ Skelton, T.\ Iwasaki, K.M. Grigoriadis, A Unified Algebraic Approach to Linear Control
Design, Taylor \& Francis, 1997.

\bibitem{Voi04}
D.-V.\ Voiculescu, Free analysis questions I: Duality transform for the coalgebra of $\partial_{X:B}$, International Math.\ Res.\ Notices 16 (2004) 793--822.

\bibitem{Voi10}
D.-V.\ Voiculescu, Free analysis questions II: The Grassmannian completion and the series expansions at the
origin, J.\ reine angew.\ Math.\ 645 (2010) 155--236.

\bibitem{WEB-WIN}
	C.\ Webster, S.\ Winkler, The Krein-Milman theorem in operator convexity, Trans.\ Amer.\ Math.\ Soc.\ 351 (1999) 307--332.

\bibitem{WITT}
	G.\ Wittstock, On matrix order and convexity, Functional Analysis: Surveys and Recent Results, Math.\ Studies, 90, 175--188, Norh-Holland, Amsterdam, 1984.
	
\bibitem{Z}
	A.\ Zalar, A note on a matrix version of the Farkas lemma, Comm.\ Algebra 40 (2012) 3420--3429.
\end{thebibliography}
\end{document}